\newtheorem{theorem}{Theorem}[section]
\theoremstyle{definition}
\newtheorem{proposition}[theorem]{Proposition}
\newtheorem{lemma}[theorem]{Lemma}
\newtheorem{definition}[theorem]{Definition}
\newtheorem{remark}[theorem]{Remark}
\newtheorem{corollary}[theorem]{Corollary}
\newtheorem{conjecture}[theorem]{Conjecture}
\def\BZ{\mathbbm Z}
\def\BQ{\mathbbm Q}
\def\BR{\mathbbm R}
\def\BC{\mathbbm C}
\def\calF{\mathcal F}
\def\calW{\mathcal W}
\def\calI{\mathcal I}
\def\calC{\mathcal C}
\def\calE{\mathcal E}
\def\calM{\mathcal M}
\def\s{\sigma}
\def\ti{\widetilde}
\def\SL{\mathrm{SL}}
\def\tq{\tilde{q}}
\def\Res{\mathrm{Res}}
\def\a{\alpha}
\def\ve{\varepsilon}
\def\Re{\mathrm{Re}}
\def\Im{\mathrm{Im}}
\def\be{\begin{equation}}
\def\ee{\end{equation}}
\def\Bor{\mathcal{B}}
\def\Lap{\mathcal{L}}
\def\sfb{\mathsf{b}}
\def\GL{\mathrm{GL}}
\def\e{\bold e}
\def\diag{\mathrm{diag}}
\def\Av{\mathrm{Av}}
\def\Om{\Omega}
\def\rind{\rho}
\def\sma#1#2#3#4{\bigl(\smallmatrix#1&#2\\#3&#4\endsmallmatrix\bigr)} 
\def\mat#1#2#3#4{\begin{pmatrix}#1&#2\\#3&#4\end{pmatrix}} 
\def\comp{\mathrm{comp}}
\def\Sol{\mathrm{Sol}}
\def\Ker{\mathrm{Ker}}
\def\vM{\accentset{\longrightarrow}{M}}
\def\sfb{\mathsf{b}}
\renewcommand\thepart{\@Roman\c@part}%
\renewcommand\part{%
   \if@noskipsec \leavevmode \fi
   \par
   \addvspace{6.7ex}%
   \@afterindentfalse
   \secdef\@part\@spart}
\def\@part[#1]#2{%
    \ifnum \c@secnumdepth >\m@ne
      \refstepcounter{part}%
      \addcontentsline{toc}{part}{Part~\thepart.\ #1}%
    \else
      \addcontentsline{toc}{part}{#1}%
    \fi
    {\parindent \z@ \raggedright
     \interlinepenalty \@M
     \normalfont
     \ifnum \c@secnumdepth >\m@ne
       \centering\large\scshape \partname~\thepart.%
       \hspace{1ex}%
     \fi%
     \large\scshape #2%
     \markboth{}{}\par}%
    \nobreak
    \vskip 4.7ex
    \@afterheading}
  \def\@spart#1{
  \refstepcounter{part}%
  \addcontentsline{toc}{part}{#1}%
    {\parindent \z@ \raggedright
     \interlinepenalty \@M
     \normalfont
     \centering\large\scshape #1\par}%
     \nobreak
     \vskip 4.7ex
     \@afterheading}
\renewcommand*\l@part[2]{%
  \ifnum \c@tocdepth >-2\relax
    \addpenalty\@secpenalty
    \addvspace{0.75em \@plus\p@}%
    \begingroup
      \parindent \z@ \rightskip \@pnumwidth
      \parfillskip -\@pnumwidth
      {\leavevmode
       \normalsize \bfseries #1\hfil \hb@xt@\@pnumwidth{\hss #2}}\par
       \nobreak
       \if@compatibility
         \global\@nobreaktrue
         \everypar{\global\@nobreakfalse\everypar{}}%
      \fi
    \endgroup
  \fi}
\def\l@subsection{\@tocline{2}{0pt}{2pc}{6pc}{}}
\begin{document}
\title[Modular $q$-holonomic modules]{
       Modular $q$-holonomic modules}
\author{Stavros Garoufalidis}
\address{
  International Center for Mathematics, Department of Mathematics \\
  Southern University of Science and Technology \\
  Shenzhen, China \newline
  {\tt \url{http://people.mpim-bonn.mpg.de/stavros}}}
\email{stavros@mpim-bonn.mpg.de}
\author{Campbell Wheeler}
\address{Max Planck Institute for Mathematics \\
         Vivatsgasse 7, 53111 Bonn, Germany \newline
         {\tt \url{http://guests.mpim-bonn.mpg.de/cjwh}}}
\email{cjwh@mpim-bonn.mpg.de}

\thanks{
  {\em Key words and phrases}: $q$-difference equations, modular linear
  $q$-difference equations, conformal field theory, vertex operator algebras,
  $q$-holonomic modules, modular $q$-holonomic modules, $q$-Borel transform,
  $q$-Laplace transform, resummation, monodromy, elliptic functions,
  $q$-hypergeometric equations, generalised $q$-hypergeometric equations,
  Jacobi theta function, Appell-Lerch sums, Mordell integral, quantum dilogarithm, 
  knots, 3-manifolds, Chern-Simons theory, holomorphic quantum modular forms,
  Conformal Field Theory, Vertex Operator Alegbras.
}

\date{31 March 2022}

\begin{abstract}
  We introduce the notion of modular $q$-holonomic modules 
  whose fundamental matrices define a cocycle with improved analyticity properties
  and show that the generalised $q$-hypergeometric equation, as well as three
  key $q$-holonomic modules of complex Chern--Simons theory are modular. 
  This notion explains conceptually recent structural properties of quantum
  invariants of knots and 3-manifolds, and of exact and perturbative Chern--Simons
  theory~\cite{GGM:peacock,GGM,GGMW:trivial,GZ:kashaev}, and in addition provides
  an effective method to solve the corresponding linear $q$-difference equations.
  An alternative title of our paper, emphasising the equations rather than the
  modules, is: 
  \medskip
  \begin{center}
\fbox{
Modular linear $q$-difference equations
}
\end{center}
\end{abstract}

\maketitle

{\footnotesize
\tableofcontents
}


\section{Introduction}
\label{sec.intro}

\subsection{Summary}
We introduce a new class of linear $q$-difference equations and (corresponding
$q$-holonomic modules) which we call \emph{modular}, with several key features.

\begin{itemize}
  \item
 Their fundamental solutions at 0 and infinity have explicitly
  computable monodromy.
\item
  A natural $\SL_2(\BZ)$-cocycle constructed from a fundamental
  solution extends from $\BC\setminus\BR$ to the complex plane minus a ray in
  the real numbers.
\item
  Their fundamental solutions are meromorphic and their residues are
  expressed in terms of the solutions themselves (kind of 'resurgence').
\end{itemize}
Modular $q$-holonomic modules are abundant. We show that the generalised
$q$-hypergeometric equation ~\eqref{gener.qhyper} (and in particular
the $q$-hypergeometric equation) is modular and self-dual; see
Theorem~\ref{thm.heine} below. Among other things, this implies an improved
analyticity for the $q$-hypergeometric function
\be
\label{2phi1}
  {}_{2}\phi_{1}(a,b;c;q,t) =
  \sum_{k=0}^{\infty}\frac{(a;q)_{k}(b;q)_{k}}{(c;q)_{k}(q;q)_{k}}t^{k} \,,
\ee
namely, that the bilinear combination
\be
\begin{aligned}
  &\frac{(q;q)_{\infty}(c;q)_{\infty}}{(a;q)_{\infty}(b;q)_{\infty}}
  \frac{(\tq\tilde{a};\tq)_{\infty}(\tq\tilde{b};\tq)_{\infty}}{
    (\tq;\tq)_{\infty}(\tq\tilde{c};\tq)_{\infty}}{}_{2}\phi_{1}(a,b;c;q,t)
  {}_{2}\phi_{1}(\tilde{a}^{-1},\tilde{b}^{-1};\tilde{c}^{-1};\tq,\tq\tilde{a}
  \tilde{b}\tilde{c}^{-1}\tilde{t})\\
  &+\tau\frac{\theta(q^{-1}ct;q)(q;q)_{\infty}^{3}}{\theta(t;q)\theta(q^{-1}c;q)}
  \frac{(q^{2}c^{-1};q)_{\infty}(q;q)_{\infty}}{
    (qac^{-1};q)_{\infty}(qbc^{-1};q)_{\infty}}
  \frac{\theta(\tq\tilde{c};\tq)\theta(\tilde{t};\tq)}{
    \theta(\tq\tilde{c}\tilde{t};\tq)
    (\tq;\tq)_{\infty}^3}\frac{(\tilde{a}\tilde{c}^{-1};\tq)_{\infty}
    (\tilde{b}\tilde{c}^{-1};\tq)_{\infty}}{(\tq^{-1}\tilde{c}^{-1};\tq)_{\infty}
    (\tq;\tq)_{\infty}}\\
  &\quad\times{}_{2}\phi_{1}(qac^{-1},qbc^{-1};q^2c^{-1};q,t)
  {}_{2}\phi_{1}(\tq\tilde{a}^{-1}\tilde{c},
  \tq\tilde{b}^{-1}\tilde{c};\tq^2\tilde{c};
  \tq,\tq\tilde{a}\tilde{b}\tilde{c}^{-1}\tilde{t})
\end{aligned}
\ee
(which a priori is a meromorphic function of $\tau \in \BC\setminus\BR$)
extends to $\tau\in\BC\setminus(-\infty,0]$.
We also show that modular linear $q$-difference equations and the corresponding
$q$-holonomic modules appear naturally in complex Chern-Simons theory. We illustrate
with three examples (Theorems~\ref{thm.ex1}, ~\ref{thm.ex2} and ~\ref{thm.41} below)
whose corresponding cocycles are the Faddeev quantum dilogarithm, the Appell-Lerch
sums, and the Andersen--Kashaev state integrals of the $4_1$ knot.

We expect that all proper (i.e., basic) $q$-hypergeometric modules are modular, and
in particular the ones that appear in the quantum differential equation in Quantum
Cohomology, or the linear q-difference equation for the small J-function of
Quantum K-Theory.

\subsection{Motivation}
  
In a recent talk~\cite{Okounkov:talk,Okounkov:notes}, Okounkov asked the
question:

\begin{center}
\emph{What does it take to solve a $q$-difference equation?}
\end{center}

Solving a linear equation usually means giving a basis of solutions,
say at $t=0$ and $t=\infty$ (the only two canonical points, fixed under the
shift transformation
$t \mapsto q t$), and to compute the monodromy (i.e., the connection)
between $t=0$ and $t=\infty$ in
terms of ``known'' functions. This problem has a rich history
with an interesting balance between concrete special functions and the abstract,
which the reader may consult in Okounkov's talks and their references.

A key to the solution is to choose linear $q$-difference equations of natural
origin, for instance the ones appearing in quantum cohomology~\cite{OP},
in Kontsevich's talks on resurgence~\cite{Kontsevich:talks}, or in
quantum topology and Chern--Simons theory~\cite{Ga:CS,GL}.

Recently, it was observed that the solutions of some 
linear $q$-difference equations concerning quantum knot invariants
~\cite{GZ:qseries,GZ:kashaev} or resurgence and Borel resummation of
perturbative Chern--Simons theory~\cite{GGM:peacock,GGM,GGMW:trivial}, have
an improved analyticity property. Roughly speaking, this means that 
some holomorphic functions on $\BC\setminus\BR$ extend on the cut plane
$\BC'=\BC\setminus (-\infty,0]$. This extension property has been formalised recently
by Zagier as holomorphic quantum modular forms~\cite{Z:HQMF}.

Our attempt to understand the mechanism behind this property abstractly led
us to the notion of a modular $q$-holonomic module. Quite by accident, this
suggests an answer to the question posed above, namely:

\begin{center}
\emph{Modularity can solve effectively a $q$-difference equation}. 
\end{center}

\subsection{Definition and properties}
\label{sub.def}

To explain the new concept, consider the linear $q$-difference equation
\be
\label{qdiff}
\s X = A X
\ee
for a vector-valued function $X=X(t,q)$, where $(\s X)(t,q)=X(qt,q)$ denotes the
shift operator, $A(t,q) \in \GL_r(\BQ(t,q))$ and $q$ is a nonzero complex number
with $|q|\neq1$. 
Fix a fundamental matrix solution $U$ to~\eqref{qdiff} at $t=0$, which we assume
is filtration-preserving and of weight $\kappa_{U}=\diag(\tau^{\kappa_{U,1}},\dots,
\tau^{\kappa_{U,r}})$ and define 
\be
\label{Om}
\Om_{U,\gamma} =(U|_{\kappa_{U}}\gamma)U^{-1},
\qquad \gamma=\sma abcd \in \SL_2(\BZ)
\ee
where the slash operator $|_{\kappa_{U}}\gamma$ is defined in Section~\ref{sub.slash}
below. The map $\gamma \mapsto \Om_{U,\gamma}$ is a cocycle of $\SL_2(\BZ)$, i.e.,
it satisfies
\be
\label{Omcoc}
\Om_{U,\gamma \gamma'}(z,\tau)
= \Om_{U,\gamma}(\gamma'(z,\tau))  \Om_{U,\gamma'}(z,\tau) 
\ee
for all $\gamma, \gamma' \in \SL_2(\BZ)$.
Likewise, let $V$ denote a filtration-preserving fundamental solution at $t=\infty$,
and $\vM=V^{-1}U$ denote the monodromy matrix (often called ``connection matrix'').
The monodromy is always an elliptic function (i.e., $\s$-invariant).
If it satisfies the equation
\be
\label{Mmod}
\vM = \Delta_{\kappa_V,\gamma} (\vM|_{\kappa_U} \gamma),
\qquad \gamma=\sma abcd \in \SL_2(\BZ)
\ee
for some $\gamma=\sma abcd \in \SL_2(\BZ)$ then the cocycle matrices associated to
$U$ and $V$ are equal: 
\be
\label{OmUV}
\Om_{U,\gamma} = \Om_{V,\gamma} \qquad \gamma=\sma abcd \in \SL_2(\BZ) \,.
\ee
(The converse holds, too, and the equivalence of~\eqref{OmUV} and~\eqref{Mmod}
holds for each fixed $\gamma$). A priori, $\Om_{\gamma}=\Om_{U,\gamma}=\Om_{V,\gamma}$
is a holomorphic function of $\tau \in \BC\setminus\BR$ and a
meromorphic function of $z$ (with $t=\e(z)$) with poles in a finite union of
translates of the lattice $\BZ+\tau\BZ$. For an element
$\gamma=\sma abcd \in \SL_2(\BZ)$, we let $\BC_\gamma$ denote the cut
plane $\BC\setminus (-\infty,-d/c]$ (if $c>0$),
$\BC\setminus [-d/c,\infty)$ (if $c<0$) and $\BC\setminus\BR$ when $c=0$. 
The next definition concerns an improved analytic property of $\Om_{\gamma}$.
\begin{definition}
  \label{def.qmm}
  We say that a $q$-difference equation is \emph{modular} if for all $\gamma
  \in \SL_2(\BZ)$, its cocycle $\Om_{\gamma}$ extends to a meromorphic function
  of $(z,\tau)\in\BC\times\BC_\gamma$ with potential poles at
  $z \in S_\gamma + \BZ + \tau \BZ$ for a finite set $S_\gamma$. 
\end{definition}

\noindent
We make several comments concerning this definition. 

\noindent 1. 
For a modular $q$-difference equation, the two sides of $\tau \in \BC\setminus\BR$
communicate: the restriction of $\Om_{\gamma}$ on the one side of the plane
$\Im(\tau)>0$ uniquely determines (and is determined by) the function on the
other side $\Im(\tau)<0$.

\noindent 2. 
In a sense, our notion of modular linear $q$-difference equation is a $q$-analogue
of the second, third and fourth order modular linear differential 
equations studied by Kaneko, Nagatomo, Zagier and others~\cite{Arike:fourth-order, 
Kaneko:third-order,Kaneko-Zagier} in relation to Conformal Field Theory 
and Vertex Operator Algebras. The characters of a VOA (under a mild $C_2$-condition)
are solutions to a modular linear differential equation
~\cite{Dong:modular-invariance,Zhu:modular-invariance}. An alternative title of
our paper could be
``Modular linear $q$-difference equations'' in direct analogy with the
modular linear differential equations of CFTs and VOAs. On the other hand, the
  current title emphasises the ``module'' aspect as opposed to the ``equation
  aspect''. 

\noindent 3. 
Our notion explains conceptually the improved analyticity properties conjectured
in relation to the structure of exact and perturbative invariants of Chern--Simons
theory~\cite{GGM:peacock,GGM,GGMW:trivial,GZ:qseries,GZ:kashaev}, as well as the
work of Gukov et al relating logarithmic CFTs and VOAs to Chern--Simons
theory~\cite{Cheng:VOA}. 
  
\noindent 4. 
Modular linear differential equations seem rare. On the other hand,
modular linear $q$-difference equations appear abundant: we conjecture
(see the end of Section~\ref{sec.intro}) that the $\hat A$-polynomial of a knot
(i.e., the linear $q$-difference equation satisfied by the colored Jones polynomial
of a knot~\cite{GL}) is a modular linear $q$-difference equation. This brings
a new perspective to the Jones polynomial of a knot.

\noindent 5. 
The modularity of the monodromy for modular $q$-holonomic modules is a restricted
condition, which leads to the complete determination of the monodromy even when a
fundamental solution is defined by $q$-Borel resummation. This in turn
completely determines the so-called $q$-Stokes phenomenon coming from the
change of the ray of the $q$-Laplace transform.

\noindent 6. 
The cocycle $\Omega_{U,\gamma}$ as a function of a fundamental solution of a
linear $q$-difference equation appears new and different from the matrices
considered in Etingof~\cite{Etingof} that generate the Galois group of the equation.

\noindent 7. 
The above definition involves all elements of $\SL_2(\BZ)$, although it
imposes no improved extension when $\gamma_{2,1}=0$.  The third part of the next
theorem (under the hypothesis that $\Om_T=I$ which is satisfied in all of our
examples), rephrases the modularity of Definition~\ref{def.qmm}
in terms of $\Om_S$ alone, where $S=\sma 0{-1}10$ and $T=\sma 1101$ of
$\SL_2(\BZ)$ are the standard generators of $\SL_2(\BZ)$. Recall that
$\BC'=\BC_S=\BC\setminus(-\infty,0]$.  

\begin{theorem}
\label{thm.ST}
{\rm (a)} If Equation~\eqref{Mmod} for the monodromy holds for $\gamma=S$
and $\gamma=T$, then it holds for all $\gamma \in \SL_2(\BZ)$.
\newline
{\rm (b)} If $\Om$ is an $\SL_2(\BZ)$-cocycle with $\Om_T=I$, then $\Om_S$
satisfies the 4-term and the 3-term functional equations
\begin{subequations}
\begin{align}
  \label{OmS1}
  1 &= 
  \Om_S\left(-\frac{z}{\tau},-\frac{1}{\tau}\right)
  \Om_S\left(-z,\tau\right)
  \Om_S\left(\frac{z}{\tau},-\frac{1}{\tau}\right)
  \Om_S(z,\tau) \\
  \label{OmS2}
  \Om_S(z,\tau) &= \Om_S\left(\frac{z}{\tau+1}, \frac{\tau}{\tau+1}\right)
  \Om_S(z,\tau+1) \,.
\end{align}
\end{subequations}
Conversely, given $\Om_T=I$ and $\Om_S$ that satisfies the above functional
equations, there is a $\SL_2(\BZ)$-cocycle with those values.
\newline
{\rm (c)} If $\Omega$ is a cocycle such that $\Om_T=I$ and $\Om_S$ extends as
a meromorphic function of $(z,\tau) \in \BC \times \BC'$, then $\Om_\gamma$
extends as a meromorphic function of $(z,\tau) \in \BC \times \BC_\gamma$
for all $\gamma \in \SL_2(\BZ)$. 
\end{theorem}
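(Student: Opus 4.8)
The plan is to treat the three parts in turn, using only the cocycle relation~\eqref{Omcoc}, the normalisation $\Om_T=I$, the explicit actions $S(z,\tau)=(z/\tau,-1/\tau)$ and $T(z,\tau)=(z,\tau+1)$, and the two standard facts that $S=\sma 0{-1}10$, $T=\sma 1101$ generate $\SL_2(\BZ)$ and that $\SL_2(\BZ)=\langle S,T\mid S^4,\ (ST)^3S^{-2}\rangle$. For (a) I would invoke the stated equivalence that, for each fixed $\gamma$, Equation~\eqref{Mmod} holds iff $\Om_{U,\gamma}=\Om_{V,\gamma}$, and then show that $G:=\{\gamma:\Om_{U,\gamma}=\Om_{V,\gamma}\}$ is a subgroup. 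If $\Om_{U,\gamma_i}=\Om_{V,\gamma_i}$ as meromorphic functions for $i=1,2$, then~\eqref{Omcoc}, evaluating the first factor at $\gamma_2(z,\tau)$, gives $\Om_{U,\gamma_1\gamma_2}=\Om_{V,\gamma_1\gamma_2}$; and~\eqref{Omcoc} with $\gamma'=\gamma^{-1}$ together with $\Om_{\cdot,I}=I$ gives $\Om_{\cdot,\gamma^{-1}}(z,\tau)=\Om_{\cdot,\gamma}(\gamma^{-1}(z,\tau))^{-1}$, so $G$ is closed under inverses. Since $G\ni S,T$ by hypothesis and these generate, $G=\SL_2(\BZ)$, whence~\eqref{Mmod} holds for all $\gamma$.

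For (b), the forward direction recognises~\eqref{OmS1} and~\eqref{OmS2} as the cocycle images of the two relators. Telescoping $\Om_{S^4}$ by~\eqref{Omcoc} and substituting $S(z,\tau)=(z/\tau,-1/\tau)$, $S^2(z,\tau)=(-z,\tau)$, $S^3(z,\tau)=(-z/\tau,-1/\tau)$ produces exactly the four-term product on the right of~\eqref{OmS1}, which equals $\Om_{S^4}=\Om_I=1$. For the three-term relation I would use the identity $(ST)^2=T^{-1}S$ in $\SL_2(\BZ)$ (equivalent to $(ST)^3=S^2$): on the one hand $\Om_{(ST)^2}=\Om_{T^{-1}S}=\Om_S$ since $\Om_{T^{-1}}=I$, and on the other hand expanding $\Om_{(ST)^2}$ by~\eqref{Omcoc} with $\Om_T=I$ writes it as $\Om_S(TST\,(z,\tau))\,\Om_S(T(z,\tau))$, the right-hand side of~\eqref{OmS2}, since $TST=\sma 1011$ acts by $(z,\tau)\mapsto(z/(\tau+1),\tau/(\tau+1))$. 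For the converse I would appeal to the standard fact that a non-abelian $1$-cocycle of a finitely presented group, valued in invertible meromorphic matrix functions under the given $\SL_2(\BZ)$-action, is determined by its values on generators subject to one consistency condition per relator; here those conditions are precisely~\eqref{OmS1} (from $S^4$) and~\eqref{OmS2} (from $(ST)^3S^{-2}$), so the prescribed $\Om_S$ and $\Om_T=I$ extend to a cocycle.

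For (c) the plan is induction on $\lvert c\rvert$, where $c$ is the lower-left entry of $\gamma$, exploiting that $\Om_\gamma$ depends only on the bottom row of $\gamma$ (since $\Om_{T\gamma}=\Om_\gamma$) and that $\Om_{\gamma T^{n}}(z,\tau)=\Om_\gamma(z,\tau+n)$. When $c=0$ we have $\BC_\gamma=\BC\setminus\BR$ and nothing to prove; when $\lvert c\rvert=1$, $\Om_\gamma$ is a $\tau$-translate of $\Om_S$ and hence extends to the correspondingly translated cut $\BC_\gamma$. For $\lvert c\rvert\ge 2$ I would choose $n$ so that $\gamma'''=\gamma T^{-n}S^{-1}$ has strictly smaller $\lvert c\rvert$ (the Euclidean step on the bottom row), and write, via~\eqref{Omcoc},
\[
  \Om_\gamma(z,\tau)=\Om_{\gamma'''}\!\big(S(z,\tau+n)\big)\,\Om_S(z,\tau+n),
  \qquad S(z,\tau+n)=\Big(\tfrac{z}{\tau+n},-\tfrac{1}{\tau+n}\Big).
\]
By the inductive hypothesis $\Om_{\gamma'''}$ extends once its $\tau$-argument $-1/(\tau+n)$ lies in $\BC_{\gamma'''}$, and $\Om_S$ extends once $\tau+n\in\BC'=\BC_S$; since the $z$-dependence is affine, the resulting poles stay in a finite union of translates of $\BZ+\tau\BZ$.

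The step I expect to be the crux is the \emph{geometric} lemma that the set of $\tau$ for which both $\tau$-arguments avoid their cuts is exactly $\BC_\gamma=\BC\setminus(-\infty,-d/c]$ (for $c>0$, with the reflected statement for $c<0$). Unwinding the induction, the excluded set is the union of the images of the ray $(-\infty,0]$ under the Möbius maps attached to the successive $S$-factors in the continued-fraction expansion of $\gamma$; one must show these real intervals are nested and tile so that their union is the single ray $(-\infty,-d/c]$ and no more. I would prove this by the same induction — appending one generator enlarges the excluded ray by exactly one Farey interval — after checking the base cases $\gamma=S$ (excluded set $(-\infty,0]$) and $\gamma=ST$ (excluded set $(-\infty,-1]$) directly. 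This Farey/Stern--Brocot bookkeeping, rather than any analytic difficulty, is the real content of part (c).
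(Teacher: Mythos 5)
Your proposal is correct and follows essentially the same route as the paper: part (a) via closure of \eqref{Mmod} under products of the generators $S,T$; part (b) by pushing the two relators of the presentation $\langle S,T\mid S^4,\ TSTST=S\rangle$ through the cocycle identity; and part (c) by decomposing $\Om_\gamma$ into $\Om_S$-factors along the continued-fraction/Euclidean expansion of the bottom row and checking that the excluded real intervals are adjacent Farey intervals tiling the single ray $(-\infty,-d/c]$ — your induction on $|c|$ is just the paper's Gauss-reduction-plus-cascading-inequalities argument repackaged one step at a time. The only cosmetic slips are that the excluded intervals tile rather than nest (as your own Farey description correctly indicates), and that the $|c|=1$ base case with $c<0$ is a translate of $\Om_{S^{-1}}=\Om_{-S}$, whose extension to $\tau<0$ needs the inversion identity $\Om_{\gamma^{-1}}(z,\tau)=\Om_\gamma(\gamma^{-1}(z,\tau))^{-1}$ that you already use in part (a).
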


Note that with the assumptions of part (c) above, we have
\be
\label{Om-I}
\Om_{-I}(z,\tau) = \Om_S\left(\frac{z}{\tau},-\frac{1}{\tau}\right)
\Om_S(z,\tau)
\ee
which is meromorphic on $\BC\times (\BC\setminus\BR)$, in general does not extend
any further; see for instance the cocycle of Theorem~\ref{thm.ex1} below.

\noindent 8. 
Our final comment concerns the distinction between a linear $q$-difference equation
to a $q$-holonomic module. A linear $q$-difference equation leads to a pair
$(M,e)$ of a $q$-holonomic module $M$ over the $q$-Weyl algebra
$\calW=\BQ(q,t) \langle \s \rangle/(\s t-q t \s)$ and a cyclic vector $e$ of $M$. 
The category $\calM$ of $q$-holonomic modules is an abelian category,
which admits a multivariable extension closed under the usual operations on sheaves.
For a detailed introduction, see~\cite{Coutinho} and~\cite{Sabbah}, and
also~\cite{GL:survey}. Whereas $D$-modules over the Weyl algebra have
only one dual (see for example, ~\cite[Sec.2.2]{Singer:galois-differential}), modules
over the $q$-Weyl algebra have two duals $M^{\wedge}$ (the usual $\mathrm{Hom}$-dual
as in the case of $D$-modules) and a dual $M^{\vee}$ coming from the involution
$q \mapsto q^{-1}$, discussed in Section~\ref{sub.duality} below.
The next lemma summarises some categorical properties of modular $q$-holonomic
modules.

\begin{lemma}
\label{lem.cat}
  \rm{(a)} Modularity of a $q$-holonomic module is independent of the choice
  of a cyclic vector.
  \newline
  \rm{(b)} If $0 \to M' \to M \to M'' \to 0$ is a short exact sequence of
  $q$-holonomic modules and $M$ is modular, then $M$ and $M'$ are also modular.
  \newline
  \rm{(c)} $M$ is a modular, then $M^\wedge$ and $M^\vee$ are modular with
  cocycles given by
  \be
\label{Omduals}
\Om^\wedge(z,\tau):=\Om(z,-\tau), \qquad \text{and}
\qquad \Om^\vee=(\Om^{-1})^t
\ee
where $\Om$ is a cocycle of $M$. 
\end{lemma}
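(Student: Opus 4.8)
The plan is to reduce each part to the transformation behaviour of the cocycle $\Om_\gamma$ of \eqref{Om} under a natural operation on fundamental solutions, and to observe that in every case this operation is implemented by matrices that are themselves meromorphic on $\BC\times\BC_\gamma$, so the meromorphic extension demanded by Definition~\ref{def.qmm} is preserved. The only delicate point is that the slash operator $|_\kappa\gamma$ of Section~\ref{sub.slash} contributes automorphy factors that are powers of $(c\tau+d)$, which are holomorphic and nonvanishing on $\BC_\gamma$ and hence harmless. For (a) I would argue that two cyclic vectors $e,e'$ present the same $M$, so the systems $\sigma X=AX$ and $\sigma X=A'X$ are gauge equivalent over $\BQ(q,t)$, say $A'=(\sigma g)Ag^{-1}$ with $g\in\GL_r(\BQ(q,t))$. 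After matching normalisations the filtration-preserving solutions at $t=0$ satisfy $U'=gU$, so \eqref{Om} gives
\[
\Om_{U',\gamma}(z,\tau)=g\big(\gamma(z,\tau)\big)\,\Om_{U,\gamma}(z,\tau)\,g(z,\tau)^{-1}
\]
up to the diagonal weight factors. Since $g$ is rational in $t=\e(z)$ and $q=\e(\tau)$, both $g$ and its $\gamma$-transform are meromorphic on $\BC\times\BC_\gamma$, so $\Om_{U,\gamma}$ extends there exactly when $\Om_{U',\gamma}$ does.

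For (b) I would pick a cyclic vector of $M$ adapted to the submodule $M'$, so that in a filtration-compatible basis the matrix $A$ is block upper triangular with diagonal blocks $A'$ and $A''$ attached to $M'$ and $M''=M/M'$. As the submodule is $\sigma$-stable, one can take the filtration-preserving solution $U$ at $t=0$ block upper triangular, with diagonal blocks $U',U''$ fundamental for $M'$ and $M''$; the slash operator respects this block structure, so \eqref{Om} gives
\[
\Om_{U,\gamma}=\begin{pmatrix}\Om_{U',\gamma}&*\\0&\Om_{U'',\gamma}\end{pmatrix}.
\]
If $M$ is modular the left-hand side extends meromorphically to $\BC\times\BC_\gamma$ for all $\gamma$, and reading off the diagonal blocks gives the same for $\Om_{U',\gamma}$ and $\Om_{U'',\gamma}$; by (a) this is independent of the cyclic vectors, so both $M'$ and $M''$ are modular.

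For (c) I would realise each duality as an explicit operation on a fundamental solution. The involution $q\mapsto q^{-1}$ is $\tau\mapsto-\tau$ because $q=\e(\tau)$, and it sends a solution $U$ of $\sigma X=AX$ to the solution $U(z,-\tau)$ of the dual system, so by \eqref{Om} its cocycle is $\Om(z,-\tau)$; as $\tau\mapsto-\tau$ carries $\BC_\gamma$ to a cut plane of the required type, extension is preserved. The $\mathrm{Hom}$-dual is fixed by demanding that $Y^tX$ be $\sigma$-invariant whenever $\sigma X=AX$, which forces $\sigma Y=(A^{-1})^tY$; hence the dual solution is $(U^{-1})^t$ with weight $\kappa_U^{-1}$, and \eqref{Om} together with the compatibility of this weight with the slash yields the cocycle $(\Om^{-1})^t$, an algebraic operation that preserves meromorphy. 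Comparing these two computations with the conventions of Section~\ref{sub.duality} then produces the formulas \eqref{Omduals}.

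The main obstacle I expect is the uniform control of the weight data rather than any individual part: I must verify that a gauge change, a filtration-adapted block decomposition, and the two dualities each send a filtration-preserving solution of a definite weight to another such solution, with the slash operator of Section~\ref{sub.slash} intertwining correctly. Once the resulting automorphy factors are pinned down as powers of $(c\tau+d)$ in every case, the extension statements follow formally and the identities \eqref{Omduals} reduce to a direct substitution into \eqref{Om}.
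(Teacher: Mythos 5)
Your proposal is correct and follows essentially the same route as the paper: part (a) via gauge conjugation of the cocycle by a rational matrix, part (b) via the block upper-triangular form of a filtration-compatible fundamental solution (the paper justifies this block structure through the exactness of the solution functor and Sauloy's canonical filtration, where you simply posit an adapted basis, but the mechanism is identical), and part (c) via the explicit dual fundamental matrices $U(t,q^{-1})$ and $(U^{-1})^t$. No substantive differences.
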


\subsection{The generalised $q$-hypergeometric equation is modular}
\label{sub.qhyper}

Contrary to what one might expect, modular $q$-holonomic modules are abundant.
This section concerns the generalised $q$-hypergeometric equation

\be
\label{gener.qhyper}
\left(\prod_{j=0}^{r-1}(1-q^{-1}b_{j}\sigma)-t\prod_{j=1}^{r}(1-a_{j}\sigma)\right)f
=0
\ee
for a function $f=f(t,a,b,q)$ where $a=(a_1,\dots,a_r)$ and $b=(b_0,\dots,b_{r-1})$
with $b_0=q$, where $\s$ is the operator that shifts $t$ to $qt$. This equation
is a $q$-deformation of the hypergeometric equation (a regular singular linear
differential equations with singularities at $0,1,\infty$), itself a generalisation
of the Gauss hypergeometric equation. The hypergeometric equation has a rich history
related to, among other things, periods in algebraic geometry and the
in the Gauss-Manin connection of the middle cohomology of the Dwork family of
smooth hypersurfaces; see for example~\cite{Beukers,Katz:another} and references
therein. 

The generalised $q$-hypergeometric equation~\eqref{gener.qhyper} was
introduced and studied by Heine, who shows that this equation has a solution the
function, analytic at $t=0$, given by the $q$-hypergeometric series
\be
\label{phirr}
{}_{r}\phi_{r-1}(a;b;q,t) =
\sum_{k=0}^{\infty}
\frac{(a_{1};q)_{k}\dots(a_{r};q)_{k}}{(b_{1};q)_{k}\dots(b_{r-1};q)_{k}(q;q)_{k}}t^{k}
\,.
\ee
The generalised $q$-hypergeometric equation and the $q$-hypergeometric series
are classical examples of functions that appear in many areas of research, and
for a comprehensive treatment, we refer the reader to the book of Gasper--Rahman
~\cite{Gasper} and references therein. 

Consider the matrices 
\be
\label{UVheine}
  U=W(f^{(1)},f^{(qb_{1}^{-1})},\dots,f^{(qb_{r-1}^{-1})}), \qquad
  V=W(g^{(a_{1}^{-1})},\dots,f^{(a_{r}^{-1})})
\ee
where $W$ denotes the Wronskian of
$r$ functions $f_1,\dots,f_r$, which is an $r \times r$ matrix defined by
$W(f_1,\dots,f_r)_{ij}=(\s^{i-1} f_j)$ for $i,j=1,\dots,r$ and
\be
\label{heine.bjsols}
  f^{(q^{-1}b_{j})}(t,a,b,q)
  =
  B_{j}(a,b,q)\frac{\theta(q^{-1}b_{j}t;q)}{\theta(t;q)}
  {}_{r}\phi_{r-1}(qa/b_{j};qb/b_j;q,t)
\ee
for $j=0,\dots,r-1$ with $B_{0}=1$ and, for $j>0$,
\be
\label{Bjabq}
B_{j}(a,b,q) =
\left(\prod_{i=1}^{r}\frac{(a_{i};q)_{\infty}(qb_{i-1}/b_{j};q)_{\infty}}{
    (qa_{i}/b_{j};q)_{\infty}(b_{i-1};q)_{\infty}}\right)
\frac{(q;q)_{\infty}^{3}}{\theta(q^{-1}b_{j};q)} \,,
\ee
and 
\be
\label{heine.ajsols}
g^{(a_{j}^{-1})}(t,a,b,q) =
A_{j}(a,b,q)\frac{\theta(q^{-1}a_{j}t;q)}{\theta(q^{-1}t;q)}
{}_{r}\phi_{r-1}(qa_{i}/b;qa_{i}/a;q,qb_{1}\dots b_{r-1}a_{1}^{-1}
\dots a_{r}^{-1}t^{-1})
\ee
for $j=1,\dots,r$ with
\be
A_{j}(a,b,q) =
\frac{(q;q)_{\infty}^2}{\theta(q^{-1}a_{j};q)
  \prod_{i=1,i\neq j}^{r}(a_{i}/a_{j};q)_{\infty}}
\prod_{i=1}^{r}
\frac{(a_{i};q)_{\infty}(b_{i-1}/a_{j};q)_{\infty}}{(b_{i-1};q)_{\infty}}\,.
\ee
The calculation of the monodromy is a classical result and follows from
~\cite[Eq.(4.5.2)]{Gasper}. The monodromy
$\vM(t,a,b,q) = V(t,a,b,q)^{-1}U(t,a,b,q)$ is given explicitly by
\be
\label{Mheine}
\vM(t,a,b,q) =
\begin{pmatrix}
  1 & \frac{(q;q)_{\infty}^{3}\theta(b_{1}t;q)\theta(a_{1}/b_{1}t;q)
  \theta(a_{1};q)}{\theta(t;q)\theta(a_{1}t;q)\theta(b_{1};q)\theta(a_{1}/b_{1};q)}
  & \dots & \frac{(q;q)_{\infty}^{3}\theta(b_{r-1}t;q)\theta(a_{1}/b_{r-1}t;q)
  \theta(a_{1};q)}{\theta(t;q)\theta(a_{1}t;q)\theta(b_{r-1};q)\theta(a_{1}/b_{r-1};q)}
  \\
1 & \frac{(q;q)_{\infty}^{3}\theta(b_{1}t;q)\theta(a_{2}/b_{1}t;q)
  \theta(a_{2};q)}{\theta(t;q)\theta(a_{2}t;q)\theta(b_{1};q)\theta(a_{2}/b_{1};q)}
& \dots & \frac{(q;q)_{\infty}^{3}\theta(b_{r-1}t;q)\theta(a_{2}/b_{r-1}t;q)
  \theta(a_{2};q)}{\theta(t;q)\theta(a_{2}t;q)\theta(b_{r-1};q)\theta(a_{2}/b_{r-1};q)}
\\
: & : & : & \dots
\\
1 & \frac{(q;q)_{\infty}^{3}\theta(b_{1}t;q)\theta(a_{r}/b_{1}t;q)
  \theta(a_{r};q)}{\theta(t;q)\theta(a_{r}t;q)\theta(b_{1};q)\theta(a_{r}/b_{1};q)}
& \dots & \frac{(q;q)_{\infty}^{3}\theta(b_{r-1}t;q)\theta(a_{r}/b_{r-1}t;q)
  \theta(a_{r};q)}{\theta(t;q)\theta(a_{r}t;q)\theta(b_{r-1};q)\theta(a_{r}/b_{r-1};q)}
\end{pmatrix} \,.
\ee
Then from the modularity of the Dedekind $\eta$-function and the Jacobi
$\theta$-function the matrix $\vM$ satisfies the modular transformation~\eqref{Mmod}
with weights $\kappa_{U}=(0,1,\dots,1)$ and $\kappa_{V}=(0,\dots,0)$.

When $a,b$ are specialised at points where there are singularities, one can take
coefficients of the expansion around these points to define $U,V$.

Theorem~\ref{thm.heine} states that the generalised $q$-hypergeometric equation in
modular. The theorem will also hold when we specialise the values of $a,b$. This is
because the integral representations of the cocycle will be regular at these
specialisations and the factorisation of the integral will similarly involve the
expansion around these points as it will involve computation of residues of higher
order.

\begin{theorem}
\label{thm.heine}
The two cocycles~\eqref{Om} $\Om_{U}$ and $\Om_{V}$ are equal and modular.
\end{theorem}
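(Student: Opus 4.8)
The plan is to separate the assertion into the equality $\Om_U=\Om_V$ and the modular extension, reducing both to the generators $S=\sma 0{-1}10$ and $T=\sma 1101$. For the equality, I would first invoke Theorem~\ref{thm.ST}(a): since the equivalence of~\eqref{Mmod} and~\eqref{OmUV} holds for each fixed $\gamma$, it suffices to verify the modular transformation law~\eqref{Mmod} of the monodromy matrix $\vM$ of~\eqref{Mheine} for $\gamma=S$ and $\gamma=T$. Writing $t=\e(z)$, $q=\e(\tau)$, every entry of $\vM$ is a ratio of Jacobi theta functions $\theta(\,\cdot\,;q)$ times a power of $(q;q)_{\infty}$. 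Under $T\colon\tau\mapsto\tau+1$ the theta multipliers cancel in each quotient, giving~\eqref{Mmod} for $\gamma=T$ and in fact $\Om_T=I$. Under $S\colon(z,\tau)\mapsto(z/\tau,-1/\tau)$ each $\theta(\,\cdot\,;q)$ obeys the Jacobi inversion formula and each $(q;q)_{\infty}=\e(-\tau/24)\,\eta(\tau)$ obeys $\eta(-1/\tau)=\sqrt{-i\tau}\,\eta(\tau)$; counting the resulting $\sqrt{-i\tau}$-factors — the numerator carries $(q;q)_{\infty}^{3}$ and three theta factors against four theta factors in the denominator — produces exactly one net power of $\tau$, matching the weights $\kappa_U=(0,1,\dots,1)$, $\kappa_V=(0,\dots,0)$, while the Gaussian exponents reproduce the Gaussian factor built into the slash operator $|_{\kappa_U}S$. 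This establishes~\eqref{Mmod} for $\gamma=S$, hence~\eqref{OmUV}: $\Om_U=\Om_V=:\Om$.

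It then remains to prove that $\Om$ is modular. Since $\Om_T=I$, Theorem~\ref{thm.ST}(c) reduces this to showing that $\Om_S$, a priori holomorphic on $\BC\times(\BC\setminus\BR)$, extends to a meromorphic function on $\BC\times\BC'$ with poles confined to a finite union of translates of $\BZ+\tau\BZ$. By definition $\Om_S=(U|_{\kappa_U}S)\,U^{-1}$, and, expanding the Wronskian $U$ and its inverse, each entry is a bilinear combination of the $q$-hypergeometric solutions~\eqref{heine.bjsols} in the variable $q$ with their $\tq$-counterparts arising from the slash action, multiplied by theta and $(q;q)_{\infty}$ prefactors — exactly the shape of the ${}_2\phi_1$ bilinear displayed in the introduction.

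The key step is to realise each such bilinear combination as a single contour integral whose integrand is a ratio of Faddeev quantum dilogarithms $\fadb$, with $\bb^2=\tau$ (so that $\Re(\bb)>0$ corresponds precisely to $\tau\in\BC'$). Concretely, I would start from the Mellin--Barnes representation of ${}_r\phi_{r-1}$ and fuse the $q$- and $\tq$-factors by means of the product formula expressing $\fadb$ as a ratio of the two infinite Pochhammer products $(\,\cdot\,;q)_{\infty}$ and $(\,\cdot\,;\tq)_{\infty}$; the theta prefactors likewise emerge from the quasi-periodicity and inversion relations of $\fadb$. Because $\fadb(x)$ is meromorphic in $x$ and holomorphic in $\bb$ for $\Re(\bb)>0$, the resulting integral is manifestly meromorphic on $\BC\times\BC'$, and its poles — inherited from the poles of the $\fadb$-factors in the integrand — project to a finite set of translates of $\BZ+\tau\BZ$ in the $z$-variable, as required by Definition~\ref{def.qmm}.

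Finally, I would verify that the integral agrees with $\Om_S$ on $\BC\setminus\BR$ by closing the contour and summing residues separately on the two half-planes: on $\Im(\tau)>0$ the residues of the $\fadb$-factors reproduce the ${}_r\phi_{r-1}(\,\cdot\,;q)$ series together with the $(q;q)_{\infty}$ and theta prefactors of $U$, while on $\Im(\tau)<0$ they reproduce the $\tq$-series and prefactors of $U|_{\kappa_U}S$. The hard part will be precisely this residue bookkeeping: one must choose the $\fadb$-factors and the contour so that the two residue expansions match the two halves of the bilinear combination, and one must control the interchange of summation with the expansion of $U^{-1}$ as a ratio of Wronskians. The degenerate specialisations of $a,b$, where factors such as $(a_i/a_j;q)_{\infty}$ collapse, are handled as indicated after~\eqref{Mheine}: the integrand remains regular and the factorisation merely picks up higher-order residues (derivatives in the spectral parameter), without affecting meromorphy on $\BC\times\BC'$.
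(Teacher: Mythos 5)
Your treatment of the equality $\Om_U=\Om_V$ is exactly the paper's: the explicit monodromy~\eqref{Mheine}, the $\eta$- and $\theta$-inversion formulas, and the weight count $\kappa_U=(0,1,\dots,1)$, $\kappa_V=(0,\dots,0)$ give~\eqref{Mmod} for $S$ (and trivially for $T$), hence~\eqref{OmUV}. Your strategy for the extension — represent the cocycle by a contour integral of ratios of Faddeev quantum dilogarithms, manifestly meromorphic for $\tau\in\BC'$, and recover the bilinear combination by residues — is also the paper's strategy (the state integral~\eqref{heine.stateint}).

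However, there is a genuine gap in the second half. When you shift the $x$-contour of such an integral and sum residues, each residue is \emph{simultaneously} a product of a $q$-series and a $\tq^{-1}$-series (because $\fadb$ is a ratio of a $(\,\cdot\,;q)_\infty$-product and a $(\,\cdot\,;\tq)_\infty$-product); the factorisation is not obtained by treating the two half-planes of $\tau$ separately, and what comes out is a matrix of the form $U(t,a,b,q)\,D\,U(\tilde t,\tilde a,\tilde b,\tq^{-1})^{T}$ — a product of two fundamental matrices, one for $M$ and one for $M^{\wedge}$. The cocycle, by contrast, is $(U|_{\kappa_U}S)\,U^{-1}=U(\tilde t,\tilde a,\tilde b,\tq)\,\diag(\tau^{-\kappa_U})\,U(t,a,b,q)^{-1}$, which involves the \emph{inverse} of the fundamental matrix, i.e.\ the dual $M^{\vee}$ (note the inverted parameters $\tilde a^{-1},\tilde b^{-1},\tilde c^{-1}$ in the ${}_2\phi_1$ bilinear of the introduction). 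Bridging these two expressions requires the explicit self-duality $M^{\vee}\cong M^{\wedge}$, namely $U(t,a,b,q)^{-T}=Q(t,a,b,q)\,U(t,a,b,q^{-1})\,\diag(1,-1,\dots,-1)$ with $Q$ an elementary gauge matrix (Proposition~\ref{prop.heine.selfdual}, equivalent to a theorem of Beukers--Jouhet); this is the technical heart of the paper's proof and occupies most of Section~\ref{sec.heine}. Your proposal never addresses how $U^{-1}$ is to be expressed in terms of series solutions, so the "residue bookkeeping" you flag as the hard part cannot actually close without supplying this duality statement (for $r=2$ it degenerates to Heine's transformation~\cite[Eqn.(1.4.3)]{Gasper}, but for general $r$ it is a nontrivial identity that must be proved).
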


The proof of the above theorem is given in Section~\ref{sec.heine} and uses an
integral representation of the solutions of Equation~\eqref{gener.qhyper} in
terms of a special function, the Faddeev quantum dilogarithm~\cite{Faddeev,FK-QDL},
the factorisation of the corresponding integrals (so-called state integrals)
as a bilinear combination functions of $z,\tau$ and $(z/\tau,-1/\tau)$ along the
lines of~\cite{GK:qseries}, combined with an explicit description of the
self-duality of the corresponding $q$-holonomic module. What is more, the $U$ and the
$V$ cocycles are obtained by moving the contour of integration of the state integral
upwards or downwards, respectively, and this is one explanation of their
equality. We also remark that the above theorem is valid for $r=1$,
where the corresponding cocycle is none other than a ratio of two Faddeev
quantum dilogarithms.

\subsection{Modular $q$-holonomic modules in Chern--Simons theory}

In this section we present three $q$-holonomic modules whose cocycles play a key
role in complex Chern--Simons theory and prove their modularity. We will not need
a detailed knowledge of Chern--Simons theory, but focus on the fact that it is a
gauge theory with gauge group a complex Lie group, whose partition function
can be identified by a finite dimensional integrals of products of the Faddeev
quantum dilogarithm. A detailed exposition of Chern--Simons theory is given in the
work of Andersen--Kashaev and Beem, Dimofte and Pasquetti~\cite{AK,Dimofte:complex,
  Beem,Pasquetti}. The quasi-periodicity of the Faddeev quantum dilogarithm implies
that the partition function of complex Chern--Simons theory satisfies a
$q$-holonomic module of linear $q$-difference equations.

In the modular $q$-holonomic modules discussed below, we prove their modularity
\emph{and} at the same time define and compute fundamental matrices $U$ and $V$
of solutions algorithmically, and give explicit formula for their monodromy.
This is possible because of two key features, namely:
\begin{itemize}
\item[(a)]
  The fundamental matrices $U$ and $V$ are meromorphic functions with explicit
  poles and with residues expressed linearly in terms of $U$ and $V$.
  This is some kind of resurgence property and it is ultimately responsible for
  determining the monodromy.
\item[(b)]
  The monodromy is uniquely determined by its elliptic property, the explicit
  poles and principle parts, and its limiting values at $t=0$ or $t=\infty$. 
\end{itemize}

Our first module, the linear $q$-difference equation for the infinite
Pochhammer symbol
\be
\label{ex1}
(1-qt)f(qt,q)-f(t,q)=0 \,,
\ee
has trivial monodromy but its cocycle is a very interesting function, the Faddeev
quantum dilogarithm function which is the building block of the partition function
of complex Chern--Simons theory. With the definition of $f^{(0)}$ and $g^{(1)}$ given
in Equations~\eqref{ex1.f0} and~\eqref{ex1.g1} below, we define the fundamental
matrices $U(t,q)$ and $V(t,q)$ of solutions at $t=0$ and $t=\infty$ by 
\be
\label{UVex1}
U(t,q) = W(f^{(0)}(t,q)), \qquad
V(t,q) = W(g^{(1)}(t,q)) \,,
\ee
with equal weights $\kappa = 0$. In our theorems below,
we will use the terms ``a $q$-holonomic module is modular'' and ``its cocycle
in modular'' interchangeably. 

\begin{theorem}
\label{thm.ex1}
The monodromy matrix is $\vM(t,q)=(1)$. The two cocycles~\eqref{Om} $\Om_U$
and $\Om_V$ are equal, modular and when $\gamma=S$, they are given explicitly by
\be
\Om_{S}(z,\tau) = 
\Phi_{\sfb}\left(\frac{iz}{\sfb}
  +\frac{i\sfb}{2}+\frac{1}{2i\sfb}\right)^{-1}
\ee
where $\Phi$ is the Faddeev quantum dilogarithm function~\cite{Faddeev, FK-QDL}.
\end{theorem}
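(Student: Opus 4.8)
The plan is to exhibit the two one-dimensional fundamental solutions explicitly, compute the scalar monodromy by a theta identity, and then recognise the resulting $S$-cocycle as a ratio of two Pochhammer symbols, which is the reciprocal of the Faddeev quantum dilogarithm once conventions are matched. Throughout I write $q=\e(\tau)$, $t=\e(z)$ and work first in the region $\Im(\tau)>0$, where $|q|<1$ and all products converge.

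\emph{Solutions and monodromy.} Since~\eqref{ex1} is first order, a fundamental matrix is a single nonzero solution. From the factorisation $(qt;q)_\infty=(1-qt)(q^2t;q)_\infty$ one checks that $f^{(0)}(t,q)=(qt;q)_\infty$ of~\eqref{ex1.f0} solves $(1-qt)f(qt,q)=f(t,q)$ and is analytic and nonvanishing at $t=0$ with $f^{(0)}(0,q)=1$. To analyse the solution $g^{(1)}$ of~\eqref{ex1.g1} at $t=\infty$, I would use the Jacobi triple product $\theta(t;q)=(q;q)_\infty(t;q)_\infty(q/t;q)_\infty$ together with the quasi-periodicity $\theta(qt;q)=-t^{-1}\theta(t;q)$ to write $g^{(1)}$ as a theta quotient times $(qt;q)_\infty$ and to verify that the elliptic factor relating $g^{(1)}$ to $f^{(0)}$ is identically $1$. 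As $\vM=V^{-1}U=f^{(0)}/g^{(1)}$ is exactly this elliptic factor, this proves $\vM(t,q)=(1)$. The triviality is structural: $(qt;q)_\infty$ is entire and single valued in $t$, so one and the same function is the canonical solution at both $t=0$ and $t=\infty$.

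\emph{Reduction to $\Om_S$.} With $\vM=(1)$ we have $U=V$, so the two cocycles of~\eqref{Om} coincide, $\Om_U=\Om_V$; this is also forced by~\eqref{OmUV}, since the trivial monodromy satisfies the modular transformation~\eqref{Mmod}. Because the weight vanishes and $T$ acts by $(z,\tau)\mapsto(z,\tau+1)$ while $\e(\tau+1)=q$, we have $\Om_T=I$. Theorem~\ref{thm.ST}(c) then reduces modularity to the single matrix $\Om_S$: it suffices to continue $\Om_S$ meromorphically to $(z,\tau)\in\BC\times\BC'$. Since $\kappa=0$, the slash action of $S=\sma{0}{-1}{1}{0}$ is plain substitution, so with $\tq=\e(-1/\tau)$ and $\tilde t=\e(z/\tau)$,
\[
\Om_S(z,\tau)=U\!\left(\tfrac{z}{\tau},-\tfrac{1}{\tau}\right)U(z,\tau)^{-1}=\frac{(\tq\,\tilde t;\tq)_\infty}{(qt;q)_\infty},\qquad \tau\in\BC\setminus\BR .
\]

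\emph{Identification and modularity.} Setting $\tau=\sfb^2$ and comparing with the product formula
\[
\fadb(x)=\frac{(\exp(2\pi\sfb(x+c_\sfb));q)_\infty}{(\exp(2\pi\sfb^{-1}(x-c_\sfb));\tq)_\infty},\qquad c_\sfb=\tfrac{i}{2}(\sfb+\sfb^{-1}),
\]
a direct computation of exponents shows that the $q$-Pochhammer argument $\exp(2\pi\sfb(x+c_\sfb))$ equals $qt$ and the $\tq$-Pochhammer argument $\exp(2\pi\sfb^{-1}(x-c_\sfb))$ equals $\tq\,\tilde t$ precisely when $x=\frac{iz}{\sfb}+\frac{i\sfb}{2}+\frac{1}{2i\sfb}$, whence
\[
\Om_S(z,\tau)=\fadb\!\left(\frac{iz}{\sfb}+\frac{i\sfb}{2}+\frac{1}{2i\sfb}\right)^{-1}.
\]
Modularity now follows from the analytic theory of the quantum dilogarithm: although the ratio of Pochhammer symbols converges only for $\tau\in\BC\setminus\BR$, the function $\fadb$ is meromorphic in its argument for every $\tau=\sfb^2\in\BC'$, with poles and zeros on the two half-lattices $\pm(c_\sfb+i\sfb\BZ_{\geq0}+i\sfb^{-1}\BZ_{\geq0})$; in the $z$-variable these lie in a single coset of $\BZ+\tau\BZ$, so the poles of $\Om_S$ sit in $S_S+\BZ+\tau\BZ$ for a finite set $S_S$, as Definition~\ref{def.qmm} requires, and Theorem~\ref{thm.ST}(c) propagates the extension to all $\gamma\in\SL_2(\BZ)$. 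I expect the only genuine difficulty to be the passage across the real $\tau$-axis: each Pochhammer factor degenerates as $|q|\to1$, and the whole content of modularity is that their quotient is the restriction of the globally meromorphic $\fadb$. Once the identification above is in place this is exactly the classical meromorphic continuation of the Faddeev quantum dilogarithm, and the remaining work is only the bookkeeping of the $2\pi$- and $c_\sfb$-conventions so that the arguments match on the nose.
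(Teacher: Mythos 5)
Your proposal is correct and follows essentially the same route as the paper: explicit solutions $f^{(0)}=(qt;q)_\infty$ and $g^{(1)}=\theta(t;q)/((t^{-1};q)_\infty(q;q)_\infty)$, trivial monodromy via the Jacobi triple product, the $S$-cocycle as $(\tq\tilde t;\tq)_\infty/(qt;q)_\infty$ identified with $\Phi_{\sfb}^{-1}$, and modularity from the meromorphic continuation of the Faddeev quantum dilogarithm to $\tau\in\BC'$ combined with $\Om_T=I$ and Theorem~\ref{thm.ST}(c). Your exponent bookkeeping for the argument $x=\frac{iz}{\sfb}+\frac{i\sfb}{2}+\frac{1}{2i\sfb}$ checks out against the paper's conventions.
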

  
In forthcoming work~\cite{GKZ:cocycle}, the cocycle $\Om_\gamma$ will be
identified with an $\SL_2(\BZ)$-extension of the Faddeev quantum dilogarithm,
and its modularity will be deduced independently, using properties of the
odd Eisenstein series. 

Our second module is the $q$-difference equation of the Appell-Lerch sums
\be
\label{ex2}
f(q^2t,q)+(qt-1)f(qt,q)-tf(t,q)=0 \,.
\ee
The cocycle of~\eqref{ex2} is the Appell-Lerch sum, which is the building block
for the extension of the partition function of complex Chern--Simons theory
that detects the trivial flat connection, see~\cite{GGMW:trivial}.
In the above equation, some formal power series solutions are divergent, and their 
$q$-Laplace resummation is expressed in terms of an Appell-Lerch sum that depends
on an additional elliptic variable. The monodromy of this equation is an explicit
product of theta functions that depends on two elliptic variables, but the cocycle
is independent of them, and is expressed explicitly in terms of the Mordell
integral. Our results give a new interpretation of the Mordell integral and
Appell-Lerch sums emphasising the role of the linear $q$-difference equations
as opposed to the aspects of modular forms advocated by Zwegers in his
thesis~\cite{Zwegersthesis} and by Dabholkar--Murthy--Zagier~\cite{DMZ:blackholes}
in the study of mock modular forms and their incarnations in the mathematical
physics of black holes.

With the definition of $f^{(-1)}$, $f^{(0)}$, $g^{(0)}$ and $g^{(1)}$ given in
Equations~\eqref{ex2.f-1}, \eqref{ex2.f0}, \eqref{ex2.g0} and~\eqref{ex2.g1}
below, we define the fundamental matrices $U(t,\lambda,q)$ and $V(t,\mu,q)$
of solutions at $t=0$ and $t=\infty$ by 
\be
\label{UVex2}
U(t,\lambda,q) = W(f^{(0)}(t,\lambda,q), f^{(-1)}(t,q)), \qquad
V(t,\mu,q) = W(g^{(0)}(t,\mu,q), g^{(-1)}(t,q)) \,,
\ee
with equal weights $\kappa=(0,1)$.

\begin{theorem}
\label{thm.ex2}
{\rm (a)}
The monodromy matrix $\vM(t,\lambda,\mu,q)=V(t,\mu,q)^{-1}U(t,\lambda,q)$ is
given by
\be
\label{Mex2b}
\vM(t,\lambda,\mu,q) =
\begin{pmatrix}
1 & 0\\
* & 1\\
\end{pmatrix}
\ee
where 
\be
\label{M12ex2}
\vM_{2,1}(t,\lambda,\mu,q) =
-\frac{(q;q)_{\infty}^{3}\theta(q^{-1}t;q)\theta(\lambda ^{-1}\mu ;q)
  \theta(\lambda ^{-1}\mu ^{-1}t^{-1};q)}{\theta(\lambda ^{-1};q)\theta(\mu ;q)
  \theta(\lambda ^{-1}t^{-1};q)\theta(\mu ^{-1}t^{-1};q)} \,.
\ee
The matrix $\vM$ satisfies the modular transformation~\eqref{Mmod}.
\newline
{\rm (b)}
The two cocycles~\eqref{Om} $\Om_U$ and $\Om_V$ are equal, modular, 
independent of $\lambda$ and $\mu$ and when $\gamma=S$, are given explicitly
by
\be
\Om_{S}(z,\tau)
=
\begin{pmatrix}
0 & 1\\
1 & -\tilde{t}
\end{pmatrix}
\begin{pmatrix}
\tau^{-1} & 0\\
\frac{\e\left(\frac{1}{8}\right)}{\sqrt{\tau}}\tilde{t}^{-\frac{1}{2}}
\tq^{\frac{1}{8}}\e\left(\frac{z^2}{2\tau}\right)h(z,\tau) &
\frac{\e\left(\frac{1}{8}\right)}{\sqrt{\tau}}\e
\left(-\frac{(z+1/2-\tau/2)^{2}}{2\tau}\right)
\end{pmatrix}
\begin{pmatrix}
0 & 1\\
1 & -t
\end{pmatrix}^{-1}
\ee
where
\be
\label{mordell}
h(z,\tau)=\int_{\BR}\frac{e^{\pi i (\tau x^{2} + 2 i z x)}}{2\cosh(\pi x)}dx.
\ee
is the Mordell integral. 
\end{theorem}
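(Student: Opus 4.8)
The plan is to handle the two parts in sequence: extract the monodromy explicitly in part (a), and then feed it into the cocycle machinery of Theorem~\ref{thm.ST} for part (b). For part (a) I would first write out the four solutions $f^{(0)},f^{(-1)},g^{(0)},g^{(-1)}$ from their definitions \eqref{ex2.f-1}--\eqref{ex2.g1}, form the Wronskian matrices $U=W(f^{(0)},f^{(-1)})$ and $V=W(g^{(0)},g^{(-1)})$, and study $\vM=V^{-1}U$. Since $\vM$ is $\s$-invariant, each of its entries is an elliptic function of $z$ (with $t=\e(z)$), hence is pinned down by its divisor together with a single normalisation. The claimed shape $\vM=\sma{1}{0}{*}{1}$ asserts that $f^{(-1)}=g^{(-1)}$ (the regular ``trivial connection'' solution is global) and that $f^{(0)}=g^{(0)}+\vM_{2,1}\,g^{(-1)}$; the first I would verify directly by recognising $f^{(-1)}$ and $g^{(-1)}$ as the same globally meromorphic function, and $\vM_{1,1}=1$ by matching leading coefficients as $t\to 0$. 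The entry $\vM_{2,1}$ is then determined as the unique elliptic function with the poles and zeros dictated by the theta factors of the solutions, normalised by one limiting value, and comparison with \eqref{M12ex2} completes the computation.

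To finish part (a) I would verify that $\vM$ satisfies the modular transformation \eqref{Mmod}. Because every entry is an explicit ratio of Jacobi theta functions and powers of $(q;q)_\infty$, this follows from the transformation laws of the Jacobi $\theta$-function and the Dedekind $\eta$-function under $S$ and $T$, exactly as in the generalised $q$-hypergeometric case; by Theorem~\ref{thm.ST}(a) it suffices to check \eqref{Mmod} for $\gamma=S$ and $\gamma=T$. The equivalence of \eqref{Mmod} and \eqref{OmUV} then yields $\Om_U=\Om_V$, the first assertion of part (b). The independence of the common cocycle from $\lambda$ and $\mu$ is then immediate: $\Om_U$ is built from $U$ and hence depends only on $\lambda$, while $\Om_V$ depends only on $\mu$, so their equality forces the cocycle to depend on neither. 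One also checks $\Om_T=I$ (the weights being integral, the $T$-slash is trivial), which is the hypothesis needed to invoke Theorem~\ref{thm.ST}(c).

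The heart of the theorem is the explicit formula for $\Om_S$ in part (b), for which I would follow the state-integral strategy of Theorem~\ref{thm.heine}: represent the solutions by a contour integral of a ratio of Faddeev quantum dilogarithms $\fadb$, and read the cocycle off the factorisation of this integral into a product over the two sides $(z,\tau)$ and $(z/\tau,-1/\tau)$. The outer matrices $\sma{0}{1}{1}{-\tilde{t}}$ and $\sma{0}{1}{1}{-t}^{-1}$ in the stated formula are the change of basis from the companion frame to the solution frame; the two diagonal entries of the middle matrix are the individual cocycles of $f^{(0)}$ and of the resummed $f^{(-1)}$, producing the weight factor $\tau^{-1}$ and the Gaussian $\tq^{1/8}$-type term respectively. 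The off-diagonal Mordell-integral entry \eqref{mordell} arises as the defect measuring the difference between the $q$-Laplace resummations of the divergent solution along rays on the two sides of $\BR$, with the kernel $e^{\pi i\tau x^2}$ and the $1/\cosh$ factor being precisely the image of the $\fadb$-integrand in the semiclassical factorisation.

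The main obstacle will be the analytic continuation underlying modularity. The Mordell integral \eqref{mordell} converges and is holomorphic only for $\Im(\tau)>0$, whereas Definition~\ref{def.qmm} requires $\Om_S$ to extend meromorphically to $(z,\tau)\in\BC\times\BC'$. I would establish this by rotating the contour in \eqref{mordell}: for $\tau\in\BC'$ one selects a ray along which $\Re(\pi i\tau x^2)\to-\infty$, keeping the integral convergent while steering past the poles of $1/\cosh(\pi x)$ on the imaginary axis, the residues crossed in the rotation being expressed linearly in the solutions themselves (the resurgence property). The delicate points are to check that this continuation is single-valued on $\BC'$, that it agrees with the algebraically defined cocycle on $\Im(\tau)>0$, and that the sole obstruction to crossing is the ray $(-\infty,0]$ — so that $\Om_{-I}$ as in \eqref{Om-I} is genuinely defined only on $\BC\setminus\BR$. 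By Theorem~\ref{thm.ST}(c) it is enough to carry this out for $\gamma=S$.
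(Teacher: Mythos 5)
Your overall architecture matches the paper's: compute the monodromy explicitly, deduce its modular transformation from the $\theta$- and $\eta$-transformation laws, conclude $\Om_U=\Om_V$ and hence independence of $\lambda$ and $\mu$, exhibit $\Om_S$ explicitly, and extend it to $\BC'$ by rotating the contour of the Mordell integral before invoking Theorem~\ref{thm.ST}(c). Two sub-steps take a different route. For $\vM_{2,1}$ you characterise it as the unique elliptic function with prescribed poles and one normalisation; this is essentially the second of the paper's three proofs (it also obtains \eqref{M12ex2} by citing Zwegers' Proposition~1.4, and a third time from Lemma~\ref{lem.fund}, which is the most mechanical route: since $f^{(0)}$ and $g^{(0)}$ are the $\lambda$- and $\mu$-resummations of the same $q$-Borel transform $1/(1-\xi)$, their difference is given by a single residue at $\xi=1$, yielding the theta quotient directly). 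More substantively, for the explicit $\Om_S$ you propose a state-integral factorisation in the style of Theorem~\ref{thm.heine}; the paper instead quotes Zwegers' Proposition~1.5, which gives the $S$-transformation of the Appell--Lerch sum with the Mordell integral as the error term, and combines it with the determinant computation $\det U=-f^{(-1)}$ to assemble the stated matrix. Your route is viable (the Mordell integral is itself the relevant degenerate state integral), but it amounts to re-deriving Zwegers' identity, whereas the paper's is shorter because the identity is already in the literature. One small inaccuracy in your final paragraph: in the contour rotation no residues of $1/\cosh(\pi x)$ are actually crossed --- the rays $\zeta\BR$ with $|\zeta|=1$, $\zeta\neq\pm i$ never meet the poles at $x\in i(\BZ+\tfrac12)$, and the paper identifies the rotated integrals with $h$ via the uniqueness of solutions of its functional equations rather than by tracking picked-up residues.
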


Our third module is a linear $q$-difference equation associated to quantum
invariants of the simplest hyperbolic  knot $4_1$,
\be
\label{41x1}
tqf(qt;q) +(1-2t)f(t;q)+ tq^{-1}f(q^{-1}t;q) = 0 \,.
\ee
This equation, which is also related to the $q$-Hahn Bessel function, appeared
in homogeneous form in~\cite[Eqn.(11)]{GGM}. 
With the definition of $f^{(-1)}$, $f^{(1)}$ and $g^{(0,0)}$ and $g^{(0,1)}$ given in
Equations~\eqref{ex41.f-1}, \eqref{ex41.f1} and \eqref{ex41.g00}
below, we define the fundamental matrices $U(t,\lambda,q)$ and $V(t,q)$
of solutions at $t=0$ and $t=\infty$ by 
\be
\label{UVex41}
U(t,\lambda,q) = W(f^{(1)}(t,\lambda,q), f^{(-1)}(t,q)), \qquad
V(t,q) = W(g^{(0,0)}(t,q), g^{(0,1)}(t,q)) \,,
\ee
and equal weights $\kappa=(1,0)$.
\begin{theorem}
  \label{thm.41}
{\rm (a)}
The monodromy matrix $\vM(t,\lambda,q)=V^{-1}(t,q)U(t,\lambda,q)$ is given by
\be
\label{M41b}
\vM(t,\lambda,q) =
\begin{pmatrix}
- 1 & 0\\
\vM_{2,1}(t,\lambda,q) & 1\\
\end{pmatrix}
\ee
where 
\be
\label{M412}
\vM_{2,1}(t,\lambda,q) =
\frac{\theta(qt;q)\theta(t\lambda;q)\theta(t\lambda q^{-1/2};q)
\theta(t\lambda^{2}q^{-1/2};q)}{\theta(t\lambda q^{1/4};q)
\theta(-t\lambda q^{1/4};q)\theta(t\lambda q^{-1/4};q)
\theta(-t\lambda q^{-1/4};q)\theta(q^{-1}\lambda;q)\theta(q^{-3/2}\lambda;q)} \,.
\ee
The matrix $M$ satisfies the modular transformations
\be
\label{M41c}
\Delta_{T^{2},\kappa}\vM|_{\kappa}T^2=\vM,\quad
\Delta_{STS,\kappa}\vM|_{\kappa}STS=\vM,\quad
\Delta_{TST,\kappa}\vM|_{\kappa}TST=\vM\,.
\ee
It follows that the $\SL_2(\BZ)$-orbit of $M$ consists
of three functions $M,\Delta_{T,\kappa}\vM|_{\kappa}T,\Delta_{S,\kappa}\vM|_{\kappa}S$.
\newline
{\rm (b)}
The two cocycles~\eqref{Om} $\Om_{\Av(U)}$ and $\Om_{V}$, defined by the averaging 
\be
\Av(U)=V\Av(\vM), \qquad \Av(\vM)=
\frac{1}{3}\left(\vM+\Delta_{T,\kappa}\vM|_{\kappa}T
  +\Delta_{S,\kappa}\vM|_{\kappa}S\right) \,,
\ee
are equal, modular, independent of $\lambda$ and when $\gamma=S$, they
are given by elementary functions times the state integrals
\be
\int_{\BR+i\ve}
\Phi_{\sfb}(x)^2
\exp\left(-\pi i x^{2}-2\pi\sfb^{-1}zx\right)dx.
\ee
\end{theorem}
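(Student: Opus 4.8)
The plan is to prove parts (a) and (b) in turn, reducing the modularity statement at the very end to the analyticity of a single state integral.

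For part (a) I would first verify by direct substitution into the recursion~\eqref{41x1} that the four functions of Equations~\eqref{ex41.f1}, \eqref{ex41.f-1} and~\eqref{ex41.g00} are solutions; since~\eqref{41x1} is second order, the solution space over the field of $\s$-invariant (elliptic) functions is two-dimensional, so it suffices to check each $\s$-shift relation and that the two solutions at each singular point are independent (their Wronskian $W$ in~\eqref{UVex41} being invertible). The monodromy $\vM=V^{-1}U$ is then elliptic in $t$, and rather than derive its entries from connection formulas I would pin them down by the interpolation principle emphasised before the theorem: each entry is an elliptic function of $t$ whose poles and principal parts are forced by the explicit poles and residues of $U,V$ (the resurgence property), and whose normalisation is fixed by the limits at $t=0$ and $t=\infty$. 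An elliptic function with prescribed zeros, poles and one normalisation is a ratio of theta functions, which reproduces the quotient~\eqref{M412}; the entries $-1$, $0$ and $1$ of~\eqref{M41b} then follow from the leading behaviour of $f^{(1)}$ and $g^{(0,0)}$.

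Next I would establish the three modular relations~\eqref{M41c}. Because every entry of $\vM$ is assembled from Jacobi theta functions and the Dedekind $\eta$-function, the behaviour of $\vM$ under the slash action of $\gamma$ is governed entirely by the classical modular transformations of $\theta$ and $\eta$; substituting these into~\eqref{M412} and comparing with the definition of $\Delta_{\cdot,\kappa}$ yields the invariance under $T^2$, $STS$ and $TST$ by direct computation. To conclude that the $\SL_2(\BZ)$-orbit has exactly three elements, I would use these three relations to show that the set $\{\vM,\Delta_{T,\kappa}\vM|_{\kappa}T,\Delta_{S,\kappa}\vM|_{\kappa}S\}$ is already closed under the slash action of the two generators $S$ and $T$ (the relation for $T^2$ brings $\vM|_\kappa T^2$ back to $\vM$, and the relations for $STS$, $TST$ together with $S^2=-I$ close the remaining cases), so it is the full orbit; equivalently the stabiliser is the index-three subgroup generated by $T^2,STS,TST$, with the three cosets represented by $I,T,S$, a finite group-theoretic check.

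For part (b) the averaging is designed precisely to repair the failure of~\eqref{Mmod} for general $\gamma$: since $\Av(\vM)$ is the sum over the three cosets, it is invariant under the slash action of all of $\SL_2(\BZ)$ by construction (the slash of $S$ and of $T$ permutes the three terms by the closure just established, using the cocycle property of $\Delta$), hence it satisfies the modular transformation~\eqref{Mmod} for every $\gamma$. Moreover $S$ and $T$ preserve the lattice $\BZ+\tau\BZ$, so each averaged term, and therefore $\Av(\vM)$, is again elliptic; consequently $\Av(U)=V\,\Av(\vM)$ is a fundamental solution of~\eqref{41x1} at $t=0$ whose monodromy relative to $V$ is the invariant matrix $\Av(\vM)$. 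Applying the equivalence of~\eqref{Mmod} and~\eqref{OmUV} for all $\gamma$ then gives $\Om_{\Av(U),\gamma}=\Om_{V,\gamma}$. The independence of $\lambda$ is now immediate: $\lambda$ enters only through the entry~\eqref{M412} of $\vM$, whereas the common cocycle equals $\Om_V$, which is built from $V=W(g^{(0,0)},g^{(0,1)})$ alone and carries no $\lambda$.

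Finally, for modularity I would invoke Theorem~\ref{thm.ST}(c), reducing the claim to showing that $\Om_S$ extends meromorphically from $\BC\times(\BC\setminus\BR)$ to $\BC\times\BC'$. Here I would use the state integral $\int_{\BR+i\ve}\Phi_{\sfb}(x)^2\exp(-\pi i x^2-2\pi\sfb^{-1}zx)\,dx$: the Faddeev quantum dilogarithm $\Phi_{\sfb}(x)$ is holomorphic in $\sfb$ on the half-plane corresponding to $\tau=\sfb^2\in\BC'$, and the Gaussian factor $e^{-\pi i x^2}$ forces absolute convergence, so the integral is manifestly meromorphic on the enlarged domain $\BC\times\BC'$, with poles in $z$ only along the expected lattice translates arising when the contour is pinched by the poles of $\Phi_{\sfb}^2$. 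It then remains to identify this integral with $\Om_S$, which I would do by the factorisation technique of~\cite{GK:qseries}: closing the contour and summing the residues of $\Phi_{\sfb}(x)^2$ expresses the integral as a bilinear combination of $q$-series in the variables $(z,\tau)$ and $(z/\tau,-1/\tau)$, matching $(\Av(U)|_{\kappa}S)\,\Av(U)^{-1}$. The main obstacle is exactly this residue computation: because $\Phi_{\sfb}$ is \emph{squared}, its poles are double, so the residues involve derivatives of $\Phi_{\sfb}$ and produce the higher-order (resurgent) principal parts anticipated in the statement; organising these derivative-residues into the correct bilinear product, and checking that they reproduce both the elementary prefactors and the entries of $\Om_S$, is the heart of the argument and the step most likely to demand care.
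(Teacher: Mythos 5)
Your proposal follows essentially the same route as the paper: the solutions come from the Frobenius method plus $q$-Borel/$q$-Laplace resummation, the monodromy entries are pinned down as elliptic functions by matching poles, residues and the limits at $t=0,\infty$ (with the caveat, which you implicitly cover via the limit normalisation, that a holomorphic solution on $\BC^\times$ is a priori a multiple of $g^{(0,0)}$ rather than zero), the relations~\eqref{M41c} follow from the $\theta$- and $\eta$-transformations, the averaging over the three cosets yields~\eqref{Mmod} for all of $\SL_2(\BZ)$ and hence $\Om_{\Av(U)}=\Om_V$, and modularity is reduced via Theorem~\ref{thm.ST}(c) to the extension of $\Om_S$, obtained from the factorisation of the state integral $\int_{\BR+i\ve}\Phi_{\sfb}(x)^2e^{-\pi ix^2-2\pi\sfb^{-1}zx}\,dx$ into bilinear combinations of $g^{(0,0)},g^{(0,1)}$ at $(z,\tau)$ and $(z/\tau,-1/\tau)$. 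You also correctly locate the technical heart in the double-pole residues of $\Phi_{\sfb}^2$ (which produce the $\theta'/\theta$ terms of $g^{(0,1)}$), exactly where the paper leans on the factorisation theorem of~\cite{GGM}.
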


The above can be phrased by saying that $M$ is modular on an index three subgroup
$\Gamma'$ of $\SL_2(\BZ)$ (conjugate to the $\theta$ subgroup and to the congruence
subgroup $\Gamma_0(2)$ of $\SL_2(\BZ)$) 
and if $\gamma \in \Gamma'$ then $\Om_{U,\gamma}=\Om_{V,\gamma}$ is independent of
$\lambda$ and modular. The corresponding state integrals are defined in forthcoming
work~\cite{GKZ:cocycle}. However, the modularity of the module follows from
Theorem~\ref{thm.ST}. 

The equation~\eqref{41x1} has two important extensions, each containing important
information about the knot. The first extension is an inhomogeneous version, which
for the $4_1$ knot, takes the form (see~\cite[Eqn.(98)]{GZ:kashaev}
and~\cite[Sec.2.2]{GGMW:trivial})
\be
\label{41x1inhom}
tqf(qt;q) +(1-2t)f(t;q)+ tq^{-1}f(q^{-1}t;q) = 1 \,.
\ee
The solution $f^{(0)}$ at $t=0$ given in Equation~\eqref{g2} below is a resummation
of the Kashaev invariant of the $4_1$ knot, whereas the solution $g^{(0,2)}$ 
given in Equation~\eqref{f0x1} below is a $q$-series that appeared recently
in~\cite{GGMW:trivial} in relation to the asymptotic expansion of the Kashaev
invariant at the trivial representation. With those two solutions, we define
the fundamental matrices
\be
\label{UVex41b}
\begin{aligned}
  U(t,\lambda_{1},\lambda_{2},q)
  &=W(f^{(1)}(t,\lambda_{1},q),f^{(-1)}(t,q),f^{(0)}(t,\lambda_{2},q)) \\
V(t,q) &= W(g^{(0,0)}(t,q), g^{(0,1)}(t,q), g^{(0,2)}(t,q)) \,,
\end{aligned}
\ee
with equal weights $\kappa=(2,1,0)$. The monodromy matrix involves the Weierstrass
elliptic function $\wp$, a well-known function discussed in detail for example
in~\cite{Akhiezer}.

\begin{theorem}
\label{thm.41b} 
{\rm (a)} The monodromy matrix $\vM(t,\lambda_{1},\lambda_{2},q)
= V(t,q)^{-1}U(t,\lambda_{1},\lambda_{2},q)$ is given by
\be
\label{M413}
\vM(t,\lambda_{1},\lambda_{2},q) =
\begin{pmatrix}
- 1 & 0 & \wp(t,q)\\
\vM_{2,1}(t,\lambda_{1},q) & 1 &
\frac{1}{2}\frac{\wp'(t,q)-\wp'(\lambda_{2},q)}{\wp(t,q)-\wp(\lambda_{2},q)}\\
0 & 0 & 1
\end{pmatrix}
\ee
where $\vM_{2,1}$ is given by~\eqref{M412}. The matrix $M$ satisfies the
modular transformations~\eqref{M41c} with weight $\kappa=(2,1,0)$. 
\newline
{\rm (b)}
The two cocycles~\eqref{Om} $\Om_{\Av(U)}$ and $\Om_{V}$ are equal,
modular, independent of $\lambda_{1},\lambda_{2}$ and when $\gamma=S$,
they are given by combinations of elementary functions times the state integrals
\be
\int_{\BR+i\ve}
\Phi_{\sfb}(x)^2
\frac{\exp\left(-\pi i x^{2}-2\pi\sfb^{-1}zx\right)}{1+\tq^{1/2}
  \exp\left(-2\pi\sfb^{-1}x\right)}dx\,.
\ee
\end{theorem}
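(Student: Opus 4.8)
The plan is to leverage Theorem~\ref{thm.41} and to reduce the inhomogeneous problem to a homogeneous third-order one. First I would apply $\s-1$ to both sides of~\eqref{41x1inhom}; since $\s$ fixes constants this annihilates the right-hand side and yields a homogeneous third-order linear $q$-difference equation whose $q$-holonomic module $M$ has a three-dimensional solution space. Two of the three solutions are the homogeneous solutions $f^{(1)},f^{(-1)}$ (resp.\ $g^{(0,0)},g^{(0,1)}$) of Theorem~\ref{thm.41}, and the third is a particular solution $f^{(0)}$ (resp.\ $g^{(0,2)}$) of~\eqref{41x1inhom}. Thus $M$ sits in a short exact sequence with the rank-two homogeneous module of Theorem~\ref{thm.41} as submodule and the trivial (constant) module as quotient; this structure organises the computation, although the modularity of the extension will have to be established directly rather than inherited through Lemma~\ref{lem.cat}.

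For part (a) I would compute $\vM=V(t,q)^{-1}U(t,\lambda_1,\lambda_2,q)$ block-wise. The upper-left $2\times2$ block is inherited verbatim from the monodromy of Theorem~\ref{thm.41}, and the bottom row is forced to be $(0,0,1)$ by the trivial (constant) quotient together with the $\s$-invariance of the entries. The genuinely new data live in the third column, which records the connection coefficients writing the particular solution in the fundamental basis at the opposite end. These coefficients are elliptic in $t$ with poles prescribed by the residue/resurgence structure of feature~(a); the natural even elliptic function with a double pole is the Weierstrass $\wp$, giving the $(1,3)$-entry, while the $(2,3)$-entry $\tfrac12(\wp'(t,q)-\wp'(\lambda_2,q))/(\wp(t,q)-\wp(\lambda_2,q))$ is the standard odd-over-even combination with simple poles, built from the addition law. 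I would pin down both precisely by matching principal parts at the poles against the limiting values at $t=0,\infty$, exactly as in feature~(b).

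For part (b) I would first verify the three relations~\eqref{M41c} with weight $\kappa=(2,1,0)$ directly, combining the theta-function modularity already used for the $2\times2$ block with the classical $\SL_2(\BZ)$-transformation laws of $\wp$ and $\wp'$. The weight $(2,1,0)$ is calibrated precisely so that the weight-two function $\wp$ occupies the $(1,3)$-slot (row and column weights $2$ and $0$) while the weight-one ratio built from $\wp'$ and $\wp$ occupies the $(2,3)$-slot (weights $1$ and $0$). Since $M$ is then modular only on the index-three subgroup $\Gamma'$, I would pass to the averaged matrix $\Av(\vM)$ over the three cosets and invoke Theorem~\ref{thm.ST}: checking $\Om_T=I$ reduces full modularity to the single meromorphic-continuation statement for $\Om_S$ across the cut $\BC'$, and the equality $\Om_{\Av(U)}=\Om_V$ follows from the equivalence of~\eqref{OmUV} and~\eqref{Mmod} applied to $\Av(\vM)$.

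The final and hardest step is identifying $\Om_S$ with the stated state integral, whose integrand carries the extra denominator $1+\tq^{1/2}\exp(-2\pi\sfb^{-1}x)$ relative to Theorem~\ref{thm.41}. Here I would produce an integral representation of the particular solution in terms of $\Phi_{\sfb}$ in which the geometric expansion of this denominator reproduces the inhomogeneous constant term, and then factorise the state integral as a bilinear combination in the variables $(z,\tau)$ and $(z/\tau,-1/\tau)$, as for Theorems~\ref{thm.heine} and~\ref{thm.41}. The main obstacle is the bookkeeping of poles: the new denominator contributes a fresh family of poles on top of those of $\Phi_{\sfb}(x)^2$, and I must check that these are exactly the poles producing the $\wp$-dependence found in part (a), and that displacing the contour upward versus downward yields the $U$- and $V$-cocycles respectively, thereby forcing their equality.
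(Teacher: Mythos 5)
Your proposal is correct and follows essentially the same route as the paper: homogenise~\eqref{41x1inhom} to a third-order equation, inherit the $2\times2$ block and the bottom row from Theorem~\ref{thm.41} (the paper pins down $\vM_{3,3}=1$ via the determinant $\det\vM=-1$, while you use the submodule/quotient structure — a harmless variation), determine the third column by matching poles and residues of an elliptic ansatz built from $\wp$ and $\wp'$ against limits at $t=0,\infty$, and establish modularity via the transformation laws, averaging over the index-three subgroup, Theorem~\ref{thm.ST}(c), and the factorisation of the state integral with the extra denominator (which the paper imports from~\cite{GGMW:trivial} via the auxiliary functions $L_0,L_1$).
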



The second and last extension of equation~\eqref{41x1inhom} is the addition of
an $x$-variable in $\BC^\times$, which topologically measures the holonomy of the
meridian of the knot, or the color of the colored Jones polynomial, and behaves
like a Jacobi variable. The new equations are now a two variable holonomic
system and take the form 
{\small
\begin{subequations}
\be
\label{41x.inhom}
\begin{aligned}
tqf(qt,x,q)+(1-(x^{-1}+x)t)f(t,x,q) + tq^{-1}f(q^{-1}t,x,q)
=
1 
\end{aligned}
\ee
\be
\label{41x2.inhom}
\begin{aligned}
(1-qx)(1-q^{-1}x^{2})&f(t,qx,q)\\
-(x-1)^2&(x+1)(x^2t-x-(q^{-1}+q)t-x^{-1}+x^{-2}t)f(t,x,q)\\
+&(1-qx^2)(1-q^{-1}x)f(t,q^{-1}x,q)
=
(1+x^{-1})(1-qx^2)(1-q^{-1}x^{2}) \,,
\end{aligned}
\ee
\be
\label{41x3.inhom}
(1-xq)(f(t,qx,q)-x^{-1}f(qt,qx,q))=(1-x^{-1})(f(t,x,q)-qxf(qt,x,q)) \,.
\ee
\end{subequations}
}
This is not a random system of equations, instead they are the defining equations
of the descendant colored Jones polynomial of the $4_1$ knot, and appeared
explicitly in~\cite[Eqn.(97)]{GGMW:trivial}. This is a $q$-holonomic module of
rank 3, with fundamental solutions $f^{(j)}(t,x,q)$ for $j=-1,0,1$ at $t=0$
and $g^{(0,x^{j})}(t,x,q)$ for $j=-1,0,1$ at $t=\infty$ defined
in Equations~\eqref{sols.41.x}, \eqref{f0x} and~\eqref{gGM}. 
With these solutions we can define the fundamental matrices with respect
to the shift in $t$ as
\be
\label{UVex41.x}
\begin{aligned}
  U(t,x,\lambda_{1},\lambda_{2},q)
  &=W(f^{(1)}(t,x,\lambda_{1},q),f^{(-1)}(t,x,q),f^{(0)}(t,x,\lambda_{2},q))\,, \\
V(t,x,q) &= W(g^{(0,x^{-1})}(t,x,q), g^{(0,x)}(t,x,q), g^{(0,1)}(t,x,q)) \,,
\end{aligned}
\ee
with weights $\kappa_{U}=(0,1,0)$ and $\kappa_{V}=(1,1,0)$.
The next theorem gives the properties of this monodromy.

\begin{theorem}
\label{thm.41c}
{\rm (a)} The monodromy matrix is given by
  \be
  {\small
\begin{aligned}
  \label{m3x3x}
&\vM(t,x,\lambda_{1},\lambda_{2},q)
=
\begin{pmatrix}
1 & -1 & 0\\
-1 & -1 & 0\\
0 & 0 & 1
\end{pmatrix} \times \\ 
&\begin{pmatrix}
  \frac{\theta(x^{-2};q)
    \theta(t;q)^2(q;q)_{\infty}^3}{2\theta(q^{-1}x;q)^2\theta(tx;q)\theta(tx^{-1};q)}
  & 0 & \frac{\theta(x^{-2};q)
  \theta(t;q)^2(q;q)_{\infty}^{3}}{2\theta(q^{-1}x;q)^2\theta(tx;q)\theta(tx^{-1};q)}\\
  m_{2,1}(t,x,\lambda_{1},q) & 1 &
  \left(\frac{\theta'(t\lambda_{2})}{\theta(t\lambda_{2})}
-\frac{\theta'(tx)}{2\theta(tx)}-\frac{\theta'(tx^{-1})}{2\theta(tx^{-1})}
-\frac{\theta'(\lambda_{2})}{\theta(\lambda_{2})}-\frac{1}{2}\right)\\
  0 & 0 & 1
\end{pmatrix}
\end{aligned}
  }
  \ee
  where $m_{2,1}(t,x,\lambda_{1},q)$ is the unique elliptic function in $t$
  satisfying $m_{2,1}(1,x,\lambda_{1},q)=0$ with simple poles at $t_0\in
\{x^{\pm}q^{\BZ},\pm\lambda^{-1}q^{\frac{1}{2}\BZ}\}$ and residues
$\rho_{t_0}:=\Res_{t=t_0}m_{2,1}(t,x,\lambda_{1},q)\frac{dt}{2\pi i t}$ given by
  \be
  \label{m21resx}
\begin{aligned}
  \rho_{x^{\pm}q^{\BZ}} &=
  \frac{-1}{2} \\
  \rho_{\pm\lambda_{1}^{-1}q^{-1/4-\BZ}} &=
  \frac{\theta(\pm q^{3/4}\lambda_{1}^{-1};q)\theta(\pm q^{-1/4}x;q)
\theta(\pm q^{-3/4}x;q)
\theta(\pm q^{-3/4}\lambda_{1};q)}{2\theta(x;q)\theta(x^{-1};q)
\theta(q^{-1}\lambda_{1};q)\theta(q^{-3/2}\lambda_{1};q)} \\
  \rho_{\pm\lambda_{1}^{-1}q^{-3/4-\BZ}} &=
  \frac{\theta(\pm q^{1/4}\lambda_{1}^{-1};q)\theta(\pm q^{-1/4}x;q)
  \theta(\pm q^{-3/4}x;q)
  \theta(\pm q^{-1/4}\lambda_{1};q)}{2\theta(x;q)\theta(x^{-1};q)
  \theta(q^{-1/2}\lambda_{1};q)\theta(q^{-1}\lambda_{1};q)}\,.
  \end{aligned}
  \ee
The monodromy satisfies the following modular transformations
\be
\label{M41x}
\Delta_{T^{2},\kappa_{V}}\vM|_{\kappa_{U}}T^2=\vM,\quad
\Delta_{STS,\kappa_{V}}\vM|_{\kappa_{U}}STS=\vM,\quad
\Delta_{TST,\kappa_{V}}\vM|_{\kappa_{U}}TST=\vM\,.
\ee
{\rm (b)}
The two cocycles~\eqref{Om} $\Om_{\Av(U)}$ and $\Om_{V}$ are equal,
modular, independent of $\lambda_{1},\lambda_{2}$ and when $\gamma=S$, they
are given by combinations of elementary functions times the state integrals
\be
\int_{\calC}
\Phi_{\sfb}(x+i\sfb u)\Phi_{\sfb}(x-i\sfb u)
\frac{\exp\left(-\pi i x^{2}-2\pi\sfb^{-1} zx\right)}{1+\tq^{1/2}
  \exp\left(-2\pi\sfb^{-1}x\right)}dx\,.
\ee
\end{theorem}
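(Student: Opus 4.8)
The plan is to treat the meridian variable $x\in\BC^\times$ as a Jacobi parameter and the system~\eqref{41x.inhom}--\eqref{41x3.inhom} as a rank-3 $q$-holonomic module in $t$, and then to follow the same three-step scheme used for Theorems~\ref{thm.41} and~\ref{thm.41b}: compute the monodromy from explicit solutions, verify its modularity on the index-three subgroup $\Gamma'$, and average over the three cosets to obtain an $\SL_2(\BZ)$-cocycle. Concretely I would take the fundamental solutions $f^{(j)}$ at $t=0$ and $g^{(0,x^{j})}$ at $t=\infty$ from Equations~\eqref{sols.41.x}, \eqref{f0x} and~\eqref{gGM}, assemble the Wronskian matrices $U$ and $V$ of~\eqref{UVex41.x}, and form $\vM=V^{-1}U$. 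Since $U$ and $V$ solve the same system, $\vM$ is automatically $\s$-invariant, so it is an elliptic function of $t$ with values in $\GL_3$, and the whole content of part~(a) is to identify its nine entries.

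For part~(a), the diagonal blocks and the $(1,3)$ and $(2,3)$ entries are ratios of Jacobi $\theta$-functions, obtained by expressing each $t=0$ solution as an explicit linear combination of the $t=\infty$ solutions and using the quasi-periodicity of the $\theta$-prefactors appearing in $f^{(j)}$ and $g^{(0,x^{j})}$, exactly as in Theorems~\ref{thm.41} and~\ref{thm.41b}. The off-diagonal entry $m_{2,1}$ I would pin down indirectly: first show it is elliptic in $t$ with at worst simple poles at $t_0\in\{x^{\pm}q^{\BZ},\pm\lambda_1^{-1}q^{\frac{1}{2}\BZ}\}$; then compute the residues $\rho_{t_0}$ of~\eqref{m21resx} from the local expansions of $U$ against $V^{-1}$ at these points; and finally fix the remaining additive constant by the normalisation $m_{2,1}(1,x,\lambda_1,q)=0$. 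Because the listed residues sum to zero, these data determine $m_{2,1}$ uniquely. The three relations~\eqref{M41x} then follow by substituting the explicit $\theta$-quotient formulas into~\eqref{Mmod} and invoking the transformation laws of the Jacobi $\theta$-function and the Dedekind $\eta$-function, with the automorphy factors $\Delta$ matching the weights $\kappa_U=(0,1,0)$ and $\kappa_V=(1,1,0)$; these three relations are precisely modularity on $\Gamma'$.

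For part~(b), I would average $\vM$ over the three cosets of $\Gamma'$ in $\SL_2(\BZ)$ (with representatives $I,T,S$), exactly as in Theorems~\ref{thm.41}(b) and~\ref{thm.41b}(b), to form $\Av(\vM)$ and $\Av(U)=V\,\Av(\vM)$. The cocycle condition satisfied by the factors $\Delta$ guarantees that, although $\vM$ itself is modular only on $\Gamma'$, the averaged matrix $\Av(\vM)$ satisfies the full transformation law~\eqref{Mmod} for every $\gamma\in\SL_2(\BZ)$; by the equivalence of~\eqref{Mmod} and~\eqref{OmUV} for each fixed $\gamma$ this yields $\Om_{\Av(U)}=\Om_V$ for all $\gamma$. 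Independence of $\lambda_1,\lambda_2$ is then immediate: $V$ is built from the solutions $g^{(0,x^{j})}$, which carry no $\lambda$-dependence, so $\Om_V$, and hence $\Om_{\Av(U)}$, is independent of $\lambda_1,\lambda_2$. Finally, since $\Om_T=I$ in all examples of the paper (comment~7 after Definition~\ref{def.qmm}), Theorem~\ref{thm.ST}(c) reduces modularity to exhibiting a meromorphic extension of $\Om_S$ to $(z,\tau)\in\BC\times\BC'$; I would produce this from an integral representation of the solutions in terms of the Faddeev quantum dilogarithm $\Phi_{\sfb}$, factorising the resulting state integral as a bilinear combination of functions of $(z,\tau)$ and $(z/\tau,-1/\tau)$ in the manner of~\cite{GK:qseries} used for Theorem~\ref{thm.heine}. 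The two factors $\Phi_{\sfb}(x+i\sfb u)\Phi_{\sfb}(x-i\sfb u)$, in which $u$ is the additional Jacobi variable carrying the meridian dependence, reflect the two-variable nature of the descendant module, while the denominator $1+\tq^{1/2}\exp(-2\pi\sfb^{-1}x)$ reflects the inhomogeneity of~\eqref{41x.inhom}.

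The main obstacle will be the analytic control of this two-variable state integral: choosing the contour $\calC$ so that the integral converges and represents the correct averaged solution, establishing rigorously the factorisation that matches $\Av(\vM)$, and verifying that the resulting $\Om_S$ really does extend across the cut to $\BC\times\BC'$ with poles confined to a finite union of translates of $\BZ+\tau\BZ$. The residue computation for $m_{2,1}$ in part~(a) is the other delicate point, since the poles at the fractional powers $\pm\lambda_1^{-1}q^{\frac{1}{2}\BZ}$ and the quarter-integer $\theta$-shifts visible in~\eqref{M412} and~\eqref{m21resx} demand careful bookkeeping.
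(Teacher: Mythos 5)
Your plan is correct and follows essentially the same route as the paper: characterise $m_{2,1}$ as the unique elliptic function with prescribed poles, residues and normalisation, match residues of $U$ against $V$ (via the Borel-transform residue lemmas and the Meinardus-trick identity $f^{(-1)}=-g^{(0,x^{-1})}-g^{(0,x)}$), verify~\eqref{M41x} from the $\theta$-transformation laws, and obtain part~(b) by averaging over the $\Gamma'$-cosets, factorising the two-variable state integral, and invoking Theorem~\ref{thm.ST}(c). The only understatement is that the vanishing of the sum of the six residues in~\eqref{m21resx} — which you need for the \emph{existence} of $m_{2,1}$, not just its uniqueness — is itself a nontrivial $\theta$-identity requiring proof, and passing from ``the residues match'' to ``$\vM_{2,1}=m_{2,1}$'' requires the additional fact that a solution of~\eqref{41x} holomorphic on $\BC^{\times}$ vanishes identically (the $x$-deformed analogue of Corollary~\ref{cor:41holsol}, which here gives zero rather than a multiple of $g^{(0,0)}$ because the indicial roots $x^{\pm1}$ are distinct).
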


An explicit formula for the function $m_{2,1}$ is given in Equation
~\eqref{m21explicit} below.

The next theorem identifies the function 
$f^{(0)}(t,x,\lambda_{1},q)$ with the $q$-Borel resummation of the descendant of
the colored Jones polynomial, and provides a lift of the colored Jones polynomial
(as a function of $N$ and $q$) to an analytic function of $x$ and $q$.  
Its extension to all knots will be discussed in forthcoming work. 

\begin{theorem}
  \label{thm.CJ41lift}
  The $N$-th colored Jones polynomial $J_{N}(q)$ of $4_{1}$ for $|q|<1$ is given by
  \be
  \label{J41f0}
J_{N}(q)=f^{(0)}(1,q^{N},\lambda_{2},q) \,.
  \ee
\end{theorem}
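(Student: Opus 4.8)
The plan is to identify the explicit $q$-series defining $f^{(0)}(t,x,\lambda_{2},q)$ in Equation~\eqref{f0x} with the cyclotomic (Habiro) expansion of the colored Jones polynomial, and then to check that the substitution $t=1$, $x=q^{N}$ collapses the former onto the latter. Recall that in the relevant normalisation Habiro's formula reads
\be
\label{habiro41}
J_{N}(q)=\sum_{n=0}^{N-1}q^{-nN}\prod_{j=1}^{n}(1-q^{N-j})(1-q^{N+j})\,,
\ee
which is manifestly a Laurent polynomial in $q$ because the factor $1-q^{N-j}$ vanishes at $j=N$, so every term with $n\ge N$ is zero. First I would write $f^{(0)}(t,x,\lambda_{2},q)$, via \eqref{f0x}, as the $q$-Borel resummation of the formal descendant series
\be
\label{descseries}
\sum_{n\ge0}t^{n}x^{-n}\prod_{j=1}^{n}(1-x q^{-j})(1-x q^{j})
\ee
(up to the elementary theta and $(q;q)_\infty$ prefactors recorded in \eqref{f0x}); here the power of $t$ plays the role of the cyclotomic index $n$ and $x$ is the lift of $q^{N}$. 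Note that for $|q|<1$ and generic $x$ the $n$-th term of \eqref{descseries} grows like $q^{-n^{2}/2}$, so the series is genuinely divergent and the $q$-Borel resummation is essential.

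The crux is then the specialisation $x=q^{N}$. At this value the factor $1-x q^{-j}$ vanishes for $j=N$, so the a priori divergent series \eqref{descseries} truncates to a polynomial in $t$ of degree $N-1$. Because the tail responsible for the divergence of \eqref{descseries} is exactly what is annihilated by this truncation, the $q$-Borel resummation acts as the identity on the surviving finite sum, and the resummation parameter $\lambda_{2}$---which enters $f^{(0)}$ only through the solution-of-the-inhomogeneous-equation ambiguity, i.e. through an added homogeneous solution of \eqref{41x.inhom}---drops out, consistent with the $\lambda_{2}$-independence already recorded in Theorem~\ref{thm.41c}. Setting $t=1$ then leaves precisely $\sum_{n=0}^{N-1}q^{-nN}\prod_{j=1}^{n}(1-q^{N-j})(1-q^{N+j})$, which is $J_{N}(q)$ by \eqref{habiro41}.

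The main obstacle is making the previous sentence rigorous: one must show that specialisation and $q$-Borel resummation commute at the fibres $x=q^{N}$, and that the resummed function genuinely reduces to the naive termwise sum of the now-terminating series rather than differing from it by a contribution supported on the cut. I expect this to follow from the meromorphy and explicit pole structure of $f^{(0)}$ established in Theorem~\ref{thm.41c}(a), together with the vanishing of the truncating theta-factor at $x=q^{N}$, which forces any extra Borel contribution to have no pole and the correct asymptotics, hence to vanish; the remaining bookkeeping is to track the prefactors of \eqref{f0x} through the specialisation and confirm the topological normalisation $J_{1}(q)=1$. As an independent check I would verify that both sides satisfy the same homogeneous $\widehat{A}$-recursion of $4_{1}$ in $N$, obtained by specialising the $x$-difference equations \eqref{41x2.inhom}--\eqref{41x3.inhom} at $t=1$ and clearing the inhomogeneity, and that they agree at $N=1,2$; uniqueness of the recursion's solution then forces the identity.
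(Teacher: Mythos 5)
Your proposal is correct and is essentially the paper's own argument: at $x=q^{N}$ the factor $(q^{1-N};q)_{k}$ kills all terms with $k\ge N$, so $\hat f^{(0)}(t,q^{N},q)$ is a polynomial in $t$, and Equation~\eqref{qlapbort} shows $\Lap_{1}\Bor_{1}$ is the identity (and $\lambda_{2}$-independent) on polynomials, after which $t=1$ gives Habiro's cyclotomic sum for $J_{N}(q)$. The extra worries about commuting specialisation with resummation and the fallback recursion check are unnecessary, since $f^{(0)}$ is \emph{defined} fibrewise by \eqref{f0x} and there are in fact no theta or $(q;q)_{\infty}$ prefactors to track.
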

Note that there is no $\lambda_{2}$ dependence on the left hand side of
~\eqref{J41f0} and that $\lambda_{2}$ parametrises a family of analytic
continuations of the colored Jones polynomial as we vary $x$ away from $q^{N}$.

The cocycles given in the previous theorems reveal the relation of the
following three special functions (all being entries of a matrix-valued
$S$-cocycle of a modular $q$-difference equation)
  \begin{itemize}
\item[(a)] the Fadeev quantum dilogarithm
\item[(b)] the Mordell integral
\item[(c)] the Andersen-Kashaev state integral. 
\end{itemize}

We end this section with some comments regarding modular $q$-holonomic modules.
It is not obvious that $q$-holonomic modules exist or occur naturally. Yet,
they are abundant, for instance all proper $q$-hypergeometric multidimensional
sums are $q$-holonomic; see  Zeilberger--Wilf~\cite{WZ}. A detailed introduction
of $q$-holonomic modules and their functional and closure properties can be found
in~\cite{GL:survey,Koutschan:HF,PWZ}. 

Regarding the occurence of $q$-holonomic modules, they are often given in the form
$M_f=\calW f$ where $f: \BZ^r \to \BQ(q)$ is a quantum invariant. For instance,
if $f$ denotes the colored Jones polynomial of a link (colored with an arbitrary
representation of a fixed simple Lie group), or the colored HOMFLY-PT polynomial
of a link (colored by arbitrary partitions with a fixed number of rows or columns),
the corresponding modules $M_f$ are $q$-holonomic; see~\cite{GL} and~\cite{GLL}.
In addition, the special $q$-hypergeometric sums (so-called Nahm sums) studied
in~\cite[Sec.4.7]{GZ:kashaev} that depend on the upper-half of a symplectic matrix,
are $q$-holonomic. Moreover, the linear $q$-difference equations in quantum cohomology
or in quantum $K$-theory are often (and perhaps always?) specialisations of
$q$-hypergeometric series; see~\cite{OP,GS:quintic,Iritani:rec,Wen}.
Finally, the $q$-GKZ modules of Gelfand--Kapranov--Zelevinsky~\cite{Gelfand3,Gelfand2,
  Gelfand1} which are constructed by combinatorial data (a matrix of integers) together
with some ``charge vectors'' are $q$-holonomic.

The above discussion leads naturally to the following conjecture.

\begin{conjecture}
\label{conj.2}
Every $q$-holonomic module associated to a proper $q$-hypergeometric multi-dimensional
sum is modular.
\end{conjecture}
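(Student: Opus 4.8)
The plan is to reduce modularity to a single matrix attached to the generator $S$ of $\SL_2(\BZ)$, and then to establish the required analyticity by realising any proper $q$-hypergeometric sum as a multidimensional state integral built from Faddeev quantum dilogarithms, whose modular factorisation produces the cocycle. By Lemma~\ref{lem.cat}(a) modularity is independent of the cyclic vector, so I may work with the module $M_f=\calW f$ generated by the sum $f$ itself; and by Theorem~\ref{thm.ST}(c) it is enough to produce a fundamental solution with $\Om_T=I$ whose $\Om_S$ extends meromorphically from $\BC\times(\BC\setminus\BR)$ to $\BC\times\BC'$, with poles confined to a set $S_S+\BZ+\tau\BZ$. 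This concentrates the entire problem on the single generator $S$.

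The heart of the matter is the integral representation. After writing each finite Pochhammer as a ratio of two infinite ones, a proper $q$-hypergeometric summand is, up to a $q$-monomial prefactor, a finite product of factors $(\,\cdot\,;q)_{\infty}^{\pm1}$ whose arguments are monomials with exponents affine-linear in the summation indices and the continuous parameters; the Wilf--Zeilberger certificate making $f$ $q$-holonomic~\cite{WZ} is precisely the datum recording this affine structure. Since the Faddeev quantum dilogarithm $\Phi_{\sfb}$ factorises as a ratio of such an infinite $q$-Pochhammer product and its $\tq$-counterpart, with $q=e^{2\pi i\tau}$ and $\tq=e^{-2\pi i/\tau}$, I would replace each Pochhammer factor by an appropriate $\Phi_{\sfb}$ and write each entry of a fundamental matrix $U$ as a multidimensional Andersen--Kashaev integral $\int_{\calC}\prod_{j}\Phi_{\sfb}(\cdots)\,\exp(\pi i Q(x))\,d^{r}x$, where the quadratic form $Q$ and the contour $\calC$ are dictated by the properness (balancing) data. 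Closing $\calC$ upward as opposed to downward and summing residues over the poles of the $\Phi_{\sfb}$ recovers the solutions at $t=0$ and at $t=\infty$ respectively, which is the structural reason for the equality $\Om_U=\Om_V$, exactly as in the remark following Theorem~\ref{thm.heine}.

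The cocycle is then read off from the modular factorisation of this integral. Applying $S$ splits every $\Phi_{\sfb}$ into a $\tau$-part and a $(-1/\tau)$-part; resumming the integral as a bilinear combination of a $(z,\tau)$-series and a $(z/\tau,-1/\tau)$-series, along the lines of~\cite{GK:qseries}, expresses $\Om_S$ as a finite sum of theta quotients times lower-dimensional state integrals, each of which is visibly meromorphic on $\BC\times\BC'$ with the asserted lattice of poles. The $q\mapsto q^{-1}$ duality of Lemma~\ref{lem.cat}(c), which carries a proper $q$-hypergeometric module to another such module, supplies the relation $\Om^{\vee}=(\Om^{-1})^{t}$ and so pins down the entries not already fixed by the limiting behaviour at $t=0,\infty$; the monodromy is then determined uniquely by its ellipticity, its principal parts, and those limits. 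Finally, Lemma~\ref{lem.cat}(b) lets me pass to the simple constituents, so only the irreducible case must be treated.

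The hard part will be the representational step in full generality: showing that \emph{every} proper $q$-hypergeometric multidimensional sum, not merely the well-poised or balanced families already understood, admits a convergent state-integral representation whose $\Phi_{\sfb}$-poles align with the summation lattice so that the residue sum reproduces $f$ exactly. Extracting the quadratic form $Q$ and a legitimate contour $\calC$ uniformly from the Wilf--Zeilberger data, and proving that the resulting factorisation has no spurious singularities away from $S_\gamma+\BZ+\tau\BZ$, is where the genuine difficulty lies; by comparison, the reduction through Theorem~\ref{thm.ST} and the closure properties of Lemma~\ref{lem.cat} are formal.
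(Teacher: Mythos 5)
The statement you are trying to prove is Conjecture~\ref{conj.2}, which the paper leaves open: there is no proof in the paper to compare against, and your text is a strategy outline rather than a proof. It faithfully abstracts the pattern of the worked examples (Theorems~\ref{thm.heine}, \ref{thm.ex1}--\ref{thm.41c}): represent a fundamental solution by an Andersen--Kashaev-type integral of Faddeev dilogarithms, factorise it bilinearly as in~\cite{GK:qseries}, and invoke Theorem~\ref{thm.ST}(c) to propagate the extension of $\Om_S$ to all of $\SL_2(\BZ)$. But the load-bearing step --- that \emph{every} proper $q$-hypergeometric multidimensional sum admits a convergent state integral with a legitimate contour whose residues reproduce the sum and whose factorisation lands in $M$ and $M^{\vee}$ --- is exactly what you defer to the end, and it is the entire content of the conjecture. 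In the paper even the single-variable Heine case requires the nontrivial self-duality $M^{\vee}\cong M^{\wedge}$ (Proposition~\ref{prop.heine.selfdual}) to convert the natural $M\otimes M^{\wedge}$ factorisation into one usable for the cocycle, and Proposition~\ref{41inhom.not.sd} shows this self-duality genuinely fails for the inhomogeneous $4_1$ module, forcing extra functions $L_0,L_1$ into the factorisation. So the duality step you call ``formal'' is not.

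There is also a concrete reversed implication: you write that Lemma~\ref{lem.cat}(b) lets you pass to simple constituents, so that only the irreducible case need be treated. That lemma goes the other way --- if $M$ is modular then its sub and quotient modules are --- and the paper explicitly remarks that the converse (extensions of modular modules being modular) is unlikely to hold in general. A reduction of the conjecture to irreducible modules is therefore not available from the stated results. Similarly, the claim that the monodromy is ``determined uniquely by its ellipticity, its principal parts, and those limits'' is an observed feature of the examples, proved case by case via Lemmas such as~\ref{lem.vanish} and Corollary~\ref{cor:41holsol}, not a general principle you may cite. As it stands your proposal is a useful articulation of why the conjecture is plausible, but it does not close any of the gaps that make it a conjecture.
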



\section{A review of linear $q$-difference equations}
\label{sec.prelim}

\subsection{Preliminaries}
\label{sub.prelim}

It is well-known that formal power series solutions to linear difference equations
with a small parameter are typically factorially divergent series,
which lead to analytic solutions to the difference equation after applying the
process of a Borel transformation, followed by a Laplace transformation. This
subject is classical and well-studied, see for example~\cite{Bender,
  Costin:asymptotics,Miller:applied,Sauzin:divergent}.

A corresponding theory for linear $q$-difference equations was developed recently
by di Vizio, Sauloy, Ramis and others~\cite{Dreyfus:building,Hardouin,Ramis},
with particular emphasis given on the arithmetic and the Galois theoretic aspects
of the theory. A $q$-holonomic module has two canonical filtrations, one from $t=0$
and another from $t=\infty$. These filtrations can be computed concretely choosing
a cyclic vector which converts the $q$-holonomic module into a linear
$q$-difference equation.

To explain the solutions of linear $q$-difference equations, let us recall
 the Jacobi theta function which is given by a one dimensional
lattice sum, and by an infinite product (known as the Jacobi triple product identity)
by
\be
\label{theta}
\theta(t;q)
=
\sum_{k\in\BZ}(-1)^{k}q^{k(k+1)/2}t^{k}
=
(qt;q)_{\infty}(t^{-1};q)_{\infty}(q;q)_{\infty} \,.
\ee
From the above representation, it is easy to see that it satisfies the functional
equations
\be
\label{theta.fun}
\theta(t^{-1};q)
=
\theta(q^{-1}t;q)
=
-t\theta(t;q)
\ee
which imply that
\be
\label{theta.fun2}
\theta(q^{\ell}t;q)
=
(-1)^{\ell}q^{-\ell(\ell+1)/2}t^{-\ell}\theta(t;q),
\qquad (\ell \in \BZ) \,.
\ee
The Jacobi theta function is modular. Its transformation under the element
$S$ of $\SL_2(\BZ)$ is given by
\be
\label{thetaS}
\theta(\tilde{t};\tilde{q})
= \e(-3/8)
\e\left(\frac{z^{2}}{2\tau}\right)t^{\frac{1}{2}}
\tilde{t}^{-\frac{1}{2}}q^{\frac{1}{8}}\tq^{-\frac{1}{8}}\sqrt{\tau} \,
\theta(t;q)\,.
\ee
The derivative of $\theta$ with respect to $t d/dt=1/(2 \pi  i) d/dz$,
\be
\theta'(t;q)
=
\sum_{k}(-1)^{k}kq^{k(k+1)/2}t^{k}
\ee
satisfies the $q$-difference equation
\be
\label{thetader}
\frac{\theta'(q^{\ell}t;q)}{\theta(q^{\ell}t;q)}
=
\frac{\theta'(t;q)}{\theta(t;q)}-\ell \,.
\ee
It has transformation under the element $S$ of $\SL_2(\BZ)$ given by
\be
\label{S.mod.thd}
\begin{aligned}
  \frac{\theta'(t;q)}{\theta(t;q)} \Big|_{1} S
  :=\frac{1}{\tau}\frac{\theta'(\tilde{t};\tq)}{\theta(\tilde{t};\tq)}
&=
\frac{\theta'(t;q)}{\theta(t;q)}+\frac{z}{\tau}+\frac{1}{2}-\frac{1}{2\tau}\,.
\end{aligned}
\ee

\subsection{An algorithm for a fundamental matrix}
\label{sub.algorithm}

In this section we review an algorithm to obtain a fundamental matrix solution
to a $q$-holonomic module given by Dreyfus~\cite{Dreyfus:building} using the
$q$-Borel and the $q$-Laplace transform. Let us describe the main steps here.

\noindent $\bullet$
Choose a cyclic vector to present a $q$-holonomic module in the form of a linear
$q$-difference equation

\be
\label{qdiffeqn}
\sum_{j=0}^r a_j \s^j f =0
\ee
where $f=f(t,q)$, $(\s f)(t,q)=f(qt,q)$ and $a_j(t,q) \in \BQ(q)[t^{\pm 1}]$,
with $a_r \neq 0$. The rest of the algorithm depends on~\eqref{qdiffeqn} alone.

\noindent $\bullet$
The lower Newton polygon of~\eqref{qdiffeqn} is the lower convex hull
of the points $(j, v_0(a_j))$ for $j=0,\dots,r$, where $v_0(p)$ denotes the
minimum $t$-exponent of a Laurent polynomial $p(t)$ (with the convention that
$v_0(0)=\infty$). The boundary of the lower Newton
polygon is a finite sequence of edges with increasing slopes. 

\noindent $\bullet$
For each edge of the lower Newton polygon, we replace our function by a ratio
of theta functions times a new function, so that the corresponding edge is now
horizontal, and then apply the Frobenius method to get $t$-formal power
series solutions, once for each root of the indicial polynomial of the edge.
If $\kappa$ is the slope of the edge and an integer, the solutions have the form
\be
\theta(t;q)^{\kappa}
\sum_{k=0}^{\infty}\alpha_{k}(q)t^{k}\frac{\theta(\rind^{-1}t;q)}{\theta(t;q)}
\ee
where $\rind$ is determined by the roots of the indicial polynomial of the edge.
A special feature in the equations that we study is that the roots of the indicial
polynomials are roots of unity times a fractional power of $q$ times a monomial
in any additional variables.

\noindent $\bullet$
If a solution is not convergent at $t=0$, apply a $q$-Borel transform, followed by
an iterated $q$-Laplace transform (defined below), to construct a fundamental matrix
$U(t,q)$ of analytic solutions at $t=0$.
The $q$-Borel and $q$-Laplace transforms preserve linear $q$-difference equations
and change their Newton polygons by an affine linear transformation.

\noindent $\bullet$
Repeat the above steps using the upper Newton polygon of~\eqref{qdiffeqn}, that
is the upper convex hull of the points $(j, v_\infty(a_j))$ for $j=0,\dots,r$,
where $v_\infty(p)$ denotes the maximum $t$-exponent of a Laurent polynomial $p(t)$
(with the convention that $v_\infty(0)=-\infty$). Call the corresponding fundamental
matrix $V(t,q)$.

The ratio $M = V^{-1} U$ is a matrix of elliptic functions. These functions depend
on additional elliptic parameters that come from the $q$-Laplace transform,
a feature of $q$-difference equations which is absent in the world of linear
differential equations. The connection problem, i.e., the determination of this
matrix, is largely unsolved, with partial success for the case of many
$q$-hypergeometric difference equations; see Ohyama, Morita
~\cite{Morita:stokes-qdiff, Morita:stokes-rama,Ohyama:talk,Ohyama:connection}. 

\subsection{The $q$-Borel and the $q$-Laplace transforms}
\label{sub.qborel}

We now recall the $q$-Borel transform $\Bor_\kappa$ (for a rational number $\kappa$)
defined by
\be
\label{qborel}
\Bor_{\kappa}\left(\sum_{\ell=0}^{\infty}a_{\ell}t^{\ell}\right)(\xi,q)
=
\sum_{\ell=0}^{\infty}(-1)^{\ell}q^{\kappa \ell(\ell+1)/2}a_{\ell}\xi^{\ell}\,.
\ee
Its role is to convert divergent series, e.g., of the form
$\sum_{\ell=0}^\infty q^{-\kappa \ell(\ell+1)/2}a_{\ell}t^{\ell}$ (where $a_\ell$
is bounded and $\kappa>0$) to convergent ones at $\xi=0$.

An inverse of the $q$-Borel transform is the $q$-Laplace transform, whose role
is to construct analytic solutions to the linear $q$-difference equation
with prescribed asymptotics. It is defined for $\kappa>0$ by
\be
\label{qlap}
\Lap_{\kappa}(f)(t,\lambda,q)
=
\frac{1}{\theta(\lambda;q^\kappa)}\sum_{\ell\in\BZ}(-1)^{\ell}
q^{\kappa\ell(\ell+1)/2}\lambda^{\ell}f(q^{\kappa\ell}\lambda t,q)
=
\sum_{\ell\in\BZ}
\frac{f(q^{\kappa\ell}\lambda t,q)}{\theta(q^{\kappa\ell}\lambda;q^\kappa)}\, ,
\ee
and for $\kappa<0$, by
\be
\Lap_{\kappa}(f)(t,q)=\oint f(\xi,q)\theta(t/\xi;q^{-\kappa})
\frac{d\xi}{2\pi i\xi} \,.
\ee
The $q$-Laplace transform for positive $\kappa$ is more common, however, in
the computation of the monodromy of the $4_1$ knot, we will use $\kappa <0$,
following analogous computations of Morita \cite{Morita:conn-mon}.

In some sense, the two tranformations are inverse to each other. More precisely,
for all $\kappa$ and all natural numbers $n$, we have
\be
\label{qlapbort}
\Lap_{\kappa}\Bor_{\kappa}(t^{n})(t,\lambda,q) = t^n \,.
\ee
(Interestingly, the right hand side is independent of $\lambda$.) Indeed,
for $\kappa >0$, we have 
\be
\begin{aligned}
\Lap_{\kappa}\Bor_{\kappa}(t^{n})
&=
\frac{1}{\theta(\lambda;q^\kappa)}\sum_{\ell\in\BZ}(-1)^{\ell}q^{\kappa\ell(\ell+1)/2}
\lambda^{\ell}(-1)^{n}q^{\kappa n(n+1)/2}q^{\kappa\ell n}\lambda^{n}t^{n}\\
&=
\frac{1}{\theta(\lambda;q^\kappa)}\sum_{\ell\in\BZ}(-1)^{\ell+n}
q^{\kappa(\ell+n)(\ell+n+1)/2}\lambda^{\ell+n}t^{n}\\
&=
\frac{1}{\theta(\lambda;q^\kappa)}\sum_{\ell\in\BZ}(-1)^{\ell}
q^{\kappa\ell(\ell+1)/2}\lambda^{\ell}t^{n} = t^n \,.
\end{aligned}
\ee
A similar calculation using the residue theorem shows~\eqref{qlapbort}
for $\kappa <0$.
More generally, the fact solutions constructed by Borel transforms followed
by iterated Laplace transforms are asymptotic to the original formal power series is
known in the literature as Watson's lemma, 
a modern proof of which may be found for instance in
Miller~\cite[p.~53, Prop.~2.1]{Miller:applied}. An analogous lemma holds for
the $q$-case, see~\cite[Prop.2.9]{Dreyfus:building}.

By its very definition, $\Lap_\kappa$ for $\kappa>0$ depends on the
variable $\lambda$ in an elliptic way
\be
\label{lapelliptic}
\Lap_\kappa(f)(t,q^{\kappa}\lambda,q)=\Lap_\kappa(f)(t,\lambda,q) 
\ee
whereas $\Lap_\kappa$ for $\kappa <0$ does not involve a variable $\lambda$. 
The next lemma, whose proof follows from an elementary application of the
residue theorem, concerns the dependence of the $q$-Laplace transform on the
auxiliary variable $\lambda$, and may be of independent interest.

\begin{lemma}
\label{lem.fund}
Assuming that
\be
\lim_{\ve\rightarrow 0}\oint_{|\xi|=\ve^{\pm}}
\frac{f(\xi t,q)\theta(\mu^{-1}\lambda^{-1}\xi;q)}{\theta(\xi\mu^{-1};q)
  \theta(\xi\lambda^{-1};q)}\frac{d\xi}{2\pi i\xi}=0
\ee
where $\ve$ avoids the poles of the integrand we have  
\be
\begin{aligned}
\Lap_{1}(f)(t,\lambda &,q)-\Lap_{1}(f)(t,\mu,q)\\
&=
\frac{\theta(\lambda ^{-1}\mu ;q)(q;q)_{\infty}^{3}}{\theta(\lambda ^{-1};q)
  \theta(\mu ;q)}\sum_{x\in\mathrm{poles\;of}\;f}\Res_{\xi=x}\frac{f(\xi,q)
  \theta(\lambda ^{-1}\mu ^{-1}t^{-1}\xi;q)}{\theta(\xi\lambda ^{-1}t^{-1};q)
  \theta(\xi\mu ^{-1}t^{-1};q)} \,.
\end{aligned}
\ee
\end{lemma}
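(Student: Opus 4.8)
The plan is to realise the difference $\Lap_{1}(f)(t,\lambda,q)-\Lap_{1}(f)(t,\mu,q)$ as minus the sum of the residues of a single meromorphic kernel over the poles of $f$, by applying the residue theorem on $\BC^{\times}$ to the function
\[
\Phi(\xi)=\frac{f(\xi,q)\,\theta(\lambda^{-1}\mu^{-1}t^{-1}\xi;q)}{\theta(\xi\lambda^{-1}t^{-1};q)\,\theta(\xi\mu^{-1}t^{-1};q)},
\]
integrated against the invariant measure $\tfrac{d\xi}{2\pi i\xi}$ (so that all residues below, including those in the statement, are taken with respect to $\tfrac{d\xi}{2\pi i\xi}$). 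Note that after the substitution $\xi\mapsto\xi/t$ this is exactly the integrand of the hypothesis. Its poles fall into three families: the zeros $\xi=q^{\ell}\lambda t$ of $\theta(\xi\lambda^{-1}t^{-1};q)$, the zeros $\xi=q^{\ell}\mu t$ of $\theta(\xi\mu^{-1}t^{-1};q)$ (both indexed by $\ell\in\BZ$), and the poles $\xi=x$ of $f$ itself.

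First I would compute the residues along the two theta families. Using the local behaviour $\theta(u;q)\sim(q;q)_{\infty}^{3}(u-1)$ at $u=1$ together with the quasi-periodicity $\theta(q^{\ell}u;q)=(-1)^{\ell}q^{-\ell(\ell+1)/2}u^{-\ell}\theta(u;q)$, a direct calculation gives
\[
\Res_{\xi=q^{\ell}\lambda t}\Phi=\frac{\theta(\mu^{-1};q)\theta(\lambda;q)}{\theta(\lambda\mu^{-1};q)(q;q)_{\infty}^{3}}\,\frac{f(q^{\ell}\lambda t,q)}{\theta(q^{\ell}\lambda;q)},
\]
where the entire $\ell$-dependence cancels into the second factor, so that summing over $\ell$ reproduces $\Lap_{1}(f)(t,\lambda,q)$ times the displayed $\ell$-independent prefactor; the same computation at $\xi=q^{\ell}\mu t$ yields $\Lap_{1}(f)(t,\mu,q)$ times $\tfrac{\theta(\lambda^{-1};q)\theta(\mu;q)}{\theta(\mu\lambda^{-1};q)(q;q)_{\infty}^{3}}$. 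The algebraic point, verified using $\theta(s^{-1};q)=-s\,\theta(s;q)$, is that these two prefactors are negatives of one another, so the two theta families together contribute
\[
\frac{\theta(\mu^{-1};q)\theta(\lambda;q)}{\theta(\lambda\mu^{-1};q)(q;q)_{\infty}^{3}}\Bigl(\Lap_{1}(f)(t,\lambda,q)-\Lap_{1}(f)(t,\mu,q)\Bigr).
\]

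Finally I would invoke the residue theorem on an annulus $\ve\le|\xi|\le\ve^{-1}$, which reads
\[
\oint_{|\xi|=\ve^{-1}}\Phi\,\frac{d\xi}{2\pi i\xi}-\oint_{|\xi|=\ve}\Phi\,\frac{d\xi}{2\pi i\xi}=\sum_{\ve<|x|<\ve^{-1}}\Res_{\xi=x}\Phi,
\]
where the contours are taken through the gaps between poles. After the change of variables $\xi\mapsto\xi/t$ the hypothesis is precisely the assertion that both boundary integrals vanish as $\ve\to0$ (here $\ve^{\pm}$ denotes the inner radius $\ve$ and the outer radius $\ve^{-1}$). Hence the total residue sum of $\Phi$ is zero; splitting it into the two theta families and the poles of $f$ and solving for the difference of Laplace transforms yields the claimed identity, once the resulting coefficient $\tfrac{(q;q)_{\infty}^{3}\theta(\lambda\mu^{-1};q)}{\mu\,\theta(\mu;q)\theta(\lambda;q)}$ is rewritten as $\tfrac{\theta(\lambda^{-1}\mu;q)(q;q)_{\infty}^{3}}{\theta(\lambda^{-1};q)\theta(\mu;q)}$ via the inversion $\theta(s^{-1};q)=-s\,\theta(s;q)$. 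The main obstacle is exactly the analytic control at the boundary: since the theta zeros accumulate at both $0$ and $\infty$, the two contour integrals must be passed through the gaps and shown to vanish \emph{simultaneously}, and this is what the hypothesis encodes; it is also what guarantees the convergence of the bi-infinite residue sums defining the $q$-Laplace transforms. Everything else is bookkeeping with the theta functional equations.
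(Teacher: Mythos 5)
Your proof is correct and follows essentially the same route as the paper: both apply the residue theorem on $\BC^{\times}$ to the same kernel (yours is the paper's integrand after the substitution $\xi\mapsto\xi t$), split the residues into the two theta families and the poles of $f$, collapse the $\ell$-dependence via the quasi-periodicity of $\theta$ into the two $q$-Laplace transforms, and use $\theta(s^{-1};q)=-s\,\theta(s;q)$ to identify the prefactors as negatives of one another and to rewrite the final coefficient. The hypothesis plays exactly the role you assign to it, namely the simultaneous vanishing of the inner and outer boundary integrals as $\ve\to 0$.
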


Note that the assumption on $f$ is mild since as $t$ approaches $0$ or $\infty$
bounded away from the poles, the quotient of $\theta$'s approaches $0$. Note
also that the lemma can be extended to the case of $\mathcal{L}_{\kappa}$ for
$\kappa >0$ by substituting $q\mapsto q^{\kappa}$ except in the argument of $f$. 

\begin{proof}
We compute  
\be
\begin{aligned}
0&=
\sum_{\xi\in \mathrm{poles}}\Res_{\xi}\frac{f(\xi t,q)\theta(\mu^{-1}
  \lambda^{-1}\xi;q)}{\theta(\xi\mu^{-1};q)\theta(\xi\lambda^{-1};q)}
\frac{d\xi}{2\pi i\xi}\\
&=
\sum_{k}\Res_{\xi=q^{k}\lambda}\frac{f(\xi t,q)
  \theta(\mu^{-1}\lambda^{-1}\xi;q)}{\theta(\xi\mu^{-1};q)
  \theta(\xi\lambda^{-1};q)}\frac{d\xi}{2\pi i\xi}
+\sum_{k}\Res_{\xi=q^{k}\mu}\frac{f(\xi t,q)
  \theta(\mu^{-1}\lambda^{-1}\xi;q)}{\theta(\xi\mu^{-1};q)
  \theta(\xi\lambda^{-1};q)}\frac{d\xi}{2\pi i\xi}\\
&\qquad\qquad+\sum_{x\in \mathrm{poles\;of}\;f}\Res_{\xi=t^{-1}x}
\frac{f(\xi t,q)\theta(\mu^{-1}\lambda^{-1}\xi;q)}{\theta(\xi\mu^{-1};q)
  \theta(\xi\lambda^{-1};q)}\frac{d\xi}{2\pi i\xi}\\
&=
\sum_{k}(-1)^{k}q^{k(k+1)/2}\lambda^{k}
\frac{f(q^{k}\lambda t,q)
  \theta(\mu^{-1};q)}{\theta(\mu^{-1}\lambda;q)(q;q)_{\infty}^{3}}
+\sum_{k}(-1)^{k}q^{k(k+1)/2}\mu^{k}
\frac{f(q^{k}\mu t,q)\theta(\lambda^{-1};q)}{\theta(\mu\lambda^{-1};q)
  (q;q)_{\infty}^{3}}\\
&\qquad\qquad+\sum_{x\in \mathrm{poles\;of}\;f}\Res_{\xi=t^{-1}x}
\frac{f(\xi t,q)\theta(\mu^{-1}\lambda^{-1}\xi;q)}{\theta(\xi\mu^{-1};q)
  \theta(\xi\lambda^{-1};q)}\frac{d\xi}{2\pi i\xi}\\
&=
\frac{\theta(\mu^{-1};q)\theta(\lambda;q)}{\theta(\mu^{-1}\lambda;q)
  (q;q)_{\infty}^{3}}\Lap_{1}(f)(t,\lambda,q)
+\frac{\theta(\mu;q)\theta(\lambda^{-1};q)}{\theta(\mu\lambda^{-1};q)
  (q;q)_{\infty}^{3}}\Lap_{1}(f)(t,\mu,q)\\
&\qquad\qquad+\sum_{x\in \mathrm{poles\,of}\,f}\Res_{\xi=x}
\frac{f(\xi,q)\theta(\mu^{-1}\lambda^{-1}t^{-1}\xi;q)}{\theta(\xi\mu^{-1}t^{-1};q)
  \theta(\xi\lambda^{-1}t^{-1};q)}\frac{d\xi}{2\pi i\xi} \,.
\end{aligned}
\ee
\end{proof}

This type of residue formula for the Laplace transform is similar to the definition
of the Laplace transform for $\kappa<0$. We can find a similar expression for a single
Laplace transform using a special function. The Appell-Lerch sum will be studied later
in Section~\ref{sub.ex2} but we will define it here.
\be
L(t,\lambda,q)
=
\frac{1}{\theta(\lambda;q)}\sum_{k\in\BZ}(-1)^{k}
\frac{q^{k(k+1)/2}\lambda^{k}}{1-q^{k}\lambda t}.
\ee
Using this we have the following integral expression for the Laplace transform for
$\kappa>0$.

\begin{lemma}
For $\kappa>0$, we have  
\be
(\Lap_{\kappa}f)(t,\lambda,q) =
\sum_{k\in\BZ}\Res_{\xi=q^{\kappa k}}L(\xi^{-1}\lambda^{-1},\lambda,q^{\kappa})
f(\xi\lambda t,q)\frac{d\xi}{2\pi i\xi} \,.
\ee
\end{lemma}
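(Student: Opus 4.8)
The plan is essentially to unfold the Appell--Lerch sum inside each residue, isolate the single term that is singular at the prescribed point $\xi=q^{\kappa k}$, and verify that its residue reproduces the $k$-th summand of the defining series~\eqref{qlap} for $\Lap_\kappa$. Since the right-hand side is presented as an explicit sum of residues at the points $\xi=q^{\kappa k}$ (rather than a single contour integral, which would be impossible here as these points accumulate at both $0$ and $\infty$), the identity is really asserted term by term, and the proof is a one-line residue computation together with a bookkeeping of poles.

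Concretely, first I would substitute $s=\xi^{-1}\lambda^{-1}$ in the definition of $L$ at modulus $q^{\kappa}$, obtaining
\be
L(\xi^{-1}\lambda^{-1},\lambda,q^{\kappa})
=\frac{1}{\theta(\lambda;q^{\kappa})}\sum_{m\in\BZ}(-1)^{m}
\frac{q^{\kappa m(m+1)/2}\lambda^{m}}{1-q^{\kappa m}\xi^{-1}}\,.
\ee
The key observation is that, as a function of $\xi$, the $m$-th term is singular only at $\xi=q^{\kappa m}$, so at the point $\xi=q^{\kappa k}$ only the $m=k$ term contributes a pole, and that pole is simple. Using $\frac{1}{1-q^{\kappa k}\xi^{-1}}\cdot\frac{1}{\xi}=\frac{1}{\xi-q^{\kappa k}}$, I would then read off
\be
\Res_{\xi=q^{\kappa k}}L(\xi^{-1}\lambda^{-1},\lambda,q^{\kappa})f(\xi\lambda t,q)
\frac{d\xi}{2\pi i\xi}
=\frac{(-1)^{k}q^{\kappa k(k+1)/2}\lambda^{k}}{\theta(\lambda;q^{\kappa})}
f(q^{\kappa k}\lambda t,q)\,.
\ee
Summing this over $k\in\BZ$ yields precisely the first series in~\eqref{qlap}, which is the definition of $(\Lap_{\kappa}f)(t,\lambda,q)$, completing the argument.

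The only point deserving care — and the crux of the lemma — is the pole bookkeeping just described: that at each $\xi=q^{\kappa k}$ exactly one summand of the Appell--Lerch series is singular and contributes a simple pole, with the prefactor $\theta(\lambda;q^{\kappa})^{-1}$ being $\xi$-independent and hence regular. Granting this, there is no genuine analytic obstacle, since any poles of $f(\xi\lambda t,q)$ in $\xi$ are irrelevant (we take residues only at the points $\xi=q^{\kappa k}$), and convergence of the resulting $k$-sum is identical to convergence of the series~\eqref{qlap} defining $\Lap_{\kappa}$, which holds under the same mild growth hypotheses on $f$ used throughout Section~\ref{sub.qborel}.
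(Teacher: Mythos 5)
Your computation is correct: unfolding $L(\xi^{-1}\lambda^{-1},\lambda,q^{\kappa})$, noting that only the $m=k$ summand is singular at $\xi=q^{\kappa k}$ with $\Res_{\xi=q^{\kappa k}}\frac{d\xi}{2\pi i\,\xi(1-q^{\kappa k}\xi^{-1})}=1$, and summing over $k$ recovers the defining series~\eqref{qlap} exactly. The paper states this lemma without proof, treating it as immediate from the definitions of $L$ and $\Lap_{\kappa}$, and your term-by-term residue verification is precisely the intended argument.
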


Deforming the contour and the residue theorem give the following lemma.

\begin{lemma}
  Assuming that
  \be
\lim_{\ve\rightarrow 0}
\oint_{|\xi|=\ve^{\pm}}
L(\xi^{-1}\lambda^{-1},\lambda,q^{\kappa})
f(\xi\lambda t,q)\frac{d\xi}{2\pi i\xi}
=0
  \ee
  where $\ve$ avoids the poles of the integrand we have
  \be
(\Lap_{\kappa}f)(t,\lambda,q)
=
-\sum_{x\in\mathrm{poles\,of}\,f}
L(x^{-1}t,\lambda,q)
\Res_{\xi=x}f(\xi,q)\frac{d\xi}{2\pi i\xi}
  \ee
\end{lemma}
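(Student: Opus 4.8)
The plan is to deform the contour in the integral representation of the $q$-Laplace transform established in the previous lemma, trading the residues at the poles of the Appell--Lerch factor for those at the poles of $f$, in direct analogy with the proof of Lemma~\ref{lem.fund}. First I would abbreviate
\be
g(\xi) := L(\xi^{-1}\lambda^{-1},\lambda,q^{\kappa})\,f(\xi\lambda t,q)
\ee
and recall from the previous lemma that $(\Lap_{\kappa}f)(t,\lambda,q)$ is the sum over $k\in\BZ$ of the residues of $g(\xi)\frac{d\xi}{2\pi i\xi}$ at the points $\xi=q^{\kappa k}$, which are exactly the poles of the factor $L(\xi^{-1}\lambda^{-1},\lambda,q^{\kappa})$.

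The decisive point is that, for generic $\lambda$ and $t$, the meromorphic differential $g(\xi)\frac{d\xi}{2\pi i\xi}$ has precisely two disjoint families of poles: the poles $\xi=q^{\kappa k}$ coming from $L$, and the poles $\xi=x(\lambda t)^{-1}$ inherited from $f(\xi\lambda t,q)$ as $x$ ranges over the poles of $f$. I would express the total sum of residues as a limit of the contour integrals over the circles $|\xi|=\ve^{\pm}$ appearing in the hypothesis; since these boundary integrals tend to $0$ as $\ve\to0$, the residue theorem forces the total sum of residues of $g(\xi)\frac{d\xi}{2\pi i\xi}$ to vanish. Hence the sum over the $L$-poles equals minus the sum over the $f$-poles, giving
\be
(\Lap_{\kappa}f)(t,\lambda,q)
=
-\sum_{x\in\mathrm{poles\,of}\,f}\Res_{\xi=x(\lambda t)^{-1}}
g(\xi)\frac{d\xi}{2\pi i\xi}\,.
\ee

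It then remains to evaluate each residue on the right. At the pole $\xi=x(\lambda t)^{-1}$ I would substitute $u=\xi\lambda t$, under which $\frac{d\xi}{\xi}=\frac{du}{u}$ and the first argument $\xi^{-1}\lambda^{-1}$ of $L$ becomes $u^{-1}t$; since $L(u^{-1}t,\lambda,q^{\kappa})$ is holomorphic at $u=x$, the residue factors as $L(x^{-1}t,\lambda,q^{\kappa})\,\Res_{\xi=x}f(\xi,q)\frac{d\xi}{2\pi i\xi}$, and when $f$ has higher-order poles one simply Taylor-expands $L$ at $u=x$. Summing over $x$ produces the asserted formula.

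The step I expect to be the main obstacle is the analytic justification of the residue theorem rather than the bookkeeping: because the poles $q^{\kappa k}$ accumulate at both $0$ and $\infty$, one must use the stated vanishing of the boundary integrals, together with the disjointness of the two pole families, to legitimise rearranging the doubly infinite sum of residues and interchanging it with the limiting contour.
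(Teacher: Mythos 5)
Your proposal is correct and follows exactly the route the paper intends: the paper offers no written proof beyond the remark that ``deforming the contour and the residue theorem give the following lemma,'' and your argument --- applying the residue theorem to $L(\xi^{-1}\lambda^{-1},\lambda,q^{\kappa})f(\xi\lambda t,q)\frac{d\xi}{2\pi i\xi}$, using the vanishing boundary integrals to trade the residues at $\xi=q^{\kappa k}$ for those at the poles of $f$, and substituting $u=\xi\lambda t$ --- is the same contour-deformation argument spelled out for Lemma~\ref{lem.fund}. The only cosmetic discrepancy is that your computation naturally yields $L(x^{-1}t,\lambda,q^{\kappa})$ in the final formula, whereas the paper's statement writes $L(x^{-1}t,\lambda,q)$; this appears to be the paper implicitly specialising to $\kappa=1$ (or a typo), not a flaw in your argument.
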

Notice that all the dependence on $t$ and $\lambda$ is now in the arguments of
the Appell-Lerch sums. This illustrates the important role the residues of the
Borel transform play in the resummation. An application of this lemma leads to
the following remarkable formula.

\begin{corollary}
  \be
f^{(1)}(t,\lambda,q)
=
-\theta(t;q)\sum_{k=0}^{\infty}
\left(L(q^{1/4+k/2}t,\lambda,q^{1/2})R_{+}(k,q)
  +L(-q^{1/4+k/2}t,\lambda,q^{1/2})R_{-}(k,q)\right)
  \ee
  where $f^{(1)}$ is given in Equation~\eqref{ex41.f1} and $R_{\pm}$ is given
  in Equation~\eqref{Rpm}.
\end{corollary}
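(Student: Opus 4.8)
The plan is to read this off as a direct application of the preceding lemma to the $q$-Borel transform that produces $f^{(1)}$. Recall from the construction of Equation~\eqref{ex41.f1} that the formal power series solution of~\eqref{41x1} attached to the relevant edge of the lower Newton polygon is divergent, and that $f^{(1)}$ is obtained from it by a $q$-Borel transform $\Bor_{1/2}$ followed by the $q$-Laplace transform $\Lap_{1/2}$, with the theta-normalisation that flattens the edge contributing an overall factor $\theta(t;q)$. Writing $g=g(\xi,q)$ for the resulting Borel transform, the construction gives
\be
f^{(1)}(t,\lambda,q)=\theta(t;q)\,(\Lap_{1/2}g)(t,\lambda,q)\,,
\ee
so that the assertion is exactly the previous lemma applied to $f=g$ with $\kappa=\tfrac12$, once the poles and residues of $g$ have been located and matched with $R_{\pm}$.

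The heart of the argument, and the step I expect to be the main obstacle, is the explicit determination of $g$. First I would apply $\Bor_{1/2}$ to Equation~\eqref{41x1} to obtain the linear $q$-difference equation satisfied by $g$ --- the Borel transform changes the equation by an affine transformation of its Newton polygon --- and then pick out the solution holomorphic at $\xi=0$, which extends to a meromorphic function of $\xi$. The key computation is to show that this $g$ has only simple poles, located precisely at $\xi=\pm q^{-1/4-k/2}$ for $k\ge 0$ (the zeros of the theta factor in the denominator of $g$), and that
\be
\Res_{\xi=\pm q^{-1/4-k/2}}g(\xi,q)\frac{d\xi}{2\pi i\xi}=R_{\pm}(k,q)\,,
\ee
with $R_{\pm}$ as in Equation~\eqref{Rpm}. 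Since $|q|<1$ all these poles satisfy $|\xi|\ge|q|^{-1/4}>1$, so $g$ is holomorphic near $\xi=0$; together with the decay of the Appell--Lerch kernel as the radius of the small circle shrinks, this verifies the vanishing hypothesis
\be
\lim_{\ve\to 0}\oint_{|\xi|=\ve^{\pm}}L(\xi^{-1}\lambda^{-1},\lambda,q^{1/2})\,g(\xi\lambda t,q)\,\frac{d\xi}{2\pi i\xi}=0
\ee
needed to invoke the lemma, and the growth $|q|^{-1/4-k/2}\to\infty$ of the pole locations guarantees convergence of the resulting sum over poles.

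With these ingredients in place, I would substitute into the residue formula of the preceding lemma. Using $(\pm q^{-1/4-k/2})^{-1}=\pm q^{1/4+k/2}$, the pole $x=\pm q^{-1/4-k/2}$ contributes $L(\pm q^{1/4+k/2}t,\lambda,q^{1/2})\,R_{\pm}(k,q)$, so that
\be
(\Lap_{1/2}g)(t,\lambda,q)=-\sum_{k=0}^{\infty}\Bigl(L(q^{1/4+k/2}t,\lambda,q^{1/2})R_{+}(k,q)+L(-q^{1/4+k/2}t,\lambda,q^{1/2})R_{-}(k,q)\Bigr)\,.
\ee
Multiplying by the prefactor $\theta(t;q)$ then gives the claimed identity. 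Beyond this bookkeeping the only genuine labour is the pole-and-residue computation for $g$, i.e.\ finding a closed form for the Borel transform and evaluating its residues against the definition~\eqref{Rpm} of $R_{\pm}$; this is where essentially all of the content lies.
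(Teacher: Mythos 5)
Your proposal is correct and is essentially the paper's own (largely implicit) argument: the corollary is stated as a direct application of the preceding contour-deformation lemma with $\kappa=\tfrac12$ to $g=\Bor_{1/2}\hat f^{(1)}$, and the pole-and-residue computation you correctly flag as the real content is exactly Lemma~\ref{lem.resf1}(a), where $R_{\pm}(k,q)$ is \emph{defined} as $\Res_{\xi=\pm q^{-1/4-k/2}}\Bor_{1/2}\hat f^{(1)}(\xi,q)\frac{d\xi}{2\pi i\xi}$ and evaluated in closed form via the auxiliary function $H_r$ and the recursion coming from the functional equation of the Borel transform. The only loose ends are cosmetic: the poles come from the factor $(1-q^{1/2}\xi^2)$ in that functional equation rather than from an explicit theta denominator, and the vanishing hypothesis must also be checked on the large circle $|\xi|=\ve^{-1}$, not just near $\xi=0$.
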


\subsection{The slash operator and proof of Theorem~\ref{thm.ST}}
\label{sub.slash}

In this section we recall the slash operator, an important ingredient to
express modularity. We will use the usual conventions for the modular
$q=\e(\tau)$ and the Jacobi $t=\e(z)$, $\lambda_{i}=\e(u_{i})$, $\mu_{i}=\e(v_{i})$,
$x=\e(w)$ variables, i.e., $\ti q=\e(-1/\tau)$, $\ti t=\e(z/\tau)$ and
$\e(x)=e^{2\pi i x}$.
Now recall the slash operator $f|_\kappa \gamma$ (see, eg.~\cite[p.13]{123}
and~\cite{EZ}) for $\gamma=\sma abcd \in\SL_{2}(\BZ)$ acting on a function
$f(z,\tau)$ by 
\be
\label{fslash}
(f|_\kappa \gamma)(z,\tau) = (c \tau+d)^{-\kappa} f(\gamma(z,\tau)),
\qquad \gamma(z,\tau) = \left(\frac{z}{c\tau+d},\frac{a\tau+b}{c\tau+d}\right) \,.
\ee
This action can be extended to a matrix-valued
function $F(z,\tau)$ of weight $\kappa=\diag(\kappa_1,\dots,\kappa_r)$ by 
\be
\label{Fslash}
  (F|_\kappa\gamma)(z,\tau)
  =
  F(\gamma(z,\tau))
 \diag((c \tau+d)^{-\kappa}) \,.
 \ee
 This extension satisfies
 \be
 \label{slashprops}
 \begin{aligned}
   (F|_\kappa\gamma)|_\kappa\gamma' &= F|_\kappa\gamma\gamma' \\
   (F G)|_\kappa\gamma &= (F|_\kappa\gamma) \Delta_\gamma
   (G|_\kappa\gamma) 
 \end{aligned}
 \ee
 for matrix-valued functions $F$ and $G$ and for $\gamma, \gamma' \in \SL_2(\BZ)$
 where $\Delta_\gamma(\tau)=\diag((c \tau+d)^{\kappa})$. We can extend these
 definitions to include half integral weight using a multiplier system as
 done for the Dedekind $\eta$-function. In all our examples the relative weights
 $\kappa_{i}-\kappa_{j}$ are integers and the absolute weights are either always
 integers, or always half-integers. Our choice of absolute weight depends on the
 normalisation of our solutions, and multiplying them by $\eta$-functions leads to
 a shift of the absolute weights by half-integers, but has no effect on the modularity
 of the linear $q$-difference equation. 

Recall the cocycle $\Om_{U,\gamma}$ from Equation~\eqref{Om}, and the corresponding
cocycle $\Om_{V,\gamma}$. If the monodromy matrix $M=V^{-1}U$ satisfies Equation
~\eqref{Mmod}, it follows that the cocycle matrices associated to $U$ and $V$
are equal:
\be
\label{WU=WV}
\begin{aligned}
  \Om_{U,\gamma} &=  (V\vM|_{\kappa_U}\gamma)(V\vM)^{-1}
  = (V|_{\kappa_{V}}\gamma) \Delta_{\kappa_V,\gamma}(\vM|_{\kappa_U}\gamma)(V\vM)^{-1}
  \\ &= (V|_{\kappa_V}\gamma)
  \Delta_{\kappa_V,\gamma}(\vM|_{\kappa_U}\gamma)\vM^{-1}V^{-1}
  = (V|_{\kappa_V}\gamma)V^{-1} = \Om_{V,\gamma} \,.
\end{aligned}
\ee

We remark that sometimes the monodromy matrix of the fundamental bases $U$
and $V$ that come from the algorithm of Section~\ref{sub.algorithm}
satisfies Equation~\eqref{Mmod} on a finite index subgroup of $\SL_2(\BZ)$
(this happens, e.g., in Theorem~\ref{thm.41}) which contains a conjugate
of a congruence subgroup of $\SL_2(\BZ)$. In this case, an averaging of
the monodromy leads to fundamental solutions whose cocycle extends for all
$\gamma \in \SL_2(\BZ)$.

In the rest of this subsection, we give a proof of Theorem~\ref{thm.ST}

\begin{proof}[Proof of Theorem~\ref{thm.ST}]
  Using the behaviour of the slash operator on the product of two
  matrices~\eqref{slashprops}, it is easy to see that if Equation~\eqref{Mmod}
  holds for two elements of $\SL_2(\BZ)$, it also holds for their product. Since
  $S$ and $T$ generate $\SL_2(\BZ)$, part (a) follows.

  Part (b) follows from the presentation of $\SL_2(\BZ)$ given by
  \be
  \label{SL2Z}
  \SL_2(\BZ) = \langle S,T \,\, | \,\, S^4=I, TSTST=S \rangle
  \ee
  and from the cocycle property, which implies that
  if $\gamma=T^{a_0} S T^{a_1} S T^{a_2} \dots S T^{a_r} S T^{a_{r+1}}$, then
  \be
  \label{OmST}
  \Om_\gamma(z,\tau) = \prod_{j=1}^r
  \Om_S( (T^{a_j} S T^{a_{j+1}} \dots T^{a_r} S T^{a_{r+1}})(z,\tau))\,.
  \ee

  The idea for part (c) is to use reduction theory, with attention paid
  to the domain of extension. Fix a cocycle $\Om$ that satisfies $\Om_T=I$ and
  $\Om_S$ extends as a meromorphic function to $\BC\times\BC'$. Below, when
  we say that $\Om_\gamma$ extends, we will mean that it extends to
  $\BC \times \BC_\gamma$. We will give the proof in several steps.

  \noindent {\bf Step 1.}
  The cocycle property implies that
  \be
  \Om_{\gamma^{-1}}(z,\tau) = \left(\Om_\gamma(\gamma^{-1}(z,\tau))\right)^{-1}\,.
  \ee
  This, together with the fact that $\gamma^{-1}(\tau)-\gamma^{-1}(\infty)=
  1/(c(-c\tau+a))$ imply that $\Om_\gamma$ extends if and only if $\Om_{\gamma^{-1}}$
  extends. In particular, applying it to $\gamma=S$, where $\gamma^{-1}=-S$, we
  obtain that
  \be
  \label{Om-S}
  \Om_{-S}(z,\tau) = (\Om_S(-z/\tau,-1/\tau))^{-1} \,.
  \ee
  Using our assumptions on $\Om_S$, it follows that $\Om_{-S}$ extends. 

  \noindent {\bf Step 2.}
  Suppose now that $\gamma=\sma abcd \in \SL_2(\BZ)$ with $a>0$ and $c>0$.
  It follows by Gauss reduction theory that we can write
  \be
  \label{gammaST}
  \gamma = T^{a_0} S T^{a_1} S \dots T^{a_r} S T^{a_{r+1}}
  \ee
  for integers $a_i$ where $a_0 \geq 0$ and $a_1, \dots, a_r >0$. Moreover, 
  $a_0, \dots, a_r$ can be obtained from the negative continued fraction
  expansion (using nearest integers from above, rather than from below)
  \be
  \label{contfrac}
  \frac{a}{c} = [a_0,a_1,\dots,a_r]:= a_0 -1/(a_1 -1/(a_2 - \dots))\,.
  \ee
  The continued fraction expansion shows that the first column of $\gamma$
  agrees with that of the product $T^{a_0} S T^{a_1} S \dots T^{a_r} S$, and
  the last integer $a_{r+1}$ is chosen so that Equation~\eqref{gammaST} holds.
  Equation~\eqref{OmST} implies that $\Om_\gamma(z,\tau)$ is a product of $r$
  matrices $\Om_S$ matrices evaluated suitably, and $\Om_\gamma$ extends to
  real $\tau$ that satisfy $(T^{a_j} S T^{a_{j+1}} \dots T^{a_r} S T^{a_{r+1}})(\tau)>0$
  for all $j=1,\dots,r$. The key property is that the first
  column of the matrices $T^{a_j} S T^{a_{j+1}} \dots T^{a_r} S T^{a_{r+1}}$
  for $j=1,\dots,r$  consists of positive integers (with the possible exception
  of $j=r+1$ where the $(2,1)$ entry may be zero). It follows that the system
  of inequalities cascades, and becomes equivalent to the single inequality
  $\tau > -d/c$. It follows that $\Om_\gamma$ extends in the case when
  $\gamma=\sma +*+* \in \SL_2(\BZ)$. 

  The above argument is best explained by an example. Consider the matrix
  $\gamma=\sma {17}{29}{7}{12}$. We expand the rational number $17/7$ of its
  first column in negative continued fractions 
  $$
  \frac{17}{7} = [3,2,4] = 3-\frac{4}{7} = 3-\frac{1}{7/4} = 3-\frac{1}{2-1/4}
  $$
  and obtain that the matrix
  $T^3 S T^2 S T^4 S=\begin{pmatrix} 17 & -5 \\ 7 & -2 \end{pmatrix}$,
  which further adjusting it by multiplying it on the right by $T^2$, gives
  $$
  \gamma =
  \begin{pmatrix} 17 & -5 \\ 7 & -2 \end{pmatrix} T^2 =
  T^3 S T^2 S T^4 S T^2 \,.
  $$
  The cocycle property and the fact that 
  $$
  T^2 S T^4 S T^2=\begin{pmatrix}
    7 & 12 \\ 4 & 7 \end{pmatrix}, \qquad T^4 S T^2=\begin{pmatrix}
    4 & 7 \\ 1 & 2 \end{pmatrix}, \qquad T^2= \begin{pmatrix}
    1 & 2 \\ 0 & 1 \end{pmatrix}
  $$
  implies that
  \be
  \Om_\gamma(z,\tau) =
  \Om_S\left(\frac{z}{4\tau+7}, \frac{7\tau+12}{4\tau+7}\right)
  \Om_S\left(\frac{z}{\tau+2}, \frac{4\tau+7}{\tau+2}\right)
  \Om_S(z, \tau+2)\,.
  \ee
  The right hand side extends when $\tau$ is real that satisfies
  $$
  \frac{7\tau+12}{4\tau+7} >0, \qquad \frac{4\tau+7}{\tau+2} >0, \qquad
  \tau+2 >0
  $$
  which (when reading the inequalities from last to first and simplifying)
  is equivalent to the system $\tau+2>0, 4\tau+7>0, 7\tau+12>0$, which is
  equivalent to $\tau> -12/7=\gamma^{-1}(\infty)$.

  \noindent {\bf Step 3.}
  We will now use the element $\ve=\sma {-1}001 \in \GL_2(\BZ)$ of order
  2, and observe that $\ve \sma abcd \ve = \sma a{-b}{-c}d \in \SL_2(\BZ)$
  for all $\sma abcd \in \SL_2(\BZ)$. In particular, $\ve T \ve = T^{-1}$
  and $\ve S \ve = -S$. It follows that if $\gamma=\sma abcd \in \SL_2(\BZ)$
  with $a>0$ and $c>0$ is given by~\eqref{gammaST}, then
  \be
  \label{egammaST}
  \ve\gamma\ve = \begin{pmatrix} a & -b \\ -c & d \end{pmatrix}=
    T^{-a_0} (-S) T^{-a_1} (-S) \dots T^{-a_r} (-S) T^{-a_{r+1}} \,.
  \ee
  The cocycle property implies that
  \be
  \label{OmSTe}
  \Om_{\ve\gamma\ve}(z,\tau) = \prod_{j=1}^r
  \Om_{-S}( (T^{-a_j} (-S) T^{-a_{j+1}} \dots T^{-a_r} (-S) T^{-a_{r+1}})(z,\tau))
  \ee
  where $\Om_{-S}(z,\tau)$ extends for $\tau<0$ by Step 1. Thus
  $\Om_{\ve\gamma\ve}$ extends when the inequalities 
  $(T^{-a_j} (-S) T^{-a_{j+1}} \dots T^{-a_r} (-S) T^{-a_{r+1}})(\tau)<0$
  for $j=1,\dots,r$. The key point now is that these inequalities cascade to
  a single inequality, namely, $\tau < d/c=(\ve\gamma\ve)^{-1}(\infty)$.
  It follows that $\Om_\gamma$ extends when $\gamma = \sma +*{-}* \in \SL_2(\BZ)$.
  In our running example above $\gamma=\sma {17}{29}{7}{12}$, we have
  \be
  \begin{pmatrix} 17 & -20 \\ -7 & 12 \end{pmatrix} =
  \ve \gamma \ve = T^{-3} (-S) T^{-2} (-S) T^{-4} (-S) T^{-2}
  \ee
  and the cocycle property and the fact that 
  $$
  T^{-2} (-S) T^{-4} (-S) T^{-2}=\mat 7{-12}{-4}7, \qquad
 T^{-4} (-S) T^{-2}=\mat 4{-7}{-1}2, \quad
 T^{-2}=\mat 1{-2}{0}1
 $$
 implies that
  \be
  \Om_{\ve \gamma \ve} =
  \Om_{-S}\left(\frac{z}{-4\tau+7}, \frac{7\tau-12}{-4\tau+7}\right)
  \Om_{-S}\left(\frac{z}{-\tau+2}, \frac{4\tau-7}{-\tau+2}\right)
  \Om_{-S}(z, \tau-2) \,.
  \ee
  The right hand side extends when $\tau$ is real that satisfies
  $$
  \frac{7\tau-12}{-4\tau+7} <0, \qquad \frac{4\tau-7}{-\tau+2} <0, \qquad
  \tau-2 <0
$$
which is equivalent to the system $7\tau-12<0, 4\tau-7<0, \tau-2<0$ which
is equivalent to $\tau < 12/7=(\ve\gamma\ve)^{-1}(\infty)$.

  \noindent {\bf Step 4.}
  The cocycle property and the triviality of $\Om_T=I$ implies that 
$\Om_{T \gamma}=\Om_\gamma$. Since $T \sma abcd =\sma {a+c}{b+d}cd$, it follows
that $\Om_\gamma$ (and hence its extension) depends only on the bottom row
$(c,d)$ of $\gamma$. When $\gamma_{2,1} \neq 0$, the extension follows from
either Step 2 or Step 3, depending on the sign of $\gamma_{2,1}$.

This concludes the proof of the theorem.  
\end{proof}

\subsection{Duality}
\label{sub.duality}




In this section we review some elementary facts about duality of $q$-holonomic modules.
Recall that we can write the linear $q$-difference equation
\be
\label{af}
a_r(t,q) f(q^r t, q) + a_{r-1}(t,q) f(q^{r-1}t,q) + \dots + a_0(t,q) f(t,q) =0 
\ee
for a function $f(t,q)$ in matrix form $\s X = A X$ where
$X=X_f=(f, \s f, \dots, \s^{r-1} f)^t$ is a column vector and
$A=\comp(-a_0/a_r, \dots, -a_{r-1}/a_r)$ is the companion matrix where
{\small
\be
\label{comp}
\comp(c_0,c_1,\dots,c_{r-1}) =
\begin{pmatrix}
0 & 1 & 0 & \dots & 0\\
0 & 0 & 1 & \dots & 0\\
: & : & : & \dots & :\\
0 & 0 & 0 & \dots & 1\\
c_0 & c_1 & c_2 & \dots
& c_{r-1} 
\end{pmatrix} \,.
\ee
}
We will also write~\eqref{af} in operator form $L f=0$ where
$L=\sum_{j=0}^r a_j \s^{r-j} \in \calW$ and denote by $M_f=\calW f$ the corresponding
module over the $q$-Weyl algebra $\calW$. The module $M_f$ has two duals.
The first one is defined by
\be
\label{Mdual1}
M_{f}^{\wedge}=M_{f^{\wedge}}, \qquad f^{\wedge}(t,q)=f(t,q^{-1}) 
\ee
which in matrix form is given by
\be
\label{Awedge}
\s X_{f^{\wedge}} = A^{\wedge} X_{f^{\wedge}}, \qquad
A^{\wedge}(t,q) = A(qt,q^{-1})^{-1} \,.
\ee
Indeed, we have
\begin{equation*}
a_r(t,q^{-1}) f^{\wedge}(q^{-r} t, q) + a_{r-1}(t,q^{-1})
f^{\wedge}(q^{-r+1}t,q) + \dots + a_0(t,q^{-1}) f^{\wedge}(t,q) =0 
\end{equation*}
and inverting $a_{r}$, we obtain that
\begin{equation*}
f^{\wedge}(qt,q)
=
-\frac{ a_{r}(qt,q^{-1})}{ a_{0}(qt,q^{-1})}f^{\wedge}(q^{1-r}t,q)
- \frac{ a_{r-1}(qt,q^{-1})}{ a_{0}(qt,q^{-1})}f^{\wedge}(t,q)
-\dots
-\frac{ a_{1}(qt,q^{-1})}{ a_{0}(qt,q^{-1})}f^{\wedge}(t,q)
\end{equation*}
which implies~\eqref{Awedge}. The second dual module is defined by
\be
\label{Mdual2}
M_{f}^{\vee}
=
\mathrm{Hom}_{\BQ(q)[t^{\pm 1}]}(M_{f},\BQ(q)[t^{\pm 1}]) 
\ee
with a basis $f^{\vee}_{i}$ for $i=0,\dots r-1$ such that for $j=0,\dots,r-1$ we have
$f^{\vee}_{i}(\sigma^{j}f)=\delta_{i,j}$. We claim that in matrix form, this
dual module is given by
\be
\label{Avee}
\s X_{f^{\vee}} = A^{\vee} X_{f^{\vee}}, \qquad
A^{\vee}(t,q) = (A(t,q)^{-1})^t \,.
\ee
Indeed, using the basis $f^{\vee}$, we can define an action of $\calW$ on
$M_{f}^{\vee}$ via conjugation with $\sigma$. By definition, the action satisfies
$\sigma(\lambda(v)) = (\sigma\cdot\lambda)(\sigma v)$ for $\lambda\in M_{f}^{\vee}$
and $v\in M_{f}$. In particular we have
$\sigma\cdot f_{i}^{\vee} = \sigma f_{i}^{\vee}\sigma^{-1}$. 
Notice that for $i,j=0,\dots,r-1$
\begin{equation*}
\begin{aligned}
(\sigma\cdot f_{i}^{\vee})(\sigma^{j}f)
&=
(\sigma f_{i}^{\vee})(\sigma^{j-1}f)
=
(\sigma f_{i}^{\vee})\left(\sum_{k=0}^{r-1}(A(q^{-1}t,q)^{-1})_{jk}\sigma^{k}f\right)
\\ &=
\sigma\left(\sum_{k=0}^{r-1}(A(q^{-1}t,q)^{-1})_{jk}\delta_{ik}\right)
= (A(t,q)^{-1})_{ji}
= \left(\sum_{k=0}^{r-1}(A(t,q)^{-t})_{ik}f^{\vee}_{k}\right)(\sigma^{j}f)
\end{aligned}
\end{equation*}
which implies~\eqref{Avee}.

Recall that if two modules with companion matrices $A$ and $B$ are isomorphic,
there exists a change of basis $P$ such that
\be
B
=
(\s P) A P^{-1} \,.
\ee
Then taking
\be
\begin{tiny}
\begin{aligned}
  P^{\vee}(t,q)
  =
  \begin{pmatrix}
    1 & 0 & 0 & \dots & 0\\
    a_{1}/a_0 & 1 & 0 & \dots & 0\\
    a_{2}/a_0 & \s(a_{1}/a_0) & 1 & \dots & 0\\
    : & : & : & \dots & :\\
    a_{r-1}/a_0 & \s(a_{r-2}/a_0) & \s(a_{r-3}/a_0) & \dots & 1\\
  \end{pmatrix}, \qquad
  P^{\wedge}(t,q)
  =
  \begin{pmatrix}
    0 & 0 & \dots & 0 & 1\\
    0 & 0 & \dots & 1 & 0\\
    : & : & \dots & : & :\\
    0 & 1 & \dots & 0 & 0\\
    1 & 0 & \dots & 0 & 0\\
  \end{pmatrix}
\end{aligned}
\end{tiny}
\ee
we find that
\be
\begin{aligned}
  (\s P^{\vee})^{-1}A^{\vee}P^{\vee}
  &=\comp(-a_r/a_0,\s(a_{r-1}/a_0), \dots, \s^{r}(a_{1}/a_0))\\
  (\s P^{\vee})^{-1}A^{\vee}P^{\vee}
  &=\comp\left(-\frac{a_r(qt,q^{-1})}{a_0(qt,q^{-1})},
    \frac{a_{r-1}(qt,q^{-1})}{a_0(qt,q^{-1})}, \dots,
    \frac{a_{1}(qt,q^{-1})}{a_0(qt,q^{-1})}\right)\,.
\end{aligned}
\ee

We now remark an elementary relation between fundamental solutions of
inhomogeneous linear $q$-difference equations and their corresponding
homogeneous ones. Consider the inhomogeneous equation
\be
 a_{0}(t,q)f(t,q)+\dots+ a_{r-1}(t,q)f(q^{r-1}t,q)+f(q^{r}t,q)=c_{0}(q).
\ee
We can write it either in the form $\s X_{f^{(in)}} = A^{(in)} X_{f^{(in)}}$
with 
{\tiny
  \begin{equation*}
   X_{f^{(in)}}
=
\begin{pmatrix}
1\\
f \\
:\\
\s^{r-1} f
\end{pmatrix},
\qquad
A^{(in)}
=
\begin{pmatrix}
1 & 0 & 0 & 0 & \dots & 0\\
0 & 0 & 1 & 0 & \dots & 0\\
0 & 0 & 0 & 1 & \dots & 0\\
: & : & : & : & \dots & :\\
0 & 0 & 0 & 0 & \dots & 1\\
c_{0} & - a_{0} & - a_{1} & - a_{2} & \dots & - a_{r-1}
\end{pmatrix}
\end{equation*}
}

\noindent
or in the form $\s X = A X$ with
{\small
$$
X_f=(f, \s f, \dots \s^r f)^t, \qquad 
A=\comp(- a_{0} , \s a_{0}- a_{1} , \s a_{1}- a_{2} , \dots ,
\s a_{r-2}- a_{r-1} , \s a_{r-1}-1) \,.
$$
}
The two equations are related by 
$X_f = P X_{f^{(in)}}, \,\,\, A=\s P A^{(in)} P^{-1}$
where
$$
P=\comp(c_{0} , - a_{0} , - a_{1} , - a_{2} , \dots , - a_{r-1}) \,.
$$
We end this subsection by discussing the duality
\be
M^{\vee} \cong M^{\wedge}
\ee
which follows from (but is not equivalent to) the existence of matrices
$P^{\vee},P^{\wedge} \in GL_r(\BQ(t,q))$ such that
\be
\s P^{\wedge} A^{\wedge} (P^{\wedge})^{-1}
=
\s P^{\vee} A^{\vee} (P^{\vee})^{-1}
\ee
is a companion matrix. For the $q$-holonomic modules that come from Chern-Simons
theory, the duality $M \mapsto M^{\wedge}$ corresponds to orientation-reversal of
the ambient 3-manifold. On the other hand, the factorisation of the Andersen--Kashaev
state integrals into elements of $M$ and $M^{\wedge}$ suggests that in those
examples, we have $M^{\wedge}\cong M^{\vee}$. The following proposition
confirms this for the case of the $4_1$ knot. 

\begin{proposition}
\label{prop.M41duals}
The $q$-difference module $M$ associated to Equation~\eqref{41x1} satisfies
\be
  M
  \cong
  M^{\wedge}
  \cong
  M^{\vee} \,.
  \ee
The fundamental matrices satisfy
\be
\begin{aligned}
  U(t,\lambda,q)
  &=
  P^{\wedge}(t,q)
  U^{\wedge}(t,\lambda,q)
  =
  P^{\vee}(t,q)
  U^{\vee}(t,\lambda,q)
  \begin{pmatrix}
    0 & -1\\
    1 & 0
  \end{pmatrix}\,,\\
  V(t,q)
  &=
  P^{\wedge}(t,q)
  V^{\wedge}(t,q)
  =
  P^{\vee}(t,q)
  V^{\vee}(t,q)
  \begin{pmatrix}
    0 & -1\\
    1 & 0
  \end{pmatrix}
\end{aligned}
\ee
with
\be
\label{Pvw}
P^{\wedge}(t,q)
  =
  \begin{pmatrix}
1 & 0\\
2q^{-1}-t^{-1}q^{-1} & -q^{-2}
  \end{pmatrix}
  ,\quad
  P^{\vee}(t,q)
  =
  \begin{pmatrix}
0 & -q^{-1}t^{-2}\\
q^{-1}t^{-2} & 0
  \end{pmatrix} 
\ee
and cocycle $\Om$ of $M$ satisfies
  \be
  \label{quadrel}
  \Om = (P^{\wedge}|_{\kappa}\gamma) \Om^{\wedge}(P^{\wedge})^{-1}
  = (P^{\vee}|_{\kappa}\gamma) \Om^{\vee} (P^{\vee})^{-1} \,.
  \ee
\end{proposition}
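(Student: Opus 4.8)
The plan is to reduce every assertion to the level of companion matrices. First I would present Equation~\eqref{41x1} as a rank-two first-order system $\s X = A X$ (left-multiplying its annihilating operator by $\s$ to clear the $\s^{-1}$ term), obtaining the companion matrix $A=\comp(-q^{-2},-(1-2qt)/(q^2 t))$. From the definitions~\eqref{Awedge} and~\eqref{Avee} I would then record $A^{\wedge}(t,q)=A(qt,q^{-1})^{-1}$ and $A^{\vee}(t,q)=(A(t,q)^{-1})^t$ explicitly. An isomorphism of $q$-difference modules is precisely a gauge matrix $P\in\GL_2(\BQ(t,q))$ intertwining the companion matrices, so the chain $M\cong M^{\wedge}\cong M^{\vee}$ reduces to verifying the two identities
\be
A=(\s P^{\wedge})\,A^{\wedge}\,(P^{\wedge})^{-1},\qquad
A=(\s P^{\vee})\,A^{\vee}\,(P^{\vee})^{-1}
\ee
for the explicit matrices of~\eqref{Pvw}. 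Both are finite rational computations in the $2\times 2$ setting; for the $\vee$ case one uses $P^{\vee}=q^{-1}t^{-2}\sma 0{-1}10$, so the identity collapses to the elementary $A=q^{-2}\sma 0{-1}10\,A^{\vee}\,\sma 0{-1}10^{-1}$, while the $\wedge$ case reduces, after one multiplication, to $A^{\wedge}(P^{\wedge})^{-1}=\sma 0110$.

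Second, I would promote these module isomorphisms to the stated identities between fundamental matrices. Since $A=(\s P^{\wedge})A^{\wedge}(P^{\wedge})^{-1}$, the matrix $P^{\wedge}U^{\wedge}$ solves $\s X=AX$, hence differs from $U$ by right multiplication by a $\s$-invariant (elliptic) matrix $C$; the content is that $C=I$. I would establish this column by column near $t=0$: because $\wedge$ is the substitution $q\mapsto q^{-1}$ acting on the canonical solutions $f^{(1)},f^{(-1)}$ and $P^{\wedge}$ is triangular and so preserves the filtration at $t=0$, matching the leading $t$-exponents and leading coefficients forces $C=I$; running the same comparison at $t=\infty$ gives $V=P^{\wedge}V^{\wedge}$. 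The $\vee$ case runs the same way, now with the anti-diagonal $P^{\vee}=q^{-1}t^{-2}\sma 0{-1}10$, which interchanges the two filtration slots; the extra constant matrix $\sma 0{-1}10$ on the right reconciles the canonical dual basis $\{f^{\vee}_i\}$ (dual to $\{f,\s f\}$) with the ordering used to define $U^{\vee},V^{\vee}$, and being $\s$-invariant it leaves the companion system unchanged.

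Third, the cocycle identity~\eqref{quadrel} is a formal consequence. Substituting $U=P^{\wedge}U^{\wedge}$ into $\Om_{U,\gamma}=(U|_{\kappa}\gamma)U^{-1}$ and expanding via the slash product rule~\eqref{slashprops}, the automorphy factors attached to $U$ and $U^{\wedge}$ (which carry the same weight $\kappa$) combine to produce the factor $(P^{\wedge}|_{\kappa}\gamma)$, and the remaining middle factor is the cocycle of the dual module, equal by Lemma~\ref{lem.cat}(c) to $\Om^{\wedge}(z,\tau)=\Om(z,-\tau)$. This yields the first equality of~\eqref{quadrel}; the second follows identically from $U=P^{\vee}U^{\vee}\sma 0{-1}10$, the constant matrix cancelling through the slash while the dual cocycle is $(\Om^{-1})^t$ as in~\eqref{Omduals}.

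The main obstacle is the normalization step of the second paragraph: verifying that the explicit $P^{\wedge},P^{\vee}$ intertwine not merely the abstract modules but the specific fundamental matrices $U,V$ produced by the Borel--Laplace algorithm of Section~\ref{sub.algorithm}, i.e.\ that the elliptic ambiguity $C$ is trivial. This requires tracking the leading asymptotics of the explicit solutions $f^{(\pm 1)}$, $g^{(0,0)}$ and $g^{(0,1)}$ together with their $q\mapsto q^{-1}$ and dual-basis counterparts at both $t=0$ and $t=\infty$. By contrast, the gauge identities are mechanical and the cocycle relation is purely formal once the normalizations are matched.
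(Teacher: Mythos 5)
Your proposal is correct and takes essentially the same route as the paper: the paper's entire proof of Proposition~\ref{prop.M41duals} consists of the single companion-matrix gauge identity $A(t,q)=P^{\bullet}(qt,q)A^{\bullet}(t,q)P^{\bullet}(t,q)^{-1}$ for $\bullet\in\{\wedge,\vee\}$, which is exactly your first paragraph (and your intermediate simplifications $A^{\wedge}(P^{\wedge})^{-1}=\sma 0110$ and $A=q^{-2}\sma 0{-1}10 A^{\vee}\sma 0{-1}10^{-1}$ check out). The normalization of the fundamental matrices and the cocycle relation~\eqref{quadrel}, which you rightly identify as the substantive remaining content, are left implicit in the paper, so your second and third paragraphs supply detail the paper omits rather than diverging from it.
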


Equation~\eqref{quadrel} was called ``quadratic relations'' in Section 3.3 and
equations (68)-(70) of~\cite{GZ:kashaev}. An example of a self-dual module is the
generalised $q$-hypergeometric equation (see Equation~\eqref{heine.selfdual.Ueq}
below). 

\begin{proof}
With $A$, given in Equation~\eqref{comp.mat.41.x1}, and the gauge transformations
~\eqref{Pvw}, we have
\be
A(t,q)
=
P^{\wedge}(qt,q)A^{\wedge}(t,q)P^{\wedge}(t,q)^{-1}
=
P^{\vee}(qt,q)A^{\vee}(t,q)P^{\vee}(t,q)^{-1}.
\ee
\end{proof}
The extra symmetry with $M \cong M^{\wedge}$ comes from the fact the $4_{1}$ knot
is amphichiral. However, this symmetry does not persist to the
module associated to the inhomogeneous equation. This can again be seen from
the state integrals introduced in \cite{GGMW:trivial} whose integrand lacks
the symmetry the Andersen-Kashaev state integrals have.

\begin{proposition}
\label{41inhom.not.sd}
The $q$-difference module $M^{\vee}$ associated to Equation~\eqref{41x1inhom}
is not isomorphic to $M^{\wedge}$. 
\end{proposition}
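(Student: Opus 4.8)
The plan is to separate the two modules by a discrete isomorphism invariant, namely the existence of a trivial (constant) submodule. Write $L=q^{2}t\s^{2}+(1-2qt)\s+t\in\calW$ for the homogeneous operator attached to \eqref{41x1inhom} after clearing $\s^{-1}$, so that \eqref{41x1inhom} reads $Lf=1$. Homogenising by applying $\s-1$ to annihilate the constant right-hand side presents the inhomogeneous module as $M=\calW/\calW L'$ with $L'=(\s-1)L$. Since $\calW L'\subseteq\calW L$ and $\calW$ is an Ore domain, one has $\calW L/\calW L'\cong\calW/\calW(\s-1)$, and hence a canonical short exact sequence
\be
0\longrightarrow\mathbf{1}\longrightarrow M\longrightarrow M_{0}\longrightarrow 0,
\ee
where $\mathbf{1}=\calW/\calW(\s-1)$ is the rank-one module of constants and $M_{0}=\calW/\calW L$ is the homogeneous $4_{1}$ module of Proposition~\ref{prop.M41duals}, which satisfies $M_{0}\cong M_{0}^{\wedge}\cong M_{0}^{\vee}$. (Dually, the solution space of $L'$ is the space of $f$ with $Lf$ constant, an extension of $\mathbf{1}$ by the homogeneous solutions.)

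Next I would push this sequence through the two dualities. The functor $(-)^{\wedge}$ is the covariant auto-equivalence given by twisting with the ring automorphism $q\mapsto q^{-1}$, $\s\mapsto\s^{-1}$, $t\mapsto t$; it preserves the order of the sequence, and since $\mathbf{1}^{\wedge}\cong\mathbf{1}$ and $M_{0}^{\wedge}\cong M_{0}$ (amphichirality, Proposition~\ref{prop.M41duals}) it yields
\be
0\longrightarrow\mathbf{1}\longrightarrow M^{\wedge}\longrightarrow M_{0}\longrightarrow 0.
\ee
By contrast $(-)^{\vee}$ is the contravariant $\mathrm{Hom}$-duality, which reverses the sequence; using $\mathbf{1}^{\vee}\cong\mathbf{1}$ and the self-duality $M_{0}^{\vee}\cong M_{0}$ of the same proposition it yields
\be
0\longrightarrow M_{0}\longrightarrow M^{\vee}\longrightarrow\mathbf{1}\longrightarrow 0.
\ee
Thus $\mathbf{1}$ sits inside $M^{\wedge}$ as a submodule, whereas in $M^{\vee}$ it appears only as a quotient. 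The decisive point is that $(-)^{\wedge}$ preserves the submodule/quotient r\^oles while $(-)^{\vee}$ interchanges them.

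It then suffices to show $\mathrm{Hom}_{\calW}(\mathbf{1},M^{\wedge})\neq 0$ while $\mathrm{Hom}_{\calW}(\mathbf{1},M^{\vee})=0$. The first is immediate from the inclusion in the second displayed sequence. For the second, a map $\mathbf{1}\to M^{\vee}$ either factors through the submodule $M_{0}$ or splits the surjection onto $\mathbf{1}$, so I would establish two vanishings: $\mathrm{Hom}_{\calW}(\mathbf{1},M_{0})=0$, and non-splitness of the sequence for $M$. Both reduce to one kind of computation. Identifying $\mathbf{1}$ with $\BQ(q)[t^{\pm1}]$ on which $\s$ acts by $t\mapsto qt$, a nonzero map $\mathbf{1}\to M_{0}$ would give a nonzero Laurent polynomial $\beta$ annihilated by the reversed operator $q^{2}t+(1-2qt)\s+t\s^{2}$, while a nonzero map $M\to\mathbf{1}$ would give a nonzero $p$ with $Lp$ constant, where $Lp(t)=q^{2}t\,p(q^{2}t)+(1-2qt)\,p(qt)+t\,p(t)$. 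In each case a top- and bottom-degree count produces leading coefficients proportional to $(q^{\,m}-1)^{2}$ (with $m=d\pm1$, $d$ the degree of the polynomial), which are nonzero for generic $q$ and force the polynomial to vanish. As $(-)^{\vee}$ is a duality, $\mathrm{Hom}_{\calW}(M,\mathbf{1})=0$ shows the $M^{\vee}$-sequence is non-split, and together with $\mathrm{Hom}_{\calW}(\mathbf{1},M_{0})=0$ this gives $\mathrm{Hom}_{\calW}(\mathbf{1},M^{\vee})=0$. Hence $M^{\wedge}$ has a trivial submodule and $M^{\vee}$ does not, so $M^{\wedge}\not\cong M^{\vee}$.

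The hard part will be the non-splitness: the diagram-chase through the two dualities is automatic, but the whole argument collapses if the sequence splits (then $M\cong\mathbf{1}\oplus M_{0}$ forces $M^{\wedge}\cong M^{\vee}$), so the degree computation yielding $\mathrm{Hom}_{\calW}(M,\mathbf{1})=0$ carries the real weight. Conceptually this non-splitness is the algebraic counterpart of the remark preceding the statement: the inhomogeneous state integrand of \cite{GGMW:trivial} lacks the reflection symmetry of the Andersen--Kashaev integrand that produced $M_{0}\cong M_{0}^{\wedge}$, and this broken symmetry is exactly what prevents the extension from being invariant under the submodule/quotient swap carried out by $(-)^{\vee}$.
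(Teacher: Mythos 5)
Your argument is correct, and it reaches the conclusion by a genuinely more structural route than the paper. The paper's proof is a direct matrix computation: it assumes an intertwiner $P\in\GL_3(\BQ(t,q))$ between the companion matrices $A^{\vee}$ and $A^{\wedge}$, extracts from the first row of the intertwining relation that $P_{1,1}$ is constant in $t$ and that $P_{1,2}$ solves $LP_{1,2}=P_{1,1}$ with $L=q^{2}t\s^{2}+(1-2qt)\s+t$, and concludes $P_{1,1}=P_{1,2}=P_{1,3}=0$, contradicting invertibility. Your proof repackages exactly that computational kernel as the statement $\mathrm{Hom}_{\calW}(M,\mathbf{1})=0$ and then lets the canonical extension $0\to\mathbf{1}\to M\to M_{0}\to 0$, together with the covariance of $(-)^{\wedge}$ and the contravariance of $(-)^{\vee}$, do the rest: $\mathbf{1}$ sits in $M^{\wedge}$ as a submodule but in $M^{\vee}$ only as a quotient, and $\mathrm{Hom}_{\calW}(\mathbf{1},-)$ separates them. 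What your version buys is a conceptual explanation of why the two duals differ (the $\mathrm{Hom}$-dual swaps the sub/quotient roles of the inhomogeneity, and the extension is non-split), which matches the remark about the broken symmetry of the state integrand; what it costs is the extra lemma $\mathrm{Hom}_{\calW}(\mathbf{1},M_{0})=0$, which the paper never needs to isolate (it is implicitly covered by the same ``no rational solution'' assertion in the case $P_{1,1}=0$).

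Two small repairs. First, in the bottom-degree count the lowest-order coefficient of $Lp$ is the single monomial $q^{m}a_{m}$ coming from the $\s$-term alone, not an expression of the form $(q^{m}-1)^{2}a_{m}$; this only helps you, since it can never vanish and forces $m=0$, while the top count gives $(q^{d+1}-1)^{2}a_{d}$ and forces $d=-1$, contradicting $m\le d$. Second, the isomorphism in the Proposition is sought over $\BQ(q,t)$ (the paper takes $P\in\GL_3(\BQ(t,q))$), so the vanishing of $\mathrm{Hom}_{\calW}(M,\mathbf{1})$ must be proved for rational $p$, not only for Laurent polynomials: you need the additional standard observation that a pole of $p$ at some $t_{0}\in\BC^{\times}$ whose forward $q$-orbit avoids the other poles produces a pole of $Lp$ at $t_{0}$, which is impossible for $Lp$ constant, so $p$ is forced to be a Laurent polynomial and your degree count applies. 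The paper asserts the corresponding fact without this detail, so you are no worse off, but since (as you say) this computation carries the real weight of the proof, it is worth writing out.
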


\begin{proof}
The companion matrices of $M$ and $M^{\wedge}$ are given by
\be
  A^{\wedge}(t,q)
  =
  \begin{pmatrix}
1 & 0 & 0\\
q^{-1}t^{-1} & 2q^{-1}-t^{-1}q^{-1} & -q^{-2}\\
0 & 1 & 0
  \end{pmatrix},
  \quad
  A^{\vee}(t,q)
  =
  \begin{pmatrix}
1 & t^{-1} & 0\\
0 & 2q-t^{-1} & 1\\
0 & -q^{2} & 0
  \end{pmatrix} \,.
\ee
If there was an isomorphism there would exist $P(t,q) \in \GL_3(\BQ(t,q))$
such that
\be
  P(qt,q)A^{\vee}(t,q)=A^{\wedge}(t,q)P(t,q) \,.
\ee
It follows that   
\be
\label{P1111}
  P_{1,1}(qt,q)=P_{1,1}(t,q), \qquad
  P_{1,2}(qt,q)=P_{1,3}(t,q),
\ee
which then implies
\be
  tP_{1,2}(t,q)+(1-2tq)P_{1,2}(qt,q)+q^2tP_{1,2}(q^2t,q)=P_{1,1}(qt,q).
\ee
Since $P_{1,1}\in\BQ(t,q)$ satisfies~\eqref{P1111}, it is independent of $t$,
i.e., $P_{1,1}(t,q)=P_{1,1}(q)$. Therefore, $P_{1,2}$ would be a $P_{1,1}(q)$
multiple of a $\BQ(t,q)$-valued solution to Equation~\eqref{41x1inhom}. The
only such solution is zero, thus $P_{1,1}=P_{1,2}=0$ which, together with~\eqref{P1111}
gives also $P_{1,3}=0$, which violates the fact that $P$ is invertible.
\end{proof}

\subsection{Categorical aspects}
\label{sub.cat}

In this section we briefly recall some categorical aspects of modules over
the $q$-Weyl algebra and give a proof of Lemma~\ref{lem.cat}.

To begin with, a gauge transformation $X=P^{-1}Y$ changes~\eqref{qdiff} to
$\s Y = B Y$ where $B=\s P A P^{-1}$, changes a fundamental solution $U$ of
~\eqref{qdiff} to $P^{-1}U$, and consequently changes $\Om_\gamma$ to
$(P|_{\kappa}\gamma)^{-1} \Om P$. Hence, if $P \in \GL_r(\BQ(t,q))$, then modularity
is a property of the gauge equivalence class of a linear $q$-difference equation,
i.e., a property of a $q$-holonomic module, independent of a choice of a cyclic
vector. This concludes part (a) of Lemma~\ref{lem.cat}.

For part (b), we use the convariant function
$M \mapsto \Sol(M):=\Ker(\s, \calF \otimes_{\BQ(t,q)} M)$ where $\calF$ denotes a
universal $q$-difference field; see for
example~\cite[Sec.2.2]{Singer:galois-differential} for the case of modules over the
Weyl algebra and~\cite{Hardouin} for its extension for the $q$-Weyl algebra $\calW$.
This functor by definition satisfies~\cite[Lem.2.16]{Singer:galois-differential}
\be
\Sol(M) \cong \{y \in \calF^r \,\, | \,\, \s y=A y\}
\ee
where $A$ is the matrix obtained by a choice of a cyclic vector of $M$. Moreover,
if
$$
0 \to M' \to M \to M'' \to 0
$$
is a short exact sequence of finitely generated $\calW$-modules, then
\be
\label{solexact}
0 \to \Sol(M') \to \Sol(M) \to \Sol(M'') \to 0
\ee
is a short exact sequence of vector spaces over $\BQ(t,q)$;
see~\cite[Sec.2.2]{Singer:galois-differential}. In addition, $M$ has a canonical
filtration at $t=0$ (and also at $t=\infty$) independent of the choice of cyclic
vector~\cite{Sauloy:filtration} compatible with submodules and quotient modules.
Converting the above into matrices, it follows that the filtration preserving
fundamental matrices $U$, $U'$ and $U''$ of $M$, $M'$ and $M''$ (and likewise,
$V$, $V'$ and $V''$) and their corresponding cocycles are related by
\be
U = \begin{pmatrix} U' & * \\ 0 & U'' \end{pmatrix}, \qquad
\Om_U = \begin{pmatrix} \Om_{U'} & * \\ 0 & \Om_{U''} \end{pmatrix} \,.
\ee
It follows that if $\Om_U$ extends to the cut plane, so does $\Om_{U'}$
and $\Om_{U''}$, concluding part (b). It is unlikely that the converse to
part (b) holds, namely if extensions of modular $q$-holonomic modules are
$q$-holonomic, but not necessarily modular.

For part (c), observe that if $U$ is a fundamental matrix for $M$, then
$U^\wedge(t,q):=U(t,q^{-1})$ and $U^\vee=(U^{-1})^t$ are fundamental matrices for
$M^\wedge$ and $M^\vee$. It follows that if $\Om$ is a cocycle of $M$ then
$\Om^\wedge(z,\tau):=\Om(z,-\tau)$ and $\Om^\vee=(\Om^{-1})^t$ are cocycles for
$M^\wedge$ and $M^\vee$. Part (c) follows. This concludes the proof of
Lemma~\ref{lem.cat}.  
\qed


\section{Heine's $q$-hypergeometric functions}
\label{sec.heine}

This section is devoted to the proof of Theorem~\ref{thm.heine}.

\subsection{Solutions}

In this section we describe the solutions of the generalised $q$-hypergeometric
equation~\eqref{gener.qhyper}. Since that equation depends on parameters, it will
be convenient to consider the following system of equations
\be
\label{heine.equs}
\begin{aligned}
  \left(\prod_{j=0}^{r-1}(1-q^{-1}b_{j}\sigma_{t})-t\prod_{j=1}^{r}(1-a_{j}\sigma_{t})
  \right)f
  &=0\\
  \left(\sigma_{a_{i}}^{-1}-q^{-1}a_{i}\sigma_{t}\sigma_{a_{i}}^{-1}-(1-q^{-1}a_{i})
  \right)f
  &=0\\
  \left(1-q^{-1}b_{i}\sigma_{t}-(1-q^{-1}b_{i})\sigma_{b_{i}}^{-1}\right)f
  &=0\,.
\end{aligned}
\ee
The first equation describes the $q$-difference equation in $t$, namely Equation
~\eqref{gener.qhyper} whose Newton polygon shown in Figure~\ref{f.heine}.

\begin{figure}[!htpb]
\begin{center}
\begin{tikzpicture}[scale=0.8,baseline=-3]
\draw[<-,thick] (-1.5,0) -- (3.5,0);
\draw[->,thick] (5.5,0) -- (10.5,0);
\draw[<->,thick] (0,-1.5) -- (0,3.5);
\filldraw (0,0) circle (2pt);
\filldraw (0,2) circle (2pt);
\filldraw (2,0) circle (2pt);
\filldraw (2,2) circle (2pt);
\filldraw (7,0) circle (2pt);
\filldraw (7,2) circle (2pt);
\filldraw (9,0) circle (2pt);
\filldraw (9,2) circle (2pt);
\filldraw (4,1) circle (1pt);
\filldraw (4.5,1) circle (1pt);
\filldraw (5,1) circle (1pt);
\node at (1,2.4) {$g^{(a_{1}^{-1})}$};
\node at (8,2.4) {$g^{(a_{r}^{-1})}$};
\node at (1,-0.4) {$f^{(1)}$};
\node at (8,-0.4) {$f^{(qb_{r-1}^{-1})}$};
\draw (0,2) -- (3.5,2);
\draw (5.5,2) -- (9,2) -- (9,0);
\fill[blue,opacity=0.2] (0,0) -- (0,2) -- (3.5,2) -- (3.5,0) -- cycle;
\fill[blue,opacity=0.1] (3.5,0) -- (5.5,0) -- (5.5,2) -- (3.5,2) -- cycle;
\fill[blue,opacity=0.2] (5.5,0) -- (5.5,2) -- (9,2) -- (9,0) -- cycle;
\end{tikzpicture}
\caption{The Newton polygon of the first Equation~\eqref{heine.equs}.}
\label{f.heine}
\end{center}
\end{figure}
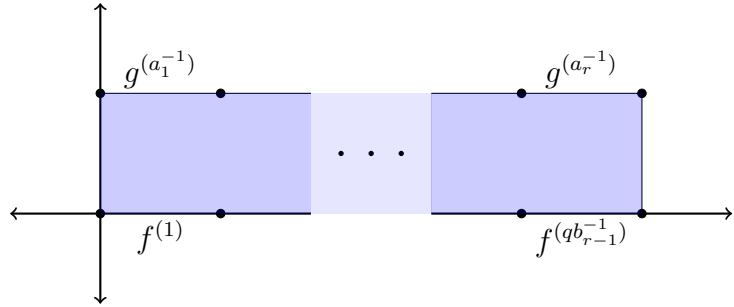

We see that there are no slopes of the Newton polygon and therefore all solutions
are determined by the top and bottom edges and their indicial polynomials. We will
normalise the solutions coming from the Frobenius algorithm so that they satisfy
the full system of Equations~\eqref{heine.equs}. The bottom edge of the Newton
polygon in Figure~\ref{f.heine} has indicial polynomial
\be
  (1-\rho)(1-q^{-1}b_{1}\rho)\dots(1-q^{-1}b_{r-1}\rho)=0\,.
\ee
The solutions corresponding to the roots are given by $f^{(q^{-1}b_{j})}$ in
Equation~\eqref{heine.bjsols}, with the convention that $b_0=q$. The top
edge of the Newton polygon in Figure~\ref{f.heine} has indicial polynomial
\be
  (1-a_{1}\rho)\dots(1-a_{r}\rho)=0\,.
\ee
The solutions corresponding to the roots are given by $g^{(a_{j}^{-1})}$ in
Equation~\eqref{heine.ajsols}. The companion matrix of Equation~\eqref{gener.qhyper}
is given by
\be
  A(t,a,b,q)
  =
  \begin{pmatrix}
    0 & 1  & \dots & 0\\
    : & : & \dots & :\\
    0 & 0 & \dots & 1\\
    -(-1)^{r}\frac{1-t}{e_{r}(b/q)-te_{r}(a)}
    & (-1)^{r}\frac{e_{1}(b/q)-te_{1}(a)}{e_{r}(b/q)-te_{r}(a)} & \dots
    & \frac{e_{r-1}(b/q)-te_{r-1}(a)}{e_{r}(b/q)-te_{r}(a)}
  \end{pmatrix}
\ee
where
\be
  e_{k}(x)
  =
  \sum_{1\leq j_{1}<\dots<j_{k}\leq r}x_{j_{1}}\dots x_{j_{k}}
  \ee are the elementary symmetric polynomials and so with $U$ and $V$ in
  Equation~\eqref{UVheine},
\be
  U(qt,a,b,q)
  =
  A(t,a,b,q)U(t,a,b,q)
  \quad\text{and}\quad
  V(qt,a,b,q)
  =
  A(t,a,b,q)V(t,a,b,q)\,.
\ee

\subsection{Self duality}

We will introduce a state integral in Section~\ref{subsec.heine.stateint} which
factorises as a finite sum of products of solutions of the module $M$ associated
to Equations~\eqref{heine.equs} and its dual $M^{\wedge}$ (see Section~\ref{sub.duality}
for the definitions of the two duals). To prove modularity we must factorise the state
integral as a finite sum of products of solutions of $M$ and $M^{\vee}$. To do
so, we need to give an explicit isomorphism between $M^{\vee}$ and $M^{\wedge}$.
This is the content of the following proposition, which after some change of
variables, is equivalent to Beukers--Jouhet~\cite[Thm.1.3]{Beukers:duality}.
For completeness, we will give an independent proof using the methods of our paper.

\begin{proposition}
  \label{prop.heine.selfdual}\cite[Thm.1.3]{Beukers:duality}
If $M$ is the module associated to Equations~\eqref{heine.equs} then
\be
\label{heine.selfdual.MV=MA}
  M^{\vee}\cong M^{\wedge} \,.
\ee
Explicitly, there exists $Q(t,a,b,q)\in GL_{r}(\BQ(t,a,b,q))$ such that
\be\label{heine.selfdual.Peq}
  Q(qt,a,b,q)A(qt,a,b,q^{-1})^{-1}
  =
  A(t,a,b,q)^{-T}Q(t,a,b,q)\,.
\ee
In addition, $Q$ satisfies 
\be
\label{heine.selfdual.Ueq}
  U(t,a,b,q)^{-T}
  =
  Q(t,a,b,q)
  U(t,a,b,q^{-1})
  \mathrm{diag}(1,-1,-1,\dots,-1)\,.
\ee
\end{proposition}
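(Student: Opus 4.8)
The two displayed identities \eqref{heine.selfdual.Peq} and \eqref{heine.selfdual.Ueq} are, modulo the rationality of $Q$, two shadows of one statement, and the isomorphism \eqref{heine.selfdual.MV=MA} is exactly the assertion that the intertwiner $Q$ can be chosen in $\GL_r(\BQ(t,a,b,q))$. The plan is therefore to \emph{define} $Q$ from the fundamental solutions and reduce the entire proposition to a single rationality statement. Recall from Section~\ref{sub.duality} and Section~\ref{sub.cat} that $U^{\wedge}(t,q):=U(t,a,b,q^{-1})$ and $U^{\vee}(t,q):=U(t,a,b,q)^{-T}$ are fundamental matrices for $M^{\wedge}$ and $M^{\vee}$, so that $\s U^{\wedge}=A^{\wedge}U^{\wedge}$ and $\s U^{\vee}=A^{\vee}U^{\vee}$ with $A^{\wedge}=A(qt,q^{-1})^{-1}$ and $A^{\vee}=A^{-T}$. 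Setting $D:=\diag(1,-1,\dots,-1)$ and
\be
Q(t,a,b,q):=U(t,a,b,q)^{-T}\,D\,U(t,a,b,q^{-1})^{-1},
\ee
equation \eqref{heine.selfdual.Ueq} holds by construction, and a one-line computation using $U(qt,q)^{-T}=A^{\vee}U(t,q)^{-T}$ and $U(qt,q^{-1})^{-1}=U(t,q^{-1})^{-1}(A^{\wedge})^{-1}$ gives $\s Q=A^{\vee}Q(A^{\wedge})^{-1}$, which is precisely \eqref{heine.selfdual.Peq}. Thus both functional relations hold for this $Q$ \emph{a priori as meromorphic matrices}, and everything comes down to showing that $Q$ is rational, together with the identification of the sign matrix $D$.

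I would attack rationality by inserting the explicit solutions \eqref{heine.bjsols} and \eqref{heine.ajsols}. Each entry of $U(t,a,b,q)$ is a normalisation constant ($B_j$ as in \eqref{Bjabq}, respectively $A_j$) times a theta quotient $\theta(q^{-1}b_j\,q^{i-1}t;q)/\theta(q^{i-1}t;q)$ times a series ${}_r\phi_{r-1}$ evaluated at $q^{i-1}t$, and likewise for $U(t,a,b,q^{-1})$ with $q$ replaced by $q^{-1}$. Under this substitution the $t$-dependence of $Q$ splits into theta factors, whose quasi-periodicity is governed by the functional equations \eqref{theta.fun}--\eqref{theta.fun2} (and merely reproduces \eqref{heine.selfdual.Peq}), and into the bilinear pairing of the ${}_r\phi_{r-1}$ families at $q$ against those at $q^{-1}$. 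Rationality is then the statement that this bilinear combination is free of its transcendental (theta and hypergeometric) parts. This cancellation is exactly the duality of Beukers--Jouhet \cite[Thm.1.3]{Beukers:duality}; in the language of this paper it is an instance of the factorisation of the state integral of Section~\ref{subsec.heine.stateint} into a finite sum of products of solutions, so the cancellation is read off from a residue computation rather than from a direct manipulation of series (which is delicate, since the $q^{-1}$-series in $U^{\wedge}$ is only defined by analytic continuation). The constant $D=\diag(1,-1,\dots,-1)$ is not free: it is the unique sign matrix producing the cancellation, and it can be pinned down by matching the leading $t\to 0$ behaviour of $U(t,q)^{-T}$ against that of $Q\,U(t,q^{-1})D$, which is dictated by the indicial polynomials of Figure~\ref{f.heine} and the normalisation constants $A_j,B_j$.

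I expect the rationality step to be the only genuine obstacle. The reduction of \eqref{heine.selfdual.Peq} and \eqref{heine.selfdual.Ueq} to a single rationality statement is formal, the theta bookkeeping via \eqref{theta.fun}--\eqref{theta.fun2} is routine, and fixing $D$ by comparison of leading asymptotics is a finite check. The mathematical content is the transcendental cancellation of the two ${}_r\phi_{r-1}$ families, whose convergence subtleties are precisely what the state-integral factorisation resolves. To keep the argument self-contained within our methods I would derive this cancellation from that factorisation, computing the relevant residues explicitly so as to exhibit $Q$ as an explicit element of $\GL_r(\BQ(t,a,b,q))$; this generalises the rank-one case, where $Q$ is a scalar multiple of the companion matrix $A$ itself. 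Alternatively, one may simply quote Beukers--Jouhet after the change of variables identifying their normalisation with ours.
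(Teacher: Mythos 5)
Your formal reduction is sound: with $Q:=U(t,a,b,q)^{-T}D\,U(t,a,b,q^{-1})^{-1}$ and $D=\diag(1,-1,\dots,-1)$, equation \eqref{heine.selfdual.Ueq} holds by fiat and \eqref{heine.selfdual.Peq} follows from the two first-order systems, so the proposition collapses to the rationality of $Q$ in $t$ and the parameters. But that rationality \emph{is} the content of the statement, and your proposal never proves it. The mechanism you offer --- reading the cancellation off the residue factorisation of the state integral of Section~\ref{subsec.heine.stateint} --- does not apply as stated: the factorisation \eqref{heine.stateint.fac} pairs solutions at $(t,q)$ with solutions at $(\tilde t,\tq^{-1})$, i.e.\ it realises the pairing of $M$ with $M^{\wedge}$ across the modular transformation, not the pairing of $U(t,q)^{-T}$ with $U(t,q^{-1})$ at a fixed modulus. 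Moreover the logical dependence in the paper runs the other way: Proposition~\ref{prop.heine.selfdual} is precisely the input used to convert the $M\otimes M^{\wedge}$ factorisation of the state integral into the $M\otimes M^{\vee}$ form that yields the cocycle, so deriving the proposition from that factorisation is at best circular. Your fallback of quoting Beukers--Jouhet is legitimate (the paper notes the equivalence) but abandons the proof rather than supplying it.

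What the paper does, and what your plan is missing, is an explicit rational candidate for the intertwiner: a matrix $P(t,a,b,q)$ whose entries are the linear-in-$t$ quantities $e_{k}(b/q)-q^{-j}te_{k}(a)$ arranged in an anti-triangular pattern, for which \eqref{heine.selfdual.Peq} (with the parameters $a,b$ shifted, then corrected by the gauge equivalences coming from the second and third equations of \eqref{heine.equs}) is a finite identity of symmetric functions checked via $e_{k}(\lambda x)=\lambda^{k}e_{k}(x)$. Once a rational solution of the intertwining relation exists, \eqref{heine.selfdual.Ueq} is proved by showing that the elliptic matrix $E=U(t,q^{-1}a,q^{-2}b,q^{-1})^{-1}P(t,a,b,q)^{-1}U(t,a,b,q)^{-T}$ is constant --- a Liouville-type pole/zero count kills the off-diagonal entries --- and then evaluating the diagonal constants in the $t\to0$ limit, where the Vandermonde structure of the leading terms and the explicit products $B_{j}(a,b,q)B_{j}(q^{-1}a,q^{-2}b,q^{-1})$ produce exactly $\diag(1,-1,\dots,-1)$. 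Your instinct to fix $D$ by matching leading asymptotics is the right one for that last step, but without an explicit rational $P$ there is nothing to match against, and the transcendental cancellation you correctly identify as the crux is never established.
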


\begin{proof}
To prove Equation~\eqref{heine.selfdual.Peq} let
\be
\begin{tiny}
\begin{aligned}
  &(1-q^{-1}b_{1})\dots(1-q^{-1}b_{r-1})P(t,a,b,q)\\
  &=\begin{pmatrix}
    1-t & 0 & 0 & \dots & 0\\
    0 & -(e_{2}(b/q)-q^{-1}te_{2}(a)) & e_{3}(b/q)-q^{-2}te_{3}(a) & \dots
    & (-1)^{r-1}(e_{r}(b/q)-q^{-r+1}te_{r}(a))\\
    0 & (e_{3}(b/q)-q^{-1}te_{3}(a)) & -(e_{4}(b/q)-q^{-2}te_{4}(a)) & \dots & 0\\
    : & : & : & \dots & :\\
    0 & (-1)^{r-1}(e_{r}(b/q)-q^{-1}te_{r}(a)) & 0 & \dots & 0
  \end{pmatrix}\,.
\end{aligned}
\end{tiny}
\ee
Using the fact that $e_{k}(\lambda x)=\lambda^{k}e_{k}(x)$, one can then see that
\be
\begin{tiny}
\begin{aligned}
  &(1-q^{-1}b_{1})\dots(1-q^{-1}b_{r-1})P(qt,a,b,q)A(qt,q^{-1}a,q^{-2}b,q^{-1})^{-1}\\
  &=(1-q^{-1}b_{1})\dots(1-q^{-1}b_{r-1})A(t,a,b,q)^{-T}P(t,a,b,q)\\
  &=
  \begin{pmatrix}
    e_{1}(b/q)-te_{1}(a) & -(e_{2}(b/q)-q^{-1}te_{2}(a)) & \dots
    & (-1)^{r-1}(e_{r}(b/q)-q^{-r+1}te_{r}(a))\\
    -(e_{2}(b/q)-te_{2}(a)) & e_{3}(b/q)-q^{-1}te_{3}(a) & \dots & 0\\
    : & : & \dots & : \\
    (-1)^{r-1}(e_{r}(b/q)-te_{r}(a)) & 0 & \dots & 0\\
  \end{pmatrix}\,.
\end{aligned}
\end{tiny}
\ee
Now notice that the second and third Equations~\eqref{heine.equs} give gauge
equivalences between the modules in $t$ thinking of $a,b$ as constants when we
shift $a_{j}\mapsto qa_{j}$ and $b_{i}\mapsto qb_{i}$. Therefore, multiplying $P$
by these gauge equivalences gives the desired $Q$. Now to prove
Equation~\eqref{heine.selfdual.Ueq} we note that, from Equation
~\eqref{heine.selfdual.Peq}
\be
  E(t,a,b,q)
  :=
  U(t,q^{-1}a,q^{-2}b,q^{-1})^{-1}P(t,a,b,q)^{-1}U(t,a,b,q)^{-T}
\ee
is an elliptic function. However,
\be
  H(t,a,b,q)
  :=
  \mathrm{diag}\left(\frac{\theta(q^{-2}b_{i}t;q)}{\theta(q^{-1}t;q)}\right)^{-1}
  E(t,a,b,q)\,\,\,
  \mathrm{diag}\left(\frac{\theta(q^{-1}b_{i}t;q)}{\theta(t;q)}\right)
\ee
is holomorphic at $t=0$. Therefore, we see that
\be
H_{i,j}(t,a,b,q)=
-t\frac{\theta(q^{-1}b_{j-1}t;q)}{\theta(q^{-2}tb_{i-1};q)}E_{i,j}(t,a,b,q)
\ee
is holomorphic. This implies that if $i\neq j$ that $E_{i,j}(t,a,b,q)$ has at most
has simple poles at $q\in b_{j-1}^{-1}q^{\BZ}$ and must have zeros at
$t\in b_{i-1}^{-1}q^{\BZ}$ and there is no such non-zero elliptic function and
therefore $E_{i,j}(t,a,b,q)=0$. Then notice that
\be
  H_{ii}(t,a,b,q)
  =
  -t\frac{\theta(q^{-1}b_{i-1}t;q)}{\theta(q^{-2}tb_{i-1};q)}E_{i,i}(t,a,b,q)
  =
  qb_{i-1}^{-1}E_{i,i}(t,a,b,q)
\ee
is holomorphic at $t=0$ and so as $E_{i,i}(t,a,b,q)$ is also elliptic in $t$ it
is constant in $t$. Now notice that
\be
\begin{tiny}
\begin{aligned}
  &\lim_{t\rightarrow 0}U(t,q^{-1}a,q^{-2}b,q^{-1})
  \mathrm{diag}\left(\frac{\theta(q^{-2}b_{i}t;q)}{\theta(q^{-1}t;q)}\right)\\
  &=
  \begin{pmatrix}
    1 & 1 & \dots & 1\\
    1 & qb_{1}^{-1} & \dots & qb_{r-1}^{-1}\\
    : & : & \dots & :\\
    1 & (qb_{1}^{-1})^{r-1} & \dots & (qb_{r-1}^{-1})^{r-1}\\
  \end{pmatrix}
  \begin{pmatrix}
    1 & 0 & \dots & 0\\
    0 & B_{1}(q^{-1}a,q^{-2}b,q^{-1}) & \dots & 0\\
    : & : & \dots & :\\
    0 & 0 & \dots & B_{r-1}(q^{-1}a,q^{-2}b,q^{-1})\\
  \end{pmatrix}
\end{aligned}
\end{tiny}
\ee
and
\be
\begin{small}
\begin{aligned}
  &\lim_{t\rightarrow 0}\mathrm{diag}
  \left(\frac{\theta(q^{-1}b_{i}t;q)}{\theta(t;q)}\right)^{-1}U(t,a,b,q)^{T}\\
  &=
  \begin{pmatrix}
    1 & 0 & \dots & 0\\
    0 & B_{1}(a,b,q) & \dots & 0\\
    : & : & \dots & :\\
    0 & 0 & \dots & B_{r-1}(a,b,q)\\
  \end{pmatrix}
  \begin{pmatrix}
    1 & 1 & \dots & 1\\
    1 & qb_{1}^{-1} & \dots & (qb_{1}^{-1})^{r-1}\\
    : & : & \dots & :\\
    1 & qb_{r-1}^{-1} & \dots & (qb_{r-1}^{-1})^{r-1}\\
  \end{pmatrix}\,.
\end{aligned}
\end{small}
\ee
Then using the fact that
\be
  \sum_{j=0}^{r}(-1)^{j}e_{j}(b/q)\rho^{j}
  =
  (1-\rho)(1-q^{-1}b_{1}\rho)\dots(1-q^{-1}b_{r-1}\rho)
\ee
vanishes at $\rho\in\{qb_{i}^{-1}\}$ we can show the matrix
\be
\begin{aligned}
  \prod_{k=1}^{r}(1-q^{-1}b_{k})
  \begin{pmatrix}
    1 & 1 & \dots & 1\\
    1 & qb_{1}^{-1} & \dots & (qb_{1}^{-1})^{r-1}\\
    : & : & \dots & :\\
    1 & qb_{r-1}^{-1} & \dots & (qb_{r-1}^{-1})^{r-1}\\
  \end{pmatrix}
  P(t,a,b,q)
  \begin{pmatrix}
    1 & 1 & \dots & 1\\
    1 & qb_{1}^{-1} & \dots & qb_{r-1}^{-1}\\
    : & : & \dots & :\\
    1 & (qb_{1}^{-1})^{r-1} & \dots & (qb_{r-1}^{-1})^{r-1}\\
  \end{pmatrix}
\end{aligned}
\ee
has $(i+1,j+1)$-th entry
\be
\begin{aligned}
  &\sum_{k=0}^{r-1}(-1)^{j}e_{j}(b/q)(qb_{j}^{-1})^{k}
  \sum_{\ell=0}^{r-1-k}(b_{i}b_{j}^{-1})^{\ell}\\
  &=
  \delta_{i,j}(1-b_{0}/b_{j})\dots(1-b_{j-1}/b_{j})(1-b_{j+1}/b_{j})
  \dots(1-b_{r-1}/b_{j})\
\end{aligned}
\ee
where we have used the fact that
\be
\begin{small}
\begin{aligned}
    &\sum_{k=0}^{r-1}(-1)^{j}e_{j}(b/q)x^{k}\sum_{\ell=0}^{r-1-k}(xy^{-1})^{\ell}
    =
    \sum_{k=0}^{r-1}(-1)^{j}e_{j}(b/q)x^{k}\frac{1-(xy^{-1})^{r-k}}{1-xy^{-1}}\\
    &=
    \frac{1}{1-xy^{-1}}\left((1-x)(1-q^{-1}b_{1}x)
 \dots(1-q^{-1}b_{2}x)-x^{r}y^{-r}(1-y)(1-q^{-1}b_{1}y)\dots(1-q^{-1}b_{2}y)\right)\,.
\end{aligned}
\end{small}
\ee
Therefore, with the convention that $(x;q^{-1})_{\infty}=(qx;q)_{\infty}^{-1}$ and
$(q^{-1};q^{-1})_{\infty}=(q;q)_{\infty}^{-1}$ when $|q|<1$, for $j>0$
\be
\begin{aligned}
  &B_{j}(a,b,q)B_{j}(q^{-1}a,q^{-2}b,q^{-1})\\
  &=-q^{-1}b_{j}\frac{(1-q^{-1}b_{1})\dots(1-q^{-1}b_{r-1})}{(1-q/b_{j})(1-b_{1}/b_{j})
    \dots(1-b_{j-1}/b_{j})(1-b_{j+1}/b_{j})\dots(1-b_{r-1}/b_{j})}
\end{aligned}
\ee
and we see that
\be
\begin{aligned}
  &\begin{pmatrix}
    E_{1,1}(t,a,b,q) & 0 & 0 & \dots & 0\\
    0 & qb_{1}^{-1}E_{2,2}(t,a,b,q) & 0 & \dots & 0\\
    : & : & : & \dots & :\\
    0 & 0 & 0 & \dots & qb_{r-1}^{-1}E_{r,r}(t,a,b,q)
  \end{pmatrix}\\
  &=
  \lim_{t\rightarrow 0}H(t,a,b,q)
  =
  \begin{pmatrix}
    1 & 0 & 0 & \dots & 0\\
    0 & -qb_{1}^{-1} & 0 & \dots & 0\\
    : & : & : & \dots & :\\
    0 & 0 & 0 & \dots & -qb_{r-1}^{-1}
  \end{pmatrix}
\end{aligned}
\ee
Therefore, again using the gauge equivalence in the second and third Equations
~\eqref{heine.equs} complete the proof.
\end{proof}

\begin{remark}
  \label{rem.r=2}
When $r=2$, Proposition~\ref{prop.heine.selfdual} is equivalent to the identity
~\cite[Eqn.(1.4.3)]{Gasper}
\be
  {}_{2}\phi_{1}(a,b;c;q,t)
  =
  \frac{(abc^{-1}t;q)_{\infty}}{(t;q)_{\infty}}
  {}_{2}\phi_{1}(ca^{-1},cb^{-1};c^{-1};q,abc^{-1}t)
\ee
where we note that the ratio of Pochhammers is related to the determinant of $U$.
This is the $q$-analogue of Euler's transformation formula for ${}_2F_{1}$
~\cite[Eqn.(1.4.2)]{Gasper}
\be
  {}_{2}F_{1}(a,b;c;t)=(1-t)^{c-a-b}{}_{2}F_{1}(c-a,c-b;c;t)
\ee
where
\be
{}_{r}F_{r-1}(a;b;t) =
\sum_{k=0}^{\infty}
\frac{(a_{1})_{k}\dots(a_{r})_{k}}{(b_{1})_{k}\dots(b_{r-1})_{k}}\frac{t^{k}}{k!} \,.
\ee
\end{remark}

\begin{remark}
\label{rem.q=1}
The $q\rightarrow 1$ limit of Proposition~\ref{prop.heine.selfdual} is discussed
in full generality in \cite[Thm.1.1]{Beukers:duality}, where it is shown that the
dual of the module associated to ${}_{r}F_{r-1}(a;b;t)$ is the module associated to
${}_{r}F_{r-1}(1-a;2-b;t)$.
\end{remark}

\subsection{State integral}
\label{subsec.heine.stateint}

Consider the following state integral

\be
\label{heine.stateint}
I(z,\alpha,\beta,\tau) = 
\int_{\calC} \frac{\Phi_{\sfb}(x)
  \prod_{j=1}^{r-1} \Phi_{\sfb}(x+i\sfb^{-1}\beta_{j}+i\sfb^{-1}-i\sfb)}{
\prod_{j=1}^{r} \Phi_{\sfb}(x+i\sfb^{-1}\alpha_{j}+i\sfb^{-1}-i\sfb)}
\exp\left(-2\pi\frac{zx}{\sfb}\right)
dx \,.
\ee
where $\calC$ is a contour in the complex plane asymptotic to
$\BR+i\ve$ that separates the poles of the numerator from the zeros of the
denominator of the integrand, $a_{i}=\e(\alpha_{i})$,
$\tilde{a}_{i}=\e(\alpha_{i}/\tau)$, $b_{i}=\e(\beta_{i})$ and
$\tilde{b}_{i}=\e(\beta_{i}/\tau)$, and $\Phi_\sfb$ is the Faddeev quantum
dilogarithm function ~\cite{Faddeev, FK-QDL} and $\sfb=\sqrt{\tau}$.

We first discuss convergence of the above integral for
$\tau=\sfb^2$ in the upper half-plane. Using the asymptotic behavior
$\Phi_{\sfb}(x) \sim e^{2 \pi i x^2}$ (resp., $1$) when $\Re(x) \gg 0$
(resp., $\Re(x) \ll 0$) (see for example, ~\cite[Eqn.(46)]{AK}), it follows that
when $\Re(x)\gg 0$, the integrand of~\eqref{heine.stateint} is given by a constant
times $e^{-2\pi x \sfb^{-1} (\gamma+z)}$ where
$\gamma=\sum_{j=0}^{r-1}\beta_j-\sum_{j=1}^r \alpha_j$, and setting $x=x_0+i t$
with $t \gg 0$, it follows that the absolute value of the integrand is a constant
times $e^{-2\pi t \Im(\sfb^{-1}(\gamma+z))}$, which is exponentially decaying when
$\Im( \sfb^{-1}(\gamma+z))<0$. Likewise, when $\Re(x) \ll 0$, the integrand is
exponentially decaying when $\Im(\sfb^{-1} z)>0$. 

Finally, the state integral satisfies difference equations when we shift $\alpha,\beta,z$ by either $1$ or $\tau$. This can be used to analytically extend to a meromorphic function for $\tau\in\BC'$ and $\alpha\in\BC^{r}$, $\beta\in\BC^{r-1}$, and $z\in\BC$.

From its very definition, the state integral is a well-defined holomorphic
function of $\tau \in \BC'$. Moreover, after moving the contour of integration
upwards and using the residue theorem (see eg.~\cite{GK:qseries}), the
state integral in Equation~\eqref{heine.stateint} can be written in the
factorised form
\be
\label{heine.II}
I(z,\alpha,\beta,\tau)=-t^{1/2}\tilde{t}^{1/2}\frac{\tau}{2\pi i}
\calI(z,\alpha,\beta,\tau)
\ee
where
\be
\label{heine.stateint.fac}
\calI(z,\alpha,\beta,\tau)
=f^{(1)}(t,a,b,q)f^{(1)}(\tilde{t},\tilde{a},\tilde{b},\tq^{-1})
-\tau\sum_{j=1}^{r}f^{(qb_{j}^{-1})}(t,a,b,q)
f^{(qb_{j}^{-1})}(\tilde{t},\tilde{a},\tilde{b},
\tq^{-1})\,.
\ee
Now $\calI(z+k+j\tau,\alpha,\beta,\tau)$ is nothing but the $(j+1,k+1)$ entry
of the matrix
\be
  U(t,a,b,q)
  \begin{pmatrix}
    1 & 0 & \dots & 0\\
    0 & -\tau & \dots & 0\\
    : & : & \dots & :\\
    0 & 0 & \dots & -\tau\\
  \end{pmatrix}
  U(\tilde{t},\tilde{a},\tilde{b},\tq^{-1})^{T}\,,
\ee
and therefore, by Equation~\eqref{heine.selfdual.Ueq}, the $(j+1,k+1)$ entry of
\be
  U(t,a,b,q)
  \begin{pmatrix}
    1 & 0 & \dots & 0\\
    0 & \tau & \dots & 0\\
    : & : & \dots & :\\
    0 & 0 & \dots & \tau\\
  \end{pmatrix}
  U(\tilde{t},\tilde{a},\tilde{b},\tq)^{-1}Q(\tilde{t},\tilde{a},\tilde{b},\tq)^{-T}
  \,.
\ee
Therefore, inverting $Q$, we see that the cocycle
\be
  \Om(z,\alpha,\beta,\tau)
  =
  U(\tilde{t},\tilde{a},\tilde{b},\tq)
  \begin{pmatrix}
    1 & 0 & \dots & 0\\
    0 & \tau & \dots & 0\\
    : & : & \dots & :\\
    0 & 0 & \dots & \tau\\
  \end{pmatrix}^{-1}
  U(t,a,b,q)^{-1}
\ee
extends to a meromorphic function for $\tau\in\BC'$. This complete the proof of
Theorem~\ref{thm.heine}.

\begin{remark}
\label{rem.updown}
Note that the state integral ~\eqref{heine.stateint} is absolutely convergent
and its contour of integration can be pushed either upwards or downwards. Doing
so, the integral factorises in two different ways, one giving the $U$-cocycle
and another giving the $V$-cocycle. This explains the equality of the two
cocycles from first principles.
\end{remark}

\subsection{Resonance}

In this section we discuss in detail the resonant generalised $q$-hypergeometric
equation~\eqref{gener.qhyper}, i.e., the case where at least one of the ratios
$a_i/a_j$ (for $i \neq j$), $b_i/b_j$ (for $i \neq j$) or $a_i/b_j$ (for some $i$
and $j$) is an integer power of $q$. For simplicity, we will 
consider only the case of $r=2$, although our arguments remain valid for all
$r$. When $r=2$, the system of equations ~\eqref{heine.equs} is given by
{\small
\be
\label{2phi1equs}
\begin{aligned}
  (1-t)f(t,a,b,c,q)-(q^{-1}c+1-t(a+b))f(qt,a,b,c,q)+(q^{-1}c-tab)f(q^2t,a,b,c,q)
  &=0\\
  f(t,q^{-1}a,b,c,q)-q^{-1}af(qt,q^{-1}a,b,c,q)-(1-q^{-1}a)f(t,a,b,c,q)
  &=0\\
  f(t,a,q^{-1}b,c,q)-q^{-1}bf(qt,a,q^{-1}b,c,q)-(1-q^{-1}b)f(t,a,b,c,q)
  &=0\\
  f(t,a,b,c,q)-q^{-1}cf(qt,a,b,c,q)-(1-q^{-1}c)f(t,a,b,q^{-1}c,q)
  &=0\,,
\end{aligned}
\ee
}

\noindent
with Equation~\eqref{2phi1} being one such solution. We can specialise $a,b,c$ so
that some of $a$, $b$, $c$, $a/b$, $a/c$ or $b/c$ lies in $q^{\BZ}$. All of these
conditions can be seen to be special
points of the monodromy matrix~\eqref{Mheine} and various special properties of the
equations appear like, for example, submodules. 

We now present two examples of these special points in the simplest case or
$r=2$. These can all be deduced from Theorem~\ref{thm.heine}.

\noindent
$\bullet \,\, b=c$. 
The first Equation~\eqref{2phi1equs} now takes the form
\be
\begin{aligned}
  &(1-q^{-1}b\s_{t})\left((1-\s_{t})-t(1-a\s_{t})\right)\\
  &=\left((1-\s_{t})(1-q^{-1}b\s_{t})-t(1-a\s_{t})(1-b\s_{t})\right)f\\
  &=0\,.
\end{aligned}
\ee
This has normalised solutions
\be
\begin{aligned}
  f^{(1)}(t,a,b,q)
  &=
  \sum_{k=0}^{\infty}\frac{(a;q)_{k}}{(q;q)_{k}}t^{k}\\
  f^{(qb^{-1})}(t,a,b,q)
  &=
  \frac{(a;q)_{\infty}(q^{2}b^{-1};q)_{\infty}
  \theta(q^{-1}bt;q)(q;q)_{\infty}^{2}}{(qab^{-1};q)_{\infty}\theta(q^{-1}b;q)
  \theta(t;q)}\sum_{k=0}^{\infty}\frac{(qab^{-1};q)_{k}}{(q^{2}b^{-1};q)_{k}}t^{k}\,.
\end{aligned}
\ee
Note that the first generates a submodule and indeed satisfies
\be
\left(\left((1-\s_{t})-t(1-a\s_{t})\right)f^{(1)}\right)(t,q)
=(1-t)f^{(1)}(t,a,b,q)-(1-at)f^{(1)}(qt,a,b,q)=0\,.
\ee
We note that for the value at $t=0$ and the $q$-difference equation we see that
\be
  f^{(1)}(t,a,b,q)
  =
  \frac{(at;q)_{\infty}}{(t;q)_{\infty}}
\ee
a classical result known as the $q$-binomial theorem. From Theorem~\ref{thm.ex1} or
even Theorem~\ref{thm.heine} with $r=1$, we then see that this is a modular
$q$-holonomic submodule. Now the second solution satisfies the inhomogeneous equation
\be
(1-t)f^{(qb^{-1})}(t,a,b,q)-(1-at)f^{(qb^{-1})}(qt,a,b,q)
=\frac{(a;q)_{\infty}(q^{2}b^{-1};q)_{\infty}
  \theta(q^{-1}bt;q)(q;q)_{\infty}^{2}}{(qab^{-1};q)_{\infty}
  \theta(q^{-1}b;q)\theta(t;q)}
\ee
where we note that the RHS is of course annihilated by $(1-q^{-1}b\s_{t})$. The full
module can then be shown to be modular using elementary functions holomorphic for
$\tau\in\BC'$ times the state integral
\be
\begin{aligned}
\int_{\calC}
\frac{\Phi_{\sfb}(x)}
{\Phi_{\sfb}(x+i\sfb^{-1}\alpha)}
\frac{\exp\left(-2\pi\frac{zx}{\sfb}\right)}{1+\tq^{1/2}
  \exp\left(-2\pi\frac{x}{\sfb}\right)\e(\beta/\tau)}
dx \,.
\end{aligned}
\ee
Note that this state integral is of course the same as the one in
Equation~\eqref{heine.stateint} where $\beta_{1}=\gamma=\beta+1=\alpha_{2}+1$.

\noindent
$\bullet \,\, c=q$. 
The first Equation~\eqref{2phi1equs} now takes the form
\be
    (1-t)f(t,a,b,q)-(2-t(a+b))f(qt,a,b,q)+(1-tab)f(q^2t,a,b,q)=0\,.
\ee
Notice that this now has indicial polynomial $(1-\rho)^2$. Therefore, we expand
using the Frobenius method to find solutions which are the coefficients of $\ve$
in the expansion to order $O(\ve^2)$ of 
\be
\begin{aligned}
  f^{(1,0)}(t,a,b,q)&=\frac{(qe^{\ve};q)_{\infty}^{2}}{
    (ae^{\ve};q)_{\infty}(be^{\ve};q)_{\infty}}\sum_{k=0}^{\infty}
  \frac{(ae^{\ve};q)_{k}(be^{\ve};q)_{k}}{(qe^{\ve};q)_{k}^{2}}t^{k}
  \frac{\theta(e^{-\ve}t;q)}{\theta(t;q)}\\
  &=f^{(1,0)}(t,a,b,q)+f^{(1,1)}(t,a,b,q)\ve+O(\ve^2)\,.
\end{aligned}
\ee
Then considering the state integral
\be
\begin{aligned}
\int_{\calC}
\frac{\Phi_{\sfb}(x)^2}
{\Phi_{\sfb}(x+i\sfb^{-1}\alpha)\Phi_{\sfb}(x+i\sfb^{-1}\beta)}
\exp\left(-2\pi\frac{zx}{\sfb}\right)
dx \,.
\end{aligned}
\ee
along with Equation~\eqref{S.mod.thd} we can show that the module is modular with
this special value.


\section{Proof of Theorems~\ref{thm.ex1} and~\ref{thm.ex2}}
\label{sec.thm12}

\subsection{The $q$-Pochhammer symbol}
\label{sub.ex1}

This section is devoted to the proof of Theorem~\ref{thm.ex1}. 
The $q$-difference equation~\eqref{ex1} can be written in operator form
as $((1-qt)\sigma-1)f=0$, with the Newton polygon shown in Figure~\ref{f.ex1}.

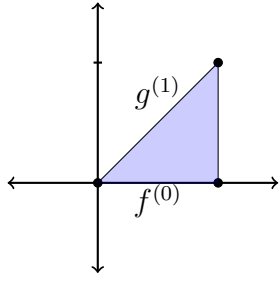
\begin{figure}[!htpb]
\begin{center}
\begin{tikzpicture}[scale=0.8,baseline=-3]
\draw[thick,<->] (0,-1.5) -- (0,3);
\draw[thick,<->] (-1.5,0) -- (3,0);
\foreach \y in {2}
\draw[thick] (2pt,\y) -- (-2pt,\y);
\filldraw (0,0) circle (2pt);
\filldraw (2,2) circle (2pt);
\filldraw (2,0) circle (2pt);
\draw (0,0) -- (2,2);
\draw (2,2) -- (2,0);
\fill[blue,opacity=0.2] (0,0) -- (2,2) -- (2,0) -- cycle;
\node at (1,1.5) {$g^{(1)}$};
\node at (1,-0.3) {$f^{(0)}$};
\end{tikzpicture}
\caption{The Newton polygon of Equation~\eqref{ex1}.}
\label{f.ex1}
\end{center}
\end{figure}

\noindent
The lower Newton polygon has one edge of slope zero. Applying the
Frobenius method, we seek a formal power series solution of the form
\be
f^{(0)}(t,q)
=
\sum_{k=0}^{\infty}\alpha_{k}(q)t^{k}\frac{\theta(\rind^{-1}t;q)}{\theta(t;q)}
\quad\text{ where }\quad
q^{k}\rind\alpha_{k}(q)-q^{k}\rind\alpha_{k-1}(q)-\alpha_{k}(q)=0.
\ee
Since $\alpha_{k}(q)=0$ for $k<0$, setting $k=0$ in the above equation
implies the vanishing of the indicial polynomial
\be
(\rind-1)\alpha_{0}(q)=0,
\ee
giving $\rind=1$. Then we have
\be
\frac{\alpha_{k}(q)}{\alpha_{k-1}(q)}
=
-\frac{q^{k}}{1-q^{k}}.
\ee
Therefore, normalising so that $\alpha_{0}(q)=1$,
\be
\label{ex1.f0}
f^{(0)}(t,q)
=
\sum_{k=0}^{\infty}(-1)^{k}\frac{q^{k(k+1)/2}}{(q;q)_{k}}t^{k}
=
(qt;q)_{\infty} \,.
\ee
This solution is convergent at $t=0$, in fact it is an entire function of $t$.

The upper Newton polygon has one edge of slope of one. Therefore, we must multiply
by a $\theta$-function to get a slope zero, \emph{i.e.} a power series solution.
The new Newton polygon is as follows.
\begin{center}
\begin{tikzpicture}[scale=1]
\draw[thick,<->] (0,-1.5) -- (0,3);
\draw[thick,<->] (-1.5,0) -- (3,0);
\foreach \y in {2}
\draw[thick] (0.1,\y) -- (-0.1,\y);
\filldraw (0,0) circle (2pt);
\filldraw (2,2) circle (2pt);
\filldraw (2,0) circle (2pt);
\draw (0,0) -- (2,2);
\draw (2,2) -- (2,0);
\fill[blue,opacity=0.2] (0,0) -- (2,2) -- (2,0) -- cycle;
\node at (0.8,1.5) {$g^{(1)}$};
\draw[ultra thick, ->] (3.5,0) -- (5,0);
\node at (4.25,0.5) {$\theta(t;q)$};
\draw[thick,<->] (0+7,-3) -- (0+7,1.5);
\draw[thick,<->] (-1.5+7,0) -- (3+7,0);
\draw[thick] {(0.1+7,-2)} -- (-0.1+7,-2);
\filldraw (0+7,0) circle (2pt);
\filldraw (2+7,-2) circle (2pt);
\filldraw (2+7,0) circle (2pt);
\draw (0+7,0) -- (2+7,-2);
\draw (2+7,-2) -- (2+7,0);
\fill[blue,opacity=0.2] (0+7,0) -- (2+7,-2) -- (2+7,0) -- cycle;
\node at (1+7,0.4) {$\hat{g}^{(1)}$};
\end{tikzpicture}
\end{center}
Therefore, the top edge has solution of the form
\be
g^{(1)}(t,q)
=
\theta(t;q)\hat{g}^{(1)}(t,q), \qquad \hat{g}^{(1)}(t,q)
=
\theta(t;q)
\sum_{k=0}^{\infty}\beta_{k}(q)t^{-k}\frac{\theta(\rind^{-1}t;q)}{\theta(t;q)}
\ee
where
\be
-q^{-k}\rind\beta_{k-1}(q)+q^{-k}\rind\beta_{k}(q)-\beta_{k}(q)=0.
\ee
As $\beta_{k}(q)=0$ for $k<0$, we get indicial polynomial
\be
(\rind-1)\beta_{0}(q)=0
\ee
and so $\rind=1$. Then we have
\be
\frac{\beta_{k}}{\beta_{k-1}}
=
\frac{1}{1-q^{k}}.
\ee
Therefore, normalising so that $\beta_{0}(q)=\frac{1}{(q;q)_{\infty}}$,
we obtain that
\be
\label{ex1.g1}
g^{(1)}(t,q)
=
\frac{\theta(t;q)}{(q;q)_{\infty}}\sum_{k=0}^{\infty}\frac{t^{-k}}{(q;q)_{k}}
=
\frac{\theta(t;q)}{(t^{-1};q)_{\infty}(q;q)_{\infty}} \,.
\ee
It follows that the monodromy is given by
\be
\vM(t,q)
=
V(t,q)^{-1}U(t,q)
=
\frac{(qt;q)_{\infty}(t^{-1};q)_{\infty}(q;q)_{\infty}}{\theta(t;q)}
=
1 
\ee
and the cocycles are equal and given by 
\be
\Om_{U}(z,\tau)
=
(U|_{0}S)(z,\tau) U(z,\tau)^{-1}
=
\Om_{U}(z,\tau)
=
(V|_{0}S)(z,\tau) V(z,\tau)^{-1}
=
\frac{(\ti q\ti t;\ti q)_{\infty}}{(qt;q)_{\infty}} \,.
\ee
Notice that
\be
\frac{(\ti q\ti t;\ti q)_{\infty}}{(qt;q)_{\infty}}
=
\Phi_{\sfb}\left(\frac{iz}{\sfb}
  +\frac{i\sfb}{2}+\frac{1}{2i\sfb}\right)^{-1}
\ee
where $\Phi$ is the Faddeev quantum dilogarithm function~\cite{Faddeev,
FK-QDL}. This function extends to a meromorphic function of $(z,\tau)\in
\BC \times \BC'$, with poles at $z\in \BZ_{\geq0}+\BZ_{\geq0}\tau$. Noting that $\Omega_{T}=1$,
it then follows from Theorem~\ref{thm.ST} that
$\Om_{U,\gamma}$ also extends. This is discussed in detail in upcoming
work~\cite{GKZ:cocycle}. This proves that this is a modular
$q$-holonomic module.

\subsection{The Appell-Lerch sums}
\label{sub.ex2}

This section is devoted to the proof of Theorem~\ref{thm.ex2}.
The $q$-difference equation~\eqref{ex2} has Newton polygon shown in
Figure~\ref{f.ex2}.

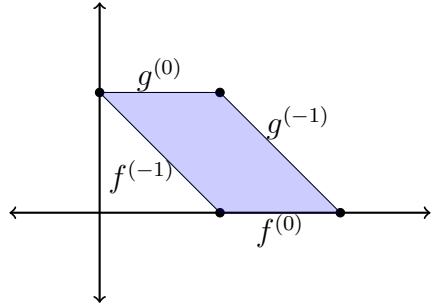
\begin{figure}[!htpb]
\begin{center}
\begin{tikzpicture}[scale=0.8,baseline=-3]
\draw[<->,thick] (-1.5,0) -- (5.5,0);
\draw[<->,thick] (0,-1.5) -- (0,3.5);
\filldraw (2,0) circle (2pt);
\filldraw (2,2) circle (2pt);
\filldraw (0,2) circle (2pt);
\filldraw (4,0) circle (2pt);
\draw (0,2) -- (2,0) -- (4,0) -- (2,2) -- cycle;
\fill[blue,opacity=0.2] (0,2) -- (2,0) -- (4,0) -- (2,2) -- cycle;
\node at (3,-0.3) {$f^{(0)}$};
\node at (0.7,0.6) {$f^{(-1)}$};
\node at (1,2.3) {$g^{(0)}$};
\node at (3.3,1.5) {$g^{(-1)}$};
\end{tikzpicture}
\caption{The Newton polygon of Equation~\eqref{ex2}.}
\label{f.ex2}
\end{center}
\end{figure}

\noindent
To begin with, the boundary of the lower Newton polygon consists of one edge of
slope zero and one edge of slope $1$. The edge of slope zero has a solution of
the form
\be
f^{(0)}(t,q)=
\sum_{k=0}^{\infty}\alpha_{k}(q)t^{k}\frac{\theta(\rind^{-1}t;q)}{\theta(t;q)}
\;\text{where}\;
\alpha_{k}(q)q^{2k}\rind^{2}+q^{k}
\rind\alpha_{k-1}(q)-q^{k}\rind\alpha_{k}(q)-\alpha_{k-1}(q)
=0.
\ee
Therefore, as $\alpha_{k}(q)=0$ for $k<0$, we get the indicial polynomial
\be
\alpha_{0}(q)(\rind^{2}-\rind) = 0
\ee
and so $\rind=1$. Then we have
\be
(q^{2k}-q^{k})\alpha_{k}(q)-(1-q^k)\alpha_{k-1}(q)=0
\quad\text{so}\quad
-q^{k}a_{k}=a_{k-1}.
\ee
Therefore, normalising so that $\alpha_{0}(q)=1$,
\be
\hat{f}^{(0)}(t,q)=\sum_{k=0}^{\infty}(-1)^{k}q^{-k(k+1)/2}t^{k}.
\ee
This is of course divergent for $|q|<1$ so we must $q$-Borel resum. The affect on
the Newton polygon is as follows
\begin{center}
\begin{tikzpicture}
\draw[<->,thick] (-1,0) -- (3,0);
\draw[<->,thick] (0,-1) -- (0,2);
\filldraw (1,0) circle (2pt);
\filldraw (1,1) circle (2pt);
\filldraw (0,1) circle (2pt);
\filldraw (2,0) circle (2pt);
\draw (0,1) -- (1,0) -- (2,0) -- (1,1) -- cycle;
\fill[blue,opacity=0.2] (0,1) -- (1,0) -- (2,0) -- (1,1) -- cycle;
\node at (1.5,-0.35) {\begin{tiny}$f^{(0)}$\end{tiny}};
\node at (4,0.4) {$\Bor_{1}$};
\draw[->,ultra thick] (3.5,0) -- (4.5,0);
\draw[<->,thick] (-1+6,0) -- (3+6,0);
\draw[<->,thick] (0+6,-1) -- (0+6,2);
\draw (-0.1+6,1) -- (0.1+6,1);
\filldraw (1+6,0) circle (2pt);
\filldraw (1+6,1) circle (2pt);
\filldraw (2+6,1) circle (2pt);
\filldraw (2+6,0) circle (2pt);
\draw (1+6,1) -- (1+6,0) -- (2+6,0) -- (2+6,1) -- cycle;
\fill[blue,opacity=0.2] (1+6,1) -- (1+6,0) -- (2+6,0) -- (2+6,1) -- cycle;
\node at (1.5+6,-0.35) {\begin{tiny}$\Bor_{1}f^{(0)}$\end{tiny}};
\end{tikzpicture}
\end{center}
Now notice that
\be
\Bor_{1}{\hat{f}^{(0)}}(\xi,q) = \sum_{k=0}^{\infty}\xi^{k}
= \frac{1}{1-\xi}
\ee
and so we find
\be
\label{ex2.f0}
f^{(0)}(t,\lambda ,q)
=
\mathcal{L}_{1}\mathcal{B}_{1}(\hat{f}^{(0)})(t,\lambda ,q)
=
\frac{1}{\theta(\lambda ;q)}\sum_{\ell}(-1)^{\ell}
\frac{q^{\ell(\ell+1)/2}\lambda ^{\ell}}{1-\lambda tq^{\ell}}.
\ee
This is the Appell-Lerch sum whose modular properties were studied in Zwegers
thesis~\cite{Zwegersthesis}. Now the bottom edge of slope $-1$ must be divided by
a $\theta$-function to get a zero slope. The effect on the Newton polygon is as
follows.
\begin{center}
\begin{tikzpicture}
\draw[<->,thick] (-1,0) -- (3,0);
\draw[<->,thick] (0,-1) -- (0,2);
\filldraw (1,0) circle (2pt);
\filldraw (1,1) circle (2pt);
\filldraw (0,1) circle (2pt);
\filldraw (2,0) circle (2pt);
\draw (0,1) -- (1,0) -- (2,0) -- (1,1) -- cycle;
\fill[blue,opacity=0.2] (0,1) -- (1,0) -- (2,0) -- (1,1) -- cycle;
\node at (0.37,0.19) {\begin{tiny}$f^{(-1)}$\end{tiny}};
\node at (4,0.4) {$\theta(q^{-1}t;q)^{-1}$};
\draw[->,ultra thick] (3.5,0) -- (4.5,0);
\draw[<->,thick] (-1+6,0) -- (3+6,0);
\draw[<->,thick] (0+6,-1) -- (0+6,2);
\draw (-0.1+6,1) -- (0.1+6,1);
\draw (2+6,-0.1) -- (2+6,0.1);
\filldraw (1+6,0) circle (2pt);
\filldraw (1+6,1) circle (2pt);
\filldraw (0+6,0) circle (2pt);
\filldraw (2+6,1) circle (2pt);
\draw (0+6,0) -- (1+6,0) -- (2+6,1) -- (1+6,1) -- cycle;
\fill[blue,opacity=0.2] (0+6,0) -- (1+6,0) -- (2+6,1) -- (1+6,1) -- cycle;
\node at (0.5+6,-0.25) {\begin{tiny}$\hat{f}^{(-1)}$\end{tiny}};
\end{tikzpicture}
\end{center}
Therefore, the bottom edge of slope minus one has a solution of the form
\be
f^{(-1)}(t,q)
=
\theta(q^{-1}t;q)^{-1}\hat{f}^{(-1)}(t,q)
=
\theta(q^{-1}t;q)^{-1}\sum_{k=0}^{\infty}\alpha_{k}(q)t^{k}
\frac{\theta(\rind^{-1}t;q)}{\theta(t;q)}
\ee
where
\be
(1-q^{k}\rind)\alpha_{k}(q)+q^{k}\rind(1-q^{k-1}\rind)\alpha_{k-1}(q)=0.
\ee
Therefore, the indicial polynomial is given by
\be
(1-\rind)\alpha_{0}(q)=0
\ee
and so $\rind=1$. Then we see that
\be
(1-q)\alpha_{1}(q)=0
\ee
and so $\alpha_{k}=0$ for $k\neq 0$. Therefore,
\be
\label{ex2.f-1}
f^{(-1)}(t,q)=\theta(q^{-1}t;q)^{-1}\,.
\ee
Now following similar calculations we find that
\be
\Bor_{1}\hat{g}^{(0)}(\xi,q)
=
-\sum_{k=-\infty}^{-1}\xi^{k}
=
-\frac{\xi^{-1}}{1-\xi^{-1}}
=
\frac{1}{1-\xi}
\ee
and so
\be
\label{ex2.g0}
g^{(0)}(t,\mu ,q)
=
\frac{1}{\theta(\mu ;q)}\sum_{\ell}(-1)^{\ell}
\frac{q^{\ell(\ell+1)/2}\mu ^{\ell}}{1-\mu tq^{\ell}} \,.
\ee
Then finally, we have
\be
\label{ex2.g1}
g^{(-1)}(t,q)=\theta(q^{-1}t;q)^{-1} \,.
\ee
Consider the inhomogeneous gauge transformation (see Section~\ref{sub.cat})
\be
U(t,\lambda,q)
=
\begin{pmatrix}
  0 & 1\\
  1 & -t
\end{pmatrix}
\begin{pmatrix}
  1 & 0\\
  f^{(0)}(t,\lambda,q) & f^{(-1)}(t,q)
\end{pmatrix}
\ee
and the similar one for $V$. With the definition of the fundamental matrices
given in Equations~\eqref{UVex2} and associated monodromy, we then have that

\be
\begin{aligned}
\vM(t,\lambda,q)
&=
\begin{pmatrix}
  1 & 0\\
  g^{(0)}(t,\mu,q) & g^{(-1)}(t,q)
\end{pmatrix}^{-1}
\begin{pmatrix}
  1 & 0\\
  f^{(0)}(t,\lambda,q) & f^{(-1)}(t,q)
\end{pmatrix}\\
&=
\theta(q^{-1}t,q)
\begin{pmatrix}
  \theta(q^{-1}t,q)^{-1} & 0\\
  -g^{(0)}(t,\mu,q) & 1
\end{pmatrix}
\begin{pmatrix}
  1 & 0\\
  f^{(0)}(t,\lambda,q) & \theta(q^{-1}t,q)^{-1}
\end{pmatrix}\\
&=
\begin{pmatrix}
  1 & 0\\
  \vM_{2,1}(t,\lambda ,\mu ,q) & 1
\end{pmatrix}
\end{aligned}
\ee

where

\be
\begin{aligned}
\vM_{2,1}(t,\lambda ,\mu ,q)
&=
\theta(q^{-1}t;q)f^{(0)}(t,\lambda ,q)-\theta(q^{-1}t;q)g^{(0)}(t,\mu ,q)\\
&=
\frac{\theta(q^{-1}t;q)}{\theta(\lambda ;q)}\sum_{\ell}(-1)^{\ell}
\frac{q^{\ell(\ell+1)/2}\lambda ^{\ell}}{1-\lambda tq^{\ell}}
-\frac{\theta(q^{-1}t;q)}{\theta(\mu ;q)}\sum_{\ell}(-1)^{\ell}
\frac{q^{\ell(\ell+1)/2}\mu ^{\ell}}{1-\mu tq^{\ell}} \,.
\end{aligned}
\ee
Using Equation~(7) of Proposition~1.4 of \cite{Zwegersthesis}, we can deduce that
\be
\label{M21sander}
\vM_{2,1}(t,\lambda ,\mu ,q)
=
-\frac{(q;q)_{\infty}^{3}\theta(q^{-1}t;q)\theta(\lambda ^{-1}\mu ;q)
  \theta(\lambda ^{-1}\mu ^{-1}t^{-1};q)}{\theta(\lambda ^{-1};q)\theta(\mu ;q)
  \theta(\lambda ^{-1}t^{-1};q)\theta(\mu ^{-1}t^{-1};q)}.
\ee
We now give a second proof of the above equation using elliptic functions and
residues, which is more general and applicable to our third example. Let $m_{2,1}$
denote the function defined by the right-hand side of~\eqref{M21sander}.
We need to prove that the function $\calE_1$, defined by
\be
\calE_1(t,\lambda,\mu,q)
:=
f^{(0)}(t,\lambda,q)
-g^{(0)}(t,\mu,q)
-\frac{m_{2,1}(t,\lambda,\mu,q)}{\theta(q^{-1}t;q)}
\ee
is identically zero. This will follow from the facts, that
$\calE_1$ is holomorphic at $\BC^\times$ (see Lemma~\ref{lem.resAL} below),
and a solution of a first order equation~\eqref{AL.hom.equ}
(see Lemma~\ref{lem.vanish}). 

\begin{lemma}
\label{lem.vanish}
  If $h(t,q)$ is a solution to the equation
\be
\label{AL.hom.equ}
h(qt,q)+th(t,q)=0
\ee
that is holomorphic for $t\in\BC^{\times}$ then $h(t,q)=0$.
\end{lemma}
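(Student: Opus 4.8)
The plan is to expand $h$ in a globally convergent Laurent series and read off the recursion forced by the functional equation. Since $h$ is holomorphic on the punctured plane $\BC^{\times}$, it has a Laurent expansion $h(t,q)=\sum_{k\in\BZ}c_{k}(q)t^{k}$ that converges for every $t$ with $0<|t|<\infty$. First I would substitute this into~\eqref{AL.hom.equ} and compare coefficients of $t^{k}$: since $h(qt,q)=\sum_{k}c_{k}q^{k}t^{k}$ and $t\,h(t,q)=\sum_{k}c_{k-1}t^{k}$, vanishing of each coefficient gives
\be
c_{k-1}(q)=-q^{k}c_{k}(q),\qquad k\in\BZ.
\ee

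Next I would solve this two-term recursion in closed form. Iterating upward and downward from $c_{0}$ yields
\be
c_{k}(q)=(-1)^{k}q^{-k(k+1)/2}c_{0}(q),\qquad k\in\BZ,
\ee
which one verifies directly for $k\geq 0$ and for $k\leq 0$ from the recursion. (Formally this is the coefficient sequence of $\theta(t;q^{-1})$, which is precisely the object that fails to converge when $|q|<1$, and this is the heart of the matter.)

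The key and final step is to use convergence to force $c_{0}=0$. Because $h$ is holomorphic on all of $\BC^{\times}$, the nonnegative part $\sum_{k\geq 0}c_{k}(q)t^{k}$ must converge for every finite $t$, so its radius of convergence is infinite. But $|c_{k}(q)|^{1/k}=|q|^{-(k+1)/2}|c_{0}(q)|^{1/k}\to\infty$ as $k\to\infty$ whenever $0<|q|<1$, so the radius of convergence is $0$ unless $c_{0}(q)=0$. Hence $c_{0}(q)=0$, and the recursion then propagates this to $c_{k}(q)=0$ for all $k\in\BZ$, giving $h\equiv 0$.

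The computation is short, so there is no serious obstacle; the one point demanding care is that the argument must invoke the standing hypothesis $|q|<1$ at exactly the convergence step. Indeed, for $|q|>1$ the coefficients $c_{k}$ decay and $\theta(t;q^{-1})$ is a genuine nonzero holomorphic solution of~\eqref{AL.hom.equ}, so the statement is specific to $|q|<1$ and the growth estimate above is where that assumption is consumed.
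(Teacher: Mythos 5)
Your proof is correct, but it takes a different route from the paper's. The paper argues structurally: every solution of~\eqref{AL.hom.equ} has the form $C(t,q)/\theta(q^{-1}t;q)$ with $C$ elliptic (since $1/\theta(q^{-1}t;q)$ is a particular solution, by~\eqref{theta.fun}, and the ratio of two solutions is $\sigma$-invariant); holomorphy of $h$ makes $C=h\,\theta(q^{-1}t;q)$ a holomorphic elliptic function, hence constant by Liouville, and a nonzero constant would force $h$ to have simple poles at the zeros $t\in q^{\BZ}$ of the theta function — contradiction. You instead expand $h$ in its globally convergent Laurent series on $\BC^{\times}$, solve the two-term recursion $c_{k-1}=-q^{k}c_{k}$ explicitly to get $c_{k}=(-1)^{k}q^{-k(k+1)/2}c_{0}$, and observe that for $|q|<1$ this coefficient growth kills the radius of convergence unless $c_{0}=0$. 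Both arguments are sound. Yours is more elementary (no theta functions, no Liouville for elliptic functions) and has the virtue of making explicit exactly where $|q|<1$ is used, together with the correct observation that $\theta(t;q^{-1})$ is a genuine obstruction when $|q|>1$; the paper's argument is shorter and identifies the full one-dimensional space of meromorphic solutions along the way, which is the form reused in later sections. Your style of argument is in fact the one the paper itself sketches as an ``alternative proof'' in the remark at the end of the proof of Corollary~\ref{cor:41holsol}, so it fits naturally into the paper's toolkit.
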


\begin{proof}
  Every solution is of the form $C(t,q)/\theta(q^{-1}t;q)$ for some elliptic
  function $C(t,q)$. Therefore, $C(t,q)=h(t,q)\theta(q^{-1}t;q)$ is a
  holomorphic elliptic function and therefore constant. However, this implies
  that $h$ has simple poles unless $C=0$.
\end{proof}

\begin{lemma}
  \label{lem.resAL}
  \rm{(a)} The function $f^{(0)}(t,\lambda,q)$ has simple poles at
$t\in \lambda^{-1}q^{\BZ}$ with residue
\be
\begin{aligned}
\Res_{t=q^{-m}\lambda^{-1}}f^{(0)}(t,\lambda,q)\frac{dt}{2\pi it}
&=
\frac{-1}{\theta(q^{m}\lambda;q)}\\
\Res_{t=q^{-m}\mu^{-1}}g^{(0)}(t,\mu,q)\frac{dt}{2\pi it}
&=
\frac{-1}{\theta(q^{m}\mu;q)} \,.
\end{aligned}
\ee
  \rm{(b)}
  The function $\theta(q^{-1}t;q)^{-1}m_{2,1}(t,\lambda,\mu,q)$ has simple
  poles at $t\in q^{\BZ}\lambda^{-1},q^{\BZ}\mu^{-1}$ with residues
  \be
\begin{aligned}
  \Res_{t=q^{-m}\lambda^{-1}}\theta(q^{-1}t;q)^{-1}
  m_{2,1}(t,\lambda,\mu,q)\frac{dt}{2\pi it}
&=
\frac{-1}{\theta(q^{m}\lambda;q)}\\
\Res_{t=q^{-m}\mu^{-1}}\theta(q^{-1}t;q)^{-1}
m_{2,1}(t,\lambda,\mu,q)\frac{dt}{2\pi it}
&=
\frac{1}{\theta(q^{m}\mu;q)}\,.
\end{aligned}
\ee
\end{lemma}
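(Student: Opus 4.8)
The plan is to compute both families of residues directly from the series and product definitions, reducing every answer to a normalised theta value by means of the quasi-periodicity~\eqref{theta.fun2}. For part (a) I would start from the explicit Appell--Lerch sum~\eqref{ex2.f0}. Each summand $\frac{(-1)^{\ell}q^{\ell(\ell+1)/2}\lambda^{\ell}}{1-\lambda t q^{\ell}}$ contributes a single simple pole in $t$ at $t=q^{-\ell}\lambda^{-1}$, and these poles are distinct for distinct $\ell$. Hence near $t=q^{-m}\lambda^{-1}$ only the $\ell=m$ term is singular; writing $1-\lambda t q^{m}=-\lambda q^{m}(t-q^{-m}\lambda^{-1})$ and using that $\Res_{t=t_0}(\cdot)\frac{dt}{2\pi i t}$ multiplies the ordinary $t$-residue by $t_0^{-1}$, the residue equals $\frac{-(-1)^{m}q^{m(m+1)/2}\lambda^{m}}{\theta(\lambda;q)}$. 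The quasi-periodicity~\eqref{theta.fun2}, in the form $\theta(q^{m}\lambda;q)=(-1)^{m}q^{-m(m+1)/2}\lambda^{-m}\theta(\lambda;q)$, converts this into the claimed $-1/\theta(q^{m}\lambda;q)$. The residue of $g^{(0)}$ from~\eqref{ex2.g0} is identical with $\mu$ in place of $\lambda$.

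For part (b), I would first cancel the factor $\theta(q^{-1}t;q)$, so that $\theta(q^{-1}t;q)^{-1}m_{2,1}$ (with $m_{2,1}$ the right-hand side of~\eqref{M21sander}) becomes $-\frac{(q;q)_{\infty}^{3}\theta(\lambda^{-1}\mu;q)\theta(\lambda^{-1}\mu^{-1}t^{-1};q)}{\theta(\lambda^{-1};q)\theta(\mu;q)\theta(\lambda^{-1}t^{-1};q)\theta(\mu^{-1}t^{-1};q)}$. The only $t$-dependence producing poles sits in the two denominator factors $\theta(\lambda^{-1}t^{-1};q)$ and $\theta(\mu^{-1}t^{-1};q)$, whose simple zeros lie at $t\in q^{\BZ}\lambda^{-1}$ and $t\in q^{\BZ}\mu^{-1}$ respectively. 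The whole computation rests on the single normalising residue $\Res_{t=1}\theta(t;q)^{-1}\frac{dt}{2\pi i t}=(q;q)_{\infty}^{-3}$, which I would read off from the product formula~\eqref{theta} by noting $\theta(t;q)=(q;q)_{\infty}^{3}(1-t^{-1})(1+O(t-1))$ near $t=1$. Transporting this normalisation to the zero at $t=q^{-m}\lambda^{-1}$ via~\eqref{theta.fun2}, and evaluating the remaining (non-vanishing) theta factors at that point, collapses the ratio to $-1/\theta(q^{m}\lambda;q)$. The analogous computation at $t=q^{-m}\mu^{-1}$ yields $+1/\theta(q^{m}\mu;q)$; the opposite sign arises because the surviving denominator factor is now $\theta(\mu;q)$ rather than $\theta(\lambda^{-1};q)$, and the roles of $\theta(\lambda^{-1}\mu;q)$ and the numerator thetas are interchanged at the $\mu$-pole.

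The only delicate bookkeeping — and the step I expect to consume most of the effort — is the consistent handling of the quasi-periodicity prefactors $(-1)^{m}q^{-m(m+1)/2}\lambda^{-m}$ together with the factor $t_0^{-1}=q^{m}\lambda$ coming from the $\frac{dt}{2\pi i t}$ normalisation, and, in part (b), the matching of the derivative-of-theta normalisation across the shift to $q^{-m}\lambda^{-1}$. Everything else is a routine substitution. I would record as a remark that the resulting sign pattern is exactly what makes $f^{(0)}-g^{(0)}-m_{2,1}/\theta(q^{-1}t;q)$ residue-free at every pole in $\BC^{\times}$, since this is the reason the lemma is stated in this precise form: it is exactly the input that lets Lemma~\ref{lem.vanish} force $\calE_1\equiv 0$.
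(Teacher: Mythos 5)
Your proposal is correct and follows essentially the same route as the paper: part (a) by isolating the single singular $\ell=m$ term of the Appell--Lerch sum and invoking~\eqref{theta.fun2}, and part (b) by reducing to the normalising residue $\Res_{t=1}\theta(t;q)^{-1}\frac{dt}{2\pi i t}=(q;q)_{\infty}^{-3}$ from the Jacobi triple product and transporting it with quasi-periodicity. The only cosmetic difference is that the paper obtains the sign flip at the $\mu$-poles most cleanly from the antisymmetry $m_{2,1}(t,\lambda,\mu,q)=-m_{2,1}(t,\mu,\lambda,q)$ rather than rerunning the computation, but your direct calculation gives the same answer.
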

\begin{proof}
  Part (a) follows from a calculation of the residue of the only term in the
  sum that contributes to the residue in Equations~\eqref{ex2.f0}~\eqref{ex2.g0}
  and Equation~\eqref{theta.fun2}. Now for part (b) we note that
\be
\Res_{t=1}\frac{1}{\theta(t;q)}\frac{dt}{2\pi it}=\frac{1}{(q;q)_{\infty}^{3}}
\ee
which follows easily from the Jacobi triple product~\eqref{theta} for example.
Therefore, we see
\be
\begin{aligned}
  &\Res_{t=q^{-m}\lambda^{-1}}
  \frac{m_{2,1}(t,\lambda,\mu,q)}{\theta(q^{-1}t;q)}\frac{dt}{2\pi it}\\
&=
(-1)^{m}q^{m(m+1)/2}\Res_{t=1}
-\frac{(q;q)_{\infty}^{3}\theta(\lambda ^{-1}\mu ;q)
\theta(\mu ^{-1}t^{-1};q)}{\theta(\lambda ^{-1};q)\theta(\mu ;q)
\theta(t^{-1};q)\theta(\lambda\mu ^{-1}t^{-1};q)}\frac{dt}{2\pi it}\\
&=
(-1)^{m}q^{m(m+1)/2}
\frac{\theta(\lambda ^{-1}\mu ;q)
\theta(\mu ^{-1};q)}{\theta(\lambda ^{-1};q)\theta(\mu ;q)
\theta(\lambda\mu ^{-1};q)}\\
&=
\frac{-1}{\theta(q^{m}\lambda;q)}
\end{aligned}
\ee
where we have repeatedly used Equations~\eqref{theta.fun}
and~\eqref{theta.fun2}. A similar computation calculates the other residues
or using Equation~\eqref{theta.fun} we can show that
\be
m_{2,1}(t,\lambda,\mu,q)=-m_{2,1}(t,\mu,\lambda,q) \,.
\ee
\end{proof}

Noting that $\calE_{1}(t,\lambda,\mu,q)$ satisfies Equation~\eqref{AL.hom.equ}
these lemmata show that the potential simple poles of $\calE_{1}(t,\lambda,\mu,q)$
cancel and therefore it is holomorphic on $t\in\BC^{\times}$ and therefore
vanishes.

For completeness, we give a third proof of~\eqref{M21sander} using  
Lemma~\ref{lem.fund}.
\be
\begin{aligned}
\vM_{2,1}(t,\lambda ,\mu ,q)
&=
\theta(q^{-1}t;q)\Lap_{1}\left(\frac{1}{1-\xi}\right)(t,\lambda ;q)
-\theta(q^{-1}t;q)\Lap_{1}\left(\frac{1}{1-\xi}\right)(t,\mu ;q)\\
&=
\frac{\theta(q^{-1}t;q)\theta(\lambda ^{-1}\mu ;q)(q;q)_{\infty}^{3}}{
  \theta(\lambda ^{-1};q)\theta(\mu ;q)}\Res_{\xi=1}\frac{\theta(\lambda ^{-1}
  \mu ^{-1}t^{-1}\xi;q)}{\theta(\xi\lambda ^{-1}t^{-1};q)
  \theta(\xi\mu ^{-1}t^{-1};q)(1-\xi)}\\
&=
-\frac{(q;q)_{\infty}^{3}\theta(q^{-1}t;q)\theta(\lambda ^{-1}\mu ;q)
  \theta(\lambda ^{-1}\mu ^{-1}t^{-1};q)}{\theta(\lambda ^{-1};q)\theta(\mu ;q)
  \theta(\lambda ^{-1}t^{-1};q)\theta(\mu ^{-1}t^{-1};q)} 
\end{aligned}
\ee
concluding the proof of Equation~\eqref{Mex2b}.
From the explicit expression for $\vM_{2,1}$ from Equation~\eqref{M12ex2} and the
modularity of the Dedekind $\eta$-function and the Jacobi $\theta$-function it
follows that $M$ satisfies Equation~\eqref{Mmod} with weight $\kappa=(0,1)$. It
follows that the two cocycles of Equation~\eqref{UVex2} agree.
This implies that neither cocycle depends on $\lambda$ or $\mu$. This
is equivalent to observations in \cite{Zwegersthesis} that the slash operator
acting on the Appell-Lerch sums depends only on the difference of two Jacobi
variables. Then using~\cite[Proposition~1.5]{Zwegersthesis} we can give an
explicit formula for the cocycle in terms of the Mordell integral~\eqref{mordell}
and elementary functions. In particular, we have
\be
\begin{aligned}
f^{(0)}(qt,\lambda ,q)+tf^{(0)}(t,\lambda ,q)
&=1\\
f^{(-1)}(qt,q)+tf^{(-1)}(t,q)
&=0\\
\ti f^{(0)}(t,\lambda ,q)\;\e
\left(\frac{(z+1/2-\tau/2)^{2}}{2\tau}-\frac{1}{8}\right)\sqrt{\tau}
&=
\tau f^{(0)}(t,\lambda ,q)-\tau t^{-\frac{1}{2}}q^{\frac{1}{8}}h(t,q)\\
f^{(-1)}(\ti t,\ti q)\;\e\left(\frac{(z+1/2-\tau/2)^{2}}{2\tau}
  -\frac{1}{8}\right)
\sqrt{\tau}
&=
f^{(-1)}(t,q) \,.
\end{aligned}
\ee
This means that
\be
\begin{aligned}
\det U(t,\lambda ,q)
&=
f^{(-1)}(qt,q)f^{(0)}(t,q)-f^{(-1)}(t,q)f^{(0)}(qt,q)\\
&=
f^{(-1)}(t,q)\left(-tf^{(0)}(t,q)-f^{(0)}(qt,q)\right)\\
&=
-f^{(-1)}(t,q).
\end{aligned}
\ee
Therefore, we see that
\be
\Om_{S}(z,\tau)
=
\begin{pmatrix}
0 & 1\\
1 & -\tilde{t}
\end{pmatrix}
\begin{pmatrix}
\tau^{-1} & 0\\
\frac{\e\left(\frac{1}{8}\right)}{\sqrt{\tau}}\tilde{t}^{-\frac{1}{2}}
\tq^{\frac{1}{8}}\e\left(\frac{z^2}{2\tau}\right)h(z,\tau) &
\frac{\e\left(\frac{1}{8}\right)}{\sqrt{\tau}}\e
\left(-\frac{(z+1/2-\tau/2)^{2}}{2\tau}\right)
\end{pmatrix}
\begin{pmatrix}
0 & 1\\
1 & -t
\end{pmatrix}^{-1}.
\ee
We can vary the contour in Equation~\eqref{mordell} to get the integral
\be
\int_{\zeta\BR}\frac{e^{\pi i (\tau x^{2} + 2 i z x)}}{2\cosh(\pi x)}dx
\ee
for $|\zeta|=1$ and $\zeta\neq\pm i$. This is a convergent integral to a
holomorphic function in $(z,\tau)$ when $\Im(\tau\zeta^2)>0$. Therefore, we
see that from the uniqueness properties of the solutions to the functional
equations of $h$ the Mordell integral~\cite[Proposition~1.2]{Zwegersthesis}
these functions give an analytic extension of $h$ the Mordell integral to
the cut plane $\BC'$. Therefore, the cocycle $\Om_{U,S}(t,q)$ extends to a
holomorphic function for $z\in\BC$ and $\tau\in\BC'$. Since $\Om_{U,T}=I$,
part (c) of Theorem~\ref{thm.ST} concludes that the cocycle $\Om_U$ is modular.


\section{A $q$-difference equation of the $4_1$ knot}
\label{sec.41}

This section is devoted to the proof of Theorem~\ref{thm.41},
Theorem~\ref{thm.41b} and Theorem~\ref{thm.41c}.

\subsection{Solutions}
\label{subsec.41sols}

The $q$-difference equation~\eqref{41x1} has Newton polygon shown in Figure
~\ref{f.41x1}.

\begin{figure}[!htpb]
\begin{center}
\begin{tikzpicture}[scale=0.8,baseline=-3]
\draw[<->,thick] (-3.5,0) -- (3.5,0);
\draw[<->,thick] (0,-1.5) -- (0,3.5);
\foreach \x in {-2,2}
\draw[thick] (\x,-2pt) -- (\x,2pt);
\filldraw (0,0) circle (2pt);
\filldraw (0,2) circle (2pt);
\filldraw (-2,2) circle (2pt);
\filldraw (2,2) circle (2pt);
\node at (-1,2.4) {$g^{(0,0)}$};
\node at (1,2.4) {$g^{(0,1)}$};
\node at (-1.6,0.8) {$f^{(-1)}$};
\node at (1.6,0.8) {$f^{(1)}$};
\draw (0,0) -- (2,2) -- (-2,2) -- cycle;
\fill[blue,opacity=0.2] (0,0) -- (2,2) -- (-2,2) -- cycle;
\end{tikzpicture}
\caption{The Newton polygon of Equation~\eqref{41x1}.}
\label{f.41x1}
\end{center}
\end{figure}
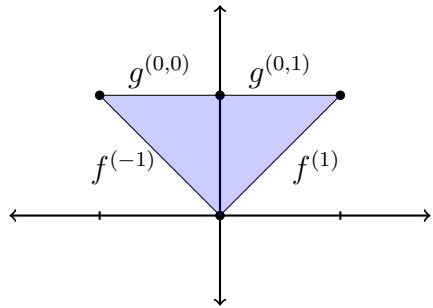

\noindent
The boundary of the lower Newton polygon has edges of slope $-1$ and $1$. The
edge with slope $-1$ must be divided by a $\theta$-function to get a power series
solution. The effect on the Newton polygon is as follows.
\begin{center}
\begin{tikzpicture}
\draw[<->,thick] (-2,0) -- (2,0);
\draw[<->,thick] (0,-1) -- (0,2);
\foreach \x in {-1,1}
\draw[thick] (\x,-2pt) -- (\x,2pt);
\filldraw (0,0) circle (2pt);
\filldraw (0,1) circle (2pt);
\filldraw (-1,1) circle (2pt);
\filldraw (1,1) circle (2pt);
\draw (0,0) -- (1,1) -- (-1,1) -- cycle;
\fill[blue,opacity=0.2] (0,0) -- (1,1) -- (-1,1) -- cycle;
\node at (-0.8,0.35) {\begin{tiny}$f^{(-1)}$\end{tiny}};
\draw[->,ultra thick] (3,0) -- (4,0);
\node at (3.5,0.4) {$\theta(t;q)^{-1}$};
\draw[<->,thick] (-2+7,0) -- (2+7,0);
\draw[<->,thick] (0+7,-1) -- (0+7,2);
\foreach \x in {-1+7,1+7}
\draw[thick] (\x,-2pt) -- (\x,2pt);
\filldraw (0+7,0) circle (2pt);
\filldraw (0+7,1) circle (2pt);
\filldraw (-1+7,0) circle (2pt);
\filldraw (1+7,2) circle (2pt);
\draw (0+7,0) -- (1+7,2) -- (-1+7,0) -- cycle;
\fill[blue,opacity=0.2] (0+7,0) -- (1+7,2) -- (-1+7,0) -- cycle;
\node at (-0.5+7,-0.25) {\begin{tiny}$\hat{f}^{(-1)}$\end{tiny}};
\end{tikzpicture}
\end{center}
Therefore, the bottom edge of the Newton polygon with slope $-1$ has a solution
of the form
\be
f^{(-1)}(t,q)
=
\theta(t;q)^{-1}\hat{f}^{(-1)}(t,q)
=
\theta(t;q)^{-1}\sum_{k=0}^{\infty}\alpha_{k}^{(-1)}(q)t^{k}
\frac{\theta(\rind^{-1}t;q)}{\theta(t;q)}
\ee
where
\be
(1-q^{-k-1}\rind^{-1})\alpha_{k}^{(-1)}(q)-2\alpha_{k-1}^{(-1)}(q)
-q^{k}\rind\alpha_{k-2}^{(-1)}(q)=0.
\ee
Therefore, this edge has indicial polynomial
\be
(1-q^{-1}\rind^{-1})\alpha_{0}^{(-1)}(q)
\ee
and so $\rind=q^{-1}$. Now notice that if we take the $(-1)$-$q$-Borel transform we
see the effect on the Newton polygon is as follows.
\begin{center}
\begin{tikzpicture}
\draw[<->,thick] (-2,0) -- (2,0);
\draw[<->,thick] (0,-1) -- (0,2);
\foreach \x in {-1,1}
\draw[thick] (\x,-2pt) -- (\x,2pt);
\filldraw (0,0) circle (2pt);
\filldraw (0,1) circle (2pt);
\filldraw (-1,0) circle (2pt);
\filldraw (1,2) circle (2pt);
\draw (0,0) -- (1,2) -- (-1,0) -- cycle;
\fill[blue,opacity=0.2] (0,0) -- (1,2) -- (-1,0) -- cycle;
\node at (-0.5,-0.25) {\begin{tiny}$\hat{f}^{(-1)}$\end{tiny}};
\draw[ultra thick,->] (3,0) -- (4,0);
\node at (3.5,0.4) {$\Bor_{-1}$};
\draw[<->,thick] (-2+7,0) -- (1+7,0);
\draw[<->,thick] (0+7,-1) -- (0+7,2);
\foreach \x in {1}
\draw[thick] (-0.1+7,\x) -- (0.1+7,\x);
\filldraw (0+7,0) circle (2pt);
\filldraw (-1+7,1) circle (2pt);
\filldraw (-1+7,0) circle (2pt);
\filldraw (-1+7,2) circle (2pt);
\draw (0+7,0) -- (-1+7,2) -- (-1+7,0) -- cycle;
\fill[blue,opacity=0.2] (0+7,0) -- (-1+7,2) -- (-1+7,0) -- cycle;
\node at (-0.7+7,-0.25) {\begin{tiny}$\Bor_{-1}\hat{f}^{(-1)}$\end{tiny}};
\end{tikzpicture}
\end{center}
This means that this Borel transform will satisfy the $q$-difference equation
\be
(1-q^{-1}t)^{2}\Bor_{-1}\hat{f}^{(-1)}(q^{-1}t;q)
=
\Bor_{-1}\hat{f}^{(-1)}(t,q).
\ee
Therefore, normalising so that $\alpha_{0}(q)=-(q;q)_{\infty}^{2}$ we have
\be
\Bor_{-1}\hat{f}^{(-1)}(t,q)
=
\sum_{k=0}^{\infty}(-1)^{k}q^{-k(k+1)/2}\alpha_{k}^{(-1)}(q)t^{k}
=
\frac{-(q;q)_{\infty}^{2}}{(t;q)_{\infty}^{2}}.
\ee
Therefore, we see that
\be
\alpha^{(-1)}_{k}(q)=(q;q)_{\infty}^{2}(-1)^{k+1}q^{k(k+1)/2}
\sum_{\ell=0}^{k}\frac{1}{(q;q)_{\ell}(q;q)_{k-\ell}}.
\ee
In particular, we have
\be
\label{ex41.f-1}
f^{(-1)}(t,q)=(q;q)_{\infty}^{2}\theta(q^{-1}t;q)^{-1}\sum_{k=0}^{\infty}
\sum_{\ell=0}^{k}(-1)^{k}\frac{q^{k(k+1)/2}}{(q;q)_{\ell}(q;q)_{k-\ell}}t^{k} \,.
\ee
Now the bottom edge of slope $1$ must be multiplied by a $\theta$-function to get
a power series solution. However, this will then be divergent so we must take
$(1/2)$-$q$-Borel resummation. The effect on the Newton polygon is as follows.
\begin{center}
\begin{tikzpicture}
\draw[<->,thick] (-2-5,0) -- (2-5,0);
\draw[<->,thick] (0-5,-1) -- (0-5,3);
\foreach \x in {-1-5,1-5}
\draw[thick] (\x,-2pt) -- (\x,2pt);
\foreach \x in {2}
\draw[thick] (-5.08,\x) -- (-4.92,\x);
\filldraw (0-5,0) circle (2pt);
\filldraw (0-5,1) circle (2pt);
\filldraw (-1-5,1) circle (2pt);
\filldraw (1-5,1) circle (2pt);
\draw (0-5,0) -- (1-5,1) -- (-1-5,1) -- cycle;
\fill[blue,opacity=0.2] (0-5,0) -- (1-5,1) -- (-1-5,1) -- cycle;
\node at (0.7-5,0.25) {\begin{tiny}$f^{(1)}$\end{tiny}};
\draw[->,ultra thick] (-2.8,0) -- (-2.2,0);
\node at (-2.5,0.4) {$\theta(t;q)$};
\draw[<->,thick] (-2,0) -- (2,0);
\draw[<->,thick] (0,-1) -- (0,3);
\foreach \x in {-1,1}
\draw[thick] (\x,-2pt) -- (\x,2pt);
\foreach \x in {2}
\draw[thick] (-2pt,\x) -- (2pt,\x);
\filldraw (0,0) circle (2pt);
\filldraw (0,1) circle (2pt);
\filldraw (-1,2) circle (2pt);
\filldraw (1,0) circle (2pt);
\draw (0,0) -- (-1,2) -- (1,0) -- cycle;
\fill[blue,opacity=0.2] (0,0) -- (-1,2) -- (1,0) -- cycle;
\node at (0.5,-0.25) {\begin{tiny}$\hat{f}^{(1)}$\end{tiny}};
\draw[->,ultra thick] (2.2,0) -- (2.8,0);
\node at (2.5,0.4) {$\Bor_{1/2}$};
\draw[<->,thick] (-2+5,0) -- (2+5,0);
\draw[<->,thick] (0+5,-1) -- (0+5,3);
\foreach \x in {1}
\draw[thick] (4.92,\x) -- (5.08,\x);
\foreach \x in {4}
\draw[thick] (\x,-2pt) -- (\x,2pt);
\filldraw (0+5,0) circle (2pt);
\filldraw (1/2+5,1) circle (2pt);
\filldraw (1+5,0) circle (2pt);
\filldraw (0+5,2) circle (2pt);
\draw (0+5,0) -- (0+5,2) -- (1+5,0) -- cycle;
\fill[blue,opacity=0.2] (0+5,0) -- (0+5,2) -- (1+5,0) -- cycle;
\node at (0.6+5,-0.28) {\begin{tiny}$\Bor_{1/2}\hat{f}^{(1)}$\end{tiny}};
\end{tikzpicture}
\end{center}
By symmetry of the $q$-difference equation, one can easily use the previous
solution to check that the formal solution to this edge is given by
$q\mapsto q^{-1}$ which gives
\be
f^{(1)}(t,q)
=
\frac{\theta(t;q)}{(q;q)_{\infty}^{2}}\hat{f}^{(1)}(t,q)
=
\frac{\theta(t;q)}{(q;q)_{\infty}^{2}}\sum_{k=0}^{\infty}\sum_{\ell=0}^{k}
\frac{q^{\ell^2-\ell k}}{(q;q)_{\ell}(q;q)_{k-\ell}}t^{k}.
\ee
One then sees that
\be
\begin{aligned}
\Bor_{1/2}\hat{f}^{(1)}(\xi,q)
&=
\sum_{k=0}^{\infty}\sum_{\ell=0}^{k}(-1)^{k}
\frac{q^{k(k+1)/4+\ell^2-\ell k}}{(q;q)_{\ell}(q;q)_{k-\ell}}\xi^{k}
&=
\sum_{k,\ell=0}^{\infty}(-1)^{k+\ell}
\frac{q^{k(k+1)/4-k\ell/2+\ell(\ell+1)/4}}{(q;q)_{k}(q;q)_{\ell}}\xi^{k+\ell}
\end{aligned}
\ee
is holomorphic for $|\xi|<|q^{-1/4}|$. Then, using the functional equation
\be
(1-q^{1/2}\xi^2)\Bor_{1/2}\hat{f}^{(1)}(\xi,q)+
2\xi\Bor_{1/2}\hat{f}^{(1)}(q^{1/2}\xi,q)-\Bor_{1/2}\hat{f}^{(1)}(q\xi,q)=0,
\ee
we can analytically extend away from $\xi\in\pm q^{-1/4+\frac{1}{2}\BZ_{\leq0}}$
and we see that there are poles at $\xi\in\pm q^{-1/4+\frac{1}{2}\BZ_{\leq0}}$.
Therefore, we finally define
\be
\label{ex41.f1}
\begin{aligned}
f^{(1)}(t,\lambda,q)
&=
\frac{\theta(t;q)}{(q;q)_{\infty}^{2}}\Lap_{1/2}\Bor_{1/2}
\hat{f}^{(1)}(t,\lambda,q)\\
&=
\frac{\theta(t;q)}{(q;q)_{\infty}^{2}}\sum_{n\in\BZ}\frac{\Bor_{1/2}\hat{f}^{(1)}
(q^{\frac{n}{2}}\lambda t,q)}{\theta(q^{\frac{n}{2}}\lambda;q^{\frac{1}{2}})}\\
&=
\frac{\theta(t;q)}{(q;q)_{\infty}^{2}\theta(\lambda;q^{1/2})}
\sum_{n\in\BZ}(-1)^{n}q^{n(n+1)/4}\lambda^{n}\Bor_{1/2}
\hat{f}^{(1)}(q^{\frac{n}{2}}\lambda t,q).
\end{aligned}
\ee
Now the top of the Newton polygon has one edge of slope $0$. We find the solution
satisfies
\be
g^{(0)}(t,q)
=
\sum_{k=0}^{\infty}\beta_{k}(q)t^{-k}\frac{\theta(\rind^{-1}t;q)}{\theta(t;q)}
\quad\text{where}\quad
\beta_{k}(q)-q^{-k}\rind^{-1}(1-q^{k}\rind^{-1})^{2}\beta_{k+1}(q)=0.
\ee
Therefore, as the indicial polynomial is
\be
(1-q^{-1}\rind^{-1})^{2}\beta_{0}(q)=0
\ee
we take $\rind=q^{-1}e^\ve$ and expand to order $\ve^{2}$ to find
solutions
\be
\begin{aligned}
g^{(0)}(t,\ve,q)
&=
\sum_{k=0}^{\infty}(-1)^{k}\frac{q^{k(k+1)/2}}{(q;q)_{k}^{2}}t^{-1-k}\\
&+\left(\sum_{k=0}^{\infty}
  \left(\frac{1}{2}E_{1}(q)-\frac{1}{2}-\frac{\theta'(t^{-1};q)}{\theta(t^{-1};q)}+\sum_{j=1}^{k}\frac{1+q^{j}}{1-q^{j}}\right)
  (-1)^{k}\frac{q^{k(k+1)/2}}{(q;q)_{k}^{2}}t^{-1-k}\right)\ve+O(\ve^{2})\,,
\end{aligned}
\ee
where $E_{1}(q)=1-4\sum_{j=1}^{\infty}\frac{q^{j}}{1-q^{j}}$.
Therefore, we have solutions
\be
\label{ex41.g00}
\begin{aligned}
g^{(0,0)}(t,q)
&=
\sum_{k=0}^{\infty}(-1)^{k}\frac{q^{k(k+1)/2}}{(q;q)_{k}^{2}}t^{-1-k}\\
g^{(0,1)}(t,q)
&=
\sum_{k=0}^{\infty}\left(\frac{1}{2}E_{1}(q)-\frac{1}{2}-\frac{\theta'(t^{-1};q)}{\theta(t^{-1};q)}+\sum_{j=1}^{k}\frac{1+q^{j}}{1-q^{j}}\right)(-1)^{k}\frac{q^{k(k+1)/2}}{(q;q)_{k}^{2}}t^{-1-k}.
\end{aligned}
\ee

\subsection{Monodromy}

Consider the fundamental matrices $U$ and $V$ given by Equations~\eqref{UVex41}
and the associated monodromy. Using the modular transformation properties of
the Jacobi $\theta$-function Equation~\eqref{thetaS} and the Dedekind $\eta$-function,
it is easy to see that Equation~\eqref{M412} implies~\eqref{M41c}.

Note that each of the functions $\vM_{2,1},\vM_{2,1}|_{-1}T,\vM_{2,1}|_{-1}S$ has
$\SL_2(\BZ)$-stabiliser $\langle T^2,TST\rangle$, $\langle T^2,S\rangle$ and
$\langle ST^2S,T\rangle$, respectively, and that the second group is the
$\theta$-subgroup and the last group is $\Gamma_{0}(2)$. The appearance of
$\Gamma_{0}(2)$ is a consequence of the $(1/2)$-$q$-Borel transform below. 

The rest of this section is devoted to the proof that the monodromy is given
by Equation~\eqref{M412}. To achieve this, we need to write $f^{(-1)}$ and $f^{(1)}$
as linear combinations of $g^{(0,0)}$ and $g^{(0,1)}$, the coefficients in these
expressions give the entries of the monodromy matrix. 

Firstly we determine the second column of the monodromy, using an adaption of an
argument in~\cite{Morita:conn-mon}. Using the $(-1)$-$q$-Laplace transform or the
Meinardus trick (see for example~\cite{Meinardus} and~\cite[p.54]{Zagier:dilog})
and shifting the contour of $\oint:=\int_{|t|=\ve}$ we obtain that
\be
\begin{tiny}
\begin{aligned}
  \label{f1=g01}
&\hat{f}^{(-1)}(t,q)
=
(q;q)_{\infty}^{2}\oint_{0}\frac{\theta(t/\xi;q)}{(\xi;q)_{\infty}^{2}}
\frac{d\xi}{2\pi i \xi}
=
-(q;q)_{\infty}^{2}\sum_{k=0}^{\infty}\left(\Res_{\xi=q^{-k}}\right)
\frac{\theta(t/\xi;q)}{(\xi;q)_{\infty}^{2}}\frac{d\xi}{2\pi i \xi}\\
&=
-(q;q)_{\infty}^{2}\sum_{k=0}^{\infty}\left(\Res_{\ve=0}\right)
\frac{\theta(tq^{k}e^{\ve};q)}{(q^{-1}e^{-\ve};q^{-1})_{k}^2
  (e^{-\ve};q)_{\infty}^{2}}\frac{-d\ve}{2\pi i }\\
&=
-(q;q)^{4}\sum_{k=0}^{\infty}\left(\Res_{\ve=0}\right)
\frac{(-1)^{k}q^{-k(k+1)/2}t^{-k}e^{-k\ve}
  \theta(te^{\ve};q)}{\theta(e^{\ve};q)^{2}}
\frac{(qe^{\ve};q)_{\infty}^{2}}{q^{-k(k+1)}
  e^{-2k\ve}(qe^{\ve};q)_{k}^2}\frac{-d\ve}{2\pi i }\\
&=
(q;q)_{\infty}^{6}\Res_{\ve=0}
\frac{\theta(te^{\ve};q)}{\theta(e^{\ve};q)^{2}}
\frac{(qe^{\ve};q)_{\infty}^{2}}{(q;q)_{\infty}^{2}}
\sum_{k=0}^{\infty}
\frac{(-1)^{k}q^{k(k+1)/2}t^{-k}e^{k\ve}}{(qe^{\ve};q)_{k}^2}
\frac{d\ve}{2\pi i }\\
&=
-\theta(t^{-1};q)\sum_{k=0}^{\infty}\left(\frac{1}{2}E_{1}(q)-\frac{1}{2}+\sum_{j=1}^{k}\frac{1+q^{j}}{1-q^{j}}\right)
\frac{(-1)^{k}q^{k(k+1)/2}t^{-k-1}}{(q;q)_{k}^2}+\theta'(t^{-1};q)
\sum_{k=0}^{\infty}\frac{(-1)^{k}q^{k(k+1)/2}t^{-k-1}}{(q;q)_{k}^2}.
\end{aligned}
\end{tiny}
\ee
Therefore, we see that
\be\label{fg}
f^{(-1)}(t,q)
=
g^{(0,1)}(t,q) \,, 
\ee
which implies that $\vM_{1,2}=0$ and $\vM_{2,2}=1$. Lemma~\ref{lem.41det} below
implies that $\det(M)=-1$, therefore 
\be
\label{41:paritalmono}
\vM(t,\lambda,q) =
\begin{pmatrix}
-1 & 0\\
* & 1
\end{pmatrix} \,.
\ee
To finish the proof we will show that the function
\be
\label{41mono:intfunc}
\calE_2:=f^{(1)}+g^{(0,0)}-m_{2,1}g^{(0,1)}
\ee
vanishes identically, where $m_{2,1}$ denotes the function given in
Equation~\eqref{M412}. The function $\calE_2$ is meromorphic of
$t \in \BC^{\times}$ and has potential simple poles located at
$t\in\pm q^{-1/4-\frac{1}{2}\BZ}\lambda^{-1}$ (coming from $f^{(1)}$ and $\vM_{2,1}$)
and potential simple poles at $q^\BZ$ (coming from $g^{(0,1)}$).
Lemma~\ref{lem.resf1} below gives
\be
\label{41res}
\begin{aligned}
&\Res_{t=\pm q^{-1/4-n}\lambda^{-1}}f^{(1)}(t,\lambda,q)\frac{dt}{2\pi it}\\
&=
\frac{\theta(\pm q^{3/4}\lambda^{-1};q)\theta(\pm q^{-1/4};q)
  \theta(\pm q^{-3/4};q)
  \theta(\pm q^{-3/4}\lambda;q)}{2(q;q)_{\infty}^{6}
  \theta(q^{-1}\lambda;q)\theta(q^{-3/2}\lambda;q)}
f^{(-1)}(\pm q^{-1/4-n} \lambda^{-1},q) \,.
\end{aligned}
\ee
On the other hand,
\be
\begin{aligned}
  &\Res_{t=\pm q^{-1/4-n}\lambda^{-1}}m_{2,1}(t,\lambda,q)
  \frac{dt}{2\pi it}\\
&=
\frac{\Res_{t=\pm q^{-1/4-n}\lambda^{-1}}\theta(qt;q)\theta(t\lambda;q)
  \theta(t\lambda q^{-1/2};q)
  \theta(t\lambda^{2}q^{-1/2};q)}{\theta(t\lambda q^{1/4};q)
  \theta(-t\lambda q^{1/4};q)\theta(t\lambda q^{-1/4};q)
  \theta(-t\lambda q^{-1/4};q)\theta(q^{-1}\lambda;q)\theta(q^{-3/2}
  \lambda;q)}\frac{dt}{2\pi it}\\
&=
\frac{(-1)^{n}q^{n(n-1)/2}}{(q;q)_{\infty}^{3}}\frac{\theta(\pm q^{3/4-n}
  \lambda^{-1};q)\theta(\pm q^{-1/4-n};q)\theta(\pm q^{-3/4-n};q)
  \theta(\pm q^{-3/4-n}\lambda;q)}{
  \theta(-q^{-n};q)\theta(q^{-1/2-n};q)
  \theta(-q^{-1/2-n};q)\theta(q^{-1}\lambda;q)\theta(q^{-3/2}\lambda;q)}\\
&=
\frac{\theta(\pm q^{3/4}\lambda^{-1};q)\theta(\pm q^{-1/4};q)
  \theta(\pm q^{-3/4};q)
  \theta(\pm q^{-3/4}\lambda;q)}{(q;q)_{\infty}^{3}
  \theta(-1;q)\theta(q^{-1/2};q)
  \theta(-q^{-1/2};q)\theta(q^{-1}\lambda;q)\theta(q^{-3/2}\lambda;q)}\\
&=
\frac{\theta(\pm q^{3/4}\lambda^{-1};q)\theta(\pm q^{-1/4};q)
  \theta(\pm q^{-3/4};q)
  \theta(\pm q^{-3/4}\lambda;q)}{2(q;q)_{\infty}^{6}
  \theta(q^{-1}\lambda;q)\theta(q^{-3/2}\lambda;q)} \,.
\end{aligned}
\ee
Therefore,
\be
\Res_{t=\pm q^{-1/4-n}\lambda^{-1}} \calE_2(t,\lambda,q) \frac{dt}{2\pi it}=0 \,.
\ee
A similar computation or the fact everything is elliptic in
$\lambda\mapsto q^{1/2}\lambda$ also shows that
\be
\Res_{t=\pm q^{-3/4-n}\lambda^{-1}} \calE_2(t,\lambda,q) \frac{dt}{2\pi it}=0 \,.
\ee
Now notice that $m_{2,1}(q^{\BZ},\lambda,q)=0$ which implies $m_{2,1}g^{(0,1)}$
is holomorphic at $t\in q^{\BZ}$. We then see that $\calE_2(t,\lambda,q)$ extends
to a holomorphic function of $t \in \BC^{\times}$. Therefore, from
Corollary~\ref{cor:41holsol} we see that $\calE_2(t,\lambda,q)=C(q)g^{(0,0)}(t,q)$.
Finally, we note that 
\be
\frac{-C(q)}{(q;q)_{\infty}^{2}}
=
\lim_{r\rightarrow\infty}\frac{\calE_{2}(q^{r}t,\lambda,q)}{\theta(q^{r}t;q)}
=
0
\ee
which follows from Lemma~\ref{lem.fglimits}.
Thus, Equation~\eqref{M412} follows from the residue equation~\eqref{41res}
and from a computation of the determinants of $U$ and $V$. We discuss these
in the next sections.

\begin{lemma}
\label{lem.fglimits}
  For $t$ in some compact set where the functions are holomorphic, we have
\be
\begin{aligned}
\lim_{r\rightarrow\infty}\frac{g^{(0,0)}(q^{r}t,q)}{\theta(q^{r}t;q)}
&=
\frac{-1}{(q;q)_{\infty}^{2}}\qquad
&
\lim_{r\rightarrow\infty}\frac{f^{(-1)}(q^{r}t,q)}{\theta(q^{r}t;q)}
&=
0
\\
\lim_{r\rightarrow\infty}\frac{g^{(0,1)}(q^{r}t,q)}{\theta(q^{r}t;q)}
&=
0\qquad
&
\lim_{r\rightarrow\infty}\frac{f^{(1)}(q^{r}t,q)}{\theta(q^{r}t;q)}
&=
\frac{1}{(q;q)_{\infty}^{2}}\,.
  \end{aligned}
\ee
\end{lemma}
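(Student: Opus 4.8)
The plan is to reduce every limit to the single theta identity \eqref{theta.fun2}, which I would rewrite as $\theta(q^{r}t;q)=(-1)^{r}q^{-r(r+1)/2}t^{-r}\theta(t;q)$, so that the normalising factor is completely explicit. First I would treat the two solutions $g^{(0,0)}$ and $g^{(0,1)}$ at $t=\infty$, whose series are given in \eqref{ex41.g00}. Substituting $t\mapsto q^{r}t$ into the series for $g^{(0,0)}$ and dividing by the factored theta, the $q$-exponent of the general term collapses to $(k-r)(k-r+1)/2$; reindexing by $m=k-r$ turns the quotient into $\frac{1}{(q;q)_{\infty}^{2}\theta(t;q)}\sum_{m}(-1)^{m}q^{m(m+1)/2}t^{-1-m}$ in the limit, where I use that $(q;q)_{m+r}\to(q;q)_{\infty}$ for each fixed $m$. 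I would then recognise the bilateral sum as $t^{-1}\theta(t^{-1};q)$ and apply \eqref{theta.fun} in the form $\theta(t^{-1};q)=-t\,\theta(t;q)$ to obtain exactly $-1/(q;q)_{\infty}^{2}$.

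For $g^{(0,1)}$ the same reindexing applies, the only new feature being the coefficient $c_{k}=\tfrac12 E_{1}(q)-\tfrac12-\theta'(t^{-1};q)/\theta(t^{-1};q)+\sum_{j=1}^{k}\frac{1+q^{j}}{1-q^{j}}$. Evaluating at $q^{r}t$ I would use \eqref{thetader} (so the log-derivative of $\theta$ shifts by $+r$) together with $\sum_{j=1}^{k}\frac{1+q^{j}}{1-q^{j}}=k+2\sum_{j=1}^{k}\frac{q^{j}}{1-q^{j}}$ and the definition $E_{1}(q)=1-4\sum_{j\ge1}q^{j}/(1-q^{j})$; the growing term $-r$ from \eqref{thetader} cancels the $+r$ coming from $k=m+r$, while the constants and the divergent $\sum q^{j}/(1-q^{j})$ cancel against $\tfrac12 E_{1}-\tfrac12$, leaving $c_{m+r}\to m-\theta'(t^{-1};q)/\theta(t^{-1};q)$. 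The limit then becomes a combination of the two bilateral sums $\sum_{m}(-1)^{m}q^{m(m+1)/2}t^{-1-m}=-\theta(t;q)$ and $\sum_{m}(-1)^{m}m\,q^{m(m+1)/2}t^{-1-m}=t^{-1}\theta'(t^{-1};q)$, which cancel identically once \eqref{theta.fun} is used, giving $0$. Since $f^{(-1)}=g^{(0,1)}$ by \eqref{fg}, the limit for $f^{(-1)}$ is the same.

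For $f^{(1)}$ I would exploit that its definition \eqref{ex41.f1} already contains the factor $\theta(t;q)/(q;q)_{\infty}^{2}$, so that $f^{(1)}(q^{r}t,\lambda,q)/\theta(q^{r}t;q)=(q;q)_{\infty}^{-2}\,(\Lap_{1/2}\Bor_{1/2}\hat f^{(1)})(q^{r}t,\lambda,q)$ with no remaining theta quotient. It then suffices to show the resummed function tends to $1$ as $q^{r}t\to0$. Writing out $\Lap_{1/2}$ and using that $\Bor_{1/2}\hat f^{(1)}$ is holomorphic in a disc about $\xi=0$ with $\Bor_{1/2}\hat f^{(1)}(0,q)=1$, each term $\Bor_{1/2}\hat f^{(1)}(q^{\ell/2+r}\lambda t,q)\to1$, and the Gaussian weights sum to $\theta(\lambda;q^{1/2})/\theta(\lambda;q^{1/2})=1$, yielding the stated limit $1/(q;q)_{\infty}^{2}$ independently of $\lambda$.

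The main obstacle, and the only point requiring care beyond the algebra, is justifying the interchange of the limit $r\to\infty$ with the infinite summations. For the $g$-series this needs a uniform-in-$r$ dominating bound exploiting $(q;q)_{m+r}^{-1}\le C$ and the Gaussian decay $q^{m(m+1)/2}$; for $f^{(1)}$ it requires controlling $\Bor_{1/2}\hat f^{(1)}$ on the negative-$\ell$ tail, where its argument approaches the poles at $\pm q^{-1/4+\frac12\BZ_{\le0}}$, a contribution dominated by the weight $q^{\ell(\ell+1)/4}$. Both are routine applications of dominated convergence once $|q|<1$ is used and $t$ is restricted to a compact set avoiding the poles, as in the statement.
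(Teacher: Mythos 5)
Your proposal is correct and follows essentially the same route as the paper: for $g^{(0,0)}$ and $g^{(0,1)}$ you reindex by $m=k-r$, absorb the Gaussian via the quasi-periodicity $\theta(q^{r}t;q)=(-1)^{r}q^{-r(r+1)/2}t^{-r}\theta(t;q)$, and identify the resulting bilateral sums with $\theta(t^{-1};q)$ and $\theta'(t^{-1};q)$ (the paper's cancellation is packaged as $E_{1}(q)/2-1/2+\sum_{j=1}^{k}\frac{1+q^{j}}{1-q^{j}}=k+O(q^{k})$, which is exactly your bookkeeping). The only cosmetic differences are that you invoke $f^{(-1)}=g^{(0,1)}$ from \eqref{fg} (legitimately, since that identity is established independently) where the paper tracks the $\theta(q^{-1}t;q)^{-1}$ prefactor via Watson's lemma, and for $f^{(1)}$ you unwind the $q$-Laplace sum explicitly where the paper simply cites Watson's lemma; your dominated-convergence caveats are the right ones.
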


\begin{proof}
We have
\be
\begin{aligned}
g^{(0,0)}(q^{r}t,q)
&=
\sum_{k=0}^{\infty}(-1)^{k}\frac{q^{k(k+1)/2-rk-r}}{(q;q)_{k}^{2}}t^{-k-1}\\
&=
q^{-r(r+1)/2}\sum_{k=0}^{\infty}(-1)^{k}
\frac{q^{(k-r)(k-r+1)/2}}{(q;q)_{k}^{2}}t^{-k-1}\\
&=
(-1)^{r}q^{-r(r+1)/2}t^{-r-1}\sum_{k=-r}^{\infty}(-1)^{k}
\frac{q^{k(k+1)/2}}{(q;q)_{k+r}^{2}}t^{-k}\,.
\end{aligned}
\ee
The first equality then follows from
\begin{align*}
\lim_{r\rightarrow\infty}\frac{g^{(0,0)}(q^{r}t,q)}{\theta(q^{r}t;q)}
&=
\lim_{r\rightarrow\infty}\frac{-1}{\theta(t^{-1};q)}
\sum_{k=-r}^{\infty}(-1)^{k}\frac{q^{k(k+1)/2}}{(q;q)_{k+r}^{2}}t^{-k}
\\
&=
\frac{-1}{\theta(t^{-1};q)}\sum_{k\in\BZ}(-1)^{k}
\frac{q^{k(k+1)/2}}{(q;q)_{\infty}^{2}}t^{-k}
=\frac{-1}{(q;q)_{\infty}^{2}} \,.
\end{align*}
We can show similarly that
\be
\begin{tiny}
\begin{aligned}
g^{(0,1)}(q^{r}t,q)
&=
(-1)^{r}q^{-r(r+1)/2}t^{-r-1}\sum_{k=-r}^{\infty}\left(\frac{1}{2}E_{1}(q)-r-\frac{1}{2}-\frac{\theta'(t^{-1};q)}{\theta(t^{-1};q)}+\sum_{j=1}^{k+r}\frac{1+q^{j}}{1-q^{j}}\right)
(-1)^{k}\frac{q^{k(k+1)/2}}{(q;q)_{k+r}^{2}}t^{-k} \,.
\end{aligned}
\end{tiny}
\ee
Noting that $E_{1}(q)/2-1/2+\sum_{j=1}^{k}\frac{1+q^{j}}{1-q^{j}}=k+O(q^{k})$ completes the proof of the limits of
$g^{(0,0)},g^{(0,1)}$. The limits of $f^{(\pm1)}$ follow from Watson's lemma for
$q$-Borel resummation and keeping track of the $\theta$ prefactors.
\end{proof}

\subsection{Residues}

Note that $g^{(0,0)}$ is holomorphic on $\BC^{\times}\cup\{\infty\}$. In this
subsection we compute the residues of the meromorphic functions $f^{(-1)}=g^{(0,1)}$
and $f^{(1)}$.

\begin{lemma}
  \label{lem.resg01}
  The functions $f^{(-1)}(t,q)=g^{(0,1)}(t,q)$ have simple poles at $t = q^\BZ$ with
  residues
\be
\Res_{t=q^{n}}f^{(-1)}(t,q)\frac{dt}{2\pi i t}
=
\Res_{t=q^{n}}g^{(0,1)}(t,q)\frac{dt}{2\pi i t}
=
g^{(0,0)}(q^{n},q)\,.
\ee
\end{lemma}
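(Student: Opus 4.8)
The plan is to reduce everything to $g^{(0,1)}$ using the already-established equality $f^{(-1)}=g^{(0,1)}$ from Equation~\eqref{fg}, and then to isolate the single source of poles in the series~\eqref{ex41.g00}. The key structural observation is that, in the definition of $g^{(0,1)}$, the only $t$-dependent term inside the bracket is $-\theta'(t^{-1};q)/\theta(t^{-1};q)$, which is independent of the summation index $k$ and hence factors out of the sum. Writing the remaining, purely arithmetic contribution as
\be
G(t,q):=\sum_{k=0}^{\infty}\left(\tfrac12 E_1(q)-\tfrac12+\sum_{j=1}^{k}\frac{1+q^{j}}{1-q^{j}}\right)(-1)^{k}\frac{q^{k(k+1)/2}}{(q;q)_{k}^{2}}t^{-1-k},
\ee
one obtains the clean decomposition $g^{(0,1)}(t,q)=G(t,q)-\dfrac{\theta'(t^{-1};q)}{\theta(t^{-1};q)}\,g^{(0,0)}(t,q)$.

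First I would check that both $G(t,q)$ and $g^{(0,0)}(t,q)$ are holomorphic on $\BC^{\times}$: each is a Laurent series in $t^{-1}$ whose coefficients are dominated by $q^{k(k+1)/2}/(q;q)_{k}^{2}$, the logarithmic growth of the bracket $\tfrac12E_1(q)-\tfrac12+\sum_{j\le k}\frac{1+q^{j}}{1-q^{j}}=k+O(q^{k})$ being harmless, so both converge for all $t\neq0$. Consequently the only poles of $g^{(0,1)}$ come from the prefactor $\theta'(t^{-1};q)/\theta(t^{-1};q)$. Since $\theta(s;q)$ has simple zeros exactly at $s\in q^{\BZ}$, the function $\theta(t^{-1};q)$ has simple zeros at $t\in q^{\BZ}$, and the logarithmic-derivative prefactor has simple poles there; this already yields the asserted pole locations $t\in q^{\BZ}$.

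The one computation that needs care is the residue of the prefactor. Using $\theta'(s;q)=s\,\partial_s\theta(s;q)$ and the chain rule under $s=t^{-1}$, I would rewrite $\theta'(t^{-1};q)/\theta(t^{-1};q)=-t\,\partial_t\log\theta(t^{-1};q)$, so that $\frac{\theta'(t^{-1};q)}{\theta(t^{-1};q)}\frac{dt}{2\pi i t}=-\partial_t\log\theta(t^{-1};q)\frac{dt}{2\pi i}$. Because $\theta(t^{-1};q)$ vanishes to first order at $t=q^{n}$, the residue of this one-form equals $-1$ (the multiplicity of the zero, with the sign coming from the chain rule); alternatively the sign can be read off from the functional equations~\eqref{theta.fun}--\eqref{thetader}. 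Since $g^{(0,0)}$ is holomorphic at $t=q^{n}$, I can pull its value out of the simple-pole residue, giving
\be
\Res_{t=q^{n}}g^{(0,1)}(t,q)\frac{dt}{2\pi i t}
=-g^{(0,0)}(q^{n},q)\,\Res_{t=q^{n}}\frac{\theta'(t^{-1};q)}{\theta(t^{-1};q)}\frac{dt}{2\pi i t}
=g^{(0,0)}(q^{n},q),
\ee
with $G$ contributing nothing. The identical statement for $f^{(-1)}$ is then immediate from~\eqref{fg}. The main obstacle is thus not conceptual but bookkeeping: getting the sign and normalisation of the theta logarithmic-derivative residue correct under the $t\mapsto t^{-1}$ substitution and the paper's $\frac{dt}{2\pi i t}$ residue convention.
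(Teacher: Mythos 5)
Your proof is correct and follows essentially the same route as the paper's: the paper's own argument consists precisely of the residue computation $\Res_{t=q^{n}}\frac{\theta'(t^{-1};q)}{\theta(t^{-1};q)}\frac{dt}{2\pi it}=-1$ applied to the only $t$-singular term in the bracket defining $g^{(0,1)}$, together with the remark that the identity $f^{(-1)}=g^{(0,1)}$ was already established independently in Equation~\eqref{f1=g01}. Your write-up merely makes the decomposition $g^{(0,1)}=G-\frac{\theta'(t^{-1};q)}{\theta(t^{-1};q)}g^{(0,0)}$ and the holomorphy of the non-singular pieces explicit, which is a faithful elaboration of the same argument.
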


\begin{proof}
We have
\be
\Res_{t=q^{k}}\frac{\theta'(t^{-1};q)}{\theta(t^{-1};q)}\frac{dt}{2\pi it}
=
-1 \,.
\ee
To finish we note that the proof of the equality $f^{(-1)}=g^{(0,1)}$ is
independent of this Lemma and follows from Equation~\eqref{f1=g01}.
\end{proof}

\begin{lemma}
  \label{lem.resf1}
  \rm{(a)} The function $\Bor_{1/2}\hat{f}^{(1)}(\xi,q)$ has simple poles at
$\xi=\pm q^{-1/4-\BZ/2}$ with residue
\be
\begin{aligned}
\Res_{\xi=\pm q^{-1/4-n/2}}\Bor_{1/2}\hat{f}^{(1)}(\xi,q)\frac{d\xi}{2\pi i\xi}
&=
\frac{-\theta(\pm q^{-1/4};q^{1/2})}{2(q;q)_{\infty}^{2}}
\sum_{\ell=0}^{k}(\pm1)^{k}\frac{q^{\frac{k(k+2)}{4}}}{(q;q)_{\ell}(q;q)_{k-\ell}}.
\end{aligned}
\ee
  \rm{(b)}
  The function $f^{(1)}(t,\lambda,q)$ has simple poles at
  $t=\pm q^{-1/4+\BZ}\lambda^{-1}$ with residue
  \be
\begin{aligned}
&\Res_{t=\pm q^{-1/4-n}\lambda^{-1}}f^{(1)}(t,\lambda,q)\frac{dt}{2\pi it}\\
&=
\frac{\theta(\pm q^{3/4}\lambda^{-1};q)\theta(\pm q^{-1/4};q)
  \theta(\pm q^{-3/4};q) \theta(\pm q^{-3/4}\lambda;q)}{2(q;q)_{\infty}^{6}
  \theta(q^{-1}\lambda;q)\theta(q^{-3/2}\lambda;q)}
f^{(-1)}(\pm q^{-1/4-n} \lambda^{-1},q) \,.
\end{aligned}
\ee
\end{lemma}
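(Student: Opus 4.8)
The plan is to establish (a) from the functional equation for the Borel transform and then feed it into the $q$-Laplace sum to obtain (b). Throughout write $B(\xi):=\Bor_{1/2}\hat f^{(1)}(\xi,q)$, which converges and is holomorphic on $|\xi|<|q^{-1/4}|$. First I would rearrange the functional equation for $B$ obtained above so as to solve for $B(\xi)$ as a ratio whose numerator is a combination of $B(q\xi)$ and $B(q^{1/2}\xi)$ and whose denominator is $1-q^{1/2}\xi^{2}$. Since $|q|<1$, the arguments $q\xi$ and $q^{1/2}\xi$ have strictly smaller modulus, so this both continues $B$ meromorphically and shows that the only new singularities are the simple zeros of $1-q^{1/2}\xi^{2}$, namely $\xi=\pm q^{-1/4}$; iterating the relation generates exactly the locus $\xi\in\pm q^{-1/4-\BZ_{\ge0}/2}$ (with vanishing residue for negative index).

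Next I would extract a recursion for $r^{\pm}_{n}:=\Res_{\xi=\pm q^{-1/4-n/2}}B(\xi)\,\frac{d\xi}{2\pi i\xi}$. For $n\ge1$ the denominator is regular and nonzero at the pole, so, taking residues in the rearranged relation with respect to the logarithmic measure (which is invariant under $\xi\mapsto c\xi$), the two numerator terms contribute the residues at the rescaled arguments $q\xi$ and $q^{1/2}\xi$, that is $r^{\pm}_{n-2}$ and $r^{\pm}_{n-1}$; this gives a two–term recursion with explicit $q$-power coefficients. Writing the conjectured answer as a constant $C^{\pm}=-\theta(\pm q^{-1/4};q^{1/2})/(2(q;q)_{\infty}^{2})$ times $(\pm1)^{n}q^{n(n+2)/4}T_{n}$, where $T_{n}=\sum_{\ell=0}^{n}1/((q;q)_{\ell}(q;q)_{n-\ell})$, the recursion collapses to the elementary identity $(q^{n}-1)T_{n}=T_{n-2}-2T_{n-1}$, which I would verify by induction (or from the $q$-binomial theorem) with $T_{0}=1$, $T_{1}=2/(1-q)$. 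The constant $C^{\pm}$ is fixed by the base case $n=0$, computed directly at the first pole $\xi=\pm q^{-1/4}$, where the simple zero of $1-q^{1/2}\xi^{2}$ forces the residue to be a combination of the values $B(\pm q^{1/4})$ and $B(\pm q^{3/4})$ inside the disc of convergence; evaluating these series and recognising the combination as $\theta(\pm q^{-1/4};q^{1/2})/(q;q)_{\infty}^{2}$ by the Jacobi triple product completes part (a).

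For part (b) I would start from the $q$-Laplace expression for $f^{(1)}(t,\lambda,q)$ as the theta-normalised sum over $n\in\BZ$ of the shifted transforms $B(q^{n/2}\lambda t,q)$. The poles in $t$ are inherited from the summands: the $n$-th term is singular at $t=t_{N}:=\pm q^{-1/4-N}\lambda^{-1}$ exactly when its Borel factor meets its pole of index $2N-n$, so $\Res_{t=t_{N}}f^{(1)}\,\frac{dt}{2\pi it}$ is a convergent series indexed by $n\le2N$ of $(\pm1,q,\lambda)$-monomials against $r^{\pm}_{2N-n}$ from part (a). Reindexing by $k=2N-n$ and inserting the closed form of (a), I would match the $q$-exponents term by term and recognise the sum as $f^{(-1)}(t_{N},q)$ up to the elementary factor $q^{N^{2}+N/2}\lambda^{2N}\theta(q^{-1}t_{N};q)/(q;q)_{\infty}^{2}$, using that $f^{(-1)}$ expands in the very same coefficients $T_{k}$. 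It then remains to reduce the leftover theta prefactor $\theta(t_{N};q)\theta(q^{-1}t_{N};q)\theta(\pm q^{-1/4};q^{1/2})/\theta(\lambda;q^{1/2})$ to the stated quotient of eight base-$q$ theta values: the quasi-periodicity~\eqref{theta.fun2} absorbs the powers $q^{-N}$, and a dissection identity rewrites the base-$q^{1/2}$ thetas as products of base-$q$ thetas.

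The step I expect to be the main obstacle is precisely this theta-function bookkeeping, which occurs twice: in pinning down the normalising constant $C^{\pm}$ in (a) (the triple-product evaluation of $B$ at $\pm q^{1/4},\pm q^{3/4}$), and in collapsing the prefactor of (b) to the compact eight-theta quotient. Both are mechanical but error-prone. By contrast the structural backbone — the meromorphic continuation, the residue recursion and its reduction to a $q$-binomial identity, and the reindexing that reconstructs $f^{(-1)}$ — should go through cleanly once the functional equation is in hand.
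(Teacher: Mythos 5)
Your overall architecture coincides with the paper's: meromorphic continuation of $B=\Bor_{1/2}\hat f^{(1)}$ via the three-term functional equation, a two-term residue recursion in $n$ whose solution is pinned down by the $n=0$ value, verification of the closed form against the recursion (your identity $(q^{n}-1)T_{n}=T_{n-2}-2T_{n-1}$ is correct; it is the coefficient form of $(1-z)^{2}(z;q)_{\infty}^{-2}=(qz;q)_{\infty}^{-2}$), and then part (b) by feeding the residues into the $q$-Laplace sum, reindexing $k=2N-n$, reassembling $\hat f^{(-1)}$ and cleaning up theta prefactors with the quasi-periodicity \eqref{theta.fun2}. All of that matches the paper and is sound.

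The genuine gap is the base case $n=0$. You correctly reduce $r_{0}^{\pm}$ to $-\tfrac12\bigl(B(\pm q^{3/4})\mp 2q^{-1/4}B(\pm q^{1/4})\bigr)$, but your plan to finish by ``evaluating these series and recognising the combination \ldots\ by the Jacobi triple product'' does not go through as stated. Writing $B(\xi)=\sum_{k,\ell\ge0}(-1)^{k+\ell}q^{(k-\ell)^{2}/4+(k+\ell)/4}\xi^{k+\ell}/((q;q)_{k}(q;q)_{\ell})$, the values $B(\pm q^{1/4})$ and $B(\pm q^{3/4})$ are Hecke-type indefinite double sums (the cross term $q^{-k\ell/2}$ prevents the double sum from splitting into a product of two Euler sums), and neither value individually has a product evaluation; only the specific combination does, and proving that \emph{is} the content of the lemma, not a routine triple-product manipulation. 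The paper supplies the missing device: it decomposes $B(q^{-1/4}\xi)=\sum_{r\in\BZ}(-1)^{r}q^{r^{2}/4}\xi^{r}H_{r}(\xi,q)$ with $H_{r}(\xi,q)=\sum_{k}\xi^{2k}/((q;q)_{k}(q;q)_{k+r})$, proves via a telescoping relation that $\Res_{\xi=\pm1}H_{r}\,\frac{d\xi}{2\pi i\xi}$ is independent of $r$, and evaluates it in the limit $r\to\infty$ where $H_{\infty}=1/((q;q)_{\infty}(\xi^{2};q)_{\infty})$ finally is a product; the theta function $\theta(\pm q^{-1/4};q^{1/2})=\sum_{r}(\mp1)^{r}q^{r^{2}/4}$ then appears as the sum of the remaining coefficients. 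Without this (or an equivalent) auxiliary-function argument, the normalising constant $C^{\pm}$ in your part (a) is not established, and everything downstream in part (b) inherits the gap.
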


\begin{proof}
For part (a), consider the following auxiliary function,
\be
\label{aux.fun.H}
H_{r}(\xi,q)=\sum_{k}\frac{\xi^{2k}}{(q;q)_{k}(q;q)_{k+r}},
\ee
which we analytically continue to $\xi\notin\pm q^{\frac{1}{2}\BZ_{\leq0}}$.
From the power series at $\xi=0$, we see that
\be
\Bor_{1/2}\hat{f}^{(1)}(q^{-1/4}\xi,q)
=
\sum_{r\in\BZ}(-1)^{r}q^{r^{2}/4}\xi^{r}H_{r}(\xi,q).
\ee
We have the relation
\be
H_{r-1}(\xi,q)-\xi H_{r}(\xi,q)
=
-q^{1-r}\left(H_{r-2}(\xi,q)-H_{r-1}(\xi,q)\right)
\ee
and from this we can see that
\be
\Res_{\xi=1}\left(H_{r-1}(\xi,q)-H_{r}(\xi,q)\right)\frac{d\xi}{2\pi i\xi}
=
(-1)^{r}q^{-r(r-1)/2}\Res_{\xi=1}\left(H_{-1}(\xi,q)-H_{0}(\xi,q)\right)
\frac{d\xi}{2\pi i\xi}.
\ee
Since
\be
\lim_{r\rightarrow\infty}H_{r}(\xi,q)
=
\frac{1}{(q;q)_{\infty}(\xi^{2};q)_{\infty}},
\ee
it follows that 
\be
\label{res.aux.fun.H}
\Res_{\xi=1}H_{r}(\xi,q)\frac{d\xi}{2\pi i\xi}
=
\Res_{\xi=1}H_{0}(\xi,q)\frac{d\xi}{2\pi i\xi}
=
\Res_{\xi=1}H_{\infty}(\xi,q)\frac{d\xi}{2\pi i\xi}
=
\frac{-1}{2(q;q)_{\infty}^{2}}.
\ee
Let
\be
R_{\pm}(n,q)=\Res_{\xi=\pm q^{-1/4-n/2}}
\Bor_{1/2}\hat{f}^{(1)}(\xi,q)\frac{d\xi}{2\pi i\xi} \,.
\ee
Then,
\be
\begin{aligned}
R(0,q)
&=
\Res_{\xi=\pm 1}\sum_{r\in\BZ}(-1)^{r}q^{r^{2}/4}\xi^{r}H_{r}(\xi,q)
\frac{d\xi}{2\pi i\xi}
&=
\frac{-\theta(\pm q^{-1/4};q^{1/2})}{2(q;q)_{\infty}^{2}} \,.
\end{aligned}
\ee
Now from the functional equation of $\Bor_{1/2}\hat{f}^{(1)}(\xi,q)$,
we deduce that
\be
\begin{aligned}
0
&=
\Res_{\xi=\pm q^{-1/4-n/2}}(1-q^{1/2}\xi^2)\Bor_{1/2}\hat{f}^{(1)}(\xi,q)+
2\xi\Bor_{1/2}\hat{f}^{(1)}(q^{1/2}\xi,q)-\Bor_{1/2}\hat{f}^{(1)}(q\xi,q)
\frac{d\xi}{2\pi i\xi}\\
&=
(1-q^{-n})R_{\pm}(n,q)\pm2q^{-1/4-(n-1)/2}R_{\pm}(n-1,q)-R_{\pm}(n-2,q).
\end{aligned}
\ee
Note that $R_{\pm}(n,q)=0$ for $n<0$ and that when $R_{\pm}(-1,q)$ is zero we have
a unique solution determined by $R_{\pm}(0,q)$. One can then check that
\be\label{Rpm}
R_{\pm}(n,q)
=
\frac{-\theta(\pm q^{-1/4};q^{1/2})}{2(q;q)_{\infty}^{2}}
\sum_{\ell=0}^{n}(\pm1)^{n}\frac{q^{\frac{n(n+2)}{4}}}{(q;q)_{\ell}(q;q)_{n-\ell}} \,.
\ee
For part (b), we compute  
\be
\begin{small}
\begin{aligned}
&\Res_{t=\pm q^{-1/4-n}\lambda^{-1}}f^{(1)}(t,\lambda,q)\frac{dt}{2\pi it}\\
&=
\Res_{t=\pm q^{-1/4-n}\lambda^{-1}}\frac{\theta(t;q)}{(q;q)_{\infty}^{2}}
\Lap_{1/2}\Bor_{1/2}\hat{f}^{(1)}(t,q)\frac{dt}{2\pi it}\\
&=
\frac{\theta(\pm q^{-1/4-n}\lambda^{-1};q)}{(q;q)_{\infty}^{2}
  \theta(\lambda;q^{1/2})}
\sum_{k\in\BZ}(-1)^{k}q^{(k-2n)(k-2n-1)/4}\lambda^{-k+2n}R_{\pm}(k,q)\\
&=
\frac{\theta(\pm q^{-1/4-n}\lambda^{-1};q)}{(q;q)_{\infty}^{2}
  \theta(\lambda;q^{1/2})}\frac{-\theta(\pm q^{-1/4};q^{1/2})}{2(q;q)_{\infty}^{2}}
q^{n(2n+1)/2}\lambda^{2n}
\sum_{k=0}^{\infty}\sum_{\ell=0}^{k}(-1)^{k}\frac{
  q^{\frac{k(k+1)}{2}}}{(q;q)_{\ell}(q;q)_{k-\ell}}(\pm q^{-1/4-n}
\lambda^{-1})^{k}\\
&=
-\frac{\theta(\pm q^{-1/4-n}\lambda^{-1};q)\theta(\pm q^{-1/4};q^{1/2})}{
  2(q;q)_{\infty}^{4}\theta(\lambda;q^{1/2})}q^{n(2n+1)/2}\lambda^{2n}
\hat{f}^{(-1)}(\pm q^{-1/4-n/2}\lambda^{-1},q)\\
&=
-\frac{\theta(\pm q^{-1/4-n}\lambda^{-1};q)\theta(\pm q^{-1/4};q)
  \theta(\pm q^{-3/4};q)\theta(\pm q^{-5/4-n}\lambda^{-1};q)}{2(q;q)_{\infty}^{6}
  \theta(\lambda;q)\theta(q^{-1/2}\lambda;q)q^{-n(2n+1)/2}\lambda^{-2n}}
f^{(-1)}(\pm q^{-1/4-n}\lambda^{-1},q)\\
&=
-\frac{\theta(\pm q^{-1/4}\lambda^{-1};q)\theta(\pm q^{-1/4};q)
  \theta(\pm q^{-3/4};q)\theta(\pm q^{-5/4}\lambda^{-1};q)}{2(q;q)_{\infty}^{6}
  \theta(\lambda;q)\theta(q^{-1/2}\lambda;q)}
f^{(-1)}(\pm q^{-1/4-n}\lambda^{-1},q)\\
&=
\frac{\theta(\pm q^{3/4}\lambda^{-1};q)\theta(\pm q^{-1/4};q)\theta(\pm q^{-3/4};q)
  \theta(\pm q^{-3/4}\lambda;q)}{2(q;q)_{\infty}^{6}
  \theta(q^{-1}\lambda;q)\theta(q^{-3/2}\lambda;q)}
f^{(-1)}(\pm q^{-1/4-n} \lambda^{-1},q) \,.
\end{aligned}
\end{small}
\ee
\end{proof}

\subsection{Determinants}

The functional equation \eqref{41x1} for $U,V$ defined in~\eqref{UVex41}
can be written in the form 
\be
U(qt,\lambda,q)
=
A(t;q)U(t,\lambda,q)
\quad\text{and}\quad
V(qt,q)
=
A(t;q)V(t,q)
\ee
where
\be\label{comp.mat.41.x1}
A(t,q)
=
\begin{pmatrix}
0 & 1\\
-q^{-2} & 2q^{-1}-q^{-2}t^{-1}
\end{pmatrix} \,.
\ee
Therefore, we see that
\be
\det(U(qt,\lambda,q))=\det(U(t,\lambda,q))q^{-2}
\quad\text{and}\quad
\det(V(qt,q))=\det(V(t,q))q^{-2}.
\ee

\begin{lemma}
\label{lem.41det}
  We have
\be
-\det(U(t,\lambda,q)) = \det(V(t,q)) = q^{-1}t^{-2} \,.
\ee
  \end{lemma}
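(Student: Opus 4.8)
The plan is to reduce both determinants to a single scalar first-order $q$-difference equation and then fix them by a holomorphy argument together with one leading coefficient. From the companion matrix \eqref{comp.mat.41.x1} one reads off $\det A(t,q)=q^{-2}$. Since $U(qt,\lambda,q)=A(t,q)U(t,\lambda,q)$ and $V(qt,q)=A(t,q)V(t,q)$, taking determinants gives $\det U(qt)=q^{-2}\det U(t)$ and likewise for $V$; equivalently $t^{2}\det U$ and $t^{2}\det V$ are $\s$-invariant, hence elliptic in $t$. It therefore suffices to show that each of these elliptic functions is holomorphic on $\BC^{\times}$ (so that, living on the compact torus $\BC^{\times}/q^{\BZ}$, it is constant) and to evaluate the resulting constant.

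For $\det V=W(g^{(0,0)},g^{(0,1)})$ I use the splitting read off from \eqref{ex41.g00}, namely $g^{(0,1)}=-\tfrac{\theta'(t^{-1};q)}{\theta(t^{-1};q)}\,g^{(0,0)}+\tilde g$, where $\tilde g$ is the series obtained by deleting the $\theta'/\theta$ term; $\tilde g$ is holomorphic on $\BC^{\times}$ and $g^{(0,0)}$ is holomorphic on $\BC^{\times}\cup\{\infty\}$, so the only candidate poles of $\det V$, at $t\in q^{\BZ}$, come from $\theta'(t^{-1})/\theta(t^{-1})$. The decisive input is \eqref{thetader}, which gives $\tfrac{\theta'(q^{-1}t^{-1})}{\theta(q^{-1}t^{-1})}=\tfrac{\theta'(t^{-1})}{\theta(t^{-1})}+1$; consequently $W\!\big(g^{(0,0)},\tfrac{\theta'(t^{-1})}{\theta(t^{-1})}g^{(0,0)}\big)=g^{(0,0)}(t)g^{(0,0)}(qt)$ and the pole-carrying part collapses, leaving $\det V=-g^{(0,0)}(t)g^{(0,0)}(qt)+W(g^{(0,0)},\tilde g)$, which is holomorphic on $\BC^{\times}$. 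Thus $t^{2}\det V$ is a holomorphic elliptic function, hence constant, and its value is the coefficient of $t^{-2}$ at $t=\infty$; a short computation from $g^{(0,0)}\sim t^{-1}$ (the $\tilde g$-contribution cancelling at this order) evaluates it to the constant in the statement, $\det V=q^{-1}t^{-2}$.

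For $\det U=W(f^{(1)},f^{(-1)})$ I run the same scheme, the point being that by \eqref{fg} the second solution is shared, $f^{(-1)}=g^{(0,1)}$. Here I must cancel two families of candidate poles. The poles of $f^{(-1)}$ at $t\in q^{\BZ}$ are harmless because $f^{(1)}$ carries the prefactor $\theta(t;q)$ in \eqref{ex41.f1} and hence vanishes on $q^{\BZ}$: in $f^{(1)}(t)f^{(-1)}(qt)-f^{(-1)}(t)f^{(1)}(qt)$ each simple pole of an $f^{(-1)}$-factor is met by a zero of the accompanying $f^{(1)}$-factor. The poles of $f^{(1)}$ at $t\in\pm q^{-1/4+\BZ}\lambda^{-1}$ cancel by Lemma~\ref{lem.resf1}(b): there the residue of $f^{(1)}$ equals one and the same ($n$-independent) constant times the value of $f^{(-1)}$, so the two residue contributions to $\det U$ coincide and cancel. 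Hence $t^{2}\det U$ is again a holomorphic elliptic function, therefore a constant and in particular independent of $\lambda$; reading off the $t^{-2}$-coefficient at $t=\infty$ with the help of the limits in Lemma~\ref{lem.fglimits} gives the value in the statement, $\det U=-q^{-1}t^{-2}$.

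The main obstacle is precisely the holomorphy of $\det U$: it is the simultaneous vanishing of both residue families—via the $\theta$-zeros on $q^{\BZ}$ and via the proportionality $\Res f^{(1)}\propto f^{(-1)}$ of Lemma~\ref{lem.resf1}—that forces $\lambda$-independence and excludes a spurious elliptic factor. A secondary technical point is the extraction of the final constant for $\det U$, where the individual solutions grow or decay like $\theta$-factors as $t\to\infty$; one therefore reads the constant off the common $t^{-2}$ order rather than from a single leading monomial. Once both determinants are identified, the stated relation $-\det U=\det V=q^{-1}t^{-2}$ follows, and in particular $\det\vM=\det U/\det V=-1$, as used in \eqref{41:paritalmono}.
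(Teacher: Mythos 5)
Your argument is correct in outline and coincides with the paper's: $\det A=q^{-2}$ makes $t^{2}\det U$ and $t^{2}\det V$ elliptic in $t$, holomorphy on $\BC^{\times}$ then forces constancy, and one coefficient fixes the constant. Two of your sub-arguments differ from the paper's in a way worth recording. For $\det V$ the paper does not cancel the poles by hand: it observes that $t^{2}\det V$ has at most one simple pole per fundamental domain (the single orbit $t\in q^{\BZ}$) and invokes the classical fact that an elliptic function cannot have exactly one simple pole; your explicit collapse via $g^{(0,1)}=-\tfrac{\theta'(t^{-1};q)}{\theta(t^{-1};q)}g^{(0,0)}+\tilde g$ and \eqref{thetader} is a valid alternative, and has the merit of producing the closed form $\det V=-g^{(0,0)}(t)\,g^{(0,0)}(qt)+W(g^{(0,0)},\tilde g)$. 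For $\det U$ you also dispose of the candidate poles at $t\in q^{\BZ}$ (zeros of the $\theta(t;q)$ prefactor of $f^{(1)}$ against the poles of $f^{(-1)}$), a point the paper passes over in this lemma but records in the $x$-deformed analogue, Lemma~\ref{lem.det41x}; your cancellation of the residues at $t\in\pm q^{-1/4+\BZ}\lambda^{-1}$ via the $n$-independence of the constant in Lemma~\ref{lem.resf1}(b) is exactly the paper's computation.

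Two cautions on the final evaluation, which you assert rather than carry out. First, you propose to read the constant for $\det U$ off the $t^{-2}$ coefficient ``at $t=\infty$'' using Lemma~\ref{lem.fglimits}; but for $|q|<1$ the limits in that lemma are taken along $q^{r}t\to 0$, and $f^{(\pm 1)}$ are controlled by their asymptotic expansions only near $t=0$. The paper accordingly evaluates $\lim_{t\to 0}\det(U)\,t^{2}$ from $f^{(1)}\sim\theta(t;q)/(q;q)_{\infty}^{2}$ and $f^{(-1)}\sim(q;q)_{\infty}^{2}/\theta(q^{-1}t;q)$, obtaining $-q^{-1}$. Second, your decomposition makes the $V$-constant genuinely computable: since $g^{(0,0)}(t)=t^{-1}+O(t^{-2})$ and $W(g^{(0,0)},\tilde g)$ starts at order $t^{-3}$, it yields $t^{2}\det V=-q^{-1}$, the opposite sign to the one stated. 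You should chase this sign explicitly (it interacts with the sign in Equation~\eqref{fg} and with $\det\vM=-1$) rather than declare that ``a short computation'' lands on the stated constant; as written, your own intermediate formula contradicts your conclusion.
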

  
\begin{proof}
Firstly notice that both $\det(U(t,\lambda,q))t^2$ and $\det(V(t,q))t^2$
are elliptic functions in $t$. Furthermore,
\be
\det(U(t,\lambda,q))
=
f^{(1)}(t,\lambda,q)f^{(-1)}(qt,q)-f^{(1)}(qt,\lambda,q)f^{(-1)}(t,q)
\ee
has potentially simple poles in $t$ at $\pm q^{-1/4-n/2}\lambda^{-1}$.
Lemma~\ref{lem.resf1} implies that
\begin{align*}
\Res_{t=\pm q^{-1/4-n}\lambda^{-1}}\det(U(t,\lambda,q)) =
&\frac{\theta(\pm q^{3/4}\lambda^{-1};q)\theta(\pm q^{-1/4};q)
  \theta(\pm q^{-3/4};q)\theta(\pm q^{-3/4}\lambda;q)}{2(q;q)_{\infty}^{6}
  \theta(q^{-1}\lambda;q)\theta(q^{-3/2}\lambda;q)} \\
& \times
f^{(-1)}(\pm q^{-1/4-n}
  \lambda^{-1},q)f^{(-1)}(\pm q^{3/4-n}\lambda^{-1},q)\\
&-\frac{\theta(\pm q^{3/4}\lambda^{-1};q)\theta(\pm q^{-1/4};q)
  \theta(\pm q^{-3/4};q)\theta(\pm q^{-3/4}\lambda;q)
}{2(q;q)_{\infty}^{6}
  \theta(q^{-1}\lambda;q)\theta(q^{-3/2}\lambda;q)}\\
& \times
f^{(-1)}(\pm q^{3/4-n}\lambda^{-1},q)f^{(-1)}(\pm q^{-1/4-n}\lambda^{-1},q) \\
  =&\; 0 \,.
\end{align*}
A similar calculation shows that
$\Res_{t=\pm q^{1/4-n}\lambda^{-1}}\det(U(t,\lambda,q))=0$.
Therefore, $U(t,\lambda;q)t^2$ is elliptic and holomorphic in
$t\in\BC^{\times}$, hence it is constant in $t$. Now considering the limit as
$t\rightarrow0$, using the definition of $f^{(\pm1)}$ and their asymptotic
expansions (given by their formal power series expansions, by a $q$-version of
Watson's lemma), it follows that
\be
\begin{aligned}
\det(U(t,\lambda,q))t^2
&=
\lim_{t\rightarrow 0}\det(U(t,\lambda,q))t^2
=
\lim_{t\rightarrow 0}\frac{\theta(t;q)}{\theta(t;q)}t^2
-\lim_{t\rightarrow 0}\frac{\theta(qt;q)}{\theta(q^{-1}t;q)}t^2\\
&=
-\lim_{t\rightarrow 0}q^{-1}\frac{\theta(t;q)}{\theta(t;q)}
=
-q^{-1} \,.
\end{aligned}
\ee

For $V$ notice that
\be
\det(V(t,q))t^{2}
=
g^{(0,0)}(t,q)g^{(0,1)}(qt,q)t^{2}-g^{(0,0)}(qt,q)g^{(0,1)}(t,q)t^{2}.
\ee
This has potentially simple poles at $t\in q^{\BZ}$. There is no non-constant
elliptic function that satisfies this~\cite{Akhiezer}.
Therefore, $\det(V(t,q))t^{2}$ is constant. Then notice that
\be
\det(V(t,q))t^{2}
=
\lim_{t\rightarrow\infty}\det(V(t,q))t^{2}
=
q^{-1}.
\ee
\end{proof}

\begin{corollary}
  \label{cor:41holsol}
  If $h(t,q)$ is holomorphic for $t\in\BC^{\times}$
  and satisfies the $q$-difference equation~\eqref{41x1} then
\be
h(t,q)=C_{0}(q)g^{(0,0)}(t,q) \,.
\ee
\end{corollary}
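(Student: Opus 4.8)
The plan is to realise $h$ as an elliptic-function combination of the basis of solutions $\{g^{(0,0)},g^{(0,1)}\}$ at infinity, and then to exploit the contrasting pole behaviour of the two basis solutions to force out the $g^{(0,1)}$-component.

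First I would record that, since $V=W(g^{(0,0)},g^{(0,1)})$ is a fundamental matrix with $\det V=q^{-1}t^{-2}\neq 0$ by Lemma~\ref{lem.41det}, the solution vector $X_h=(h,\sigma h)^t$ can be written as $X_h=V\,(c_1,c_2)^t$. Because both $X_h$ and $V$ obey the same recursion $X(qt)=A(t)X(t)$ with $A$ the companion matrix~\eqref{comp.mat.41.x1}, the coefficient vector $(c_1,c_2)^t=V^{-1}X_h$ is $\sigma$-invariant, i.e. $c_1,c_2$ are elliptic functions of $t$. Cramer's rule gives them explicitly as $c_1=W(h,g^{(0,1)})/\det V$ and $c_2=W(g^{(0,0)},h)/\det V$; since the Wronskian of any two solutions and $\det V$ both scale by $q^{-2}$ under $t\mapsto qt$, these ratios are manifestly elliptic.

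Next I would show that both coefficients are in fact constant. For $c_2=qt^2\big(g^{(0,0)}(t)h(qt)-h(t)g^{(0,0)}(qt)\big)$, the hypothesis that $h$ is holomorphic on $\BC^\times$ together with the holomorphy of $g^{(0,0)}$ there shows that $c_2$ is holomorphic on all of $\BC^\times$; an elliptic function holomorphic on $\BC^\times$ is constant. For $c_1=qt^2\big(h(t)g^{(0,1)}(qt)-g^{(0,1)}(t)h(qt)\big)$ the only possible poles come from the factors $g^{(0,1)}(t)$ and $g^{(0,1)}(qt)$, so $c_1$ is holomorphic away from $t\in q^{\BZ}$ and has at worst a simple pole there. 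On the torus $\BC^\times/q^{\BZ}$ these collapse to a single simple pole; since no nonconstant elliptic function can have a single simple pole (its residues would not sum to zero), $c_1$ is constant as well.

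Finally, with $c_1,c_2$ constant I would impose holomorphy of $h$ at the points $t=q^n$. There $g^{(0,0)}$ is holomorphic while, by Lemma~\ref{lem.resg01}, $g^{(0,1)}$ has a simple pole with $\Res_{t=q^n}g^{(0,1)}\,dt/(2\pi i t)=g^{(0,0)}(q^n)$; hence $\Res_{t=q^n}h\,dt/(2\pi i t)=c_2\,g^{(0,0)}(q^n)$. Since $h$ is holomorphic this residue vanishes, and because $g^{(0,0)}(t)\sim t^{-1}$ as $t\to\infty$ (the leading term of~\eqref{ex41.g00}) one has $g^{(0,0)}(q^n)\neq 0$ for $n\to-\infty$; therefore $c_2=0$ and $h=c_1\,g^{(0,0)}=C_0(q)g^{(0,0)}$, as claimed. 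The one step demanding care is the constancy of $c_1$: it is precisely here that the special geometry of this equation — that all poles of the second solution lie in the single $\sigma$-orbit $q^{\BZ}$ — is used, through the vanishing-of-residues obstruction on the torus.
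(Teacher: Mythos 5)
Your proof is correct and follows essentially the same route as the paper's: decompose $h$ in the basis $\{g^{(0,0)},g^{(0,1)}\}$ with elliptic coefficients, use the Wronskian identity $W(\cdot,\cdot)=c\cdot q^{-1}t^{-2}$ and the fact that an elliptic function with a single simple pole on the torus must be constant, and then kill the $g^{(0,1)}$-component via the pole of $g^{(0,1)}$ at $q^{\BZ}$. The only difference is cosmetic — you establish constancy of both coefficients first and then use the residue formula of Lemma~\ref{lem.resg01} to force $c_2=0$, whereas the paper applies the single-simple-pole obstruction directly to $C_0$ to conclude $C_1=0$ — but the ingredients and the logic are the same.
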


\begin{proof}
Every solution to equation~\eqref{41x1} can be written in the form
\be
h(t,q)=C_{0}(t,q)g^{(0,0)}(t,q)+C_{1}(t,q)g^{(0,1)}(t,q)
\ee
for some elliptic functions $C_0$ and $C_1$. Now we see that
\be
\det
\begin{pmatrix}
g^{(0,0)}(t,q) & h(t,q)\\
g^{(0,0)}(qt,q) & h(qt,q)\\
\end{pmatrix}
=
C_{1}(t,q)q^{-1}t^{-2}
\ee
is holomorphic and therefore $C_{1}(t,q)=C_{1}(q)$ is independent of $t$. Now
notice that if $g(t_{0},q)=0$ then $g(qt_{0},q)\neq0$ as otherwise this would imply
$\det(V(t_{0},q))=0$. This means that if $C_{1}(q)\neq0$ then $C_{0}(t,q)$ has
simple poles at $t\in q^{\BZ}$ and no other poles for $t\in\BC^{\times}$ which
contradicts the fact that $C_{0}(t,q)$ is elliptic as this would give an isomorphism
from the elliptic curve to $\mathbb{CP}^{1}$. Therefore
$C_{1}(q)=0$. Then again noting that if $g(t_{0},q)=0$ then $g(qt_{0},q)\neq0$
we see that $C_{0}(t,q)$ must be constant in $t$.

We remark that an alternative proof can be given using the fact that all holomorphic
solutions on $\BC^{\times}$ have a convergent Laurent/Fourier series expansion
\be
\sum_{k\in\BZ}\alpha_{k}(q)t^{k} \,.
\ee
This is a consequence of Cauchy's theorem and a detailed discussion of this
can be found, for example, in~\cite{Ahlfors}. In our case, the functional equations
imply this expansion is divergent if $\alpha_{-1}\neq0$. This forces
$\alpha_{-1}=0$, and the functional equation then implies that $\alpha_{k}=0$
  for $k<0$ and that $\alpha_{k}$ are
uniquely determined by $C_{0}(q)=\alpha_{0}(q)$ for $k \geq 0$ which implies
that our function is equal to $C_{0}(q)g^{(0,0)}(t,q)$.
\end{proof}

\subsection{State integral}
\label{sub.state}

In this section we discuss the second part of Theorem~\ref{thm.41} concerning
the extension of the cocycle to the cut plane. The main idea is to use
descendant state integrals, following~\cite[Eqn.(41)]{GGM}, defined by
\be
\mathcal{I}_{A,B}(z,\tau)
=
\int_{\BR+i\ve}
\Phi_{\sfb}(x)^B
\exp\left(-A\pi i x^{2}-2\pi\frac{zx}{\sfb}\right)dx.
\ee
Using the holomorphic extension of the quantum dilogarithm $\Phi$ from
Theorem~\ref{thm.ex1}, this integral can be shown to extend to a holomorphic
function in for $(z,\tau)\in\BC\times\BC'$. We are interested in the integrals
$\mathcal{I}_{1,2}(z,\tau)$ which, using the residue theorem, factorise as an
elementary function holomorphic for $\tau\in\BC'$ times the following bilinear
combination, see~\cite[Thm.3]{GGM},
\be
\label{gs.fac.int}
\begin{aligned}
\calI(z,\tau)
&=\tau^{1/2} g^{(0,0)}(\tilde{t},\tq)g^{(0,1)}(t,q)
-\tau^{-1/2}g^{(0,1)}(\tilde{t},\tq)g^{(0,0)}(t,q)\\
&\quad+\tau^{-1/2}\left(\tau\frac{\theta'(t^{-1};q)}{\theta(t^{-1};q)}
  -\frac{\theta'(\tilde{t}^{-1};\tq)}{\theta(\tilde{t}^{-1};\tq)}
  -\frac{1}{2}+\frac{\tau}{2}-z\right)g^{(0,0)}(\tilde{t},\tq)g^{(0,0)}(t,q)\\
&=\tau^{1/2} g^{(0,0)}(\tilde{t},\tq)g^{(0,1)}(t,q)
-\tau^{-1/2} g^{(0,1)}(\tilde{t},\tq)g^{(0,0)}(t,q) \,.
\end{aligned}
\ee
where the equality follows from Equation~\eqref{S.mod.thd}. Therefore,
$\calI(z,\tau)$ extends to a holomorphic function for $(z,\tau)\in\BC\times\BC'$.
Now, we see that 
{\small
\be
\begin{aligned}
\Om_{V,S}(z,\tau)
&=
(V|_{\kappa}S)(z,\tau)V(z,\tau)^{-1}\\
&=
qt^2
\begin{pmatrix}
  g^{(0,0)}(\tilde{t},\tq) & g^{(0,1)}(\tilde{t},\tq)\\
  g^{(0,0)}(\tq\tilde{t},\tq) & g^{(0,1)}(\tq\tilde{t},\tq)\\
\end{pmatrix}
\begin{pmatrix}
  \tau^{1/2} & 0\\
  0 & \tau^{-1/2}
\end{pmatrix}
\begin{pmatrix}
  g^{(0,1)}(qt,q) & -g^{(0,1)}(t,q)\\
  -g^{(0,0)}(qt,q) & g^{(0,0)}(t,q)
\end{pmatrix}
.
\end{aligned}
\ee
}
The entries of $\Om_{V,S}$ are then elementary functions holomorphic for
$\tau\in\BC'$ times $\calI(z+n+m\tau,\tau)$ for some $n,m\in\{-1,0,1\}$.
This shows that the cocycle $\Om_{V,S}$ extends to a holomorphic function for
$(z,\tau)\in\BC\times\BC'$; see~\cite[Thm.14]{GGM:peacock} following the proof
of~\cite[Thm.1.1]{GK:qseries}. Now noting that
\be
\Av(\vM) = \Delta_{\kappa,\gamma} (\Av(\vM)|_{\kappa} \gamma)
\ee
and $\Om_{\Av(U),T}=I$, part (c) of Theorem~\ref{thm.ST} proves part \rm{(b)}
of Theorem~\ref{thm.41}.

\subsection{The inhomogeneous equation}
\label{sec.4.inhom.x1}

We now consider Equation~\eqref{41x1inhom}.
We can convert this into a homogeneous equation of one degree higher
\be
tf(t;q)+(1-3qt)f(qt;q)+(3tq^2-1)f(q^2t;q)-tq^3f(q^3t;q)=0
\ee
with Newton polygon shown in Figure~\ref{f.413}.

\begin{figure}[!htpb]
\begin{center}
\begin{tikzpicture}[scale=0.8,baseline=-3]
\draw[<->,thick] (-1.5,0) -- (7.5,0);
\draw[<->,thick] (0,-1.5) -- (0,3.5);
\foreach \x in {2,4,6}
\draw[thick] (\x,-2pt) -- (\x,2pt);
\filldraw (0,2) circle (2pt);
\filldraw (2,0) circle (2pt);
\filldraw (2,2) circle (2pt);
\filldraw (4,0) circle (2pt);
\filldraw (4,2) circle (2pt);
\filldraw (6,2) circle (2pt);
\node at (1,2.3) {$g^{(0,0)}$};
\node at (3,2.3) {$g^{(0,1)}$};
\node at (5,2.3) {$g^{(0,2)}$};
\node at (0.5,0.8) {$f^{(-1)}$};
\node at (5.5,0.8) {$f^{(1)}$};
\node at (3,-0.3) {$f^{(0)}$};
\draw (0,2) -- (2,0) -- (4,0) -- (6,2) -- cycle;
\fill[blue,opacity=0.2] (0,2) -- (2,0) -- (4,0) -- (6,2) -- cycle;
\end{tikzpicture}
\caption{The Newton polygon of Equation~\eqref{41x1inhom}.}
\label{f.413}
\end{center}
\end{figure}
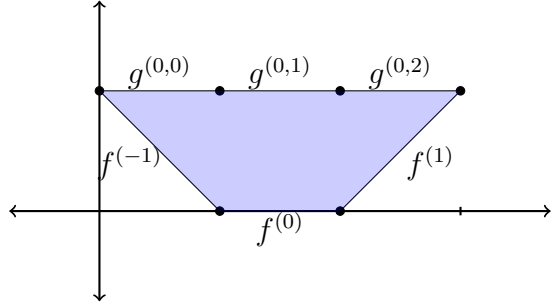

\noindent
Now from the general theory the solutions $f^{(\pm1)}$ and $g^{(0,0)}$ and
$g^{(0,1)}$ are the same as the solutions in Section~\ref{subsec.41sols}. We do
find two additional
solutions which are in fact solutions to the inhomogeneous equation, namely
\be
\hat{f}^{(0)}(t,q)=\sum_{k=0}^{\infty}(-1)^{k}q^{-k(k+1)/2}(q;q)_{k}^{2}t^{k} \,,
\ee
a divergent formal power series solution at $t=0$ and
\be
\label{g2}
{\small
\begin{aligned}
g^{(0,2)}(t;q)
&=
\sum_{k=0}^{\infty}\left(\frac{1}{2}\Bigg(\frac{1}{2}E_{1}(q)-\frac{1}{2}+\sum_{j=1}^{k}\frac{1+q^{j}}{1-q^{j}}\right)^{2}
  -\left(\frac{1}{2}E_{1}(q)-\frac{1}{2}+\sum_{j=1}^{k}\frac{1+q^{j}}{1-q^{j}}\right)\frac{\theta'(t^{-1};q)}{\theta(t^{-1};q)}\\
  &\qquad\qquad+\frac{1}{2}\frac{\theta''(t^{-1};q)}{\theta(t^{-1};q)}
  +\sum_{j=1}^{k}\frac{q^{j}}{(1-q^{j})^2}-\frac{1}{24}-\frac{1}{24}E_{2}(q)\Bigg)
(-1)^{k}\frac{q^{k(k+1)/2}}{(q;q)_{k}^{2}}t^{-1-k}\,,
\end{aligned}
}
\ee
where $E_{2}(q)=1-24\sum_{k=0}^{\infty}q^{k}/(1-q^{k})^2$,
a convergent solution at $t=\infty$. So, we must resum $\hat{f}^{(0)}(t,q)$. For $|\xi|<1$, we have
\be
\Bor_{1}\hat{f}^{(0)}(\xi,q)
=
\sum_{k=0}^{\infty}(q;q)_{k}^{2}\xi^{k}
\ee
which can be analytically continued away from $\xi\in q^{\BZ_{\leq0}}$ using the
relation
\be
(1-\xi)\Bor_{1}\hat{f}^{(0)}(\xi,q)+2q\xi \Bor_{1}\hat{f}^{(0)}(q\xi,q)
-q^2\xi \Bor_{1}\hat{f}^{(0)}(q^{2}\xi,q) = 1.
\ee
\begin{center}
\begin{tikzpicture}
\draw[<->,thick] (-1,0) -- (4,0);
\draw[<->,thick] (0,-1) -- (0,2);
\foreach \x in {1,2,3}
\draw[thick] (\x,-2pt) -- (\x,2pt);
\filldraw (0,1) circle (2pt);
\filldraw (1,0) circle (2pt);
\filldraw (1,1) circle (2pt);
\filldraw (2,0) circle (2pt);
\filldraw (2,1) circle (2pt);
\filldraw (3,1) circle (2pt);
\draw (0,1) -- (1,0) -- (2,0) -- (3,1) -- cycle;
\fill[blue,opacity=0.2] (0,1) -- (1,0) -- (2,0) -- (3,1) -- cycle;
\node at (1.5,-0.25) {\begin{tiny}$f^{(0)}$\end{tiny}};
\draw[->,ultra thick] (3+2,0) -- (4+2,0);
\node at (3.5+2,0.4) {$\Bor_{1}$};
\draw[<->,thick] (-1+8,0) -- (4+8,0);
\draw[<->,thick] (0+8,-1) -- (0+8,2);
\foreach \x in {1+8,2+8,3+8}
\draw[thick] (\x,-2pt) -- (\x,2pt);
\filldraw (0+8,1) circle (2pt);
\filldraw (0+8,0) circle (2pt);
\filldraw (1+8,1) circle (2pt);
\filldraw (1+8,0) circle (2pt);
\filldraw (2+8,1) circle (2pt);
\filldraw (3+8,1) circle (2pt);
\draw (0+8,1) -- (0+8,0) -- (1+8,0) -- (3+8,1) -- cycle;
\fill[blue,opacity=0.2] (0+8,1) -- (0+8,0) -- (1+8,0) -- (3+8,1) -- cycle;
\node at (1.5+8,-0.25) {\begin{tiny}$\Bor_{1}f^{(0)}$\end{tiny}};;
\end{tikzpicture}
\end{center}
Therefore, we define
\be\label{f0x1}
f^{(0)}(t,\lambda_{2},q)
=
(\Lap_{1}\Bor_{1}\hat{f}^{(0)})(t,\lambda_{2},q)
=
\frac{1}{\theta(\lambda_{2};q)}\sum_{k\in\BZ}(-1)^{k}q^{k(k+1)}
\lambda_{2}^{k}\Bor_{1}\hat{f}^{(0)}(q^{k}\lambda_{2}t,q).
\ee
Now we consider the matrices
\be
\label{U413}
\begin{aligned}
&U(t,\lambda_{1},\lambda_{2},q)
=
W(f^{(1)}(t,\lambda_{1},q),f^{(-1)}(t,q),f^{(0)}(t,\lambda_{2},q))\\
&=
\begin{pmatrix}
0 & 1 & 0\\
0 & 0 & 1\\
q^{-2}t^{-1} & -q^{-2} & q^{-2}t^{-1}-2q^{-1}
\end{pmatrix}
\begin{pmatrix}
0 & 0 & 1\\
f^{(1)}(t,\lambda_{1},q) & f^{(-1)}(t,q) & f^{(0)}(t,\lambda_{2},q)\\
f^{(1)}(qt,\lambda_{1},q) & f^{(-1)}(qt,\lambda_{1},q) & f^{(0)}(qt,\lambda_{2},q)
\end{pmatrix}
\end{aligned}
\ee
and
\be
\label{V413}
\begin{aligned}
V(t,q)
&=
W(g^{(0,0)}(t,q),g^{(0,1)}(t,q),g^{(0,2)}(t,q))\\
&=
\begin{pmatrix}
0 & 1 & 0\\
0 & 0 & 1\\
q^{-2}t^{-1} & -q^{-2} & q^{-2}t^{-1}-2q^{-1}
\end{pmatrix}
\begin{pmatrix}
0 & 0 & 1\\
g^{(0,0)}(t,q) & g^{(0,1)}(t,q) & g^{(0,2)}(t,q)\\
g^{(0,0)}(qt,q) & g^{(0,1)}(qt,q) & g^{(0,2)}(qt,q)\\
\end{pmatrix}.
\end{aligned}
\ee
With these formulae we will now go on to prove Theorem~\ref{thm.41b}.

\begin{proof}[Proof of Theorem~\ref{thm.41b}]
  The identification of the first two columns of $\vM(t,\lambda_{1},\lambda_{2},q)$
  with those in Equation~\eqref{M413} follows from Theorem~\ref{thm.41}.
  Equations~\eqref{U413} and~\eqref{V413}, together with Lemma~\ref{lem.41det}
  imply that 
\be
-\det(U(t,\lambda_{1},\lambda_{2},q))
=
\det(V(t,q))
=
q^{-3}t^{-3} \,.
\ee
Hence, $\det(\vM(t,\lambda_{1},\lambda_{2},q))=-1$ which in turn
implies that $\vM_{3,3}(t,\lambda_{2},q)=1$.

Now, Equation~\eqref{M413}, written in the form $U(t,\lambda_{1},\lambda_{2},q)=
V(t,q) \vM(t,\lambda_{1},\lambda_{2},q)$, together with Equations~\eqref{U413}
and~\eqref{V413} and the fact that $\vM_{3,3}=1$ imply that
\be
\label{fgs0}
f^{(0)} = \vM_{1,3} g^{(0,0)} + \vM_{2,3} g^{(0,1)} + g^{(0,2)} \,.
\ee
Thus, to determine the two remaining entries of the monodromy matrix, we need to
show that the function
\be
\label{fgs}
\calE_3 :=f^{(0)} -(m_{1,3} g^{(0,0)} + m_{2,3} g^{(0,1)} + g^{(0,2)}) 
\ee
vanishes identically, where 
\be
\label{m1323}
m_{1,3}(t,q) = \wp(t,q), \qquad
m_{2,3}(t,\lambda_{2},q)
= \frac{1}{2}\frac{\wp'(t,q)-\wp'(\lambda_{2},q)}{\wp(t,q)-\wp(\lambda_{2},q)}
\ee
denote the two entries of the matrix on the right hand-side of Equation~\eqref{M413}.
Note that $\calE_3(t,\lambda_{2},q)$ is a meromorphic function of $t$
with potential simple poles at $\lambda_2^{-1} q^\BZ$ (coming from $f^{(0)}$ and
$m_{2,3}$) and potential double poles at $q^\BZ$ (coming from
$g^{(0,1)}$, $g^{(0,2)}$, $m_{1,3}$ and $m_{2,3}$). 
Since
$\Res_{t=q^{-m}\lambda_{2}^{-1}} m_{2,3}(t,\lambda_{2},q) \frac{dt}{2\pi it} = 1 \,,
$
combined with equation~\eqref{fg} gives
\be
\Res_{t=q^{-m}\lambda_{2}^{-1}}m_{2,3}(t,\lambda_{2},q)
g^{(0,1)}(t,q) \frac{dt}{2\pi it} =  f^{(-1)}(q^{-m}\lambda_{2}^{-1},q) \,,
\ee
we see from Lemma~\ref{lem.resf0} that $\calE_3$ has no poles at
$\lambda_{2}^{-1} q^\BZ$. Now noting that
\be
m_{2,3}(t,\lambda_{2},q)
=
\frac{\theta'(t^{-1};q)}{\theta'(t^{-1};q)}
+\frac{\theta'(\lambda^{-1};q)}{\theta'(\lambda^{-1};q)}
-\frac{\theta'(\lambda^{-1}t^{-1};q)}{\theta'(\lambda^{-1}t^{-1};q)}+\frac{1}{2}\,,
\ee
the only terms that contribute to the polar part of
\be
\label{gsoff0}
  g^{(0,2)}(t,q)+m_{2,3}(t, \lambda_{2},q)
  g^{(0,1)}(t,q)+m_{1,3}(t,q) g^{(0,0)}(t,q)
\ee
at $t=q^{m}\ve$ for $\ve\sim 1$ are
\be
\begin{aligned}
&\sum_{k=0}^{\infty}\bigg(
-\left(k-2E^{(k)}_{1}(q)\right)\frac{\theta'(\ve^{-1};q)}{\theta(\ve^{-1};q)}
+\frac{1}{2}\frac{\theta''(q^{-m}\ve^{-1};q)}{\theta(q^{-m}\ve^{-1};q)}\\
&\qquad+\left(k-m-\frac{\theta'(\ve^{-1};q)}{\theta(\ve^{-1};q)}-2E^{(k)}_{1}(q)
\right)\frac{\theta'(\ve^{-1};q)}{\theta(\ve^{-1};q)}
-\frac{1}{2}\frac{\theta'(\ve^{-1};q)}{\theta(\ve^{-1};q)}\\
&\qquad+\frac{\theta'(\ve^{-1};q)^2}{\theta(\ve^{-1};q)^2}
-\frac{\theta''(\ve^{-1};q)}{\theta(\ve^{-1};q)}\bigg)(-1)^{k}
\frac{q^{k(k+1)/2-m-mk}}{(q;q)_{k}^{2}}\ve^{-1-k}\\
&=\sum_{k=0}^{\infty}\bigg(
\frac{1}{2}\frac{\theta''(q^{-m}\ve^{-1};q)}{\theta(q^{-m}\ve^{-1};q)}
-\left(m+\frac{1}{2}\right)\frac{\theta'(\ve^{-1};q)}{\theta(\ve^{-1};q)}
-\frac{\theta''(\ve^{-1};q)}{\theta(\ve^{-1};q)}\bigg)
(-1)^{k}\frac{q^{k(k+1)/2}}{(q;q)_{k}^{2}}\ve^{-1-k}.
\end{aligned}
\ee
This has a potentially simple pole at $\ve=1$ however noting that
\be
\begin{aligned}
  \Res_{\ve=1}\frac{\theta''(q^{-m}\ve^{-1};q)}{\theta(q^{-m}\ve^{-1};q)}
  \frac{d\ve}{2\pi i\ve}
&=
-2m+1\\
\Res_{\ve=1}\frac{\theta'(\ve^{-1};q)}{\theta(\ve^{-1};q)}\frac{d\ve}{2\pi i\ve}
&=
-1
\end{aligned}
\ee
we see that the residue vanishes and therefore \eqref{gsoff0} has a removable
singularity at all points $t\in q^{\BZ}$. Therefore, we see that $\calE_3$ is
a holomorphic (for $t \in \BC^\times$) solution of Equation~\eqref{41x1} and
therefore from Corollary~\ref{cor:41holsol} we see that
$\calE_3(t,\lambda_{2},q) = C(q)g^{(0,0)}(t,q)$. 
To finish the proof we note that for $t$ on some compact set where the functions
are holomorphic, Lemma~\ref{lem.fglimits} and Lemma~\ref{lem.fglimits.inhom}
implies that
\begin{align*}
-\frac{C(q)}{(q;q)_{\infty}^{2}} &=
\lim_{r\rightarrow\infty}\frac{C(q) g^{(0,0)}(t^r,q)}{\theta(q^{r}t;q)} \\  
&=\lim_{r\rightarrow\infty}\frac{
f^{(0)}(t,\lambda_{2},q)-(g^{(0,2)}(t,q) + m_{2,3}(t,\lambda_{2},q)
g^{(0,1)}(t,q) + m_{1,3}(t,q)  g^{(0,0)}(t,q))}{\theta(q^{r}t;q)} \\ &=
\frac{1}{(q;q)_{\infty}^{2}}(m_{1,3}(t^{-1},q)-m_{1,3}(t,q))= 0 \,,
\end{align*}
where we note that $m_{1,3}(t,q)$ is the Weierstrass $\wp$-function plus a
constant which is even in $z$ where $t=\e(z)$.

Our next task is to compute the residues of $f^{(0)}$. 

\begin{lemma}
\label{lem.resf0}
\rm{(a)}
The function $\Bor_{1}\hat{f}^{(0)}(\xi,q)$ has simple poles at $\xi \in q^\BZ$
with residue
\be
\label{resBf0}
R(m,q) :=
\Res_{\xi=q^{-m}}\Bor_{1}\hat{f}^{(0)}(\xi,q)\frac{d\xi}{2\pi i\xi}
= -(q;q)_{\infty}^{2}\sum_{\ell=0}^{m}\frac{q^{m}}{(q;q)_{\ell}(q;q)_{m-\ell}} \,.
\ee
\rm{(b)}
The function $f^{(0)}(t,q)$ has simple poles at $t\in \lambda_{2}^{-1}q^{\BZ}$
with residue
\be
\label{resf0}
\Res_{t=q^{-m}\lambda_{2}^{-1}}f^{(0)}(t,\lambda_{2},q)\frac{dt}{2\pi it} =
f^{(-1)}(q^{-m}\lambda_{2}^{-1},q) \,.
\ee
\end{lemma}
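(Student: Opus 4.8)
The plan is to compute the two families of residues in sequence: first the residues $R(m,q)$ of the Borel transform in (a), then transport them through the $q$-Laplace transform to obtain (b). For part (a) I would not work directly with the series $\Bor_1\hat{f}^{(0)}(\xi)=\sum_{k\ge0}(q;q)_k^2\xi^k$ (which diverges at the relevant points) but with its meromorphic continuation, governed by the inhomogeneous functional equation $(1-\xi)\Bor_1\hat{f}^{(0)}(\xi)+2q\xi\Bor_1\hat{f}^{(0)}(q\xi)-q^2\xi\Bor_1\hat{f}^{(0)}(q^2\xi)=1$ displayed just before \eqref{f0x1}, which already pins the poles to $\xi\in q^{\BZ_{\le0}}$. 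Applying $\Res_{\xi=q^{-m}}(\cdot)\tfrac{d\xi}{2\pi i\xi}$ to this equation and absorbing the dilated arguments by $\eta=q^j\xi$ converts it into the second-order recursion $(1-q^m)R(m,q)=2qR(m-1,q)-q^2R(m-2,q)$, valid for $m\ge1$ with the convention $R(m,q)=0$ for $m<0$; the constant right-hand side contributes nothing since $\tfrac1\xi$ has no pole at $\xi=q^{-m}$. The recursion plus the single value $R(0,q)$ then determines all the residues. I would obtain $R(0,q)=-(q;q)_\infty^2$ either from $R(0,q)=-\lim_{\xi\to1}(1-\xi)\Bor_1\hat{f}^{(0)}(\xi)$ evaluated through the functional equation, or more cleanly by comparison with the product $\Bor_{-1}\hat{f}^{(-1)}(\xi)=-(q;q)_\infty^2/(\xi;q)_\infty^2$ already computed in Section~\ref{subsec.41sols}. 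Finally I would check that the asserted closed form solves the recursion: writing $R(m,q)=-(q;q)_\infty^2 q^m S(m)$ with $S(m)=\sum_{\ell=0}^m 1/\!\left((q;q)_\ell(q;q)_{m-\ell}\right)$, the recursion for $R$ is equivalent to $(1-q^m)S(m)=2S(m-1)-S(m-2)$, which follows at once from $\sum_m S(m)x^m=\big(\sum_k x^k/(q;q)_k\big)^2=1/(x;q)_\infty^2$ together with $(1-x)^2/(x;q)_\infty^2=1/(qx;q)_\infty^2$.

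For part (b) I would insert the Laplace representation \eqref{f0x1} and note that the poles of $f^{(0)}$ at $t=q^{-m}\lambda_2^{-1}$ come solely from the individual summands $\Bor_1\hat{f}^{(0)}(q^k\lambda_2 t)$; the substitution $\eta=q^k\lambda_2 t$ gives $\Res_{t=q^{-m}\lambda_2^{-1}}\Bor_1\hat{f}^{(0)}(q^k\lambda_2 t)\tfrac{dt}{2\pi i t}=R(m-k,q)$, so that the residue of $f^{(0)}$ is a $\theta$-weighted bilateral sum of the $R(m-k,q)$ (equivalently, one may invoke the resurgence form of $\Lap_1$ in terms of Appell--Lerch sums proved in Section~\ref{sub.qborel}, which lands on the same sum). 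Substituting the formula from (a) and reindexing $k\mapsto m-k$ produces, term by term, the defining series \eqref{ex41.f-1} of $f^{(-1)}$ evaluated at $q^{-m}\lambda_2^{-1}$, once the prefactor $\theta(q^{-1}t;q)^{-1}$ is converted by the quasi-periodicity $\theta(q^{-m-1}\lambda_2^{-1};q)=(-1)^m q^{-m(m+1)/2}\lambda_2^{-m}\theta(\lambda_2;q)$ coming from \eqref{theta.fun} and \eqref{theta.fun2}. After this conversion the powers of $q$ and $\lambda_2$ match identically, yielding \eqref{resf0}.

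The main obstacle is concentrated in two spots, both of which are essentially bookkeeping once set up correctly. In (a) the non-mechanical point is fixing the base residue $R(0,q)=-(q;q)_\infty^2$, since the naive power series is singular at $\xi=1$ and the value must be extracted by a limit through the functional equation (or imported from the explicit product for $\Bor_{-1}\hat{f}^{(-1)}$). In (b) the delicate step is the identification of the $\theta$-weighted sum of residues with the series for $f^{(-1)}$: the two expressions are not literally the same sum, and the match depends on applying the $\theta$ transformation law to the prefactor so that the quadratic $q$-exponents and the powers of $\lambda_2$ align; I would carry out this sign-and-exponent reconciliation explicitly, as it is the only place an error could hide.
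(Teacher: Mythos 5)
Your proof is correct, and part (a) takes a genuinely different route from the paper, so it is worth comparing the two. For (a) the paper does not use the functional equation at all: it rewrites $(q;q)_k^2=(q;q)_\infty^2/(q^{k+1};q)_\infty^2$, expands the denominator by two applications of the $q$-binomial theorem, and sums the geometric series in $k$ to obtain the partial-fraction form
$\Bor_1\hat f^{(0)}(\xi,q)=(q;q)_\infty^2\sum_{n,\ell\ge0}\frac{q^{n+\ell}}{(q;q)_\ell(q;q)_n(1-q^{n+\ell}\xi)}$,
from which the location and simplicity of the poles and \emph{all} residues (including $R(0,q)$) are read off in one stroke, with no base case and no recursion. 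Your route — residues of the functional equation giving $(1-q^m)R(m,q)=2qR(m-1,q)-q^2R(m-2,q)$, plus the generating-function check $(1-x)^2/(x;q)_\infty^2=1/(qx;q)_\infty^2$ — is valid and arguably more robust (it is exactly the method the paper itself uses for the residues $R_\pm(n,q)$ of $\Bor_{1/2}\hat f^{(1)}$ in Lemma~\ref{lem.resf1}, where no closed product form is available), but it does force you to pin down $R(0,q)$ separately. Your primary route for that, $R(0,q)=-\lim_{\xi\to1^-}(1-\xi)\Bor_1\hat f^{(0)}(\xi)=-\sum_k\bigl((q;q)_k^2-(q;q)_{k-1}^2\bigr)=-(q;q)_\infty^2$ by Abel summation of the telescoping series, is sound; your secondary suggestion of ``comparison with $\Bor_{-1}\hat f^{(-1)}(\xi)=-(q;q)_\infty^2/(\xi;q)_\infty^2$'' does not literally work, since that is a different function with a \emph{double} pole at $\xi=1$, so do not lean on it. For (b) your argument coincides with the paper's: residue of each Laplace summand equals $R(m-k,q)$, reindex $k\mapsto m-k$ to pull out $(-1)^mq^{m(m+1)/2}\lambda_2^m$, substitute the closed form of $R$, and convert the prefactor by $\theta(q^{-m-1}\lambda_2^{-1};q)=(-1)^mq^{-m(m+1)/2}\lambda_2^{-m}\theta(\lambda_2;q)$; you are right that this last sign-and-exponent reconciliation is the only delicate spot, and you should indeed write it out in full rather than assert it.
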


\begin{proof}
For part (a), notice that
\be
\begin{aligned}
\Bor_{1}\hat{f}^{(0)}(\xi,q)
&=
\sum_{k=0}^{\infty}(q;q)_{k}^{2}\xi^{k}
=
(q;q)_{\infty}^{2}\sum_{k=0}^{\infty}\frac{1}{(q^{k+1};q)_{\infty}}\xi^{k}\\
&=
(q;q)_{\infty}^{2}\sum_{k,\ell,n=0}^{\infty}
\frac{q^{kn+k\ell+n+\ell}}{(q;q)_{\ell}(q;q)_{n}}\xi^{k}
=
(q;q)_{\infty}^{2}\sum_{\ell,n=0}^{\infty}\frac{q^{n+\ell}}{
  (q;q)_{\ell}(q;q)_{n}(1-q^{n+\ell}\xi)}
\end{aligned}
\ee
is the analytic continuation of $\Bor_{1}\hat{f}^{(0)}$. From this, it clearly
follows that the only poles are simple, located at $\xi=q^\BZ$. (Alternatively,
one can also use the linear $q$-difference equation satisfied by this function,
though this is not needed here). We compute the residue as follows:
\be
\begin{aligned}
R(m,q)
&=
\Res_{\xi=q^{-m}}(q;q)_{\infty}^{2}\sum_{\ell,n=0}^{\infty}
\frac{q^{n+\ell}}{(q;q)_{\ell}(q;q)_{n}(1-q^{n+\ell}\xi)}\frac{d\xi}{2\pi i\xi}\\
&=
-(q;q)_{\infty}^{2}\sum_{\ell=0}^{m}\frac{q^{m}}{(q;q)_{\ell}(q;q)_{m-\ell}}.
\end{aligned}
\ee

For part (b), the pole structure is clear noting that Equation~\eqref{f0x1}
is convergent if all the terms are and that
$\Bor_{1}\hat{f}^{(0)}(q^{k}t\lambda_{2},q)$ has poles at
$t\in\lambda_{2}^{-1}q^{\BZ_{\leq -k}}$. We compute the residues as follows:
\be
\begin{aligned}
\Res_{t=q^{-m}\lambda_{2}^{-1}}&\,f^{(0)}(t,\lambda_{2},q)\frac{dt}{2\pi it}
=
\Res_{t=q^{-m}\lambda_{2}^{-1}}(\Lap_{1}\Bor_{1}\hat{f}^{(0)})(t,\lambda_{2},q)
\frac{dt}{2\pi it}\\
&=
\Res_{t=q^{-m}\lambda_{2}^{-1}}\frac{1}{\theta(\lambda_{2},q)}
\sum_{k\in\BZ}(-1)^{k}q^{k(k+1)/2}\lambda_{2}^{k}\Bor_{1}\hat{f}^{(0)}(q^{k}
\lambda_{2}t,q)\frac{dt}{2\pi it}\\
&=
\frac{1}{\theta(\lambda_{2},q)}\sum_{k\in\BZ}(-1)^{k}q^{k(k+1)/2}
\lambda_{2}^{k}R(m-k,q)\\
&=
\frac{1}{\theta(\lambda_{2},q)}(-1)^{m}q^{m(m+1)/2}\lambda_{2}^{m}
\sum_{k=0}^{\infty}(-1)^{k}q^{k(k-1)/2}(q^{-m}\lambda_{2}^{-1})^{k}R(k,q)\\
&=
\frac{-(q;q)_{\infty}^{2}}{\theta(\lambda_{2},q)}(-1)^{m}q^{m(m+1)/2}
\lambda_{2}^{m}\sum_{k,\ell=0}^{\infty}(-1)^{k}
\frac{q^{k(k+1)/2}}{(q;q)_{\ell}(q;q)_{k-\ell}}(q^{-m}\lambda_{2}^{-1})^{k}\\
&=
f^{(-1)}(q^{-m}\lambda_{2}^{-1},q) \,.
\end{aligned}
\ee
\end{proof}
\begin{lemma}
\label{lem.fglimits.inhom}
  For $t$ in some compact set where the functions are holomorphic, we have
\be
\begin{aligned}
\lim_{r\rightarrow\infty}\frac{g^{(0,2)}(q^{r}t,q)}{\theta(q^{r}t;q)}
&=
\frac{1}{(q;q)_{\infty}^2} m_{1,3}(t^{-1},q)
&\qquad
\lim_{r\rightarrow\infty}\frac{f^{(0)}(q^{r}t,q)}{\theta(q^{r}t;q)}
&=
0 \,.
  \end{aligned}
\ee
\end{lemma}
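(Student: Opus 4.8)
The plan is to treat the two limits separately, following the substitution-and-reindexing method already used for $g^{(0,0)}$ and $g^{(0,1)}$ in the proof of Lemma~\ref{lem.fglimits}. The limit for $f^{(0)}$ is the easier of the two and is entirely analogous to the $f^{(\pm1)}$ limits in that lemma, so I would dispose of it first; the substantive computation is the one for $g^{(0,2)}$, where the Weierstrass $\wp$-function must emerge from a collapsing theta-sum.

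For $f^{(0)}$ I would use that, by Equation~\eqref{f0x1}, the function $f^{(0)}=\Lap_{1}\Bor_{1}\hat{f}^{(0)}$ carries no $\theta$-prefactor. By the $q$-analogue of Watson's lemma for $q$-Borel resummation (see~\cite[Prop.~2.9]{Dreyfus:building}), $f^{(0)}(t,\lambda_{2},q)$ is asymptotic along $q$-spirals to the formal series $\hat{f}^{(0)}$, whose value at $t=0$ is the constant term $1$; hence $f^{(0)}(q^{r}t,q)$ stays bounded as $r\to\infty$ for $t$ in a fixed compact set avoiding the poles $\lambda_{2}^{-1}q^{\BZ}$. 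On the other hand, Equation~\eqref{theta.fun2} gives $\theta(q^{r}t;q)=(-1)^{r}q^{-r(r+1)/2}t^{-r}\theta(t;q)$, whose modulus grows like $|q|^{-r(r+1)/2}\to\infty$. Dividing, the ratio tends to $0$.

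For $g^{(0,2)}$ I would substitute $t\mapsto q^{r}t$ into Equation~\eqref{g2} and reindex the summation variable as $n=k+r$, so the sum runs over $k\geq -r$, exactly as in Lemma~\ref{lem.fglimits}. Three functional equations enter: Equation~\eqref{theta.fun2} for the prefactor $(-1)^{r}q^{-r(r+1)/2}t^{-r-1}$ (which cancels against $\theta(q^{r}t;q)$, leaving the overall factor $-1/\theta(t^{-1};q)$); Equation~\eqref{thetader} for $\theta'(t^{-1};q)/\theta(t^{-1};q)\mapsto \theta'(t^{-1};q)/\theta(t^{-1};q)+r$; and its $t\,d/dt$-derivative for $\theta''(t^{-1};q)/\theta(t^{-1};q)\mapsto \theta''(t^{-1};q)/\theta(t^{-1};q)+2r\,\theta'(t^{-1};q)/\theta(t^{-1};q)+r^{2}$. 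Using the asymptotics $\tfrac12 E_{1}(q)-\tfrac12+\sum_{j=1}^{k+r}\tfrac{1+q^{j}}{1-q^{j}}=(k+r)+O(q^{k+r})$ and $\sum_{j=1}^{k+r}\tfrac{q^{j}}{(1-q^{j})^{2}}=\tfrac{1-E_{2}(q)}{24}+O(q^{k+r})$, one checks that all the $r^{2}$- and $r$-contributions (from the square of the first sum, from the cross term, and from the transformed $\theta''/\theta$) cancel, and the coefficient of the $k$-th term tends to
\be
\tfrac{1}{2} k^{2} - k\,\frac{\theta'(t^{-1};q)}{\theta(t^{-1};q)}
+ \tfrac{1}{2}\frac{\theta''(t^{-1};q)}{\theta(t^{-1};q)} - \frac{E_{2}(q)}{12} \,.
\ee
Summing over $k\in\BZ$ with $\sum_{k}(-1)^{k}k^{j}q^{k(k+1)/2}t^{-k}=\theta^{(j)}(t^{-1};q)$ for $j=0,1,2$ (from Equation~\eqref{theta}), the $k^{2}$- and $k$-terms reassemble into $\theta$-derivatives and the limit collapses to
\be
\lim_{r\to\infty}\frac{g^{(0,2)}(q^{r}t,q)}{\theta(q^{r}t;q)}
=
\frac{-1}{(q;q)_{\infty}^{2}}\left(\frac{\theta''(t^{-1};q)}{\theta(t^{-1};q)}
-\left(\frac{\theta'(t^{-1};q)}{\theta(t^{-1};q)}\right)^{2}-\frac{E_{2}(q)}{12}\right).
\ee
It then remains to identify the bracket with $-m_{1,3}(t^{-1},q)=-\wp(t^{-1},q)$, which is the standard expression of the Weierstrass $\wp$-function (in the paper's normalisation, even in $z$) as the second logarithmic derivative of $\theta$ plus the constant $E_{2}(q)/12$; see~\cite{Akhiezer}.

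The main obstacle is the bookkeeping in the $g^{(0,2)}$ computation: one must verify that the divergent $r^{2}$- and $r$-terms cancel exactly, which uses the precise transformation law for $\theta''/\theta$ together with the quadratic first term, and one must justify interchanging $\lim_{r\to\infty}$ with the infinite sum. The latter rests on the estimates above, the fact that the error terms are $O(q^{k+r})$ with $r\,q^{k+r}\to 0$, and the Gaussian factor $q^{k(k+1)/2}$ providing a dominating summable bound; the truncation at $k=-r$ causes no difficulty since $(q;q)_{k+r}\to(q;q)_{\infty}$ there, so the reindexed series converges on all of $\BC^{\times}$.
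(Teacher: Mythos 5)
Your proof is correct and follows essentially the same route as the paper: the $f^{(0)}$ limit via Watson's lemma for $q$-Borel resummation against the growth of $\theta(q^{r}t;q)$, and the $g^{(0,2)}$ limit via the substitution $t\mapsto q^{r}t$, reindexing $k\mapsto k+r$, the transformation laws for $\theta'/\theta$ and $\theta''/\theta$, and the asymptotics $A_{k+r}=k+r+O(q^{k+r})$ and $\sum_{j=1}^{k+r}q^{j}/(1-q^{j})^{2}=(1-E_{2}(q))/24+O(q^{k+r})$. You are in fact more explicit than the paper (which defers to the proof of Lemma~\ref{lem.fglimits}) in verifying the cancellation of the $r$- and $r^{2}$-terms and in identifying the resulting second logarithmic derivative of $\theta$ with $m_{1,3}(t^{-1},q)=\wp(t^{-1},q)$ up to the paper's (unspecified) additive normalisation.
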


\begin{proof}
We have
\be
\begin{aligned}
&(-1)^{r}q^{r(r+1)/2}t^{r+1}g^{(0,2)}(q^{r}t,q)\\
&=
\sum_{k=-r}^{\infty}
\Bigg(\frac{1}{2}\left(\frac{1}{2}E_{1}(q)-r-\frac{1}{2}-\frac{\theta'(t^{-1};q)}{\theta(t^{-1};q)}+\sum_{j=1}^{k+r}\frac{1+q^{j}}{1-q^{j}}\right)^{2}\\
&\qquad\qquad-\left(\frac{1}{2}E_{1}(q)-r-\frac{1}{2}-\frac{\theta'(t^{-1};q)}{\theta(t^{-1};q)}+\sum_{j=1}^{k+r}\frac{1+q^{j}}{1-q^{j}}\right)
  \frac{\theta'(t^{-1};q)}{\theta(t^{-1};q)}\\
  &\qquad\qquad+\frac{1}{2}
  \frac{\theta''(t^{-1};q)}{\theta(t^{-1};q)}+\sum_{j=1}^{k+r}\frac{q^{j}}{(1-q^{j})^2}-\frac{1}{24}-\frac{1}{24}E_{2}(q)\Bigg)(-1)^{k}
\frac{q^{k(k+1)/2}}{(q;q)_{k+r}^{2}}t^{-k} \,.
\end{aligned}
\ee
Then noting that $1/24-E_2(q)/24-\sum_{j=1}^{k}q^j/(1-q^j)^2)=O(q^{k+1})$ the proof then proceeds as in Lemma~\ref{lem.fglimits}.
\end{proof}

To finish the proof of
Theorem~\ref{thm.41b} we will use the state integrals introduced in
~\cite[Eqn.(73)]{GGMW:trivial}.
\be\label{41inhom.sint}
\int_{\BR+i\ve}
\Phi_{\sfb}(x)^2
\frac{\exp\left(-\pi i x^{2}-2\pi\frac{zx}{\sfb}\right)}{1+\tq^{1/2}
  \exp\left(-\frac{2\pi x}{\sfb}\right)}dx\,.
\ee
The factorisation of this integral was done in \cite[Thm.7]{GGMW:trivial}. The
fact this module is not self dual (see Proposition~\ref{41inhom.not.sd}) means
that additional functions arise in the factorisation. It was shown, using
Equation~\eqref{S.mod.thd}, that Equation~\eqref{41inhom.sint} factors as an
elementary function holomorphic in $\BC'$ times
\be
\begin{aligned}
\calI(z,\tau)
=&
\tau^{2}g^{(0,2)}(\tilde{t},\tq)
+\tau g^{(0,1)}(\tilde{t},\tq)L_{0}(t,q)-g^{(0,0)}(\tilde{t},\tq)L_{1}(t,q)
\end{aligned}
\ee
where
\be
\label{41.stateint.fac.inhom}
\begin{aligned}
L_{0}(t,q)
&=
1-\frac{1}{2}E_{1}(q)+\frac{\theta'(t^{-1};q)}{\theta(t^{-1};q)}
+\sum_{k=1}^{\infty}(-1)^{k}\frac{q^{k(k+1)/2}}{(q;q)_{k}^{2}(1-q^{k})}t^{-k}\\
L_{1}(t,q)
&=
-\frac{5}{12}+\frac{1}{2}E_{1}-E_{1}(q)^{2}-\frac{1}{24}E_{2}(q)
-\left(\frac{1}{8}-\frac{1}{8}E_{1}(q)\right)\frac{\theta'(t^{-1};q)}{\theta(t^{-1};q)}
-\frac{\theta''(t^{-1};q)}{2\theta(t^{-1};q)}\\
&\quad+\sum_{k=1}^{\infty}(-1)^{k}
\frac{q^{k(k+1)/2}}{(q;q)_{k}^{2}(1-q^{k})}t^{-k}
\left(\frac{1}{2}E_{1}(q)-\frac{1}{2}-\frac{\theta'(t^{-1};q)}{\theta(t^{-1};q)}+\sum_{j=1}^{k}\frac{1+q^{j}}{1-q^{j}}
  +\frac{q^{k}}{1-q^{k}}\right)\,.
\end{aligned}
\ee
These functions $L_{i}$ where then shown to satisfy
\be
\begin{aligned}
  L_{0}(t,q)-L_{0}(qt,q)&=-qtg^{(0,0)}(qt,q)\\
  L_{1}(t;q)-L_{1}(qt,q)&=-qtg^{(0,1)}(qt,q).
\end{aligned}
\ee
Notice that functions satisfying these equations are unique up to the
addition of an elliptic function. Therefore, with exactly the same argument in
~\cite[Thm.4]{GGMW:trivial} with the addition of checking the principal parts
of the LHS and RHS we can then show that
\be
{\small
V(t,q)^{-1}
\begin{pmatrix}
0 & 1 & 0\\
0 & 0 & 1\\
q^{-2}t^{-1} & -q^{-2} & q^{-2}t^{-1}-2q^{-1}
\end{pmatrix}
=
\begin{pmatrix}
  -L_{1}(t,q) & -qt^2g^{(0,1)}(qt,q) & qt^2g^{(0,1)}(t,q)\\
  L_{0}(t,q)& qt^2g^{(0,0)}(qt,q) & -qt^2g^{(0,0)}(qt,q)\\
  1 & 0 & 0
\end{pmatrix}.
}
\ee

\noindent
Therefore, we see that the entries of $\Om_{V,S}$ are combinations of elementary
functions times $\calI(z+n+m\tau,\tau)$ for $m,n\in\{-2,-1,0,1,2\}$. Using the
explicit expressions for the monodromy one can prove it satisfies Equation
~\eqref{M41c} with equal weights $(2,1,0)$ and again using part (c) of Theorem
~\ref{thm.ST} complete the proof.
\end{proof}

\subsection{The $x$-deformation}

We will first discuss the two variable holonomic system given by
the homogeneous equations
\begin{subequations}
\be
\label{41x}
\begin{aligned}
tq^{-1}f(q^{-1}t,x,q)+(1-(x^{-1}+x)t)f(t,x,q)+tqf(qt,x,q)
=
0\,,
\end{aligned}
\ee
\be
\label{41x2}
\begin{aligned}
(1-qx)(1-q^{-1}x^{2})f(t,qx,q)&\\
-(x-1)^2(x+1)&(x^2t-x-(q^{-1}+q)t-x^{-1}+x^{-2}t)f(t,x,q)\\
+&(1-qx^2)(1-q^{-1}x)f(t,q^{-1}x,q)
=
0 \,,
  \end{aligned}
  \ee
\be
\label{41x3}
(1-xq)(f(t,qx,q)-x^{-1}f(qt,qx,q))=(1-x^{-1})(f(t,x,q)-qxf(qt,x,q)) \,.
\ee
\end{subequations}
This is not a random system of equations, instead they are the defining equations
of the $t$-deformation of the homogeneous $\hat{A}$-polynomial of the $4_{1}$
knot. This system appeared in~\cite[Eqn.(10)]{GK:desc} and
~\cite[Eqn.(134)]{GGM:peacock}, and it is $q$-holonomic of rank 2 in the variables
$(t,x)$. The system is symmetric under the involution $x \leftrightarrow x^{-1}$
(which corresponds to the Weyl symmetry in the color of the Jones polynomial of
the knot) and our solutions will also be invariant under this involution. As a
result the monodromy connecting $x=0$ to $x=\infty$ is the identity. 

To construct solutions, we will apply the Frobenius method to the
Equation~\eqref{41x}. This has Newton polygon depicted in Figure~\ref{f.41x1}.
Notice that the indicial polynomial $(\rind-1)^2$ for the top edge of
Equation~\eqref{41x1} now becomes $(\rind-x)(\rind-x^{-1})$ for Equation~\eqref{41x}.
We can then normalise so that the solutions satisfy the full system of equations
~\eqref{41x},~\eqref{41x2},~\eqref{41x3}. The solutions to this homogeneous
system are then given by
\be
\label{sols.41.x}
\begin{aligned}
f^{(-1)}(t,x,q)
&=
\frac{\theta(q^{-1}x;q)}{(1-x)\theta(q^{-1}t;q)(q;q)_{\infty}}\sum_{k=0}^{\infty}
\sum_{\ell=0}^{k}(-1)^{k}\frac{q^{k(k+1)/2}x^{2\ell-k}}{(q;q)_{\ell}
  (q;q)_{k-\ell}}t^{k}\\
f^{(1)}(t,x,\lambda_{1},q)
&=
\frac{\theta(t;q)(q;q)_{\infty}}{(1-x)\theta(x;q)}
\Lap_{1/2}\Bor_{1/2}\hat{f}^{(1)}(t,x,\lambda_{1},q)\\
g^{(0,x^{-1})}(t,x,q)
&=
\frac{\theta(q^{-1}x;q)\theta(tx;q)(qx^{2};q)_{\infty}}{\theta(t;q)
  \theta(x^{2};q)(1-x)(q;q)_{\infty}}\sum_{k=0}^{\infty}(-1)^{k}
\frac{q^{k(k+1)/2}x^{k}}{(q;q)_{k}(qx^{2};q)_{k}}t^{-k-1}\\
g^{(0,x)}(t,x,q)
&=
\frac{\theta(q^{-1}x^{-1};q)\theta(tx^{-1};q)(qx^{-2};q)_{\infty}}{\theta(t;q)
  \theta(x^{-2};q)(1-x^{-1})(q;q)_{\infty}}\sum_{k=0}^{\infty}(-1)^{k}
\frac{q^{k(k+1)/2}x^{-k}}{(q;q)_{k}(qx^{-2};q)_{k}}t^{-k-1}
\end{aligned}
\ee
where
\be
\hat{f}^{(1)}(t,x,q)
=
\sum_{k=0}^{\infty}\sum_{\ell=0}^{k}\frac{q^{\ell^{2}-k\ell}
  x^{2\ell-k}}{(q;q)_{\ell}(q;q)_{k-\ell}}t^{k}.
\ee
Notice that we have
\be
\Bor_{1/2}\hat{f}^{(1)}(t,x,q)
=
\sum_{k=0}^{\infty}\sum_{\ell=0}^{k}(-1)^{k}
\frac{q^{k(k+1)/4+\ell^{2}-k\ell}x^{2\ell-k}}{(q;q)_{\ell}(q;q)_{k-\ell}}t^{k}
\ee
which satisfies
\be
(1-q^{1/2}\xi^2)\Bor_{1/2}\hat{f}^{(1)}(\xi,q)+
(x+x^{-1})\xi\Bor_{1/2}\hat{f}^{(1)}(q^{1/2}\xi,q)-\Bor_{1/2}
\hat{f}^{(1)}(q\xi,q)=0.
\ee
We can complete this to include the inhomogeneous solutions to the system of
Equations~\eqref{41x.inhom}, \eqref{41x2.inhom}, \eqref{41x3.inhom}.
Equations~\eqref{41x.inhom} can be made third order homogenous which has
Newton polygon depicted in Figure~\ref{f.413}. The indicial polynomial of
the top edge now factors as $(\rind-x)(\rind-x^{-1})(\rind-1)$.
This gives two additional solutions as in Section~\ref{sec.4.inhom.x1}.
Firstly we have
\be
\hat{f}^{(0)}(t,x,q)=\sum_{k=0}^{\infty}(-1)^{k}q^{-k(k+1)/2}
(qx;q)_{k}(qx^{-1};q)_{k}t^{k} \,,
\ee
a divergent formal power series solution at $t=0$. Then
\be
  \Bor_{1}f^{(0)}(\xi,x,q)=\sum_{k=0}^{\infty}(qx;q)_{k}(qx^{-1};q)_{k}t^{k} \,,
\ee
which can be analytically continued away from $\xi\in q^{\BZ_{\leq0}}$ using the
relation
\be
(1-\xi)\Bor_{1}\hat{f}^{(0)}(\xi,q)+(x+x^{-1})q\xi \Bor_{1}\hat{f}^{(0)}(q\xi,q)
-q^2\xi \Bor_{1}\hat{f}^{(0)}(q^{2}\xi,q) = 1.
\ee
So we define
\be
\label{f0x}
f^{(0)}(t,x,\lambda_{2},q)
=
(\Lap_{1}\Bor_{1}\hat{f}^{(0)})(t,x,\lambda_{2},q)
=
\frac{1}{\theta(\lambda_{2};q)}\sum_{k\in\BZ}(-1)^{k}q^{k(k+1)}
\lambda_{2}^{k}\Bor_{1}\hat{f}^{(0)}(q^{k}\lambda_{2}t,x,q).
\ee
The second solution is then given by
\be
\label{gGM}
g^{(0)}(t,x,q)
=
\sum_{k=0}^{\infty}(-1)^{k}\frac{q^{k(k+1)/2}}{(x;q)_{k+1}(x^{-1};q)_{k+1}}t^{-k-1}.
\ee
We then take
\be
\label{UVex41x}
\begin{aligned}
  U(t,x,\lambda_{1},\lambda_{2},q) &=
  W(f^{(1)}(t,x,\lambda_{1},q), f^{(-1)}(t,x,q), f^{(0)}(t,x,\lambda_{2},q))\:,\\
V(t,q) &= W(g^{(0,x^{-1})}(t,x,q), g^{(0,x)}(t,x,q), g^{(0,1)}(t,x,q)) \,.
\end{aligned}
\ee
Before we give the proof note the immediate symmetries
\be
\begin{aligned}
g^{(0,x^{-1})}(t,x^{-1},q)
&=
g^{(0,x)}(t,x,q),\qquad
&
f^{(-1)}(t,x^{-1},q)
&=
f^{(-1)}(t,x,q),
\\
g^{(0,x)}(t,x^{-1},q)
&=
g^{(0,x^{-1})}(t,x,q),\qquad
&
f^{(-1)}(t,x^{-1},q)
&=
f^{(-1)}(t,x,q),
\\
g^{(0,1)}(t,x^{-1},q)
&=
g^{(0,1)}(t,x,q),\qquad
&
f^{(0)}(t,x^{-1},q)
&=
f^{(0)}(t,x,q)\,.
  \end{aligned}
\ee
\begin{proof}[Proof of Theorem~\ref{thm.41c}]
  First we prove the existence of the elliptic function $m_{2,1}$
  with the properties listed in Theorem~\ref{thm.41c}. 
  Note that the only restriction to an existence of an elliptic function
  with prescribed poles and residues is the vanishing of
  the sum of the residues on a fundamental domain. Here, there are six
  residues, and their sum vanishing is equivalent to
\be
\begin{small}
\begin{aligned}
  1
  &=\frac{\theta(q^{3/4}\lambda_{1}^{-1};q)\theta(q^{-1/4}x;q)\theta(q^{-3/4}x;q)
  \theta(q^{-3/4}\lambda_{1};q)}{2\theta(x;q)\theta(x^{-1};q)
  \theta(q^{-1}\lambda_{1};q)\theta(q^{-3/2}\lambda_{1};q)}\\
  &+\frac{\theta(-q^{3/4}\lambda_{1}^{-1};q)\theta(-q^{-1/4}x;q)\theta(-q^{-3/4}x;q)
  \theta(-q^{-3/4}\lambda_{1};q)}{2\theta(x;q)\theta(x^{-1};q)
  \theta(q^{-1}\lambda_{1};q)\theta(q^{-3/2}\lambda_{1};q)}\\
  &+\frac{\theta(q^{1/4}\lambda_{1}^{-1};q)\theta(q^{-1/4}x;q)\theta(q^{-3/4}x;q)
  \theta(q^{-1/4}\lambda_{1};q)}{2\theta(x;q)\theta(x^{-1};q)
  \theta(q^{-1/2}\lambda_{1};q)\theta(q^{-1}\lambda_{1};q)}\\
  &+\frac{\theta(-q^{1/4}\lambda_{1}^{-1};q)\theta(-q^{-1/4}x;q)\theta(-q^{-3/4}x;q)
  \theta(-q^{-1/4}\lambda_{1};q)}{2\theta(x;q)\theta(x^{-1};q)
  \theta(q^{-1/2}\lambda_{1};q)\theta(q^{-1}\lambda_{1};q)}\\
  &=\left(\frac{\theta(q^{3/4}\lambda_{1}^{-1};q)
  \theta(q^{-3/4}\lambda_{1};q)}{2
  \theta(q^{-1}\lambda_{1};q)\theta(q^{-3/2}\lambda_{1};q)}
  +\frac{\theta(q^{1/4}\lambda_{1}^{-1};q)
  \theta(q^{-1/4}\lambda_{1};q)}{2
  \theta(q^{-1/2}\lambda_{1};q)\theta(q^{-1}\lambda_{1};q)}\right)
  \frac{\theta(q^{-1/4}x;q)\theta(q^{-3/4}x;q)}{\theta(x;q)\theta(x^{-1};q)}\\
  &+\left(\frac{\theta(-q^{3/4}\lambda_{1}^{-1};q)
  \theta(-q^{-3/4}\lambda_{1};q)}{2
  \theta(q^{-1}\lambda_{1};q)\theta(q^{-3/2}\lambda_{1};q)}
  +\frac{\theta(-q^{1/4}\lambda_{1}^{-1};q)
  \theta(-q^{-1/4}\lambda_{1};q)}{2
  \theta(q^{-1/2}\lambda_{1};q)\theta(q^{-1}\lambda_{1};q)}\right)
  \frac{\theta(-q^{-1/4}x;q)\theta(-q^{-3/4}x;q)}{\theta(x;q)\theta(x^{-1};q)}\,.
\end{aligned}
\end{small}
\ee
To prove this identity, notice that the right hand side is elliptic in
$\lambda_{1}\mapsto q^{1/2}\lambda_{1}$ and $x\mapsto qx$. Moreover, $\lambda_{1}$
has only one potential simple pole in its fundamental domain and therefore this
is constant in $\lambda_{1}$. Then $x$ has a potential double pole which cancels.
Then specialising the RHS to $\lambda_{1}=q^{1/4}$ and $x=-q^{1/4}$ gives
\be
\frac{\theta(q^{1/2};q)
  \theta(q^{-1/2};q)}{2
  \theta(q^{-3/4};q)\theta(q^{-5/4};q)}
\frac{\theta(-1;q)\theta(-q^{-1/2};q)}{\theta(-q^{1/4};q)\theta(-q^{-1/4};q)}.
\ee
This can be proven to equal one by elementary means using the Jacobi triple
product identity. Therefore, such an $m_{2,1}(t,x,\lambda_{1},q)$ exists.
Explicitly, we have
\be
\label{m21explicit}
\begin{tiny}
\begin{aligned}
  &m_{2,1}(t,x,\lambda_{1},q)
  =
  \frac{-1}{2}\frac{\theta'(tx;q)}{\theta(tx;q)}
  +\frac{-1}{2}\frac{\theta'(tx^{-1};q)}{\theta(tx^{-1};q)}\\
  +&\frac{\theta(q^{3/4}\lambda_{1}^{-1};q)\theta(q^{-1/4}x;q)\theta(q^{-3/4}x;q)
  \theta(q^{-3/4}\lambda_{1};q)}{2\theta(x;q)\theta(x^{-1};q)
  \theta(q^{-1}\lambda_{1};q)\theta(q^{-3/2}\lambda_{1};q)}
\frac{\theta'(t\lambda_{1} q^{1/4})}{\theta(t\lambda_{1} q^{1/4})}
  +\frac{\theta(- q^{3/4}\lambda_{1}^{-1};q)\theta(- q^{-1/4}x;q)\theta(- q^{-3/4}x;q)
  \theta(- q^{-3/4}\lambda_{1};q)}{2\theta(x;q)\theta(x^{-1};q)
  \theta(q^{-1}\lambda_{1};q)\theta(q^{-3/2}\lambda_{1};q)}
\frac{\theta'(-t\lambda_{1} q^{1/4})}{\theta(-t\lambda_{1} q^{1/4})}\\
  +&\frac{\theta(q^{1/4}\lambda_{1}^{-1};q)\theta(q^{-1/4}x;q)\theta(q^{-3/4}x;q)
  \theta(q^{-1/4}\lambda_{1};q)}{2\theta(x;q)\theta(x^{-1};q)
  \theta(q^{-1/2}\lambda_{1};q)\theta(q^{-1}\lambda_{1};q)}
\frac{\theta'(t\lambda_{1} q^{3/4})}{\theta(t\lambda_{1} q^{3/4})}
  +\frac{\theta(- q^{1/4}\lambda_{1}^{-1};q)\theta(- q^{-1/4}x;q)\theta(- q^{-3/4}x;q)
  \theta(- q^{-1/4}\lambda_{1};q)}{2\theta(x;q)\theta(x^{-1};q)
  \theta(q^{-1/2}\lambda_{1};q)\theta(q^{-1}\lambda_{1};q)}
\frac{\theta'(-t\lambda_{1} q^{3/4})}{\theta(-t\lambda_{1} q^{3/4})}\\
  &\qquad\qquad\qquad\quad-\frac{-1}{2}\frac{\theta'(x;q)}{\theta(x;q)}
  -\frac{-1}{2}\frac{\theta'(x^{-1};q)}{\theta(x^{-1};q)}
  -\frac{\theta(q^{3/4}\lambda_{1}^{-1};q)\theta(q^{-1/4}x;q)\theta(q^{-3/4}x;q)
  \theta(q^{-3/4}\lambda_{1};q)}{2\theta(x;q)\theta(x^{-1};q)
  \theta(q^{-1}\lambda_{1};q)\theta(q^{-3/2}\lambda_{1};q)}
\frac{\theta'(\lambda_{1} q^{1/4})}{\theta(\lambda_{1} q^{1/4})}\\
-&\frac{\theta(- q^{3/4}\lambda_{1}^{-1};q)\theta(- q^{-1/4}x;q)
  \theta(- q^{-3/4}x;q)\theta(- q^{-3/4}\lambda_{1};q)}{2\theta(x;q)\theta(x^{-1};q)
  \theta(q^{-1}\lambda_{1};q)\theta(q^{-3/2}\lambda_{1};q)}
\frac{\theta'(-\lambda_{1} q^{1/4})}{\theta(-\lambda_{1} q^{1/4})}
  -\frac{\theta(q^{1/4}\lambda_{1}^{-1};q)\theta(q^{-1/4}x;q)\theta(q^{-3/4}x;q)
  \theta(q^{-1/4}\lambda_{1};q)}{2\theta(x;q)\theta(x^{-1};q)
  \theta(q^{-1/2}\lambda_{1};q)\theta(q^{-1}\lambda_{1};q)}
\frac{\theta'(\lambda_{1} q^{3/4})}{\theta(\lambda_{1} q^{3/4})}\\
&\qquad\qquad\qquad\qquad\qquad\qquad\qquad\qquad
-\frac{\theta(- q^{1/4}\lambda_{1}^{-1};q)\theta(- q^{-1/4}x;q)\theta(- q^{-3/4}x;q)
  \theta(- q^{-1/4}\lambda_{1};q)}{2\theta(x;q)\theta(x^{-1};q)
  \theta(q^{-1/2}\lambda_{1};q)\theta(q^{-1}\lambda_{1};q)}
\frac{\theta'(-\lambda_{1} q^{3/4})}{\theta(-\lambda_{1} q^{3/4})} \,.
\end{aligned}
\end{tiny}
\ee

Next we calculate the second column of the monodromy matrix using the same
argument as Equation~\eqref{f1=g01} from \cite{Morita:conn-mon}.
\be
\begin{small}
\begin{aligned}
&\hat{f}^{(-1)}(t,x;q)
=
\oint_{0}\Bor_{-1}(g)(\xi,x;q)\theta(t/\xi;q)\frac{d\xi}{2\pi i \xi}
=
\oint_{0}\frac{\theta(t/\xi;q)}{(x\xi;q)_{\infty}
  (x^{-1}\xi;q)_{\infty}}\frac{d\xi}{2\pi i \xi}\\
&=
-\sum_{k=0}^{\infty}\left(\Res_{\xi=x^{\pm}q^{-k}}\right)
\frac{\theta(t/\xi;q)}{(x\xi;q)_{\infty}(x^{-1}\xi;q)_{\infty}}
\frac{d\xi}{2\pi i \xi}\\
&=
-\sum_{k=0}^{\infty}\left(\Res_{\ve=0}\right)
\frac{\theta(txq^{k}e^{\ve};q)}{
  (q^{-1}e^{-\ve};q^{-1})_{k}
  (e^{-\ve};q)_{\infty}(x^{-2}q^{-1}
  e^{-\ve};q^{-1})_{k}(x^{-2}e^{-\ve};q)_{\infty}}
\frac{-d\ve}{2\pi i}\\
&\quad\quad-\sum_{k=0}^{\infty}\left(\Res_{\ve=0}\right)
\frac{\theta(tx^{-1}q^{k}e^{\ve};q)}{
  (x^{2}q^{-1}e^{-\ve};q^{-1})_{k}(x^{2}
  e^{-\ve};q)_{\infty}(q^{-1}e^{-\ve};q^{-1})_{k}(
  e^{-\ve};q)_{\infty}}\frac{-d\ve}{2\pi i}\\
&=
\sum_{k=0}^{\infty}\frac{\theta(txq^{k};q)(qx^{2};q)_{\infty}}{
  (q^{-1};q^{-1})_{k}(x^{-2}q^{-1};q^{-1})_{k}\theta(x^{2};q)}
+\sum_{k=0}^{\infty}\frac{\theta(tx^{-1}q^{k};q)(qx^{-2};q)_{\infty}}{
  (x^{2}q^{-1};q^{-1})_{k}\theta(x^{-2};q)(q^{-1};q^{-1})_{k}}\\
&=
\frac{\theta(tx;q)(qx^{2};q)_{\infty}}{\theta(x^{2};q)}
\sum_{k=0}^{\infty}(-1)^{k}
\frac{q^{k(k+1)/2}x^{k}t^{-k}}{(q;q)_{k}(x^{2}q;q)_{k}}
+\frac{\theta(tx^{-1};q)(qx^{-2};q)_{\infty}}{\theta(x^{-2};q)}
\sum_{k=0}^{\infty}(-1)^{k}
\frac{q^{k(k+1)/2}x^{-k}t^{-k}}{(q;q)_{k}(qx^{-2};q)_{k}}
\\
&=
-\frac{\theta(q^{-1}t;q)(1-x)(q;q)_{\infty}}{
  \theta(q^{-1}x;q)}g^{(0,x^{-1})}(t,x,q)
-\frac{\theta(q^{-1}t;q)(1-x^{-1})(q;q)_{\infty}}{
  \theta(q^{-1}x^{-1};q)}g^{(0,x)}(t,x,q) \,.
\end{aligned}
\end{small}
\ee
Therefore, we see that
\be\label{fm1ingx}
\begin{aligned}
f^{(-1)}(t,x,q)
&=
-g^{(0,x^{-1})}(t,x,q)
-
g^{(0,x)}(t,x,q).
\end{aligned}
\ee
Now note that the bottom row of $\vM$ is given by $(0,0,1)$ simply from the fact
the inhomogeneous module contains the homogeneous as a sub-module and then the
inhomogeneity normalises the last entry. Therefore, from Lemma~\ref{lem.det41x},
we see that
\be
\begin{pmatrix}
1 & -1 & 0\\
-1 & -1 & 0\\
0 & 0 & 1
\end{pmatrix}^{-1}
\vM(t,x,\lambda,q)
=
\begin{pmatrix}
  \frac{\theta(x^{-2};q)
    \theta(t;q)^2
    (q;q)_{\infty}^3}{2\theta(q^{-1}x;q)^2\theta(tx;q)\theta(tx^{-1};q)} & 0 & *\\
* & 1 & *\\
0 & 0 & 1
\end{pmatrix}\,.
\ee
Consider the function
\be
\begin{aligned}
  \calE_{4}&:=f^{(1)}(t,x,\lambda_{1},q)-\frac{\theta(x^{-2};q)
  \theta(t;q)^2(q;q)_{\infty}^3}{2\theta(q^{-1}x;q)^2\theta(tx;q)\theta(tx^{-1};q)}
  \left(g^{(0,x^{-1})}(t,x,q)-g^{(0,x)}(t,x,q)\right)\\
  &\quad-m_{2,1}(t,x,\lambda_{1},q)\left(-g^{(0,x^{-1})}(t,x,q)-g^{(0,x)}(t,x,q)
  \right)\\
  &=
  f^{(1)}(t,x,\lambda_{1},q)-\frac{\theta(x^{-2};q)
  \theta(t;q)^2(q;q)_{\infty}^3}{2\theta(q^{-1}x;q)^2\theta(tx;q)\theta(tx^{-1};q)}
  \left(g^{(0,x^{-1})}(t,x,q)+g^{(0,x)}(t,x,q)\right)\\
  &\quad-m_{2,1}(t,x,\lambda_{1},q)f^{(-1)}(t,x,q)
\end{aligned}
\ee
where is $m_{2,1}$ has the properties stated in Theorem~\ref{thm.41c}.
Then by the definition of $m_{2,1}$ along with Lemma~\ref{lem.41.gx} and
Lemma~\ref{lem.resf1.x} we see that $\calE_{4}$ is holomorphic and satisfies
Equation~\eqref{41x} and therefore must be zero from Lemma~\ref{lem.41.x.holsol}.
This proves the first column. Now for the last column consider the function 
\be
\begin{aligned}
  \calE_{5}
  &:=
  f^{(0)}(t,\lambda_{2},x,q)
  -
  g^{(0,1)}(t,x,q)\\
  &-
  \frac{\theta(x^{-2};q)
    \theta(t;q)^2(q;q)_{\infty}^{3}}{2\theta(q^{-1}x;q)^2\theta(tx;q)
    \theta(tx^{-1};q)}\left(g^{(0,x^{-1})}(t,x,q)-g^{(0,x)}(t,x,q)\right)\\
  &-
  \left(\frac{\theta'(t\lambda_{2})}{\theta(t\lambda_{2})}
-\frac{\theta'(tx)}{2\theta(tx)}-\frac{\theta'(tx^{-1})}{2\theta(tx^{-1})}
-\frac{\theta'(\lambda_{2})}{\theta(\lambda_{2})}-\frac{1}{2}\right)
  \\ & \times 
  \left(-g^{(0,x^{-1})}(t,x,q)-g^{(0,x)}(t,x,q)\right).
\end{aligned}
\ee
This is holomorphic from Lemma~\ref{lem.41.gx}, Lemma~\ref{lem.resf1.x} and
Lemma~\ref{lem.resf0.x}. Moreover, $\calE_{5}$ satisfies Equation~\eqref{41x}
and therefore vanishes from Lemma~\ref{lem.41.x.holsol}.

The entries of the RHS
of Equation~\eqref{m3x3x} have the following transformation properties under
the $S$ matrix
\be
\begin{small}
\begin{aligned}
  \frac{\theta(x^{-2};\tq)
    \theta(t;\tq)^2(\tq;\tq)_{\infty}^{3}}{2\theta(\tq^{-1}x;\tq)^2
    \theta(tx;\tq)\theta(tx^{-1};\tq)}
  &=\tau\frac{\theta(x^{-2};q)
    \theta(t;q)^2(q;q)_{\infty}^3}{2\theta(q^{-1}x;q)^2\theta(tx;q)
    \theta(tx^{-1};q)}\\
  \left(\frac{\theta'(\ti t\ti \lambda_{2})}{\theta(\ti t\ti \lambda_{2})}
    -\frac{\theta'(\ti t\ti x)}{2\theta(\ti t\ti x)}
    -\frac{\theta'(\ti t\ti x^{-1})}{2\theta(\ti t\ti x^{-1})}
  -\frac{\theta'(\ti \lambda_{2})}{\theta(\ti \lambda_{2})}-\frac{1}{2}\right)
  &=\tau\left(\frac{\theta'(t\lambda_{2})}{\theta(t\lambda_{2})}
  -\frac{\theta'(tx)}{2\theta(tx)}-\frac{\theta'(tx^{-1})}{2\theta(tx^{-1})}
  -\frac{\theta'(\lambda_{2})}{\theta(\lambda_{2})}-\frac{1}{2}\right)\\
\end{aligned}
\end{small}
\ee
while $m_{2,1}$ has three elements in it's $\SL_{2}(\BZ)$ orbit
$m_{2,1},m_{2,1}|_{1}S,m_{2,1}|_{1}T$. To see this notice that $m_{2,1}|_{1}T^2$
simply permutes the terms in RHS of Equation~\eqref{m21explicit}. Then using
Equations~\eqref{thetaS}~\eqref{S.mod.thd} we can explicitly compute
$m_{2,1}|_{1}S$ and $m_{2,1}|_{1}TS$ which can be used to show that
$m_{2,1}|_{1}ST=m_{2,1}|_{1}S$ and $m_{2,1}|_{1}TS=m_{2,1}|_{1}T$.
Altogether, this shows the monodromy satisfies Equation~\eqref{M41x}.

\begin{lemma}
  \label{lem.41.x.holsol}
  If $h(t,x,q)$ is holomorphic for $t\in\BC^{\times}$ and satisfies
  Equation~\eqref{41x} then it vanishes.
\end{lemma}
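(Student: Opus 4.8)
The plan is to follow the second, Fourier–theoretic proof of Corollary~\ref{cor:41holsol}: a function holomorphic on $\BC^{\times}$ is given by a Laurent series that must converge on the entire punctured plane, and I will show that the recursion forced by Equation~\eqref{41x} is incompatible with such convergence unless every coefficient vanishes. Concretely, I would write $h(t,x,q)=\sum_{k\in\BZ}\alpha_{k}(x,q)\,t^{k}$, where holomorphy on $\BC^{\times}$ is equivalent to $\limsup_{k\to+\infty}|\alpha_{k}|^{1/k}=0$ together with $\limsup_{k\to+\infty}|\alpha_{-k}|^{1/k}=0$ (convergence at $t=\infty$ and at $t=0$ respectively). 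This is the natural strategy here because, unlike in the $x=1$ case where $g^{(0,0)}$ is holomorphic on $\BC^{\times}\cup\{\infty\}$, both fundamental solutions $g^{(0,x^{-1})}$ and $g^{(0,x)}$ of~\eqref{sols.41.x} carry a $\theta(t;q)$ in the denominator and hence have poles at $t\in q^{\BZ}$; so one expects the holomorphic solution space to be trivial.

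The computational core is to substitute the series into Equation~\eqref{41x} and collect the coefficient of $t^{k}$ from the three terms $tq^{-1}h(q^{-1}t)$, $(1-(x^{-1}+x)t)h(t)$, and $tq\,h(qt)$. This yields the two-term recursion
\be
\alpha_{k}(x,q)=\bigl(x+x^{-1}-q^{k}-q^{-k}\bigr)\,\alpha_{k-1}(x,q)\,.
\ee
The key algebraic observation I would record is the factorisation $x+x^{-1}-q^{k}-q^{-k}=-q^{-k}(q^{k}-x)(q^{k}-x^{-1})$, so that for $x\notin q^{\BZ}$ the multiplier is nonzero for every $k$, while for $k\to+\infty$ it grows like $|q|^{-k}$ (recall $|q|<1$).

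From here the conclusion is forced. Running the recursion forward gives $|\alpha_{k}|\sim|\alpha_{0}|\,|q|^{-k(k+1)/2}$, hence $\limsup_{k\to+\infty}|\alpha_{k}|^{1/k}=\infty$ unless $\alpha_{k}=0$ for all large $k$; convergence of the Laurent series at any point of $\BC^{\times}$ forces the latter. Since the multiplier never vanishes, running the recursion backward from an index beyond which the coefficients are zero gives $\alpha_{k-1}=\alpha_{k}/(x+x^{-1}-q^{k}-q^{-k})=0$, and inductively $\alpha_{k}=0$ for all $k\in\BZ$, i.e.\ $h\equiv0$. The only genuine obstacle is the genericity hypothesis $x\notin q^{\BZ}$: when $x$ is an integer power of $q$ the multiplier has a zero, the backward induction can stall, and this is precisely the resonant situation in which $g^{(0,x^{-1})}$ and $g^{(0,x)}$ degenerate. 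For the intended application this is harmless, since $\calE_4$ and $\calE_5$ depend meromorphically on $x$, so their vanishing for generic $x$ propagates to all $x$ by analytic continuation.
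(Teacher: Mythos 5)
Your proof is correct and follows essentially the same route as the paper: expand $h$ in a Laurent series on $\BC^{\times}$, derive the two-term coefficient recursion from Equation~\eqref{41x}, and show that the super-exponential growth forced by the recursion is incompatible with convergence unless all coefficients vanish. In fact your version is the more careful one: the recursion is first order with the divergence occurring as $k\to+\infty$ (the paper writes $k\sim-\infty$, where the coefficients actually decay like $q^{k^{2}/2}$, and speaks of two initial conditions $\alpha_{0},\alpha_{-1}$ where one suffices generically), and your explicit caveat that the argument stalls on the resonant locus $x\in q^{\BZ}$ — where the lemma genuinely fails, as the one-dimensional holomorphic solution space at $x=1$ in Corollary~\ref{cor:41holsol} shows — together with the analytic-continuation remark for the application, addresses a point the paper leaves implicit.
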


\begin{proof}
  If $h(t,x,q)$ is holomorphic for $t\in\BC^{\times}$ then is has an Laurent
  series expansion
\be
h(t,x,q)=\sum_{k\in\BZ}\a_{k}(x,q)t^{k}.
\ee
Therefore Equation~\eqref{41x} determines the coefficients $\a_{k}(x,q)$ from
two initial conditions say $\a_{0}$ and $\a_{-1}$. However for any non-zero choice
of $(\a_{0},\a_{-1})$ we find that $\a_{k}\sim O(q^{-k^{2}/2})$ for $k\sim-\infty$
and therefore it is divergent unless $\a_{0}=\a_{-1}=0$. Notice the difference here
with that of Corollary~\ref{cor:41holsol} comes from the fact that $\a_{-1}=0$ does
not imply that $\a_{k}=0$ for $k<0$.
\end{proof}
  
\begin{lemma}
  \label{lem.41.gx}
We have
\be
\begin{aligned}
  \Res_{t=q^{m}x}\frac{\theta(t;q)^{2}}{\theta(tx;q)
\theta(tx^{-1};q)}g^{(0,x^{-1})}(t,x,q)\frac{dt}{2\pi i t}
  &=
  \frac{-\theta(x;q)^2f^{(-1)}(q^{m}x,x,q)}{\theta(x^{2};q)(q;q)_{\infty}^{3}}\\
  \Res_{t=q^{m}x^{-1}}\frac{\theta(t;q)^{2}}{\theta(tx;q)
\theta(tx^{-1};q)}g^{(0,x)}(t,x,q)\frac{dt}{2\pi i t} 
  &=
  \frac{-\theta(x^{-1};q)^2f^{(-1)}(q^{m}x^{-1},x,q)}{
    \theta(x^{-2};q)(q;q)_{\infty}^{3}}\\
\end{aligned}
\ee
while $g^{(0,1)}(t,x,q)$ is holomorphic for $t\in\BC^{\times}\cup\{\infty\}$.
\end{lemma}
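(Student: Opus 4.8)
The plan is to compute each residue by isolating the simple pole, which comes entirely from the prefactor $\theta(t;q)^2/(\theta(tx;q)\theta(tx^{-1};q))$, and then to evaluate the remaining factor using the relation~\eqref{fm1ingx} between $f^{(-1)}$ and the two functions $g^{(0,x^{\pm1})}$. First I would note that for generic $x$ the function $g^{(0,x^{-1})}(t,x,q)$ is holomorphic and nonzero at $t=q^m x$: its only $t$-poles arise from the $1/\theta(t;q)$ factor, located at $t\in q^{\BZ}$, and $q^m x\notin q^{\BZ}$. The prefactor, on the other hand, has a simple zero of $\theta(tx^{-1};q)$ at $t=q^m x$ (since $tx^{-1}=q^m\in q^{\BZ}$), while $\theta(tx;q)=\theta(q^m x^2;q)$ and $\theta(t;q)^2=\theta(q^m x;q)^2$ are nonzero there. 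Hence the integrand has a simple pole whose residue equals $g^{(0,x^{-1})}(q^m x,x,q)$ times the residue of the prefactor.

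Next I would compute the residue of the prefactor. Using the quasi-periodicity~\eqref{theta.fun2}, $\theta(q^\ell t;q)=(-1)^\ell q^{-\ell(\ell+1)/2}t^{-\ell}\theta(t;q)$, together with the normalisation $\Res_{t=1}\theta(t;q)^{-1}\,dt/(2\pi i t)=(q;q)_\infty^{-3}$ (a consequence of the Jacobi triple product~\eqref{theta}), I would pull the pole back to the base point $t=1$ and simplify $\theta(q^m x;q)^2/\theta(q^m x^2;q)$. A short computation then gives
\be
\Res_{t=q^m x}\frac{\theta(t;q)^2}{\theta(tx;q)\theta(tx^{-1};q)}\frac{dt}{2\pi i t}
=\frac{\theta(x;q)^2}{\theta(x^2;q)(q;q)_\infty^3}.
\ee
The \emph{key simplification to monitor} is that all the $m$-dependent powers of $q$ and $x$ cancel, so this residue is independent of $m$; this cancellation is the only genuinely delicate point of the argument, the rest being direct substitution.

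The final ingredient is to replace $g^{(0,x^{-1})}(q^m x,x,q)$ by $-f^{(-1)}(q^m x,x,q)$. For this I would observe that $g^{(0,x)}(t,x,q)$ carries an explicit factor $\theta(tx^{-1};q)$ in its numerator, so $g^{(0,x)}(q^m x,x,q)=0$; combined with the identity $f^{(-1)}=-g^{(0,x^{-1})}-g^{(0,x)}$ from~\eqref{fm1ingx} this gives $g^{(0,x^{-1})}(q^m x,x,q)=-f^{(-1)}(q^m x,x,q)$, and multiplying by the prefactor residue yields the first formula. The second formula follows by the identical argument with the roles of $x$ and $x^{-1}$ interchanged: the simple pole at $t=q^m x^{-1}$ now comes from the zero of $\theta(tx;q)$, and the companion function $g^{(0,x^{-1})}$, which carries a factor $\theta(tx;q)$ in its numerator, vanishes at $t=q^m x^{-1}$. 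For the last assertion I would argue directly from the defining series of $g^{(0,1)}(t,x,q)$: it has no $t$-dependent denominator, and its coefficients contain the rapidly decaying factor $q^{k(k+1)/2}/((x;q)_{k+1}(x^{-1};q)_{k+1})$, so the Laurent series in $t^{-1}$ converges on all of $\BC^\times$ and tends to $0$ as $t\to\infty$, proving holomorphy on $\BC^\times\cup\{\infty\}$.
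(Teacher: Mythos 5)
Your proposal is correct and follows exactly the route of the paper's (much terser) proof: the paper simply cites the identity $f^{(-1)}=-g^{(0,x^{-1})}-g^{(0,x)}$ from Equation~\eqref{fm1ingx} together with the residue formula $\Res_{t=q^{m}x^{\pm}}\frac{\theta(t;q)^{2}}{\theta(tx;q)\theta(tx^{-1};q)}\frac{dt}{2\pi i t}=\frac{\theta(x^{\pm};q)^2}{\theta(x^{\pm2};q)(q;q)_{\infty}^3}$, and you have filled in precisely the details it leaves implicit (the $m$-independence of the prefactor residue via quasi-periodicity, and the vanishing of the companion $g$-function at the pole, which converts $g^{(0,x^{\mp1})}$ into $-f^{(-1)}$). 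No gaps.
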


\begin{proof}
This follows from Equation~\eqref{fm1ingx} and the fact that
\be
\Res_{t=q^{m}x^{\pm}}\frac{\theta(t;q)^{2}}{\theta(tx;q)
\theta(tx^{-1};q)}\frac{dt}{2\pi i t}
=
\frac{\theta(x^{\pm};q)^2}{\theta(x^{\pm2};q)(q;q)_{\infty}^3}\,.
\ee
\end{proof}

\begin{lemma}
  \label{lem.resf1.x}
  \rm{(a)} The function $\Bor_{1/2}\hat{f}^{(1)}(\xi,x,q)$ has simple poles at
$\xi=\pm q^{-1/4-\BZ_{\geq0}/2}$ with residue
\be
\begin{aligned}
\Res_{\xi=\pm q^{-1/4-n/2}}\Bor_{1/2}\hat{f}^{(1)}(\xi,x,q)\frac{d\xi}{2\pi i\xi}
&=
\frac{-\theta(\pm q^{-1/4}x;q^{1/2})}{2(q;q)_{\infty}^{2}}
\sum_{\ell=0}^{n}(\pm1)^{n}
\frac{q^{\frac{n(n+2)}{4}}x^{2\ell-n}}{(q;q)_{\ell}(q;q)_{n-\ell}}.
\end{aligned}
\ee
  \rm{(b)}
  The function $f^{(1)}(t,x,\lambda_{1},q)$ has simple poles at
  $t=\pm q^{-1/4+\BZ}\lambda_{1}^{-1}$
  with residue
  \be
\begin{aligned}
  &\Res_{t=\pm q^{-1/4-n}\lambda_{1}^{-1}}f^{(1)}(t,x,\lambda_{1},q)
  \frac{dt}{2\pi it}\\
&=
\frac{\theta(\pm q^{3/4}\lambda_{1}^{-1};q)\theta(\pm q^{-1/4}x;q)
  \theta(\pm q^{-3/4}x;q)
  \theta(\pm q^{-3/4}\lambda_{1};q)}{2\theta(x;q)
  \theta(x^{-1};q)
  \theta(q^{-1}\lambda_{1};q)\theta(q^{-3/2}\lambda_{1};q)}
f^{(-1)}(\pm q^{-1/4-n} \lambda_{1}^{-1},x,q) \,.
\end{aligned}
\ee
\end{lemma}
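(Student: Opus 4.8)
The plan is to mirror the proof of the undeformed Lemma~\ref{lem.resf1}, carrying the extra variable $x$ through each step. For part (a), I would first regroup the double series for $\Bor_{1/2}\hat{f}^{(1)}(\xi,x,q)$ according to the index $r$ given by the difference of the two summation indices. Just as in the $x=1$ case, the substitution $\xi\mapsto q^{-1/4}\xi$ collapses the exponent to $(\ell-m)^{2}/4$ and produces the decomposition
\be
\Bor_{1/2}\hat{f}^{(1)}(q^{-1/4}\xi,x,q)
=
\sum_{r\in\BZ}(-1)^{r}q^{r^{2}/4}x^{r}\xi^{r}H_{|r|}(\xi,q)
\ee
in which the \emph{same} auxiliary function $H_{r}(\xi,q)$ of Equation~\eqref{aux.fun.H} appears, the entire $x$-dependence being carried by the scalar prefactor $x^{r}$. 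Consequently the residue computation~\eqref{res.aux.fun.H} for $H_{r}$ at $\xi=1$ applies verbatim, and summing the prefactors over $r$ against $(\pm1)^{r}$ turns $\sum_{r}(-1)^{r}q^{r^{2}/4}(\pm x)^{r}$ into $\theta(\pm q^{-1/4}x;q^{1/2})$ via the lattice-sum form~\eqref{theta}. This gives the base case $R_{\pm}(0,q,x)=\frac{-\theta(\pm q^{-1/4}x;q^{1/2})}{2(q;q)_{\infty}^{2}}$, exactly the $n=0$ value of the claimed formula.

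For the inductive step I would take the residue at $\xi=\pm q^{-1/4-n/2}$ of the functional equation recorded in this subsection,
\be
(1-q^{1/2}\xi^{2})\Bor_{1/2}\hat{f}^{(1)}(\xi,x,q)
+(x+x^{-1})\xi\,\Bor_{1/2}\hat{f}^{(1)}(q^{1/2}\xi,x,q)
-\Bor_{1/2}\hat{f}^{(1)}(q\xi,x,q)=0\,.
\ee
This yields a three-term recursion in $n$ for $R_{\pm}(n,q,x)$ identical in shape to the one in the proof of Lemma~\ref{lem.resf1}(a), the only change being that the constant $2$ is replaced by $x+x^{-1}$. Since $R_{\pm}(n,q,x)=0$ for $n<0$ and $R_{\pm}(0,q,x)$ is known, the recursion determines all $R_{\pm}(n,q,x)$ uniquely, and a direct substitution confirms that the closed form displayed in part (a) solves it, exactly as the closed form~\eqref{Rpm} was verified before.

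For part (b), I would start from the definition of $f^{(1)}(t,x,\lambda_{1},q)=\frac{\theta(t;q)(q;q)_{\infty}}{(1-x)\theta(x;q)}\Lap_{1/2}\Bor_{1/2}\hat{f}^{(1)}$ in Equation~\eqref{sols.41.x} and compute the residue at $t=\pm q^{-1/4-n}\lambda_{1}^{-1}$. As in the undeformed case, this reduces to the residue of the summand of the $q$-Laplace series, governed by the values $R_{\pm}(k,q,x)$ from part (a); reindexing the resulting sum over $k$ reassembles the series $\hat{f}^{(-1)}(t,x,q)$ underlying $f^{(-1)}$ in~\eqref{sols.41.x}. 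Thus the residue equals $f^{(-1)}$ evaluated at the pole times an explicit ratio of theta functions, which I would then simplify to the stated quotient using the quasi-periodicity relations~\eqref{theta.fun},~\eqref{theta.fun2} and the Jacobi triple product~\eqref{theta}, following the chain of equalities that closes the proof of Lemma~\ref{lem.resf1}(b).

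The only genuinely new feature relative to Lemma~\ref{lem.resf1} is the bookkeeping of the additional $x$-dependent theta factors, so I expect the main obstacle to be the final theta-function simplification in part (b): the several factors of the form $\theta(\pm q^{*}x;q)$, $\theta(\pm q^{*}\lambda_{1}^{\pm};q)$ and $\theta(x^{\pm};q)$ produced by the reindexing must be collected and shown to equal the compact quotient $\frac{\theta(\pm q^{3/4}\lambda_{1}^{-1};q)\theta(\pm q^{-1/4}x;q)\theta(\pm q^{-3/4}x;q)\theta(\pm q^{-3/4}\lambda_{1};q)}{2\theta(x;q)\theta(x^{-1};q)\theta(q^{-1}\lambda_{1};q)\theta(q^{-3/2}\lambda_{1};q)}$. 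Because the auxiliary function $H_{r}$ and its residue are unchanged by the deformation, no new analytic input is required, and the whole argument is a controlled elaboration of the $x=1$ computation.
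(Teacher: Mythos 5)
Your proposal follows the paper's proof essentially verbatim: part (a) uses the same decomposition of $\Bor_{1/2}\hat{f}^{(1)}(q^{-1/4}\xi,x,q)$ into the auxiliary functions $H_{r}(\xi,q)$ of Equation~\eqref{aux.fun.H} (whose residues at $\xi=\pm1$ are unchanged by the $x$-deformation), the same base case $R_{\pm}(0,x,q)$ via the lattice form of $\theta$, and the same three-term recursion from the functional equation with $2$ replaced by $x+x^{-1}$; part (b) likewise expands the $q$-Laplace sum against the residues $R_{\pm}(k,x,q)$, reassembles $\hat{f}^{(-1)}$, and finishes with quasi-periodicity of $\theta$. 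This is the paper's argument, correctly carried out.
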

\begin{proof}
Firstly note that the singularities are determined by the functional equation
for $\Bor_{1/2}\hat{f}^{(1)}(\xi,x,q)$. For part (a), recall the auxiliary
function $H_{r}(\xi,q)$ defined in Equation~\eqref{aux.fun.H}. Note that
from the power series at $\xi=0$, we see that
\be
\Bor_{1/2}\hat{f}^{(1)}(q^{-1/4}\xi,x,q)
=
\sum_{r\in\BZ}(-1)^{r}q^{r^{2}/4}x^{r}\xi^{r}H_{r}(\xi,q).
\ee
Let
\be
R_{\pm}(n,x,q)=\Res_{\xi=\pm q^{-1/4-n/2}}
\Bor_{1/2}\hat{f}^{(1)}(\xi,x,q)\frac{d\xi}{2\pi i\xi}.
\ee
Then, from Equation~\eqref{res.aux.fun.H},
\be
\begin{aligned}
R_{\pm}(0,x,q)
&=
\Res_{\xi=\pm 1}\sum_{r\in\BZ}(-1)^{r}q^{r^{2}/4}\xi^{r}x^{r}H_{r}(\xi,q)
\frac{d\xi}{2\pi i\xi}
&=
\frac{-\theta(\pm q^{-1/4}x;q^{1/2})}{2(q;q)_{\infty}^{2}}.
\end{aligned}
\ee
Now from the functional equation of $\Bor_{1/2}\hat{f}^{(1)}(\xi,x,q)$, we can
deduce that
\be
\begin{aligned}
&\Res_{\xi=\pm q^{-1/4-n/2}}(1-q^{1/2}\xi^2)\Bor_{1/2}\hat{f}^{(1)}(\xi,q)+
(x+x^{-1})\xi\Bor_{1/2}\hat{f}^{(1)}(q^{1/2}\xi,q)-\Bor_{1/2}\hat{f}^{(1)}(q\xi,q)
\frac{d\xi}{2\pi i\xi}\\
&=
(1-q^{-n})R_{\pm}(n,x,q)\pm(x+x^{-1})q^{-1/4-(n-1)/2}R_{\pm}(n-1,x,q)
-R_{\pm}(n-2,x,q)=0.
\end{aligned}
\ee
Note that $R_{\pm}(n,x,q)=0$ for $n<0$ and that when $R_{\pm}(-1,x,q)$ is zero we
have a unique solution determined by $R_{\pm}(0,x,q)$. One can then check that
\be
R_{\pm}(n,x,q)
=
\frac{-\theta(\pm q^{-1/4}x;q^{1/2})}{2(q;q)_{\infty}^{2}}
\sum_{\ell=0}^{n}(\pm1)^{n}\frac{q^{\frac{n(n+2)}{4}}
  x^{2\ell-n}}{(q;q)_{\ell}(q;q)_{n-\ell}} \,.
\ee
For part (b), we compute
{\small
\be
\begin{aligned}
  &\Res_{t=\pm q^{-1/4-n}\lambda_{1}^{-1}}f^{(1)}(t,x,\lambda_{1},q)
  \frac{dt}{2\pi it}\\
&=
\Res_{t=\pm q^{-1/4-n}\lambda_{1}^{-1}}
\frac{\theta(t;q)(q;q)_{\infty}}{(1-x)\theta(x;q)}
\Lap_{1/2}\Bor_{1/2}\hat{f}^{(1)}(t,x,\lambda_{1},q)\frac{dt}{2\pi it}\\
&=
\frac{\theta(\pm q^{-1/4-n}\lambda_{1}^{-1};q)(q;q)_{\infty}}{(1-x)\theta(x;q)
  \theta(\lambda_{1};q^{1/2})}
\sum_{k\in\BZ}(-1)^{k}q^{(k-2n)(k-2n-1)/4}\lambda_{1}^{-k+2n}R_{\pm}(k,x,q)\\
&=
\frac{\theta(\pm q^{-1/4-n}\lambda_{1}^{-1};q)(q;q)_{\infty}}{(1-x)\theta(x;q)
  \theta(\lambda_{1};q^{1/2})}
\frac{-\theta(\pm q^{-1/4}x;q^{1/2})}{2(q;q)_{\infty}^{2}q^{-n(2n+1)/2}\lambda_{1}^{-2n}}
\sum_{k=0}^{\infty}\sum_{\ell=0}^{k}(-1)^{k}\frac{
  q^{\frac{k(k+1)}{2}}x^{2\ell-k}}{(q;q)_{\ell}(q;q)_{k-\ell}}
(\pm q^{-1/4-n}\lambda_{1}^{-1})^{k}\\
&=
-\frac{\theta(\pm q^{-1/4-n}\lambda_{1}^{-1};q)\theta(\pm q^{-1/4}x;q^{1/2})}{
  2(q;q)_{\infty}(1-x)\theta(x;q)\theta(\lambda_{1};q^{1/2})}q^{n(2n+1)/2}
\lambda_{1}^{2n}
  \frac{(1-x)}{\theta(q^{-1}x;q)}
\hat{f}^{(-1)}(\pm q^{-1/4-n/2}\lambda_{1}^{-1},x,q)\\
&=
\frac{\theta(\pm q^{-1/4-n}\lambda_{1}^{-1};q)\theta(\pm q^{-1/4}x;q)
  \theta(\pm q^{-3/4}x;q)\theta(\pm q^{-5/4-n}
  \lambda_{1}^{-1};q)}{2x\theta(x;q)^2
  \theta(\lambda_{1};q)\theta(q^{-1/2}\lambda_{1};q)q^{-n(2n+1)/2}\lambda_{1}^{-2n}}
f^{(-1)}(\pm q^{-1/4-n}\lambda_{1}^{-1},x,q)\\
&=
\frac{\theta(\pm q^{-1/4}\lambda_{1}^{-1};q)\theta(\pm q^{-1/4}x;q)
  \theta(\pm q^{-3/4}x;q)\theta(\pm q^{-5/4}
  \lambda_{1}^{-1};q)}{2x\theta(x;q)^2
  \theta(\lambda_{1};q)\theta(q^{-1/2}\lambda_{1};q)}
f^{(-1)}(\pm q^{-1/4-n}\lambda_{1}^{-1},x,q)\\
&=
\frac{\theta(\pm q^{3/4}\lambda_{1}^{-1};q)\theta(\pm q^{-1/4}x;q)
  \theta(\pm q^{-3/4}x;q)
  \theta(\pm q^{-3/4}\lambda_{1};q)}{2\theta(x;q)
  \theta(x^{-1};q)\theta(q^{-1}\lambda_{1};q)\theta(q^{-3/2}\lambda_{1};q)}
f^{(-1)}(\pm q^{-1/4-n} \lambda_{1}^{-1},x,q) \,.
\end{aligned}
\ee
}
\end{proof}
\begin{lemma}
\label{lem.resf0.x}
\rm{(a)}
The function $\Bor_{1}\hat{f}^{(0)}(\xi,x,q)$ has simple poles at $\xi \in q^\BZ$
with residue
\be
\label{resBf0.x}
R(m,x,q)
:=
\Res_{\xi=q^{-m}}\Bor_{1}\hat{f}^{(0)}(\xi,x,q)\frac{d\xi}{2\pi i\xi}
= -(qx;q)_{\infty}(qx^{-1};q)_{\infty}\sum_{\ell=0}^{m}
\frac{q^{m}x^{2\ell-m}}{(q;q)_{\ell}(q;q)_{m-\ell}} \,.
\ee
\rm{(b)}
The function $f^{(0)}(t,x,\lambda_{2},q)$ has simple poles at
$t\in \lambda_{2}^{-1}q^{\BZ}$ with residue
\be
\label{resf0.x}
\Res_{t=q^{-m}\lambda_{2}^{-1}}f^{(0)}(t,x,\lambda_{2},q)\frac{dt}{2\pi it} =
f^{(-1)}(q^{-m}\lambda_{2}^{-1},x,q) \,.
\ee
\end{lemma}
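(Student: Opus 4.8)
The plan is to follow the proof of Lemma~\ref{lem.resf0} essentially line by line, since the present statement is its $x$-deformation and the two $q$-Pochhammer factors $(qx;q)_{k}(qx^{-1};q)_{k}$ here play exactly the role of the single square $(q;q)_{k}^{2}$ there; the $x$-powers will simply ride along as inert spectators.

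For part (a), I would first produce the analytic continuation of $\Bor_{1}\hat{f}^{(0)}(\xi,x,q)=\sum_{k\ge 0}(qx;q)_{k}(qx^{-1};q)_{k}\xi^{k}$ by writing each finite product as a ratio of infinite ones, $(qx;q)_{k}(qx^{-1};q)_{k}=(qx;q)_{\infty}(qx^{-1};q)_{\infty}/\bigl((q^{k+1}x;q)_{\infty}(q^{k+1}x^{-1};q)_{\infty}\bigr)$, expanding the two reciprocal products by the $q$-binomial theorem $1/(z;q)_{\infty}=\sum_{n\ge0}z^{n}/(q;q)_{n}$, and then summing the geometric series in $k$. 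This yields
\be
\Bor_{1}\hat{f}^{(0)}(\xi,x,q)
=(qx;q)_{\infty}(qx^{-1};q)_{\infty}\sum_{n,\ell\ge0}
\frac{q^{n+\ell}x^{n-\ell}}{(q;q)_{n}(q;q)_{\ell}(1-q^{n+\ell}\xi)}\,,
\ee
from which the poles are manifestly simple and located at $\xi\in q^{\BZ_{\le0}}$. The residue formula~\eqref{resBf0.x} then follows from $\Res_{\xi=q^{-m}}(1-q^{m}\xi)^{-1}\,\frac{d\xi}{2\pi i\xi}=-1$, keeping only the terms with $n+\ell=m$ and relabelling $\ell\mapsto m-\ell$ to reach the stated power $x^{2\ell-m}$.

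For part (b), I would insert the series~\eqref{f0x} for $f^{(0)}=\Lap_{1}\Bor_{1}\hat f^{(0)}$ and observe that, at $t=q^{-m}\lambda_{2}^{-1}$, only the Laplace terms of index $k\le m$ are singular, each contributing $R(m-k,x,q)$ through part (a). After reindexing $k\mapsto m-k$ and substituting the explicit $R(k,x,q)$ from~\eqref{resBf0.x}, the residue becomes a double sum in $(k,\ell)$ whose summand is proportional to $(-1)^{k}q^{k(k+1)/2}x^{2\ell-k}/\bigl((q;q)_{\ell}(q;q)_{k-\ell}\bigr)$ evaluated at $q^{-m}\lambda_{2}^{-1}$; I would then recognise this, after collecting the factor $q^{m(m+1)/2}\lambda_{2}^{m}/\theta(\lambda_{2};q)$ together with the Borel prefactors $(qx;q)_{\infty}(qx^{-1};q)_{\infty}$ into the normalisation $\theta(q^{-1}x;q)/\bigl((1-x)\theta(q^{-1}t;q)(q;q)_{\infty}\bigr)$, precisely as $f^{(-1)}(q^{-m}\lambda_{2}^{-1},x,q)$ from~\eqref{sols.41.x}.

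The only genuine work, and the step I expect to be the main obstacle, is this final theta-function bookkeeping in part (b): one must convert $q^{m(m+1)/2}\lambda_{2}^{m}/\theta(\lambda_{2};q)$ together with the $x$-dependent Borel prefactors into the $f^{(-1)}$ normalisation evaluated at $t=q^{-m}\lambda_{2}^{-1}$, using the functional equations~\eqref{theta.fun} and~\eqref{theta.fun2}. This is exactly the $x$-deformation of the identical manipulation already carried out in Lemma~\ref{lem.resf0}, so no new idea is required; the extra $x$-powers factor cleanly out of the geometric summation of part (a) and are carried along unchanged.
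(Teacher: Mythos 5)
Your proposal is correct and follows essentially the same route as the paper's proof: part (a) by rewriting $(qx;q)_{k}(qx^{-1};q)_{k}$ as a ratio of infinite products, expanding the reciprocals by the $q$-binomial theorem and summing the geometric series in $k$ to exhibit the simple poles, and part (b) by feeding the residues $R(m-k,x,q)$ into the $q$-Laplace sum and matching the resulting double sum against the normalisation of $f^{(-1)}$ in~\eqref{sols.41.x}. The only cosmetic difference is the labelling $x^{n-\ell}$ versus $x^{\ell-n}$ in the intermediate expansion, which is immaterial after collecting the terms with $n+\ell=m$.
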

\begin{proof}
For part (a), notice that
{\small
\be
\begin{aligned}
\Bor_{1}\hat{f}^{(0)}(\xi,x,q)
&=
\sum_{k=0}^{\infty}(qx;q)_{k}(qx^{-1};q)_{k}\xi^{k}  
=
(qx;q)_{\infty}(qx^{-1};q)_{\infty}\sum_{k=0}^{\infty}
\frac{1}{(q^{k+1}x;q)_{\infty}(q^{k+1}x^{-1};q)_{\infty}}\xi^{k}\\
&=
(qx;q)_{\infty}(qx^{-1};q)_{\infty}\sum_{k=0}^{\infty}
\sum_{n,\ell=0}^{\infty}\frac{q^{kn+k\ell+n+\ell}
  x^{\ell-n}}{(q;q)_{n}(q;q)_{\ell}}\xi^{k}\\
&=
(qx;q)_{\infty}(qx^{-1};q)_{\infty}\sum_{n,\ell=0}^{\infty}
\frac{q^{n+\ell}x^{\ell-n}}{(q;q)_{n}(q;q)_{\ell}(1-q^{n+\ell}\xi)}\\
&=
(qx;q)_{\infty}(qx^{-1};q)_{\infty}\sum_{n=0}^{\infty}\sum_{\ell=0}^{n}
\frac{q^{n}x^{2\ell-n}}{(q;q)_{\ell}(q;q)_{n-\ell}(1-q^{n}\xi)}.
\end{aligned}
\ee
}
The residue then follows.
For part (b), the pole structure is clear noting that Equation~\eqref{f0x}
is convergent if all the terms are and that
$\Bor_{1}\hat{f}^{(0)}(q^{k}t\lambda_{2},x,q)$ has poles at
$t\in\lambda_{2}^{-1}q^{\BZ_{\leq -k}}$. We compute the residues
$\rho_m:=\Res_{t=q^{-m}\lambda_{2}^{-1}}f^{(0)}(t,x,\lambda_{2},q)\frac{dt}{2\pi it}$
as follows:
\be
\begin{aligned}
  \rho_m
  &=
\Res_{t=q^{-m}\lambda_{2}^{-1}}(\Lap_{1}\Bor_{1}\hat{f}^{(0)})(t,x,\lambda_{2},q)
\frac{dt}{2\pi it}\\
&=
\Res_{t=q^{-m}\lambda_{2}^{-1}}\frac{1}{\theta(\lambda_{2},q)}
\sum_{k\in\BZ}(-1)^{k}q^{k(k+1)/2}\lambda_{2}^{k}\Bor_{1}\hat{f}^{(0)}(q^{k}
\lambda_{2}t,x,q)\frac{dt}{2\pi it}\\
&=
\frac{1}{\theta(\lambda_{2},q)}\sum_{k\in\BZ}(-1)^{k}q^{k(k+1)/2}
\lambda_{2}^{k}R(m-k,x,q)\\
&=
\frac{1}{\theta(\lambda_{2},q)}(-1)^{m}q^{m(m+1)/2}\lambda_{2}^{m}
\sum_{k=0}^{\infty}(-1)^{k}q^{k(k-1)/2}(q^{-m}\lambda_{2}^{-1})^{k}R(k,x,q)\\
&=
\frac{-(qx;q)_{\infty}(qx^{-1};q)_{\infty}}{
  \theta(\lambda_{2},q)}(-1)^{m}q^{m(m+1)/2}
\lambda_{2}^{m}\sum_{k,\ell=0}^{\infty}(-1)^{k}
\frac{q^{k(k+1)/2}x^{2\ell-k}}{
  (q;q)_{\ell}(q;q)_{k-\ell}}(q^{-m}\lambda_{2}^{-1})^{k}\\
&=
f^{(-1)}(q^{-m}\lambda_{2}^{-1},x,q) \,.
\end{aligned}
\ee
\end{proof}

The functional equation \eqref{41x} for $U,V$ defined in~\eqref{UVex41x}
can be written in the form
\be
U(qt,x,\lambda_{1},\lambda_{2},q)
=
A(t,x,q)U(t,x,\lambda_{1},\lambda_{2},q)
\quad\text{and}\quad
V(qt,x,q)
=
A(t,x,q)V(t,x,q)
\ee
where
\be
A(t,x,q)
=
\begin{pmatrix}
0 & 1 & 0\\
0 & 0 & 1\\
q^{-3} & q^{-3}t^{-1}-(1+x+x^{-1})q^{-2} & (1+x+x^{-1})q^{-1}-q^{-3}t^{-1}
\end{pmatrix}.
\ee
Therefore, we see that
\begin{equation*}
\det(U(qt,x,\lambda_{1},\lambda_{2},q))=\det(U(t,x,\lambda_{1},\lambda_{2},q))q^{-3},
\qquad \det(V(qt,x,q))=\det(V(t,x,q))q^{-3} \,.
\end{equation*}

\begin{lemma}
  \label{lem.det41x} 
  We have:
\be
\begin{aligned}
\det(U(t,x,\lambda_{1},\lambda_{2},q))
&=
-\frac{q^{-3}x}{(1-x)^2}t^{-3}\\
\det(V(t,x,q))
&=
\frac{\theta(q^{-1}x;q)^2\theta(tx;q)\theta(tx^{-1};q)}{\theta(x^{-2};q)
  \theta(t;q)^2(q;q)_{\infty}^3}\frac{q^{-3}x}{(1-x)^2}t^{-3}\,.
\end{aligned}
\ee
\end{lemma}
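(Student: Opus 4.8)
The plan is to treat $\det U$ and $\det V$ separately; in each case I first exhibit an elliptic function, then pin it down from its poles and zeros, and finally fix the overall constant by a single limiting value. The common starting point is that since the companion matrix satisfies $\det A(t,x,q)=q^{-3}$, the relations $\det U(qt)=q^{-3}\det U(t)$ and $\det V(qt)=q^{-3}\det V(t)$ noted before the lemma show that $\det(U)\,t^{3}$ and $\det(V)\,t^{3}$ are invariant under $t\mapsto qt$, hence elliptic in $t$. Everything then reduces to locating poles and zeros and computing one value.

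For $\det U=W(f^{(1)},f^{(-1)},f^{(0)})$ I would show that $\det(U)\,t^{3}$ is holomorphic on $\BC^{\times}$, hence constant. In a fundamental domain the only possible poles are the orbits $\pm q^{-1/4}\lambda_{1}^{-1}q^{\BZ}$ from $f^{(1)}$, the orbit $\lambda_{2}^{-1}q^{\BZ}$ from $f^{(0)}$, and the orbit $q^{\BZ}$ from $f^{(-1)}$ (which has poles only there, via $f^{(-1)}=-g^{(0,x^{-1})}-g^{(0,x)}$ of~\eqref{fm1ingx} and the $\theta(t;q)^{-1}$ factors in~\eqref{sols.41.x}). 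At a pole $t_{0}$ of $f^{(1)}$ only the first column is singular, so $\Res_{t_{0}}\det U$ is the determinant with that column replaced by its residue; Lemma~\ref{lem.resf1.x}(b) shows this residue is a \emph{pole–index–independent} scalar times $f^{(-1)}$ at $t_{0},qt_{0},q^{2}t_{0}$, i.e.\ a multiple of the second column, so the determinant vanishes. The same mechanism, using $\Res f^{(0)}=f^{(-1)}$ from Lemma~\ref{lem.resf0.x}(b), kills the residues on the $\lambda_{2}^{-1}q^{\BZ}$ orbit. This leaves only the orbit $q^{\BZ}$, where the pole is simple; being the sole remaining pole orbit, the vanishing of the sum of residues of an elliptic function over a fundamental domain forces its residue to vanish as well. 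Thus $\det(U)\,t^{3}$ is holomorphic and elliptic, hence constant.

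For $\det V=W(g^{(0,x^{-1})},g^{(0,x)},g^{(0,1)})$ the function is not holomorphic, so I instead consider $\Psi:=\det(V)\,t^{3}\,\theta(t;q)^{2}/\bigl(\theta(tx;q)\theta(tx^{-1};q)\bigr)$, which is again elliptic. From~\eqref{sols.41.x}, $g^{(0,x^{-1})}$ and $g^{(0,x)}$ each contribute simple poles at $q^{\BZ}$ while $g^{(0,1)}$ is regular there (Lemma~\ref{lem.41.gx}), so $\det V$ has double poles at $q^{\BZ}$, cancelled by the double zeros of $\theta(t;q)^{2}$. Moreover the factor $\theta(tx;q)$ makes the whole first column vanish along $t\in x^{-1}q^{\BZ}$ and $\theta(tx^{-1};q)$ makes the second column vanish along $t\in xq^{\BZ}$, giving simple zeros of $\det V$ that cancel the simple poles of $1/\bigl(\theta(tx;q)\theta(tx^{-1};q)\bigr)$. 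Hence $\Psi$ is holomorphic and elliptic, so constant, which already yields $\det(V)\,t^{3}=c_{V}\,\theta(tx;q)\theta(tx^{-1};q)/\theta(t;q)^{2}$.

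It remains to evaluate the two constants, which I would do exactly as in the rank-two computation of Lemma~\ref{lem.41det}, now with a $3\times3$ Casoratian. For $\det U$ I take $t\to0$ and insert the leading behaviour $f^{(1)}\sim\frac{(q;q)_{\infty}}{(1-x)\theta(x;q)}\theta(t;q)$, $f^{(-1)}\sim\frac{\theta(q^{-1}x;q)}{(1-x)(q;q)_{\infty}}\theta(q^{-1}t;q)^{-1}$, $f^{(0)}\sim1$ coming from~\eqref{sols.41.x} and Watson's lemma; expanding the shifted entries with~\eqref{theta.fun},~\eqref{theta.fun2}, a single term survives at order $t^{-3}$, and $\theta(q^{-1}x;q)=-x\,\theta(x;q)$ collapses its coefficient to $-q^{-3}x/(1-x)^{2}$. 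For $\det V$ I match the double-pole coefficient of $\det(V)\,t^{3}$ at $t=1$ against that of $c_{V}\,\theta(tx;q)\theta(tx^{-1};q)/\theta(t;q)^{2}$, reading the former off the leading coefficients of $g^{(0,x^{-1})}$ and $g^{(0,x)}$. The main obstacle is precisely this constant evaluation: at $t=0$ the solutions $f^{(1)}$ and $f^{(0)}$ share the same leading exponent, so the naive leading term of the Wronskian degenerates and one must retain the shifted $\theta$-factors (equivalently, pass to a Vandermonde in the $\sigma$-eigenvalues) to isolate the surviving $t^{-3}$ contribution, with the analogous care needed for $\det V$ as $t\to\infty$; a subsidiary point is to confirm that the scalar in Lemma~\ref{lem.resf1.x}(b) is genuinely independent of the pole index, since that is what makes the residue column collinear with the $f^{(-1)}$-column.
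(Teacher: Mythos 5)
Your qualitative argument follows the paper's strategy exactly: $\det(U)\,t^{3}$ and $\det(V)\,t^{3}$ are elliptic, the residues at the $f^{(1)}$-poles vanish because Lemma~\ref{lem.resf1.x}(b) makes the residue column a scalar multiple (with scalar independent of the point in the orbit) of the $f^{(-1)}$ column, and for $V$ one multiplies by the elliptic factor $\theta(t;q)^{2}/(\theta(tx;q)\theta(tx^{-1};q))$ before concluding constancy. Your handling of the extra column is also sound in itself: the residues on $\lambda_{2}^{-1}q^{\BZ}$ die by $\Res f^{(0)}=f^{(-1)}$ (Lemma~\ref{lem.resf0.x}(b)), and the orbit $q^{\BZ}$ can be dispatched either by your sum-of-residues argument or, more directly, by noting that the zeros of the $\theta(t;q)$ prefactor of $f^{(1)}$ cancel the simple poles of $f^{(-1)}$ termwise. (Minor point: the poles of $f^{(1)}$ lie on $\pm q^{-1/4+\BZ/2}\lambda_{1}^{-1}$, i.e.\ four orbits modulo $q^{\BZ}$, not two; the same collinearity argument covers the $\pm q^{-3/4}\lambda_{1}^{-1}q^{\BZ}$ orbits, but they must be listed for your sum-of-residues step to be legitimate.)

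The place where your proposal does not close, and where the paper does something you are missing, is the evaluation of the constants. The matrices $U,V$ of~\eqref{UVex41x} are built from the \emph{inhomogeneous} system, so their second factor has first row $(0,0,1)$; expanding along that row gives
\be
\det(U)=q^{-2}t^{-1}\bigl(f^{(1)}(t)f^{(-1)}(qt)-f^{(1)}(qt)f^{(-1)}(t)\bigr),
\ee
and likewise for $V$ with $g^{(0,x^{-1})},g^{(0,x)}$ (Equation~\eqref{det.3x3.in.2x2}). Thus $f^{(0)}$, $g^{(0,1)}$ and $\lambda_{2}$ drop out of the determinant entirely, the $\lambda_{2}^{-1}q^{\BZ}$ analysis becomes unnecessary, and the ``degeneracy of leading exponents'' you worry about at $t=0$ never arises: the constants are fixed by the same two-term computation as in Lemma~\ref{lem.41det}, using only $\theta(qt;q)/\theta(q^{-1}t;q)=q^{-1}t^{-2}$ and $\theta(q^{-1}x;q)=-x^{-1}\theta(x;q)$ for $U$, and the $k=0$ coefficients of the $t^{-k-1}$ expansions as $t\to\infty$ for $V$. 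By contrast, your proposed normalisation of $c_{V}$ by matching the double-pole coefficient at $t=1$ requires the values of the series $\sum_{k}(-1)^{k}q^{k(k+1)/2}x^{\pm k}t^{-k-1}/((q;q)_{k}(qx^{\pm2};q)_{k})$ at $t=1$ and $t=q$, which do not simplify without further identities; this is a genuine obstacle, not merely extra bookkeeping. So: same skeleton, correct pole analysis, but you need the reduction to the $2\times2$ Casoratian (or an equivalent device) to actually extract the stated closed forms.
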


\begin{proof}
Firstly notice that
\be
\begin{tiny}
\begin{aligned}
&U(t,x,\lambda_{1},\lambda_{2},q)\\
&=
W(f^{(1)}(t,x,\lambda_{1},q),f^{(-1)}(t,x,q),f^{(0)}(t,x,\lambda_{2},q))\\
&=
\begin{pmatrix}
0 & 1 & 0\\
0 & 0 & 1\\
q^{-2}t^{-1} & -q^{-2} & q^{-2}t^{-1}-(x+x^{-1})q^{-1}
\end{pmatrix}
\begin{pmatrix}
0 & 0 & 1\\
f^{(1)}(t,x,\lambda_{1},q) & f^{(-1)}(t,x,q) & f^{(0)}(t,x,\lambda_{2},q)\\
f^{(1)}(qt,x,\lambda_{1},q) & f^{(-1)}(qt,x,\lambda_{1},q) &
f^{(0)}(qt,x,\lambda_{2},q)
\end{pmatrix}
\end{aligned}
\end{tiny}
\ee

\noindent
with a similar expression for $V$. Therefore,
\be
\label{det.3x3.in.2x2}
\begin{aligned}
  \det(U(t,x,\lambda_{1},\lambda_{2},q))
  &=
  q^{-2}t^{-1}(f^{(1)}(t,x,\lambda_{1},q)f^{(-1)}(qt,x,q)-f^{(1)}(qt,x,
\lambda_{1},q)f^{(-1)}(t,x,q))\\
  \det(V(t,x,q))
  &=
  q^{-2}t^{-1}(g^{(0,x^{-1})}(t,x,q)g^{(0,x)}(qt,x,q)
-g^{(0,x^{-1})}(qt,x,q)g^{(0,x)}(t,x,q))\,.
\end{aligned}
\ee
Now notice that both $\det(U(t,x,\lambda_{1},\lambda_{2},q))t^3$ and
$\det(V(t,x,q))t^3$ are elliptic functions in $t$. Furthermore,
$\det(U(t,x,\lambda_{1},q))$ has potentially simple poles in $t$ at
$t\in\pm q^{-1/4-\BZ/2}\lambda^{-1}$ and $t\in q^{\BZ}$. However, note that
the poles of $f^{(-1)}$ at $t\in q^{\BZ}$ cancel with zeros of $f^{(1)}$.
Lemma~\ref{lem.resf1.x} implies that
  \be
\begin{aligned}
& \Res_{t=\pm q^{-1/4-n}\lambda^{-1}}\det(U(t,x,\lambda_{1},\lambda_{2},q)q^{2}t)=\\
&\frac{\theta(\pm q^{3/4}\lambda_{1}^{-1};q)\theta(\pm q^{-1/4}x;q)
  \theta(\pm q^{-3/4}x;q)\theta(\pm q^{-3/4}\lambda_{1};q)
}{2(q;q)_{\infty}^{2}\theta(x;q)\theta(x^{-1};q)
  \theta(q^{-1}\lambda_{1};q)\theta(q^{-3/2}\lambda_{1};q)}\\
   & \times f^{(-1)}(\pm q^{-1/4-n}
  \lambda_{1}^{-1},x,q)f^{(-1)}(\pm q^{3/4-n}
  \lambda_{1}^{-1},x,q) \\
&\quad-\frac{\theta(\pm q^{3/4}\lambda_{1}^{-1};q)\theta(\pm q^{-1/4}x;q)
  \theta(\pm q^{-3/4}x;q)
  \theta(\pm q^{-3/4}\lambda_{1};q)}{2(q;q)_{\infty}^{2}\theta(x;q)\theta(x^{-1};q)
  \theta(q^{-1}\lambda_{1};q)\theta(q^{-3/2}\lambda_{1};q)}\\
& \times f^{(-1)}(\pm q^{3/4-n}
  \lambda_{1}^{-1},x,q)f^{(-1)}(\pm q^{-1/4-n}
  \lambda_{1}^{-1},x,q) = 0 \,.
\end{aligned}
\ee
A similar calculation shows that
\be
\Res_{t=\pm q^{-3/4-n}\lambda_{1}^{-1}}\det(U(t,x,\lambda_{1},\lambda_{2},q))=0 \,.
\ee
Therefore, we see that $U(t,x,\lambda_{1},q)t^2$ is elliptic and holomorphic in
$t\in\BC^{\times}$. Therefore, it is constant in $t$. Now considering the limit as
$t\rightarrow0$, using the definition of $f^{(\pm1)}$ and their asymptotic expansions
(given by their formal power series expansions, by a version of Watson's lemma),
it follows that
\be
\begin{aligned}
\det(U(t,x,\lambda_{1},\lambda_{2},q))q^2t^3
&=
\lim_{t\rightarrow 0}\det(U(t,x,\lambda_{1},\lambda_{2},q))q^2t^3\\
&=
\lim_{t\rightarrow 0}\frac{\theta(t;q)\theta(q^{-1}x;q)}{\theta(t;q)
  \theta(x;q)(1-x)^2}t^2
-\lim_{t\rightarrow 0}\frac{\theta(qt;q)\theta(q^{-1}x;q)}{\theta(q^{-1}t;q)
  \theta(x;q)(1-x)^2}t^2\\
&=
-\lim_{t\rightarrow 0}q^{-1}\frac{\theta(t;q)\theta(x;q)}{\theta(t;q)
  \theta(x;q)(1-x)^2}
=
-\frac{q^{-1}x}{(1-x)^2} \,.
\end{aligned}
\ee
Now noting that $\theta(t;q)^2/\theta(tx;q)\theta(tx^{-1};q)$
is elliptic we see that 
\be
\det(V(t,x,q))t^{3}\frac{\theta(t;q)^2}{\theta(tx;q)\theta(tx^{-1};q)}
\ee
is elliptic and holomorphic in $t$ and therefore constant in $t$. Then we see that
\be
\begin{aligned}
  &\lim_{t\rightarrow\infty}\det(V(t,x,q))q^{2}t^{3}
  \frac{\theta(t;q)^2}{\theta(tx;q)\theta(tx^{-1};q)}\\
&=
\lim_{t\rightarrow\infty}(g^{(0,x^{-1})}(t,x,q)g^{(0,x)}(qt,x,q)
-g^{(0,x^{-1})}(qt,x,q)g^{(0,x)}(t,x,q))t^{2}\frac{\theta(t;q)^2}{\theta(tx;q)
  \theta(tx^{-1};q)}\\
&=
q^{-1}\frac{(qx^2;q)_{\infty}(qx^{-2};q)_{\infty}(1-x^2)
\theta(q^{-1}x;q)
\theta(q^{-1}x^{-1};q)}{
  (1-x)^2\theta(x^{2};q)\theta(x^{-2};q)(q;q)_{\infty}^2}\\
&=
-q^{-1}x^{2}\frac{\theta(x^2;q)\theta(q^{-1}x;q)
\theta(q^{-1}x^{-1};q)}{\theta(x^{2};q)\theta(x^{-2};q)(1-x)^2(q;q)_{\infty}^3}\,.
\end{aligned}
\ee
\end{proof}
To finish the proof of
Theorem~\ref{thm.41c} we will use the state integrals introduced in
~\cite[Equ.(139)]{GGMW:trivial} for $w$ in a neighbourhood of zero.
\be\label{41x.sint}
\int_{\BR+i\ve}
\Phi_{\sfb}(x+i\sfb^{-1}u)\Phi_{\sfb}(x-i\sfb^{-1}u)
\frac{\exp\left(-\pi i x^{2}-2\pi\frac{zx}{\sfb}\right)}{1
  +\tq^{1/2}\exp\left(-\frac{2\pi x}{\sfb}\right)}dx\,.
\ee
The factorisation of this integral was done in \cite[Equ.(149)]{GGMW:trivial}.
This module is again not self dual (see Proposition~\ref{41inhom.not.sd}) and
means that additional functions arise in the factorisation. It was shown, using
Equation~\eqref{thetaS}, that Equation~\eqref{41x.sint} factors as combinations
of elementary functions holomorphic in $\BC'$ times
\be
\calI(z,w,\tau)
=
g^{(0,1)}(\tilde{t},\ti x,\tq)+\tau g^{(0,x)}(\tilde{t},\ti x,\tq)
L^{(x^{-1})}(t,x,q)-\tau g^{(0,x^{-1})}(\tilde{t},\ti x,\tq)L^{(x)}(t,x,q)
\ee
where
\be
  L^{(x^{\mp})}(t,x,q)
  =
  \frac{
    \theta(t;q)(1-x^{\mp})(q;q)_{\infty}^2}{\theta(q^{-1}x^{\pm};q)
    \theta(tx^{\mp};q)}(qx^{\pm2};q)_{\infty}\sum_{k=0}^{\infty}(-1)^{k}
  \frac{q^{k(k+1)/2}x^{\pm k}}{(q;q)_{k}(qx^{\pm2};q)_{k}(1-q^{k}x^{\pm})}t^{-k}.
\ee
These functions $L^{(x^\mp)}$ can then be shown to satisfy
\be
\begin{aligned}
  L^{(x^{\mp})}(t,x,q)-L^{(x^{\mp})}(qt,x,q)&=\frac{\theta(x^{-2};q)
    \theta(t;q)^2(q;q)_{\infty}^3}{\theta(q^{-1}x;q)^2\theta(tx;q)
    \theta(tx^{-1};q)}\frac{(1-x)^2}{x}qtg^{(0,x^{\pm})}(qt,x,q).
\end{aligned}
\ee
Again this determines $L^{(x^{\mp})}$ up to the addition of an elliptic function
so checking the principal parts
\be
{\small
V(t,x,q)^{-1}
\begin{pmatrix}
0 & 1 & 0\\
0 & 0 & 1\\
q^{-2}t^{-1} & -q^{-2} & q^{-2}t^{-1}-(x+x^{-1})q^{-1}
\end{pmatrix}
=
\begin{pmatrix}
  -L^{(x)}(t,x,q) & * & *\\
  L^{(x^{-1})}(t,x,q)& * & *\\
  1 & 0 & 0
\end{pmatrix}.
}
\ee
where the $*$ are given by $\pm\det(V(t,x,q))^{-1}g^{(0,x^{\pm})}(q^{m}t,x,q)$
where $m=0,1$. Finally, one can use the functional equations to take $w$ away
from $0$ which gives the analytic continuation or can alter the contour
depending on $w$. Noting that the entries of $\Omega_{V,S}$ are combinations of
elementary functions times $\calI(z+n+m\tau,w,\tau)$, we see that $\Omega_{V,S}$
extends for $\tau\in\BC'$ and using the modularity of the monodromy and part (c)
of Theorem~\ref{thm.ST} completes the proof.
\end{proof}

\subsection{An analytic lift of the colored Jones polynomial}
\label{sub.CJ41lift}

We finish this section by giving a proof of Theorem~\ref{thm.CJ41lift}. The
main observation is that when $x=q^N$, the series $\hat{f}^{(0)}(t,q^{N},q)$
terminates to a polynomial of $t$, in which case the $q$-Borel transfrom, followed
by a $q$-Laplace transfrom is the identity. Explicitly, when $x=q^{N}$ for
$N\in\BZ_{\geq1}$ we have
  \be
  \begin{aligned}
\hat{f}^{(0)}(t,q^{N},q)
&=
\sum_{k=0}^{\infty}(-1)^{k}q^{-k(k+1)/2}
(q^{1+N};q)_{k}(q^{1-N};q)_{k}t^{k}\\
&=
\sum_{k=0}^{N-1}(-1)^{k}q^{-k(k+1)/2}
(q^{N+1};q)_{k}(q^{1-N};q)_{k}t^{k}\,.
  \end{aligned}
  \ee
  The theorem then follows from Equation~\eqref{qlapbort}.
\qed

\subsection{Specialisation $t=q^m$}
\label{sub.tqm}

In this short subsection, included for completeness, we briefly comment how
our analytic functions of $t$, specialised to $t=q^m$, become the known sequences
of $q$-series and $(x,q)$-series that have appeared in the literature. 
In~\cite{GZ:qseries} and~\cite{GGM:peacock,GGM,GGMW:trivial} the $q$-holonomic
modules are discrete versions of what we have considered in Section~\ref{sec.41}.
Here we describe how the solutions can be constructed from the ones presented here.
Consider a solution $f(t,q)$ of a $q$-difference equation
$$
a_r(t,q) f(q^r t, q) + a_{r-1}(t,q) f(q^{r-1}t,q) + \dots + a_0(t,q) f(t,q) =0 
$$
corresponding to an edge of slope $\kappa$ on the Newton polygon, and let
\be
\label{fm}
f_{m}(q)=(-1)^{\kappa m}q^{-\kappa m(m+1)/2}
\Res_{z=0}\theta(q^{m}\e(z),q)^{-\kappa}f(q^{m}\e(z),q)\frac{dz}{2\pi iz} \,.
\ee
Then we find that $f_m(q)$ satisfies the linear $q$-difference equation
$$
a_r(q^m,q) f_{r+m}(q) + a_{r-1}(q^m,q) f_{r+m-1}(q) + \dots
  + a_0(q^m,q) f_{m}(q) =0 \,.
$$
This follows from   
\be
\begin{small}
\begin{aligned}
  0=&\Res_{z=0}\left(a_r(q^m\e(z),q) f(q^{r+m}\e(z), q)
    + a_{r-1}(q^m\e(z),q) f(q^{r+m-1}\e(z),q) + \dots \right. \\ & \left.
    + a_0(q^m\e(z),q) f(q^m\e(z),q)\right)
  \frac{\e(\kappa z)dz}{\theta(\e(z);q)^{\kappa}2\pi iz}\\
  =&a_r(q^m,q) f_{r+m}(q) + a_{r-1}(q^m,q) f_{r+m-1}(q) + \dots
  + a_0(q^m,q) f_{m}(q) 
\end{aligned}
\end{small}
\ee
where the equality follows from holomorphicity of $\theta^{-\kappa}f$ or the
higher order of vanishing of the indicial polynomial.

For example, for the Equation~\eqref{41x1}, and its solution
solution $g^{(0,1)}$ defined in Equation~\eqref{ex41.g00}, we obtain that 
\be
    \Res_{z=0}g^{(0,1)}(q^{m}\e(z),q)\frac{dz}{2\pi iz}
    =
    \sum_{k=0}^{\infty}\left(k+\frac{1}{2}-m
  -2E^{(k)}_{1}(q)\right)(-1)^{k}\frac{q^{k(k+1)/2-km-m}}{(q;q)_{k}^{2}} \,,
\ee
a $q$-series that appears in \cite[Equ.13b]{GGM} and \cite[Equ.6]{GGMW:trivial}.

\subsection*{Acknowledgements} 

The authors wish to thank Thomas Dreyfus, Jie Gu, Rinat Kashaev, Marcos
Mari\~{n}o, Mikhail Kapranov, Matthias Storzer and Don Zagier for enlightening
conversations. The work of C.W. has been supported by the Max-Planck-Gesellschaft.


\bibliographystyle{plain}
\bibliography{biblio}
\end{document}